\theoremstyle{plain}
\newtheorem{theorem}{\protect\theoremname}[section]
\newtheorem*{theorem*}{\protect\theoremname}
\newtheorem{lemma}[theorem]{\protect\lemmaname}
\newtheorem*{lemma*}{\protect\lemmaname}
\newtheorem{proposition}[theorem]{\protect\propositionname}
\newtheorem*{proposition*}{\protect\propositionname}
\newtheorem{corollary}[theorem]{\protect\corollaryname}
\newtheorem*{corollary*}{\protect\corollaryname}
\newtheorem{fact}[theorem]{\protect\factname}
\newtheorem*{fact*}{\protect\factname}
\newtheorem{conjecture}[theorem]{\protect\conjecturename}
\theoremstyle{definition}
\newtheorem{definition}[theorem]{\protect\definitionname}
\newtheorem*{definition*}{\protect\definitionname}
\newtheorem{example}[theorem]{\protect\examplename}
\newtheorem{question}[theorem]{\protect\questionname}
\theoremstyle{remark}
\newtheorem{remark}[theorem]{\protect\remarkname}
\numberwithin{equation}{subsection}
\newcommand{\cA}{\mathcal{A}}
\newcommand{\cE}{\mathcal{E}}
\newcommand{\cF}{\mathcal{F}}
\newcommand{\fG}{\mathfrak{G}}
\newcommand{\cL}{\mathcal{L}}
\newcommand{\cM}{\mathscr{M}}
\newcommand{\cN}{\mathcal{N}}
\newcommand{\cP}{\mathcal{P}}
\newcommand{\R}{\mathbb{R}}
\newcommand{\E}{\mathbb{E}}
\newcommand{\eps}{\epsilon}
\newcommand{\tensor}{\otimes}
\newcommand{\gibbs}[1]{\left\langle#1\right\rangle}
\newcommand{\innerp}[1]{\left\langle#1\right\rangle}
\newcommand{\brac}[1]{\left\langle#1\right\rangle}
\newcommand{\sech}{\text{sech}}
\newcommand{\bm}{\mathbf{m}}
\newcommand{\abs}[1]{\lvert#1\rvert}
\newcommand{\norm}[1]{\lvert\lvert#1\rvert\rvert}
\newcommand{\Tbull}{T_\bullet}
\providecommand{\corollaryname}{Corollary}
\providecommand{\definitionname}{Definition}
\providecommand{\factname}{Fact}
\providecommand{\lemmaname}{Lemma}
\providecommand{\propositionname}{Proposition}
\providecommand{\theoremname}{Theorem}
\providecommand{\remarkname}{Remark}
\providecommand{\conjecturename}{Conjecture}
\providecommand{\examplename}{Example}
\providecommand{\claimname}{Claim}
\providecommand{\problemname}{Problem}
\providecommand{\questionname}{Question}
\begin{document}
\title{Some properties of the phase diagram for mixed $p$-spin glasses}

\author{Aukosh Jagannath}
\address[Aukosh Jagannath]{Courant Institute of Mathematical Sciences, 251 Mercer St.\ NY, NY, USA, 10012}
\email{aukosh@cims.nyu.edu}

\author{Ian Tobasco}
\address[Ian Tobasco]{Courant Institute of Mathematical Sciences, 251 Mercer St.\ NY, NY, USA, 10012}
\email{tobasco@cims.nyu.edu}

 \keywords{Parisi Formula, Sherrington-Kirkpatrick Model, First order optimality conditions, de Almeida-Thouless line}
 \subjclass[2010]{60K35, 82B44, 82D30,  49S05, 49K21} 
\thanks{The final publication is available at Springer via http://dx.doi.org/10.1007/s00440-015-0691-z}

\date{\today}

\begin{abstract}
In this paper we study the Parisi variational problem for mixed $p$-spin glasses with Ising spins. 
Our starting point is a characterization of Parisi measures whose origin lies in the first order optimality 
conditions for the Parisi functional, which is known to be strictly convex. 
Using this characterization, we study the phase diagram in the temperature-external field plane. 
We begin by deriving self-consistency conditions for Parisi measures that generalize those of de Almeida and Thouless 
to all levels of Replica Symmetry Breaking (RSB) and all models. As a consequence, we conjecture that for all models 
the Replica Symmetric (RS) phase is the region determined by the natural analogue of the de Almeida-Thouless condition.
 We show that for all models, the complement of this region is in the RSB phase. 
 Furthermore, we show that the conjectured phase boundary is exactly the phase boundary in the plane less a bounded set.
 In the case of the Sherrington-Kirkpatrick model, 
 we extend this last result to show that this bounded set does not contain the critical point at zero external field.  
\end{abstract}

\maketitle

\section{Introduction}
In this paper we consider the Parisi functional, which is defined as follows. Let $\xi_0(t)$, called the model, be $\xi_0(t) = \sum_{p\geq2}\beta_{p}^{2}t^{p}$ and let $\xi(t)=\beta^2\xi_0(t)$. The Parisi functional is given by
\begin{equation}\label{eq:pfunc}
\cP(\mu;\xi_0,\beta,h)=u_{\mu}(0,h)-\frac{1}{2}\int_{0}^{1}\xi^{\prime\prime}(s)\mu[0,s]sds
\end{equation}
where $\mu\in\Pr([0,1])$ is a probability measure on the unit interval and $u_{\mu}$ is the unique weak solution of the Parisi PDE:
\begin{equation}\label{eq:PPDE}
\begin{cases}
\partial_{t}u_{\mu}(t,x)+\frac{\xi^{''}\left(t\right)}{2}\left(\partial_{xx}u_{\mu}(t,x)+\mu\left[0,t\right]\left(\partial_{x}u_{\mu}(t,x)\right)^{2}\right)=0 & (t,x)\in(0,1)\times\R\\
u_{\mu}(1,x)=\log\cosh(x)
\end{cases}
\end{equation}
Here $\beta$ is a positive real number and $h$ is a non-negative real number, and $\beta$ and $h$ are called the inverse temperature and external field respectively. We assume that there is a positive real number $\epsilon$ such that $\xi_0(1+\epsilon)<\infty$. 
The solution $u_\mu$ can be shown \cite{JagTobSC15} to be continuous in space and time
and is unique in the class of weak solutions with essentially bounded weak derivative. The basic properties of the solution of this
PDE are summarized in Sect.\ \ref{sec:appendixPDE}. 

The study of the Parisi functional is important to the study of mean-field spin glasses. 
The strict convexity of this functional was conjectured by Panchenko \cite{PanchConv05} and proven by Auffinger and Chen in \cite{AuffChen14}. 
(For an alternative proof see \cite{JagTobSC15}.) Other properties of this functional were studied in the mathematics literature
by Talagrand in \cite{TalPM06,TalPF,TalBK11vol2}, Auffinger and Chen in \cite{AuffChen14,AuffChen13}, and
Chen in \cite{Chen15}.

A question of particular significance is the nature of the minimizer of the variational formula 
\begin{equation}\label{eq:parisi-formula}
F(\xi_0,\beta,h)=\min_{\mu\in\Pr([0,1])}\cP(\mu;\xi_0,\beta,h)
\end{equation}
as $\beta$ and $h$ vary. The region in the $(\beta,h)$ plane where this measure 
consists of one atom is called the ``Replica Symmetric'' (RS) phase; the complement of this region is called the ``Replica Symmetry Breaking'' (RSB) phase;
the region where it consists of $k+1$ atoms is called the ``$k$-step Replica Symmetry Breaking'' (kRSB) phase;  
and the region where it has either infinitely many atoms or a part that is absolutely continuous with respect to the Lebesgue measure is called the ``Full Replica Symmetry Breaking'' (FRSB) phase. 
(That the measure has
no continuous singular part was first rigorously shown in \cite{AuffChen13}.) 
The phase diagram of the Parisi functional was first explored from the variational standpoint in the mathematics literature
by Auffinger and Chen in \cite{AuffChen13} in the case $h=0$. 
The importance of these questions to the field of mean field spin 
glasses and the meaning of the above terminology is explained in more detail in Sect.\ \ref{sec:background}.

The starting point of this paper is a characterization of the minimizer of \eqref{eq:parisi-formula} which is
 based on the following elementary observations. First, the tangent space of $\Pr([0,1])$ equipped with the weak topology is naturally
included in the space of finite signed measures $\cM([0,1])$ with the same topology, so that the  derivative of a functional of the form 
\eqref{eq:pfunc} in the direction of $\sigma$ is given by 
\[
\innerp{\delta \cP_\mu,\sigma}=\innerp{G_\mu,\sigma}
\]
for some continuous bounded function $G_\mu$. Then, since $\cP$ is strictly convex, the first order optimality conditions lead to the conclusion that minimizers are characterized by
the equation
\[
\mu(\{x:G_\mu(x)=\min G_\mu(x)\})=1.
\]
This is explained in Sect.\ \ref{sec:outline}.
Using this characterization, we present self-consistency conditions for the minimizer when $\beta$ and $h$ are in a given phase.  
We then present a detailed study of the phase boundary between the RS and RSB regimes. Specifically, we present a conjecture for general models which can be seen to generalize the stability conditions obtained by de Almeida and Thouless using replica theoretic techniques \cite{AT78} in the case of the Sherrington-Kirkpatrick model ($\xi_0(t) = t^2/2$). We resolve this conjecture in a large portion of the $(\beta,h)$ plane for general models. These results are based on a quantitative study of asymptotics
of gaussian integrals as the covariances and mean diverge. This will be explained in Sect.\ \ref{sec:outline}.

\subsection{Statement of main results}
Before we state the main results we need the following technical definitions. We call the minimizer of (\ref{eq:parisi-formula})
a \emph{Parisi measure}. That this measure is unique was first proven in \cite{AuffChen14}. An alternative proof is provided in
\cite{JagTobSC15}. Define the function $G_{\mu}$ for any $\mu\in\Pr\left[0,1\right]$
and $h\geq 0$ by 
\begin{equation}\label{eq:Gmu}
G_{\mu}\left(t\right)=\int_{t}^{1}\frac{\xi''\left(s\right)}{2}\left(\E_{X_{0}=h}\left[u_{x}^{2}\left(s,X_{s}\right)\right]-s\right)\, ds,
\end{equation}
where $u$ solves the Parisi PDE (\ref{eq:PPDE}) with measure $\mu$, and $X_{s}$
solves the Auffinger-Chen SDE 
\begin{equation}\label{eq:AC-SDE}
dX_{s}=\xi''\left(s\right)\mu\left[0,s\right]u_{x}\left(s,X_{s}\right)ds+\sqrt{\xi''\left(s\right)}dW_{s}.
\end{equation} 
The properties of this SDE are summarized in Sect.\ \ref{sec:appendix}.

We begin with the following useful characterization of the optimizer. 
\begin{proposition}\label{prop:opt-conds}
$\mu$ is a Parisi measure if and only if
\begin{equation}\label{eq:EL-rel}
\mu(\{t:G_{\mu}(t)=\min G_{\mu}\})=1.
\end{equation}
Furthermore, if $\mu$ is a Parisi measure, it must satisfy the self-consistency conditions,
\begin{equation}\label{eq:self-consistency}
\begin{cases}
\E_h u_{x}^2(q,X_q)&=q\\
\xi''(q)\E_h u_{xx}^2(q,X_q)&\leq 1
\end{cases}
\end{equation}
for all $q\in \text{supp}\,\mu$.
\end{proposition}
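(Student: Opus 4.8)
The plan is to exploit strict convexity of $\cP$ together with the fact that, as asserted in the excerpt, the Gateaux derivative of $\cP$ at $\mu$ in a signed-measure direction $\sigma$ is $\innerp{G_\mu,\sigma}$ for the continuous bounded function $G_\mu$ defined in \eqref{eq:Gmu}. First I would verify this identification of the derivative: differentiating \eqref{eq:pfunc} requires differentiating $u_\mu(0,h)$ with respect to $\mu$, which one does by linearizing the Parisi PDE \eqref{eq:PPDE}; the linearized equation has an explicit Feynman--Kac-type representation whose value at $(0,h)$ reproduces the first term of $G_\mu$ via the Auffinger--Chen SDE \eqref{eq:AC-SDE}, while the second (explicit integral) term of \eqref{eq:pfunc} contributes the $-s$ inside \eqref{eq:Gmu}; an integration by parts in $t$ converts $\mu[0,s]$-weighting into the form $\int_t^1$. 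I would lean on the PDE and SDE regularity collected in Sect.\ \ref{sec:appendixPDE} and Sect.\ \ref{sec:appendix} to justify the interchange of differentiation and the $\E$/integral.

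Given the derivative formula, the first-order optimality condition for a convex functional on the convex set $\Pr[0,1]$ reads $\innerp{G_\mu,\nu-\mu}\geq 0$ for all $\nu\in\Pr[0,1]$. Testing against Dirac masses $\nu=\delta_t$ gives $G_\mu(t)\geq \innerp{G_\mu,\mu}$ for every $t\in[0,1]$, hence $\innerp{G_\mu,\mu}=\min G_\mu$ and $\mu$ is supported where $G_\mu$ attains its minimum, which is \eqref{eq:EL-rel}. Conversely, if \eqref{eq:EL-rel} holds then $\innerp{G_\mu,\nu-\mu}=\innerp{G_\mu,\nu}-\min G_\mu\geq 0$ for all $\nu$, so $\mu$ is a critical point; strict convexity (Auffinger--Chen \cite{AuffChen14}, or \cite{JagTobSC15}) upgrades this to the unique global minimum. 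This part is essentially soft once the derivative is in hand.

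For the self-consistency conditions \eqref{eq:self-consistency}: fix $q\in\operatorname{supp}\mu$. Since $G_\mu$ is continuous and $\mu$-a.e.\ equal to $\min G_\mu$, every point of $\operatorname{supp}\mu$ is a minimizer of $G_\mu$, so $q$ is an interior-or-boundary minimum of the $C^1$ function $G_\mu$ (it is $C^1$ because its integrand is continuous, by the PDE regularity). Differentiating \eqref{eq:Gmu} gives $G_\mu'(t)=-\tfrac{\xi''(t)}{2}\big(\E_h u_x^2(t,X_t)-t\big)$. At an interior minimizer $G_\mu'(q)=0$ forces $\E_h u_x^2(q,X_q)=q$; at $q=0$ or $q=1$ one argues with the one-sided derivative and the sign of $\xi''$, noting $\xi''\geq 0$ and using the known boundary behavior to rule out the wrong sign. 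For the second-derivative inequality, $q$ being a minimum also gives $G_\mu''(q)\geq 0$ (again, one-sided at the endpoints). Computing $G_\mu''$ from the expression for $G_\mu'$ and using the It\^o/heat-equation identity $\partial_s\E_h u_x^2(s,X_s)=1+\big(\xi''(s)\mu[0,s]-\text{drift terms}\big)\cdots$ — more precisely, differentiating $\E_h[u_x^2(s,X_s)]$ along the SDE and the PDE and using $\E_h u_x^2(q,X_q)=q$ to cancel the lower-order pieces — should reduce $G_\mu''(q)\geq 0$ exactly to $\xi''(q)\E_h u_{xx}^2(q,X_q)\leq 1$.

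The main obstacle is this last computation: producing a clean formula for $\tfrac{d}{ds}\E_h[u_x^2(s,X_s)]$ and then for $G_\mu''(q)$, because it requires differentiating a functional of the SDE solution, which mixes the It\^o correction from \eqref{eq:AC-SDE} with the time-derivative of $u_x$ supplied by differentiating the Parisi PDE \eqref{eq:PPDE} in $x$. The bookkeeping must be done carefully enough that, after substituting the already-established equality $\E_h u_x^2(q,X_q)=q$, all terms except $1-\xi''(q)\E_h u_{xx}^2(q,X_q)$ cancel; handling the endpoint cases $q\in\{0,1\}$ with one-sided derivatives, and confirming the requisite integrability of $u_{xx}$ and $u_{xxx}$ from the appendix, are the fiddly points. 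I would isolate the identity for $\tfrac{d}{ds}\E_h[u_x^2(s,X_s)]$ as a separate lemma before assembling the proof.
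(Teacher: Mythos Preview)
Your proposal is correct and follows essentially the same route as the paper: derive the first-variation formula $\delta\cP_\mu=G_\mu$, read off \eqref{eq:EL-rel} from first-order optimality plus convexity, then obtain \eqref{eq:self-consistency} from $G_\mu'(q)=0$ and $G_\mu''(q)\geq 0$ at support points. Two small remarks. First, the It\^o computation you flag as the main obstacle is in fact very clean: the time-derivative of $u_x$ coming from the (once $x$-differentiated) Parisi PDE cancels exactly against the drift of the Auffinger--Chen SDE, leaving $d\,u_x(s,X_s)=u_{xx}\sqrt{\xi''}\,dW_s$ and hence $\tfrac{d}{ds}\E_h[u_x^2(s,X_s)]=\xi''(s)\E_h[u_{xx}^2(s,X_s)]$ with no residual terms; this is recorded in the paper's appendix. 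Second, the paper handles the endpoints by proving separately that $1\notin\operatorname{supp}\mu$ always and $0\notin\operatorname{supp}\mu$ when $h\neq 0$, then treating the remaining case $q=0,\,h=0$ via the one-sided second derivative exactly as you suggest.
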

\begin{remark}
In the case that $\mu$ is 1-atomic and $\xi$ corresponds to SK, (\ref{eq:self-consistency})
are exactly the conditions of de Almeida and Thouless \cite{AT78}. See Sect.\ \ref{sec:background} for a brief discussion.
\end{remark}
\begin{remark}
These results and others from Sect.\ \ref{sec:First-Var} were independently obtained by Chen in \cite{Chen15}. In order to make this presentation self-contained, we present an alternative approach in Sect. \ref{sec:First-Var}. 
\end{remark}
\begin{remark}
A characterization similar to (\ref{eq:EL-rel}) was obtained by Talagrand in \cite{TalSphPF06} for the related
spherical mixed $p$-spin glass model, and used by Auffinger and Chen in \cite{AuffChen13} to prove Full RSB for a subclass of such models. 
The self-consistency conditions can be derived using the work of Talagrand \cite{TalPM06,TalPF}, where he states the result in 
the case of k-atomic measures and even $p$, and 
the work of Toninelli \cite{Ton02} where he works with the SK model and 1-atomic measures.
In particular, the self-consistency conditions can be seen as a generalization of Toninelli's work 
as well as a generalization of the conditions of de Almeida and Thouless,
to the case of general models and general levels of RSB, even full. 
\end{remark}
\begin{remark}
Note that an immediate consequence of the self-consistency conditions is that if we let 
$\beta_*=\frac{1}{\sqrt{\xi''_0(1)}}$, then the region $\beta\leq \beta_*$ is in the RS phase (see Lemma \ref{lem:beta-alr}).
\end{remark}

%Note that we have the following straightforward corollary from the above theorem. Before we state the result we need the following technical definition.
%
%\begin{definition}\label{defn:1stable} A function $F:\Pr[0,1]\to\R$ is said to be \emph{1-stable} if $\mu\in\Pr[0,1]$ is a minimizer whenever for every $x\in[0,1]$ there
%is a $t_0(x)>0$ such that 
%\[
%F(\mu + t(\delta_x - \mu)) \geq F(\mu) \quad \forall t\leq t_0.
%\]
%\end{definition}
%\begin{remark}
%This definition gives a local test for global minimality. Notice that it only requires testing in a small subclass of directions, rather than on the entire
%tangent space. If one plots the CDF, this test looks non-local.
%\end{remark}
%
%With this in hand, we have:
%\begin{corollary}\label{cor:1stable}
%The Parisi functional is 1-stable. 
%\end{corollary}
%\begin{remark}
%The 1-stability characterization can be seen in the work of Chen \cite{Chen15}. 
%\end{remark}

The remainder of this subsection is regarding our results on the RS to RSB phase transition. We use the following notation throughout.
Let $Z$ denote a standard Gaussian random variable and define 
\begin{equation}\label{eq:q-alpha-def}
\begin{aligned}
Q_*(\beta,h) &= \{q\in[0,1] \,:\, \E\tanh^2( \sqrt{\xi'(q)}Z+ h)=q \}\\
\alpha(q,\beta,h) &= \,\xi''(q)\E \sech^4(\sqrt{\xi'(q)}Z+ h)\\
\alpha(\beta,h) &= \min_{q\in Q_*}\,\alpha(q,\beta,h)\\
q_*(\beta,h) &= \max\,\{ q\in Q_* \,:\, \alpha(\beta,h) = \alpha(q,\beta,h)\}.
\end{aligned}
\end{equation}
By a continuity argument one can show that $Q_*(\beta,h)$ is closed, $q_*\in Q_*(\beta,h)$, and  $\alpha(\beta,h) = \alpha(q_*,\beta,h)$.
We call the level set $\alpha(\beta,h)=1$ the \emph{generalized AT-line}. There are many subtle questions regarding these quantities,
for example, ``is the set $\alpha=1$ a curve?'', ``is the set $Q_*$ a singleton?''. 
For a short discussion regarding these questions see Sect.\ \ref{sec:ATline-line}.
We remark here that a consequence of our methods of proof is that there will be only one $q$ in $Q_*$ with $\alpha(q)=\alpha$, in the region of $(\beta,h)$ where they apply.

With this in mind, we state our conjecture regarding the RS phase. Let
\begin{equation}\label{eq:AT}
AT=\{(\beta,h):\alpha(\beta,h)\leq 1\},
\end{equation}
and 
\[
RS=\{(\beta,h):\text{the Parisi measure is 1-atomic}\}.
\] 
\begin{conjecture}\label{conj:ATline}
We have the equality
\[
AT=RS.
\]
\end{conjecture}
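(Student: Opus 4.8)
The plan is to prove the two inclusions $RS\subseteq AT$ and $AT\subseteq RS$ separately. The inclusion $RS\subseteq AT$ follows at once from the self-consistency conditions of Proposition \ref{prop:opt-conds}; the inclusion $AT\subseteq RS$ is where all the difficulty lies, and I will indicate why it can only be carried out off a bounded set, which is the reason the full statement is phrased as a conjecture.

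For $RS\subseteq AT$: suppose $(\beta,h)\in RS$, so the Parisi measure is $\mu=\delta_m$. Since $\mu[0,s]=0$ for $s<m$, the Auffinger--Chen SDE (\ref{eq:AC-SDE}) is driftless on $[0,m)$, hence $X_m\eqdist h+\sqrt{\xi'(m)}Z$; since $\mu[0,s]=1$ for $s\ge m$, on $[m,1]$ the Parisi PDE (\ref{eq:PPDE}) linearizes under $v=e^{u_\mu}$ and the identity $\E\cosh(x+\sqrt c\,Z)=e^{c/2}\cosh x$ gives $u_\mu(s,x)=\tfrac12\bigl(\xi'(1)-\xi'(s)\bigr)+\log\cosh x$, so $u_x(m,\cdot)=\tanh$ and $u_{xx}(m,\cdot)=\sech^2$. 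Then the self-consistency conditions (\ref{eq:self-consistency}) at $q=m$ read $m=\E\tanh^2(\sqrt{\xi'(m)}Z+h)$ and $\xi''(m)\E\sech^4(\sqrt{\xi'(m)}Z+h)\le 1$, i.e.\ $m\in Q_*$ and $\alpha(m,\beta,h)\le1$; hence $\alpha(\beta,h)\le\alpha(m,\beta,h)\le1$ and $(\beta,h)\in AT$.

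For $AT\subseteq RS$: fix $(\beta,h)$ with $\alpha(\beta,h)\le1$, let $q_*=q_*(\beta,h)$ (so $q_*\in Q_*$ and $\alpha(q_*,\beta,h)\le1$), and set $\mu_0=\delta_{q_*}$. I would verify that $\mu_0$ satisfies the Euler--Lagrange characterization (\ref{eq:EL-rel}), i.e.\ that $G_{\mu_0}$ attains its minimum at $q_*$; by Proposition \ref{prop:opt-conds} this forces the Parisi measure to be $1$-atomic, so $(\beta,h)\in RS$. It\^o's formula applied to $u_x^2(t,X_t)$ together with the Parisi PDE gives $\tfrac{d}{dt}\E_h u_x^2(t,X_t)=\xi''(t)\E_h u_{xx}^2(t,X_t)\ge0$; writing $f(t)=\E_h u_x^2(t,X_t)$, (\ref{eq:Gmu}) becomes $G_{\mu_0}'(t)=-\tfrac12\xi''(t)\bigl(f(t)-t\bigr)$, so it suffices to show $f(t)\ge t$ on $[0,q_*]$ and $\int_{q_*}^t\xi''(s)\bigl(s-f(s)\bigr)\,ds\ge0$ for all $t\in[q_*,1]$. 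On $[0,q_*]$ the SDE is driftless, $X_t\eqdist h+\sqrt{\xi'(t)}Z$, and $u_x(t,\cdot)$ is a Gaussian convolution of $\tanh$; squaring the inner expectation with two independent copies of the smoothing variable gives $f(t)=\Phi\bigl(\xi'(t)/\xi'(q_*)\bigr)$, where $\Phi(\rho)=\E\bigl[\tanh(h+\sqrt{\xi'(q_*)}W_1)\tanh(h+\sqrt{\xi'(q_*)}W_2)\bigr]$ for $(W_1,W_2)$ standard bivariate normal of correlation $\rho$. By Mehler's formula $\Phi$ is increasing and convex on $[0,1]$, with $\Phi(1)=q_*$ (because $q_*\in Q_*$) and $\Phi'(1)=\xi'(q_*)\E\sech^4(h+\sqrt{\xi'(q_*)}Z)=\xi'(q_*)\alpha(q_*)/\xi''(q_*)$; combining the tangent-line bound $\Phi(\rho)\ge q_*+\Phi'(1)(\rho-1)$ with $\alpha(q_*)\le1$ and the monotonicity of $\xi''$ (so $\xi'(q_*)-\xi'(t)\le\xi''(q_*)(q_*-t)$) yields $f(t)\ge t$ on $[0,q_*]$, so $G_{\mu_0}$ is nonincreasing there.

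The remaining range $[q_*,1]$ is the main obstacle. Here $\mu_0[0,s]=1$, the SDE carries the confining drift $\xi''(s)\tanh(X_s)$, and $f(s)=\E\tanh^2(X_s)$ with $f(q_*)=q_*$ and $f'(q_*)=\alpha(q_*)\le1$; the natural sufficient statement is $f(s)\le s$ on $[q_*,1]$, equivalently $\xi''(s)\E\sech^4(X_s)\le1$ for $s\ge q_*$. Heuristically the drift pushes $|X_s|$ upward so $\E\sech^4(X_s)$ decays, but $\xi''(s)$ grows, and I know of no comparison argument closing this bound for all models and all $(\beta,h)\in AT$; this gap is exactly why $AT=RS$ is only conjectured. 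What one can prove, and what the body of the paper does, is to establish the required estimate when $h$ or $\beta$ is large, via sharp asymptotics of the Gaussian integrals defining $Q_*$ and $\alpha$ as the mean and variance diverge (so $\tanh\to\pm1$ and $\sech\to0$ quantitatively), giving $AT=RS$ --- and $\partial AT=\partial RS$ --- off a bounded subset of the $(\beta,h)$ plane; for the Sherrington--Kirkpatrick model the same comparison can moreover be pushed down to a neighbourhood of the zero-field critical point $(\beta_*,0)$.
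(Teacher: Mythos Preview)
The statement is a conjecture, not a theorem; the paper does not prove it, and neither do you (as you explicitly acknowledge). So the right comparison is between your outline and the paper's partial results.

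Your proof of $RS\subset AT$ is exactly the paper's Theorem~\ref{thm:RSBalphabigger1}: apply the self-consistency conditions of Proposition~\ref{prop:opt-conds} to $\mu=\delta_m$, use Fact~\ref{fact:explicitsolns} to identify $u_x=\tanh$, $u_{xx}=\sech^2$ at the atom, and read off $m\in Q_*$ and $\alpha(m,\beta,h)\le1$.

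Your treatment of the interval $[0,q_*]$ is a genuinely different route from the paper's. The paper (the unnumbered lemma opening Section~\ref{sec:RSprelims}) shows $g'\le0$ on $[0,q_*]$ via It\^o's lemma for $u_{xx}^2$ and monotonicity of $\xi''$. You instead write $f(t)=\E_h u_x^2(t,X_t)$ as $\Phi(\xi'(t)/\xi'(q_*))$ with $\Phi(\rho)$ the two-replica overlap at correlation $\rho$, and use the Hermite/Mehler expansion $\Phi(\rho)=\sum c_n^2\rho^n$ to get convexity, then a tangent-line bound at $\rho=1$ combined with $\alpha\le1$ and $\xi'(q_*)-\xi'(t)\le\xi''(q_*)(q_*-t)$. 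This is correct and is arguably more transparent than the It\^o argument, since it makes the role of the condition $\alpha\le1$ explicit as the slope constraint $\Phi'(1)\le1$ in the right units.

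On $[q_*,1]$ you correctly locate the obstruction. One small imprecision: you write that ``$f(s)\le s$ on $[q_*,1]$, equivalently $\xi''(s)\E\sech^4(X_s)\le1$''. These are not equivalent; the latter is $g'\le0$ (strategy~(2) of Lemma~\ref{lem:strats}) and the former is $g\le0$ (strategy~(1)). Since $g(q_*)=0$, (2) implies (1) but not conversely. The paper in fact uses both: strategy~(1) for the $2/3$-argument (Section~\ref{sec:23-arg}) and strategy~(2) for the long-time argument (Section~\ref{sec:long-time}). Your informal summary of what these arguments achieve---quantitative dispersion of the Gaussian integrals as mean and variance diverge, yielding $AT=RS$ off a bounded set, with an improvement near $(\beta_*,0)$ for SK---is accurate.
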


\begin{remark}
In \cite{AT78}, de Almeida and Thouless derived this characterization of the RS phase for the SK model  using a replica theoretic 
stability analysis. The above conjecture is a natural generalization of their characterization to all mixed $p$-spin glass models. We note, however, that 
there are arguments in the literature, e.g. \cite{Panch05}, pertaining to other spin glass models which suggest that such a conjecture may not 
be true at this level of generality. Furthermore a comparison of Figures \ref{fig:1a} and \ref{fig:2a} with known results from \cite{AuffChen13} casts doubt on this conjecture in the setting of ``non-uniformly parabolic models'', where $\xi_0''(0)=0$. Analytical and numerical evidence suggests that this conjecture should hold at least in the setting of ``uniformly parabolic models'', where $\xi''_0(0)>0$.
\end{remark}

We begin by observing the following.
\begin{theorem}\label{thm:RSBalphabigger1} For any model $\xi_0$,  $RS \subset AT$.
\end{theorem}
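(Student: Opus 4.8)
The plan is to combine the self-consistency conditions of Proposition~\ref{prop:opt-conds} with the fact that the Parisi PDE \eqref{eq:PPDE} can be integrated essentially explicitly when $\mu$ is a single atom. Let $(\beta,h)\in RS$, so that the Parisi measure is $\mu=\delta_{q_0}$ for some $q_0\in[0,1]$. Since $\mathrm{supp}\,\mu=\{q_0\}$, Proposition~\ref{prop:opt-conds} gives
\[
\E_h u_x^2(q_0,X_{q_0})=q_0,\qquad \xi''(q_0)\,\E_h u_{xx}^2(q_0,X_{q_0})\le 1,
\]
where $u=u_\mu$ and $X$ solves \eqref{eq:AC-SDE} with $X_0=h$. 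The whole proof reduces to showing that, in this $1$-atomic case, these two relations are literally the statements $q_0\in Q_*(\beta,h)$ and $\alpha(q_0,\beta,h)\le1$, with the notation of \eqref{eq:q-alpha-def}: granting this, $\alpha(\beta,h)=\min_{q\in Q_*}\alpha(q,\beta,h)\le\alpha(q_0,\beta,h)\le1$, so $(\beta,h)\in AT$ by \eqref{eq:AT}.

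The key computation is the explicit form of $u_\mu$ for $\mu=\delta_{q_0}$. On the strip $t\in[q_0,1]$ one has $\mu[0,t]=1$, so \eqref{eq:PPDE} becomes $\partial_t u+\tfrac{\xi''(t)}{2}\big(\partial_{xx}u+(\partial_x u)^2\big)=0$ with terminal data $\log\cosh$; the Hopf--Cole substitution $w=e^u$ converts it to the backward linear heat equation $\partial_t w+\tfrac{\xi''(t)}{2}\partial_{xx}w=0$ with $w(1,\cdot)=\cosh$, whose solution is $w(t,x)=\E[\cosh(x+\sqrt{\xi'(1)-\xi'(t)}\,Z)]$ for $Z$ a standard Gaussian. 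The elementary identity $\E[\cosh(x+aZ)]=e^{a^2/2}\cosh x$ (from $\E e^{aZ}=e^{a^2/2}$) collapses this to $w(t,x)=e^{(\xi'(1)-\xi'(t))/2}\cosh x$, hence
\[
u_\mu(t,x)=\tfrac12\big(\xi'(1)-\xi'(t)\big)+\log\cosh x,\qquad t\in[q_0,1].
\]
In particular $u_x(q_0,\cdot)=\tanh$ and $u_{xx}(q_0,\cdot)=\sech^2$. Furthermore, on $[0,q_0)$ one has $\mu[0,t]=0$, so the drift in \eqref{eq:AC-SDE} vanishes there and $X_{q_0}=h+\int_0^{q_0}\sqrt{\xi''(s)}\,dW_s$, which, since $\xi'(0)=0$, is Gaussian with mean $h$ and variance $\xi'(q_0)$; that is, $X_{q_0}\eqdist h+\sqrt{\xi'(q_0)}\,Z$.

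Substituting into the two displayed relations finishes the argument: the first becomes $\E\tanh^2\!\big(\sqrt{\xi'(q_0)}\,Z+h\big)=q_0$, i.e.\ $q_0\in Q_*(\beta,h)$, and the second becomes $\xi''(q_0)\,\E\sech^4\!\big(\sqrt{\xi'(q_0)}\,Z+h\big)\le1$, i.e.\ $\alpha(q_0,\beta,h)\le1$; combining as above yields $\alpha(\beta,h)\le1$ and hence $RS\subset AT$. (Incidentally the first relation forces $q_0<1$, consistent with $1\notin Q_*$, though this is not needed.) The only non-bookkeeping point is to justify the explicit formula for $u_\mu$ at the level of weak solutions of \eqref{eq:PPDE}; this is routine given the regularity and uniqueness results recalled in Sect.~\ref{sec:appendixPDE}, and I would argue either by verifying directly that the displayed function is the weak solution on $[q_0,1]$ or by approximating $\delta_{q_0}$ by measures with a smooth distribution function and passing to the limit. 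As a byproduct, this computation also establishes the claim in the remark following Proposition~\ref{prop:opt-conds}: for a single-atom measure the self-consistency conditions are exactly $q_0=\E\tanh^2(\sqrt{\xi'(q_0)}Z+h)$ and $\xi''(q_0)\E\sech^4(\sqrt{\xi'(q_0)}Z+h)\le1$, which for the SK model ($\xi'(q)=\beta^2q$, $\xi''(q)=\beta^2$) are precisely the de Almeida--Thouless conditions.
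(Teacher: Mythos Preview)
Your proof is correct and follows essentially the same approach as the paper: apply the self-consistency conditions (Proposition~\ref{prop:opt-conds}/Corollary~\ref{cor:consistency}) to the one-atomic Parisi measure $\delta_{q_0}$, use the explicit form of $u$ and $X_{q_0}$ (which the paper packages as Fact~\ref{fact:explicitsolns} rather than rederiving via Hopf--Cole) to identify these conditions with $q_0\in Q_*$ and $\alpha(q_0,\beta,h)\le 1$, and conclude $\alpha(\beta,h)\le 1$. The only minor difference is that you spell out the Hopf--Cole computation and the step $\alpha(\beta,h)\le\alpha(q_0,\beta,h)$ explicitly, whereas the paper absorbs these into the cited fact and definitions.
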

Our main result regarding the RS to RSB phase transition for general models is as follows.
\begin{theorem}\label{thm:long-time}
For any model $\xi_0$ and positive external field $h_0>0$, there is a $\beta_u$ such that for $\beta\geq\beta_u$ and $h\geq h_0$, the region $\alpha\leq 1$ is in the RS phase. That is, 
\[
AT\cap \{ \beta \geq \beta_u,\ h\geq h_0\} \subset RS.
\]
Furthermore, if $\xi_0''(0)> 0$, then we can take $h_0=0$.
\end{theorem}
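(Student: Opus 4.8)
The plan is to produce the Parisi measure explicitly as the one-atomic measure $\delta_{q_*}$, with $q_*=q_*(\beta,h)$ as in \eqref{eq:q-alpha-def}. By Proposition~\ref{prop:opt-conds} it suffices, taking $\mu=\delta_{q_*}$ in \eqref{eq:Gmu}--\eqref{eq:AC-SDE}, to show that $G_\mu$ attains its global minimum at $q_*$: then \eqref{eq:EL-rel} holds and uniqueness of the Parisi measure forces $\delta_{q_*}$ to be it, so $(\beta,h)\in RS$. For $\mu=\delta_{q_*}$ the Parisi PDE decouples: on $[q_*,1]$ the Cole--Hopf substitution gives $u_\mu(t,x)=\log\cosh x+\tfrac12(\xi'(1)-\xi'(t))$, so $u_x(t,x)=\tanh x$ and $u_{xx}(t,x)=\sech^2 x$, while on $[0,q_*]$ the function $u_\mu$ solves the backward heat equation with data at $t=q_*$, so $u_x(t,x)=\E_{Z'}[\tanh(x+\sqrt{\xi'(q_*)-\xi'(t)}\,Z')]$. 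Along \eqref{eq:AC-SDE} the drift vanishes on $[0,q_*]$, so $X_t\eqdist\sqrt{\xi'(t)}\,Z+h$ there; in particular $X_{q_*}\eqdist\sqrt{\xi'(q_*)}\,Z+h$, and on $[q_*,1]$ the process solves $dX_t=\xi''(t)\tanh(X_t)\,dt+\sqrt{\xi''(t)}\,dW_t$ from that law. Writing $f(t)=\E_h u_x^2(t,X_t)$ one has $G_\mu'(t)=-\tfrac{\xi''(t)}{2}(f(t)-t)$, so the statement reduces to
\[
f(t)\ge t \quad(t\in[0,q_*]), \qquad f(t)\le t \quad(t\in[q_*,1]).
\]
Here $f(q_*)=\E[\tanh^2(\sqrt{\xi'(q_*)}Z+h)]=q_*$ since $q_*\in Q_*$, and $f'(q_*)=\xi''(q_*)\E[\sech^4(\sqrt{\xi'(q_*)}Z+h)]=\alpha(q_*,\beta,h)=\alpha(\beta,h)\le1$ since $(\beta,h)\in AT$.

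\emph{The interval $[0,q_*]$.} The drift in \eqref{eq:AC-SDE} vanishes there, so $u_x(\cdot,X_\cdot)$ and $u_{xx}(\cdot,X_\cdot)$ are martingales (the PDE cancels the drift terms in It\^o's formula); hence $f'(t)=\xi''(t)\,\E u_{xx}^2(t,X_t)$ with $t\mapsto\E u_{xx}^2(t,X_t)$ nondecreasing, and $\xi''$ is nondecreasing because $\xi'''\ge0$. Thus $f$ is convex on $[0,q_*]$, and with $f(q_*)=q_*$ and $f'(q_*^-)=\alpha(\beta,h)\le1$ the subgradient inequality gives $f(t)\ge q_*+\alpha(\beta,h)(t-q_*)\ge t$ for $t\le q_*$, the sign of $t-q_*$ reversing the last step.

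\emph{The interval $[q_*,1]$.} Here $f(t)=\E\tanh^2(X_t)=1-\E\sech^2(X_t)$, and the It\^o drift of $\sech^2(X_t)$ is $-\xi''(t)\sech^4(X_t)$, so with $g(t):=\E\sech^4(X_t)$ and $\E\sech^2(X_{q_*})=1-q_*$ one gets $\E\sech^2(X_t)=(1-q_*)-\int_{q_*}^t\xi''(s)g(s)\,ds$, hence $f(t)-t=(q_*-t)+\int_{q_*}^t\xi''(s)g(s)\,ds$. Since this vanishes at $q_*$ and has derivative $\xi''(t)g(t)-1$, it suffices to prove
\begin{equation}\label{eq:ATcrux}
\xi''(t)g(t)\le1\qquad(t\in[q_*,1]),
\end{equation}
which at $t=q_*$ is exactly $\alpha(\beta,h)\le1$. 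A computation along the diffusion gives $g'(t)=2\xi''(t)\bigl(2g(t)-3\,\E\sech^6(X_t)\bigr)$. I would show $g$ is nonincreasing by establishing $\E\sech^6(X_t)\ge(\tfrac23+c_0)\E\sech^4(X_t)$ with a fixed $c_0>0$ — this is the quantitative Gaussian asymptotics — which holds because along $AT$ one has $\xi'(q_*)\to\infty$ as $\beta\to\infty$, so $X_{q_*}$ (and $X_t$ for $t\ge q_*$, which differs from it by a perturbation small on the $O(1)$ length-scale of $\sech$) has nearly flat density on that scale, making $\E\sech^{2k}(X_t)$ comparable to $(2\pi\xi'(q_*))^{-1/2}\int\sech^{2k}$ with $\int\sech^6/\int\sech^4=\tfrac45>\tfrac23$; the only degenerate case, $h$ exponentially larger than $\xi'(q_*)$, is trivial because then $\alpha(\beta,h)$, $g$ and $\xi''g$ are all exponentially small. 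Granting this, $g(t)\le g(q_*)\exp(-6c_0\!\int_{q_*}^t\xi'')$ while $\xi''(t)\le\xi''(q_*)\exp(\!\int_{q_*}^t\xi_0'''/\xi_0'')$ with $\xi_0'''/\xi_0''$ bounded on $[q_*,1]$ uniformly as $q_*\to1$; since $\xi''(q_*)\sim\beta^2\to\infty$, for $\beta$ past some $\beta_u$ the product $\xi''(t)g(t)$ is nonincreasing, so $\xi''(t)g(t)\le\xi''(q_*)g(q_*)=\alpha(\beta,h)\le1$, which is \eqref{eq:ATcrux}. The hypothesis $h\ge h_0>0$ is used only to guarantee $q_*\to1$ (equivalently $\xi'(q_*)\to\infty$) along $AT$; when $\xi_0''(0)>0$ the uniform bound $\xi''\ge\beta^2\xi_0''(0)$ already excludes the spurious small-$q$ members of $Q_*$ that otherwise obstruct this, which is why $h_0=0$ is then admissible.

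\emph{Main obstacle.} The heart of the proof is the quantitative and \emph{uniform} control — as the variance $\xi'(q_*)$ and the mean $h$ both diverge along $AT$ — of the Gaussian-type integrals $\E\sech^{2k}(X_t)$ along the Cole--Hopf diffusion: one needs the strict comparison $\E\sech^6\ge(\tfrac23+c_0)\E\sech^4$ for monotonicity of $g$, the rate at which $q_*\to1$, and enough slack to close \eqref{eq:ATcrux} at the borderline $\alpha(\beta,h)=1$. Everything else is soft.
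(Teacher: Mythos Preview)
Your reduction and overall architecture match the paper's exactly: show $G_{\delta_{q_*}}$ is minimized at $q_*$ by splitting into $[0,q_*]$ and $[q_*,1]$, and on the latter reduce to $\xi''(t)\,\E\sech^4(X_t)\le 1$ via the derivative identity $\frac{d}{dt}\E\sech^4(X_t)=\xi''(t)\,\E[4\sech^4-6\sech^6](X_t)$. Your convexity argument on $[0,q_*]$ is correct (and arguably cleaner than the paper's $g'\le0$ computation). Your closing step---once the ratio inequality $\E\sech^6(X_t)\ge(\tfrac23+c_0)\E\sech^4(X_t)$ is granted, integrate the differential inequality for $g$ and absorb the bounded factor $\xi_0'''/\xi_0''$ into the exponentially decaying one---is also correct, and is essentially how the paper closes (compare the chain of inequalities in the proof of Theorem~\ref{thm:long-time} after Lemma~\ref{lem:longtime-main-lem}).

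The genuine gap is the uniform ratio estimate itself, and your heuristic for it contains a real error. You write that $X_t$ ``differs from $X_{q_*}$ by a perturbation small on the $O(1)$ length-scale of $\sech$''. This is false: on $[q_*,1]$ one has $\xi'(1)-\xi'(q_*)=\beta^2(\xi_0'(1)-\xi_0'(q_*))\le\beta^2\xi_0''(1)(1-q_*)$, and since $1-q_*\asymp\beta^{-2}$ along $AT$ (Corollary~\ref{cor:q-lim}), the additional Brownian variance and the integrated drift are both genuinely of order one, not $o(1)$. So the perturbation is \emph{comparable} to the scale of $\sech$, and a one-dimensional flatness argument for $X_{q_*}$ alone does not transfer to $X_t$ by continuity. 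This is precisely why the paper passes through Girsanov (Corollary~\ref{cor:girsanov}) to rewrite $\E[4\sech^4-6\sech^6](X_t)$ as a \emph{two}-dimensional Gaussian integral $\E\Psi(\mathbf h+\sqrt{S}\mathbf Z)$ with $\Psi(x,y)=(4\sech^3 y-6\sech^5 y)\sech x$, where $S$ has one eigenvalue $\sim 2\xi'(q_*)\to\infty$ and one eigenvalue $\sim\tfrac34\alpha\tau=O(1)$. The machinery of Section~\ref{sec:2d-disp} (Theorem~\ref{thm:2d-disp-bound}) then gives the uniform rate, and the limiting object is $\E\langle\Psi\rangle(\nu^{1/2}Z)$; the strict negativity $\langle\Psi\rangle(x)<0$ for all $x$ (Fact~\ref{fact:sign-fact}) is itself a nontrivial calculation and is what replaces your claimed constant $c_0$. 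In short, you have correctly located the main obstacle, but ``nearly flat density plus small perturbation'' is not a proof here---the perturbation is order one, and the rigorous version is the two-dimensional dispersive estimate.
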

\begin{remark} \label{rem:h-is-zero}
 We note here that our proof is quantitative. In particular, one could calculate $\beta_u$ for which
the statement holds.
\end{remark}
\begin{remark}
It is still mysterious as to why ``uniform parabolicity'', $\xi_0''(0)>0$, has such a dramatic effect on this
variational problem. For our arguments, the key difference
is in the nature of an \emph{a priori} lower bound on $q_*$, specifically Lemma \ref{lem:lbq} and the remark thereafter.
\end{remark}

Finally we have the following technical result, which is crucial to the proof of the main theorem above.
In this result, we show that a region that is sufficiently buffered away from, but parallel to the conjectured boundary at $\alpha=1$ is in the RS phase. This allows us to assume a lower bound on $\alpha$ 
and to focus our efforts on the region near the generalized AT line. Let $\Lambda_0=(\pi^2-3)/(6\sqrt{2\pi})$.
\begin{proposition}\label{prop:23-AT}
 For any model $\xi_0$, 
\begin{equation}\label{eq:23-ATline}
\{ h>0,\ \alpha \leq \frac{2}{3}\frac{\xi''_0(q_*)}{\xi''_0(1)}\left(1-\frac{\Lambda_0\xi''_0(1)}{\beta(\xi'_0(q_*))^{3/2}}\right)\} \subset RS.
\end{equation}
\end{proposition}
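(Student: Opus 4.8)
The plan is to exhibit an explicit one-atom measure satisfying the optimality characterization~\eqref{eq:EL-rel} and then invoke uniqueness of the Parisi measure. The candidate is $\mu=\delta_{q_*}$, where $q_*=q_*(\beta,h)\in Q_*(\beta,h)$ is the point from~\eqref{eq:q-alpha-def}, so that $\alpha(q_*,\beta,h)=\alpha(\beta,h)=:\alpha$; we may assume the right-hand side of~\eqref{eq:23-ATline} is positive, else there is nothing to prove. By Proposition~\ref{prop:opt-conds} it suffices to show that $G_{\delta_{q_*}}$ (see~\eqref{eq:Gmu}) attains its minimum at $q_*$. Writing $q(s):=\E_{X_0=h}\!\left[u_x^2(s,X_s)\right]$ for the solution of~\eqref{eq:PPDE}--\eqref{eq:AC-SDE} with $\mu=\delta_{q_*}$, one has $G_{\delta_{q_*}}'(t)=-\tfrac12\xi''(t)\bigl(q(t)-t\bigr)$, so it is enough to prove $q(s)\ge s$ on $[0,q_*]$ and $q(s)\le s$ on $[q_*,1]$ (or merely $\int_{q_*}^{t}\xi''(s)(q(s)-s)\,ds\le 0$ for all $t$). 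Everything can be made explicit for $\mu=\delta_{q_*}$: on $[q_*,1]$ one has $\mu[0,s]=1$, the PDE linearises under $v=e^{u}$, and since $\cosh$ is a heat eigenfunction, $u(s,x)=\log\cosh x+\tfrac12(\xi'(1)-\xi'(s))$, whence $u_x(s,\cdot)=\tanh$, $u_{xx}(s,\cdot)=\sech^{2}$, and $X_s$ solves $dX_s=\xi''(s)\tanh(X_s)\,ds+\sqrt{\xi''(s)}\,dW_s$; on $[0,q_*]$ one has $\mu[0,s]=0$, so $X_s\sim N(h,\xi'(s))$ and $u_x(s,x)=\E_{Z}[\tanh(x+\sqrt{\xi'(q_*)-\xi'(s)}\,Z)]$. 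In particular $X_{q_*}\eqdist\sqrt{\xi'(q_*)}\,Z+h$, so $q(q_*)=\E\tanh^{2}(\sqrt{\xi'(q_*)}\,Z+h)=q_*$ since $q_*\in Q_*$. Differentiating the PDE in $x$ shows $s\mapsto u_x(s,X_s)$ is a bounded martingale with $d\langle\cdot\rangle_s=\xi''(s)u_{xx}^{2}(s,X_s)\,ds$, so $q\in C^{1}$ is nondecreasing with $q'(s)=\xi''(s)\,\E[u_{xx}^{2}(s,X_s)]$ and $q'(q_*)=\xi''(q_*)\,\E\sech^{4}(\sqrt{\xi'(q_*)}\,Z+h)=\alpha$.

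The interval $[0,q_*]$ is easy. There $u_{xx}(s,x)=\E_{Z}[\sech^{2}(x+\sqrt{\xi'(q_*)-\xi'(s)}\,Z)]$, so by Jensen's inequality and the fact that $X_s$ plus an independent $N(0,\xi'(q_*)-\xi'(s))$ variable has law $N(h,\xi'(q_*))$, one gets $\E[u_{xx}^{2}(s,X_s)]\le\E\sech^{4}(\sqrt{\xi'(q_*)}\,Z+h)$, hence $q'(s)\le\xi''(s)\,\E\sech^{4}(\sqrt{\xi'(q_*)}Z+h)\le\xi''(q_*)\,\E\sech^{4}(\sqrt{\xi'(q_*)}Z+h)=\alpha\le\tfrac23\tfrac{\xi_0''(q_*)}{\xi_0''(1)}\le 1$ (using that $\xi_0''$ is nondecreasing). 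Combined with $q(q_*)=q_*$, this gives $q(s)=q_*-\int_s^{q_*}q'(r)\,dr\ge s$ on $[0,q_*]$, and no further input is needed here.

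The interval $[q_*,1]$ is the heart of the matter: there $q'(s)=\xi''(s)\,\E[\sech^{4}(X_s)]$, but $X_s$ now carries the drift $\xi''(s)\tanh(X_s)$ and is no longer Gaussian, so one must control how far $\E[\sech^{4}(X_s)]$ can rise above its value $\E[\sech^{4}(X_{q_*})]=\alpha/\xi''(q_*)$ at $q_*$. The tool is the It\^o identity
\[
\tfrac{d}{ds}\,\E[\sech^{4}(X_s)]=2\,\xi''(s)\,\E\!\left[\sech^{4}(X_s)\bigl(3\tanh^{2}(X_s)-1\bigr)\right],
\]
together with the observation that $\sech^{4}$ concentrates near $0$, where $3\tanh^{2}-1<0$: since $h>0$ and, as one checks directly, $\tfrac{d}{ds}\mathrm{Var}(X_s)\ge\xi''(s)$ so that $\mathrm{Var}(X_s)\ge\xi'(q_*)$, the law of $X_s$ stays spread out and centred away from the origin, and the right-hand side is small, of size governed by $(\xi'(q_*))^{-3/2}$ with the constant $\Lambda_0=(\pi^2-3)/(6\sqrt{2\pi})$ coming from Gaussian integrals such as $\int_{\R}x^{2}\sech^{2}(x)\,dx=\pi^{2}/6$ (the $1/\sqrt{2\pi}$ being the supremum of a standard Gaussian density). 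Combining this with $\xi''(s)\le\xi''(1)$, $\xi''(q_*)\E\sech^{4}(\sqrt{\xi'(q_*)}Z+h)=\alpha$, and $\xi'(q_*)=\beta^{2}\xi_0'(q_*)$, and integrating, yields $q(t)\le t$ on $[q_*,1]$ precisely under hypothesis~\eqref{eq:23-ATline}, whose shape --- the factor $\tfrac23$, which absorbs the possible growth of $\E[\sech^{4}(X_\cdot)]$ above its value at $q_*$, and the correction $1-\Lambda_0\xi_0''(1)/(\beta(\xi_0'(q_*))^{3/2})$, which absorbs the error term --- is exactly what this bound dictates. With $q(s)\ge s$ on $[0,q_*]$ and $q(s)\le s$ on $[q_*,1]$, $G_{\delta_{q_*}}$ is minimal at $q_*$, so $\delta_{q_*}$ satisfies~\eqref{eq:EL-rel}; by uniqueness it is the Parisi measure, and therefore $(\beta,h)\in RS$.

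The step I expect to be the main obstacle is the quantitative estimate on $[q_*,1]$. On $[0,q_*]$ everything reduces to explicit Gaussian integrals and crude bounds suffice, but on $[q_*,1]$ the process is non-Gaussian and the naive inequalities (for instance $\sech^{4}\le\sech^{2}$, giving only $\E[\sech^{4}(X_s)]\le 1-q(s)$, or $\sech^{4}\tanh^{2}\le 4/27$) are far too lossy once $\xi'$ is large; one genuinely needs both the sign and the $(\xi'(q_*))^{-3/2}$-type decay of $\E[\sech^{4}(X_s)(3\tanh^{2}(X_s)-1)]$, and it is here that positivity of the external field and the precise value of $\Lambda_0$ enter. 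The remaining ingredients --- the reduction via Proposition~\ref{prop:opt-conds}, the Cole--Hopf computation, the martingale identity for $q'$, and the $[0,q_*]$ bound --- are soft.
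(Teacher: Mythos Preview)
Your reduction to showing $q(s)\ge s$ on $[0,q_*]$ and $q(s)\le s$ on $[q_*,1]$ is correct, and the $[0,q_*]$ argument is fine. The gap is on $[q_*,1]$: you propose to control $\tfrac{d}{ds}\E[\sech^4(X_s)]$ along the whole non-Gaussian trajectory and concede this is the ``main obstacle'', but you never close it. Worse, you explicitly discard the inequality that actually does the job.

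You write that $\sech^4\le\sech^2$ is ``far too lossy once $\xi'$ is large''. It is not. Plugging it into $q'(s)=\xi''(s)\E[\sech^4(X_s)]$ gives
\[
q'(s)\le \xi''(s)\,\E[\sech^2(X_s)]=\xi''(s)\bigl(1-q(s)\bigr),
\]
a \emph{closed} differential inequality in $q$ that makes no further reference to the law of $X_s$. In terms of $g(s)=q(s)-s$ this reads $g'+\xi'' g\le \xi''(s)(1-s)-1\le \xi''(s)(1-q_*)-1$ for $s\ge q_*$. If $\xi''(1)(1-q_*)\le 1$ the right-hand side is nonpositive, and Gr\"onwall with $g(q_*)=0$ gives $g\le 0$ on $[q_*,1]$, i.e.\ $q(s)\le s$. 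So the entire $[q_*,1]$ step reduces to the single scalar condition $\xi''(1)(1-q_*)\le 1$.

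That condition is checked by one dispersive estimate at the \emph{Gaussian} endpoint $s=q_*$: since $1-q_*=\E\sech^2(\sqrt{\xi'(q_*)}Z+h)$ and $\alpha=\xi''(q_*)\E\sech^4(\sqrt{\xi'(q_*)}Z+h)$, Corollary~\ref{cor:1d-disp-diff-ulb} with $\int\sech^2/\int\sech^4=3/2$ gives
\[
\bigl|\xi''(q_*)(1-q_*)-\tfrac{3}{2}\alpha\bigr|\le \frac{\Lambda_0\,\xi_0''(q_*)}{\beta(\xi_0'(q_*))^{3/2}},
\]
and then $\xi''(1)(1-q_*)\le 1$ rearranges exactly to~\eqref{eq:23-ATline}. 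No estimate on the non-Gaussian $X_s$ for $s>q_*$ is required; the factor $2/3$ and the $\Lambda_0$ correction come from this single Gaussian computation, not from tracking $\E[\sech^4(X_s)(3\tanh^2 X_s-1)]$ along the flow.
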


% fix the next paragraph
A relatively straightforward argument shows that for any 
$\beta_0$ there is an $h_1$ such that the region $[0,\beta_0]\times[h_1,\infty)$ is in the RS phase (See  
Sect. \ref{sec:elementaryRS}). The set in the temperature-external field plane on which the above bounds fail
is then upper bounded in temperature and  external field, and is thus bounded. Furthermore if $\xi''(0)>0$, then the temperatures
for which the arguments fail are also lower bounded. 
\subsubsection{The Sherrington-Kirkpatrick model}
As an example of our techniques, we now discuss how our results relate to the Sherrington-Kirkpatrick model. In this case, it is known \cite{Guer01,TalBK11vol2}
that at positive external field, $h>0$, or high temperature, $\beta <1$, $Q_*$ is a singleton.  The question of whether or not $\alpha=1$ is a curve
is still a difficult question. This is explained in Sect.\ \ref{sec:ATline-line}. 

In this case, recall that $\beta_u$ from Theorem \ref{thm:long-time} does not
depend on $h_0$. Also note that $\beta_*=1$ in this case. Combining the above results with Lemma \ref{lem:sk-3/2}, which concerns
 the region near the critical  point $(\beta,h)=(1,0)$, we get the following improvement.

\begin{theorem}\label{thm:SK-32}
For the SK model, there are $\beta_u,h_1>0$ such that
\[
AT \cap \{ \beta \geq \beta_u\text{ or }\beta\leq \beta_u,h \geq h_1\text{ or } \beta \leq 3/2\} \subset RS.
\] 
\end{theorem}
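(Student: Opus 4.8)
The plan is to assemble the claimed region from results already in hand, the only genuinely new ingredient being Lemma~\ref{lem:sk-3/2}. Write the region as $A\cup B\cup C$ with $A=\{\beta\geq\beta_u\}$, $B=\{\beta\leq\beta_u,\ h\geq h_1\}$, $C=\{\beta\leq 3/2\}$. For the SK model $\xi_0(t)=t^2/2$, so $\xi_0''\equiv 1$; hence $\xi_0''(0)=1>0$ (the model is uniformly parabolic) and $\beta_*=1/\sqrt{\xi_0''(1)}=1$. Applying the uniformly parabolic case of Theorem~\ref{thm:long-time}, which permits $h_0=0$, yields a $\beta_u$ with $AT\cap A\subset RS$; since enlarging $\beta_u$ keeps this true and only grows $B$, we may assume $\beta_u\geq 3/2$. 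Fixing this $\beta_u$, the elementary argument of Sect.~\ref{sec:elementaryRS} with $\beta_0=\beta_u$ produces an $h_1>0$ with $[0,\beta_u]\times[h_1,\infty)\subset RS$, so in fact $B\subset RS$ outright. Thus $AT\cap(A\cup B)\subset RS$.

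It remains to handle $AT\cap C$. For $\beta\leq 1$ this is immediate from the remark following Proposition~\ref{prop:opt-conds} (via Lemma~\ref{lem:beta-alr}), since $\beta_*=1$; and $AT\cap\{1<\beta\leq 3/2\}\subset RS$ is exactly Lemma~\ref{lem:sk-3/2}. Taking the union gives $AT\cap(A\cup B\cup C)\subset RS$, which is the theorem. Moreover the set of $(\beta,h)$ not covered is contained in $\{3/2<\beta<\beta_u,\ 0\leq h<h_1\}$, which is bounded and does not contain $(1,0)$ because $1<3/2$; combined with $RS\subset AT$ from Theorem~\ref{thm:RSBalphabigger1}, this gives the sharper statement that $RS$ and $AT$ coincide off a bounded set avoiding the critical point.

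All the substance is therefore in Lemma~\ref{lem:sk-3/2}: the strip $1<\beta\leq 3/2$ on $AT$, at small external field. This is precisely where the asymptotic tools of the paper are unavailable --- the Gaussian asymptotics behind Theorem~\ref{thm:long-time} need $\xi'(q_*)=\beta^2 q_*$ large, while Proposition~\ref{prop:23-AT} (here reading $\alpha\leq\tfrac23(1-\Lambda_0/(\beta q_*^{3/2}))\Rightarrow RS$) is vacuous unless $\beta q_*^{3/2}>\Lambda_0$; but on $AT$ approaching the critical point the order parameter collapses, $q_*\downarrow 0$ as $(\beta,h)\to(1,0)$ inside $AT$, while $q_*>0$ whenever $h>0$ since $0\notin Q_*$ by \eqref{eq:q-alpha-def}. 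Were I to prove Lemma~\ref{lem:sk-3/2}, I would proceed in two steps. First, dispose of the part where $q_*$ is bounded away from $0$: there $\alpha\leq\beta^2(1-q_*)$ --- which follows from $\sech^4\leq\sech^2$ together with $\E\sech^2(\sqrt{\xi'(q_*)}Z+h)=1-q_*$ --- is small enough relative to $\tfrac23(1-\Lambda_0/(\beta q_*^{3/2}))$ to invoke Proposition~\ref{prop:23-AT}. Second, on the remaining neighbourhood of $(1,0)$, run a perturbative analysis of the optimality condition \eqref{eq:EL-rel}: assume the Parisi measure is not $1$-atomic, expand the solution of \eqref{eq:PPDE}--\eqref{eq:AC-SDE} and hence $G_\mu$ near $\mathrm{supp}\,\mu$ to leading order in the small quantities $q_*$, $h$, $\beta-1$, use \eqref{eq:self-consistency} as the constraint tying these together, and show the expansion forces $G_\mu$ to be minimized at a single point --- a contradiction. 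I expect this last expansion to be the main obstacle: it is the one place where the explicit SK data (the terminal condition $\log\cosh$ and $\xi''\equiv 1$) must be used in an essential, non-robust way, and the threshold $3/2$ should emerge as the largest $\beta$ for which the error terms in the expansion remain controllable.
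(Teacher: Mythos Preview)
Your assembly of the theorem from Theorem~\ref{thm:long-time} (with $h_0=0$, since $\xi_0''(0)=1>0$ for SK), Proposition~\ref{prop:elemenRS}, and Lemma~\ref{lem:sk-3/2} is correct and is exactly how the paper intends it; the split of $C$ into $\beta\leq 1$ and $1<\beta\leq 3/2$ is harmless but unnecessary, since Lemma~\ref{lem:sk-3/2} already covers all of $\{\alpha\leq 1,\ \beta\leq 3/2\}$.

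Where you diverge from the paper is in the speculative third paragraph about how Lemma~\ref{lem:sk-3/2} ought to be proved. The paper's actual proof is much simpler than your proposed two-step program and uses neither Proposition~\ref{prop:23-AT} nor any perturbative expansion near $(1,0)$. It is a one-page ODE comparison: with $f(y)=\E_h\sech^4(X_y)$ one has $g'=\beta^2 f-1$, and It\^o plus Jensen yield $f'\leq 2\beta^2(2f-3f^{3/2})$. Comparing with the autonomous ODE $\phi'=2\beta^2(2\phi-3\phi^{3/2})$, whose fixed point is $4/9$, gives a dichotomy: if $f(q_*)>4/9$ then $f$ is non-increasing and $g'\leq g'(q_*)=\alpha-1\leq 0$; if $f(q_*)\leq 4/9$ then $f\leq 4/9$ throughout and $g'\leq 4\beta^2/9-1\leq 0$ precisely when $\beta\leq 3/2$. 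Thus the threshold $3/2$ comes from $\beta^2\cdot 4/9=1$, not from error control in an expansion.

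Your Step~1 would not work as stated in any case: the bound $\alpha\leq\beta^2(1-q_*)$ gives no useful smallness of $\alpha$ merely from $q_*$ being bounded away from $0$ (on $AT$ with $\beta$ near $3/2$ one can have $\alpha$ arbitrarily close to $1$ while $q_*$ stays well inside $(0,1)$), so Proposition~\ref{prop:23-AT} cannot be invoked there. And your Step~2, which you correctly flag as the hard part, is simply not needed.
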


%%This theorem implies that the region from the discussion immediately following Proposition \ref{prop:23-AT} can
%%be taken to have $\beta_l=3/2$ and $h_1=h_1(\beta_u)$, where $h_1$ is given by \eqref{prop:elemenRS}.

We note that the region in which this theorem applies does not contain the critical point  $(\beta,h)=(1,0)$.
See Figure \ref{fig:RSknown} for a schematic diagram of the optimal region for our arguments.

\begin{figure}[t]
\centering \subfloat[]{\includegraphics[width=.5\textwidth]{./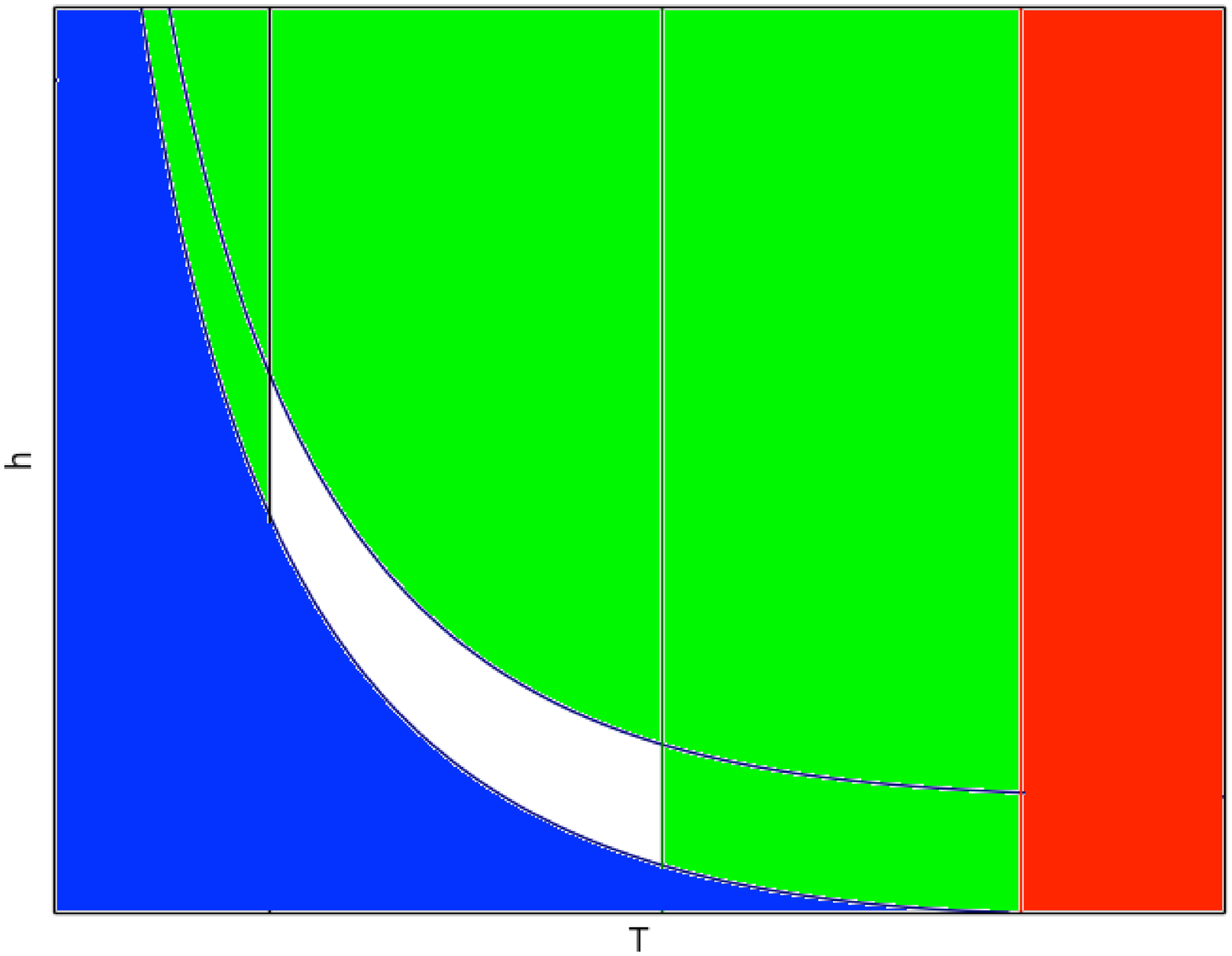}\label{fig:1a}}
 \subfloat[]{\includegraphics[width=.5\textwidth]{./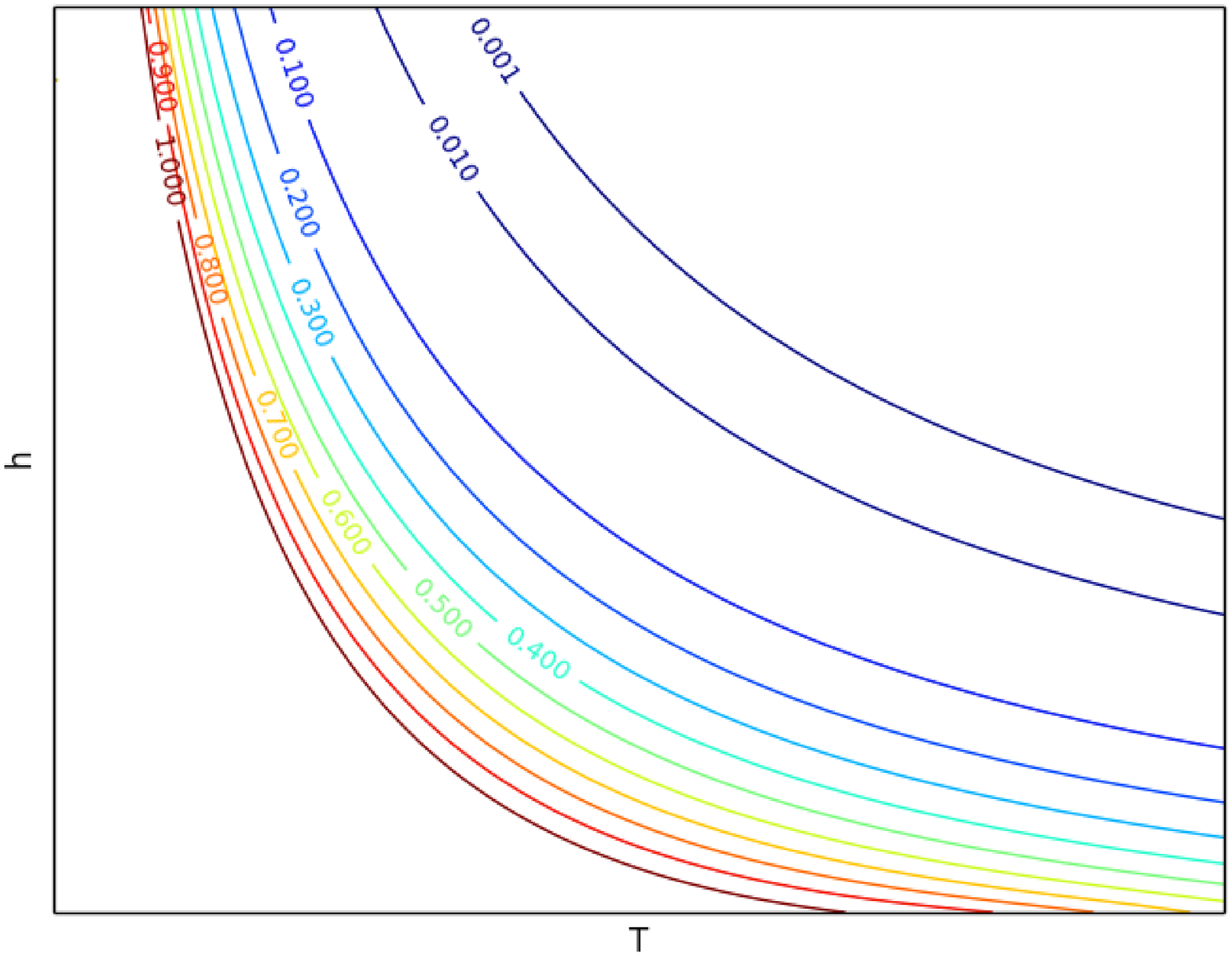}\label{fig:1b}}\\
  \subfloat[]{\includegraphics[width=.5\textwidth]{./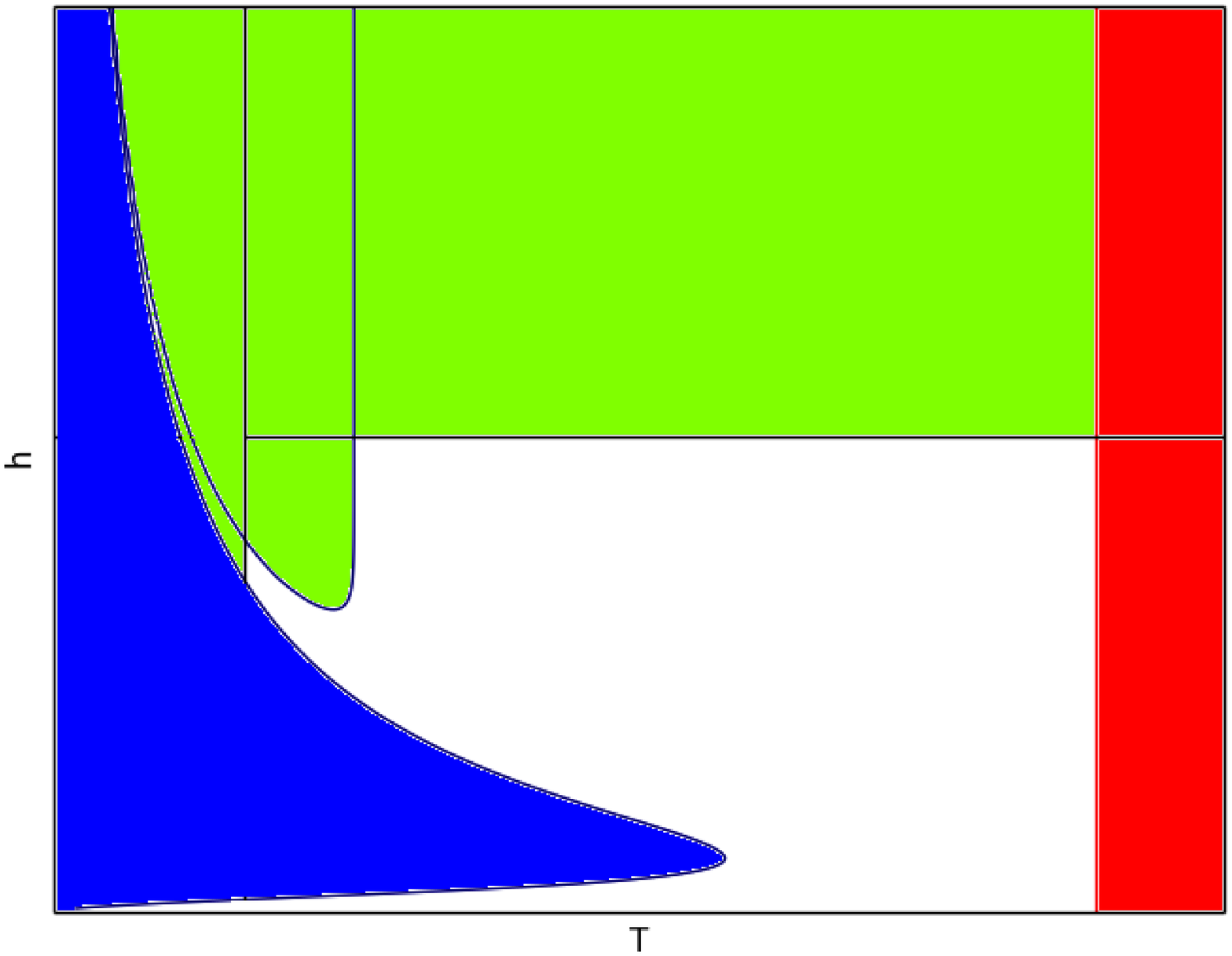}\label{fig:2a}}
 \subfloat[]{\includegraphics[width=.5\textwidth]{./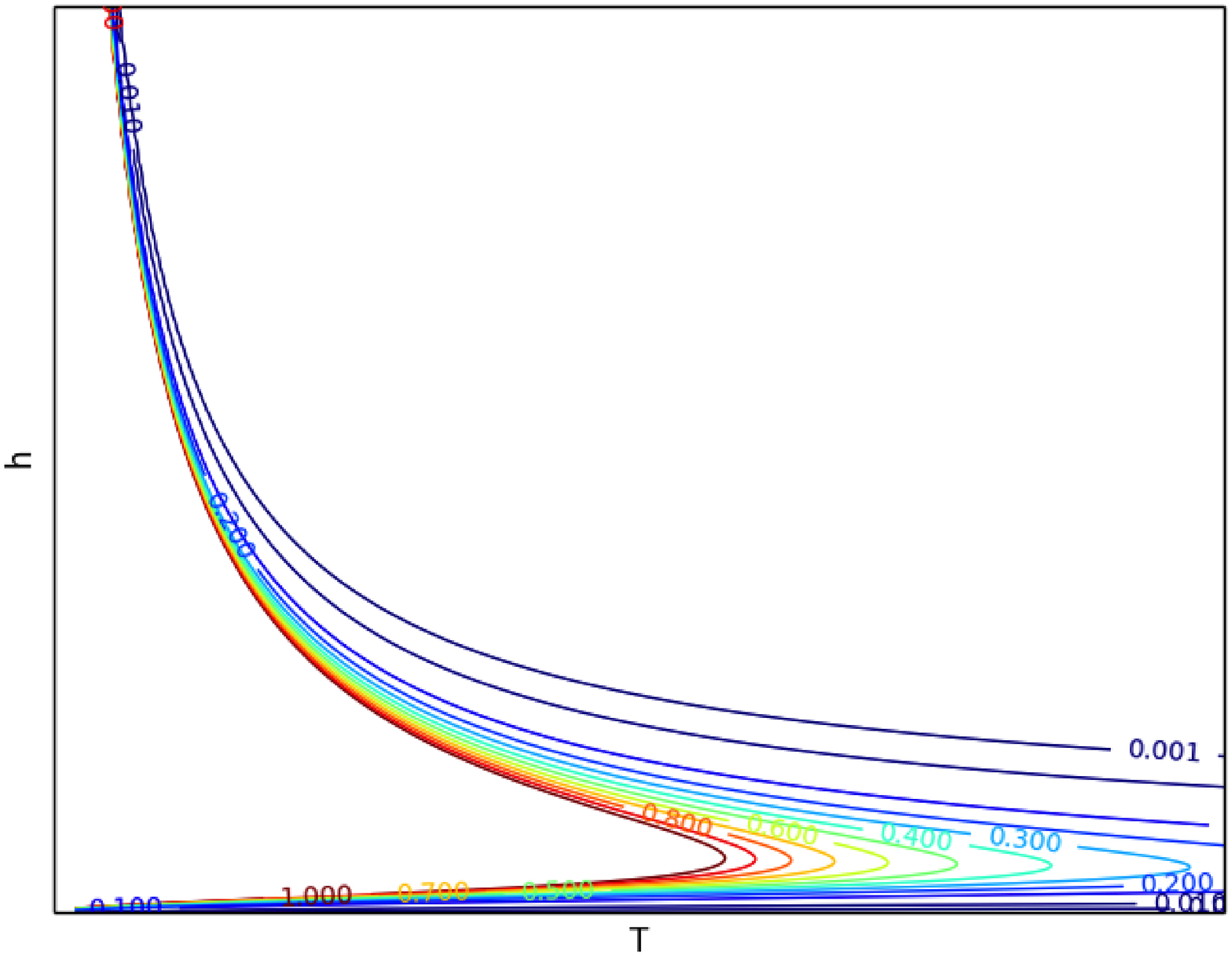}\label{fig:2b}}

 \caption{ Figure \ref{fig:1a} is a schematic of where we know RS for the SK model ($\xi_0(t) = t^2/2$) in the coordinates $(T,h)$ where
 $T=\frac{1}{\beta}$ is the temperature. The red region denotes
 the region above $1/\beta_*$. The blue region denotes the region where we know RSB.
 The green region is where we know RS. The blue curve is the curve from Proposition \ref{prop:23-AT},
 the black line (furthest to the left) is the line from Theorem \ref{thm:long-time}, and the green line (the second from the left) 
 is the line from Theorem \ref{thm:SK-32}. We note here that the aforementioned blue curve is asymptotically equivalent
 to the AT line in these coordinates. Figure \ref{fig:1b} is a plot of the level sets $\alpha=const.$\ for various values of $\alpha\leq 1$. The main idea of our analysis is to study the problem along individual level sets, and to compare with a ``limiting'' problem at $\beta,h = \infty$. Figures \ref{fig:2a}-\ref{fig:2b} are the same as the above for the pure 4-spin model $\xi_0(t)=t^4/4$.}
%``flow'' along these curves 
%and compare the effective ``problem at infinity'' with the problem at any point }
 \label{fig:RSknown}
\end{figure}
%\begin{remark}
%This result can be extended to more general models with  $\beta_2$ sufficiently large in relation 
%to the rest of the $\beta_p$.
%\end{remark}
%\begin{remark}
%Note that this result extends to the generalized SK  model as evidenced by the fact
%that the variational problem (\ref{eq:parisi-formula}) does not make any reference to $\beta_1$.
%\end{remark}
%%\begin{remark}
%%Given that the above set $E$ is bounded, one could in principle verify the remainder of the AT line conjecture using (rigorous) numerics 
%%in the remaining region.
%%\end{remark}

\subsection{Acknowledgements}
We would like to thank our advisors G.\ Ben Arous and R.V.\ Kohn  for their support. We would like to thank
anonymous referees for their very helpful suggestions regarding the exposition of this paper.
We would like to thank the New York University GRI Institute in Paris for its hospitality during the preparation of this paper.
This research was conducted while A.J.\ was supported by a National Science Foundation Graduate Research Fellowship DGE-0813964; 
and National Science Foundation grants DMS-1209165 and OISE-0730136, and while I.T.\ was supported by a National Science Foundation Graduate Research Fellowship
DGE-0813964; and National Science Foundation grants OISE-0967140 and  DMS-1311833.

\subsection{Outline of proofs of main results and discussion}\label{sec:outline}
\subsubsection{Optimality conditions for Parisi measures} %and generalized AT lines

We begin our analysis by deriving necessary and sufficient conditions for a measure $\mu$ to satisfy
\[
\min_{\Pr\left[0,1\right]}\,\cP=\cP\left(\mu\right).
\]
To this end, we show that, in an appropriate sense, the variational derivative of $\cP$ is given by 
\[
\delta \cP_\mu= G_{\mu},
\]
where $G_\mu$ is given by (\ref{eq:Gmu}). Under this interpretation of $G$, one readily gets (\ref{eq:EL-rel}) from the first order optimality condition:
\[
\innerp{\delta P_\mu,\sigma}\geq0
\]
for all $\sigma=\tilde{\mu}-\mu$, $\tilde{\mu}\in\Pr[0,1]$. The remainder of Proposition \ref{prop:opt-conds} follows from basic principles.
This is presented in Sect.\ \ref{sec:First-Var}.

\subsubsection{A shift of viewpoint}\label{subset:change-coords}

For the remainder of this section, we focus on the question of the RS phase for mixed p-spin glass models, and in particular on the resolution of Conjecture \ref{conj:ATline}. 

It is natural to expect ``high temperature''-like (RS) behavior when $\beta$ is small and $h$ is large. (This is proven in Section \ref{sec:elementaryRS}.)
 The main difficulty is  to 
understand the region near the phase boundary where $\beta$ is large and $h$ is moderate. At the heart of our analysis is the idea 
that one should study the problem along the level sets $\alpha=$ const. This provides us with a useful scaling relation between $
\beta$ and $h$ which allows us to probe the region up to the phase boundary even in the regime where $\beta,h$ are large.

\subsubsection{Dispersive estimates of Gaussians}

With the above discussion in mind, we see that we will need good control of physical quantities, e.g.\ $q_*$, when $
\alpha=$ const.\ as $\beta$ becomes large. Such quantities will generically be given by expectations of functions of the Auffinger-Chen process
(\ref{eq:AC-SDE}). We thus need estimates of such expectations as the variance and mean of the process, effectively given by $\beta^2$ and 
$h$ respectively, diverge. Our main tool will be a technique to develop such estimates, 
which we call \emph{dispersive estimates} of Gaussians.

Dispersive estimates of Gaussians study the asymptotics of Gaussian integrals of the form 
\[
\E f(X_t)
\]
with $X_t\sim\cN(\mu(t),\Sigma(t))\in \R^d$, where some of the eigenvalues of $\Sigma(t)$ diverge in the limit $t\to\infty$.   The main idea is best encapsulated
by the simple observation that as $\sigma\rightarrow\infty$,
\[
\sigma\frac{e^{-\frac{x^2}{2\sigma^2}}dx}{\sqrt{2\pi\sigma^2}}\rightarrow\frac{dx}{\sqrt{2\pi}}
\]
in the sense of distributions. The goal of dispersive estimates is to quantify the rate of convergence of
\[
\innerp{f,\sigma\frac{e^{-x^2/2\sigma}dx}{\sqrt{2\pi\sigma^2}}}\rightarrow \innerp{f,\frac{dx}{\sqrt{2\pi}}},
\]
under minimal assumptions on $f$. Notice that if there is a mean $h$ which is also diverging, then the curves
$(\sigma,h(\sigma))$ of the form 
\[
\sigma e^{-\frac{1}{2}\left(\frac{h}{\sigma}\right)^{2}}=\text{constant}
\]
are identified. For the problem at hand, these curves are asymptotically of the form $\alpha=const.$ 

The main technical difficulty arises in higher dimensions. Here we will have a one parameter family of such 
processes $(X^\theta_t)$ where the non-diverging eigenvalues of $\Sigma^\theta$ will vanish as $\theta\rightarrow0$. The goal 
will be to obtain estimates that are uniform in $\theta$. This is explained in Sect.\ \ref{sec:disp-gauss}.

\subsubsection{The long time argument}
We now outline the proof of Theorem \ref{thm:long-time}. 
We begin by manipulating
$G_{\delta_{q_*}}$ to show that if 
\[
\xi''\left(y\right)\E_{h}\left[u_{xx}^{2}\left(y,X_{y}\right)\right]-1\leq 0 \qquad \forall y\geq q_*,
\]
then the model is RS where $u$ is the Parisi PDE solution corresponding to $\delta_{q_*}$ and $X_t$ is the corresponding SDE solution. Using It\^o's lemma for $u_{xx}^2$, we then show, after further manipulation,
that it suffices to show that
\[
\E_h\left(4 \sech^4(X_t)-6\sech^6(X_t)\right)\leq -c\cdot O(1/\beta^2)
\]
uniformly for $t\in[q_*,1]$. That the expectation is negative to order $\beta^{-2}$  can be seen to be true at the point $t=q_*$ 
%in the limit as one
%``flows'' along the level set $\alpha=const.$ 
using the 1-d dispersive estimates in
Sect.\ \ref{sec:disp-gauss}. In order to obtain this uniform estimate, however, one needs the full power of 2-d dispersive
estimates. After
re-parameterizing the interval $t\in [q_*,1]$ in such a way that it remains constant along the level sets, 
 we  obtain this result using the estimates from Sect.\ \ref{sec:2d-disp}. This allows us to compare the problem along the level sets with a ``limiting'' problem at $\beta = \infty$. These results
are presented in Sect.\ \ref{sec:long-time}.

\subsubsection{The 2/3-AT line argument}\label{sec:23-ATline} 
We now outline the proof of Proposition \ref{prop:23-AT}. Instead of studying the sets $\alpha= const.$, we consider the region $\alpha\leq 2/3-o_{\beta}(1)$. 
Let $q_*$ be as in (\ref{eq:q-alpha-def}). 
By manipulating the expression for $G_{\delta_{q_*}}$, we show
in Lemma \ref{lem:23-reduction} that the region of phase space for which 
\[
 \xi''(y)(1-q_*) \leq 1 \quad \forall\,y\geq q_*
\]
is in RS. Using the 1-d dispersive estimates from Sect.\ \ref{sec:disp-gauss}, we identify the left hand side at $y=q_*$ as $\frac{3}{2} \alpha$ to
leading order in $\beta$. Thus in a region $\alpha\leq 2/3 - o_\beta(1)$, we can conclude Proposition \ref{prop:23-AT}.
These results are presented in Sect.\ \ref{sec:23-arg}.

\subsubsection{Outline of paper}
The remainder of the paper is organized as follows. In Sect.\ \ref{sec:First-Var} we present the derivation of the optimality
conditions. In Sect.\ \ref{sec:RSprelims}, we present preliminary results on the RS phase. In Sect.\ \ref{sec:disp-gauss},
we present the dispersive estimates. In Sect.\ \ref{sec:23-arg}, we present the ``2/3 argument''. In Sect.\ \ref{sec:long-time} we present
the ``long time'' argument. Sect.\ \ref{sec:appendix} is an appendix which contains various technical remarks.

\section{Background and relation to previous results}\label{sec:background}
The importance of (\ref{eq:parisi-formula}) comes from the study of mixed $p$-spin glass models, which are defined as follows.
Consider the hypercube $\Sigma_N=\{-1,1\}^N$ with Hamiltonian
\[
H_N(\sigma)=H_N'(\sigma)+h\sum_{i\leq N}\sigma_i
\]
where $H'_N$ is the centered Gaussian process on $\Sigma_N$ with covariance
\[
\E H_N'(\sigma^1)H_N'(\sigma^2)=N\xi((\sigma^1,\sigma^2)/N))
\]
where $\xi=\beta^2\xi_0$ and $\xi_0$ is the model. Define the corresponding partition function
\[
Z_N=\sum_{\sigma}e^{-H_N(\sigma)}
\]
and Gibbs measure
\[
G_N(\sigma)=\frac{e^{-H_N(\sigma)}}{Z_N}.
\]

It was predicted by Parisi and proven by  Talagrand \cite{TalPF} and Panchenko \cite{PanchPF14}
 that the thermodynamic limit of the intensive free energy of a mixed $p$-spin glass is
given by the  variational formula 
\begin{equation}\label{eq:parisi-formula-2}
\lim_{N\rightarrow\infty}\frac{1}{N}\log Z_N=\min_{\mu\in\Pr([0,1])}\cP(\mu;\xi_0,\beta,h) +\log 2 \qquad a.s.
\end{equation}
For a concise introduction to the proof of this formula, see \cite{Panch12,PanchSKbook}.

The minimizer of this formula is thought of as the order parameter of the system and is expected to be (related
to) the limiting mean measure of the overlap $R_{12}$ which is defined by
\[
R_{12}=\frac{(\sigma^1,\sigma^2)}{N}
\]
where $\sigma^1$ and $\sigma^2$ are drawn independently from $ G_N$ are called \emph{replica}. This conjectured relationship can be 
proven for a large class of models \cite{PanchSKbook}. For the particular case of the RS regime,
this is known in wider generality. For example, if at least two of the $\beta_p's$ are non-zero with even $p$,
this follows from an integration by parts \cite[Theorem 3.7]{PanchSKbook}, Jensen's inequality, and a version of Talagrand's
positivity principle \cite[Chap. 14.12]{TalBK11vol2}. In the case of SK, where our theorem applies, this is a consequence of the same integration by parts argument combined with \cite[Theorem 13.7.8]{TalBK11vol2}.

A remarkable property of spin glasses which is exhibited in these systems is the breakdown 
of strong law of large numbers type behavior at low temperature of the overlap. (For an example of this in a more classical probabilistic
setting see \cite{BABM05,BovKurk04,Bov12}.) At sufficiently high temperature and external field, one expects the 
 overlap distribution to be a Dirac mass at a point depending on $\beta$ and $h$ by the law of large numbers. 
At sufficiently low temperature and external field, however,
one expects the model to have unusual behavior: the limiting overlap distribution should be non-degenerate.
This breakdown of the law of large numbers is called ``the breaking of the replica symmetry''  in the physics 
literature \cite{MPV87}.

The RS to RSB transition is the most studied aspect of the phase diagram in the 
literature, and constitutes the bulk of this paper. In \cite{AT78}, de Almeida and Thouless performed a stability analysis of the Replica Symmetric solution of the Sherrington-Kirkpatrick (SK) model, and concluded that it
is valid in a region 
defined by the self-consistency conditions from Proposition \ref{prop:opt-conds}. In particular, they concluded that the phase boundary is given by the curve
$\alpha = 1$, which is now called the AT line. 

The rigorous study of this phase transition was  initiated by Aizenman, Leibowitz, and Ruelle in \cite{ALR87}
where they showed that at sufficiently high temperature ($\beta\leq 1$) and zero external field ($h=0$) the SK model is in the RS phase. 
In \cite{Ton02}, Toninelli showed that below the AT line, the model is RSB. Guerra \cite{Guer01}
and Guerra-Toninelli \cite{GuerTon02} showed that for a generalization of the SK model, the RS phase is 
contained in the region above the AT line by deriving a Hamilton-Jacobi equation for the free energy in certain coefficients of the Hamiltonian.  Talagrand showed \cite{Tal02} that the model is in the RS phase in a region whose boundary is conjectured
to coincide with the AT line, using a technique which is related to his technique of 2-d Guerra bounds. 
An alternative characterization of the RS phase by Talagrand \cite{TalBK11vol2} is the region where the minimum value of the Parisi functional among one- and two-atomic measures is the same. In light of \cite{AuffChen14}, this is a clear consequence of the strict convexity of the Parisi functional \cite{Chen15}.
An analogous question 
was studied for related models in \cite{Panch05}. All of these aforementioned results, with the exception of \cite{Chen15,Ton02}, were from the perspective of 
calculating the left hand side of (\ref{eq:parisi-formula-2}) and did not use the variational formulation since the equality (as well as the strict convexity) 
had yet to be rigorously proven.

The bulk of this paper is devoted to the study of the generalization of the AT line conjecture described above
 for general mixed p-spin glass models 
which reduces to the AT line problem for the cases mentioned above.  Theorem \ref{thm:long-time} 
shows that the AT line is exactly the phase boundary
at sufficiently low temperature for all models including, for example, the SK model. 
 Proposition \ref{prop:23-AT} shows that a large portion of the region above the AT line is Replica Symmetric at moderate
and high temperatures. In the case of the SK model, Theorem \ref{thm:SK-32} coupled with the above verifies RS in the region above the AT line less a bounded set which does not contain the critical point at zero external field.

\section{First variation formula and optimality conditions}\label{sec:First-Var}

We derive necessary and sufficient conditions for a measure $\mu$ to satisfy
\[
\min_{\Pr\left[0,1\right]}\,\cP=\cP\left(\mu\right).
\]
To do so, we make precise the idea that if $\mu_t$ is a path in $\Pr[0,1]$
starting at the minimizing measure $\mu_0=\mu$ then 
\[
\frac{d}{dt}^+ \cP(\mu_0) \geq 0.
\] 
After calculating the first-variation $\delta\cP$, we derive optimality conditions which characterize Parisi measures,
as well as self-consistency conditions for a model to be kRSB.
As an application, we prove that all models are RSB below the (generalized) AT line.

\subsection{First variation of the Parisi functional}

In the following, we work with the weak topology on $\cM\left[0,1\right]$, the
Radon measures. We metrize $\Pr\left[0,1\right]$ in the relative
topology using the metric
\[
d\left(\mu,\nu\right)=\int_{0}^{1}\abs{\mu\left[0,s\right]-\nu\left[0,s\right]}\, ds.
\]
Let us denote the duality pairing of $C\left[0,1\right]$ with $\cM\left[0,1\right]$
by 
\[
\left\langle \phi,\mu\right\rangle =\int_{\left[0,1\right]}\phi\, d\mu.
\]

\begin{definition}
We call a path of measures $\mu_{t}:\left[0,1\right]\to\cM\left[0,1\right]$
\emph{weakly differentiable }if the weak limit 
\[
\lim_{\epsilon\to0}\,\frac{1}{\epsilon}\left(\mu_{t+\epsilon}-\mu_{t}\right)=\dot{\mu}_{t}\in\cM
\]
exists for all $t\in\left(0,1\right)$, in which case we call $\dot{\mu}_{t}$
the \emph{weak derivative }of $\mu_{t}$. We call $\mu_{t}$ \emph{right
weakly differentiable at $t=0$ }if the weak right limit
%\[
%\lim_{\epsilon\to0^{+}}\,\frac{1}{\epsilon}\left(\mu_{\epsilon}-\mu_{0}\right)=\dot{\mu}_{0}\in\cM
%\]
exists.%, in which case we call $\dot{\mu}_{0}$ the \emph{right weak
%derivative} \emph{at $t=0$ }of $\mu_{t}$\emph{.}
\end{definition}
We now compute the first variation of $\cP$.  The motivation for the proof is as follows.
Since the Parisi functional is the sum of the Parisi PDE solution $u$ corresponding to $\mu$
and a linear term,
\[
\cP\left(\mu\right)=u\left(0,h\right)-L\left(\mu\right),
\]
the difficulty lies only with understanding $u$.
Consider the formal variation of the solution
$\delta u$ with respect to a variation in the measure $\delta\mu$.
Differentiating the Parisi PDE in $\mu$ we find
\[
\left(\partial_{t}+\cL_{t,\mu}\right)\delta u+\frac{\xi''}{2}\delta\mu u_{x}^{2}=0,
\]
where $\cL_{t,\mu}$ is the infinitesimal generator for the Auffinger-Chen SDE (\ref{eq:AC-SDE}) with measure $\mu$. 
One then recovers $\delta u$ using It\^o's lemma, and a rearrangement and integration by parts then
suggests the formula for $G$ from (\ref{eq:Gmu}).

We will need the following notation: if a function $f:\left[0,1\right]\to\R$
is right differentiable at $x\in[0,1)$ we denote the right derivative
as
\[
\frac{d}{dx}^{+}f\left(x\right)=\lim_{y\to x^{+}}\,\frac{f\left(y\right)-f\left(x\right)}{y-x}.
\]
%Note if $f$ is convex, then $f$ is right differentiable at every $x\in(0,1)$ and
%\[
%\frac{d}{dx}^{+}f\left(x\right)=\inf_{y\in(x,1]}\,\frac{f\left(y\right)-f\left(x\right)}{y-x}.
%\]
We denote the left derivative similarly by $\frac{d}{dx}^- f$.

\begin{lemma}\label{lem:right-derivP}
Let $\mu_{t}:\left[0,1\right]\to\Pr\left[0,1\right]$
be weakly differentiable. Then the function $t\to\cP\left(\mu_{t}\right)$
is differentiable, and 
\[
\frac{d}{dt}\cP\left(\mu_{t}\right)=\left\langle G_{\mu_{t}},\dot{\mu}_{t}\right\rangle 
\]
for all $t\in\left(0,1\right)$. If $\mu_{t}$ is right weakly differentiable
at $t=0$ then $t\to\cP\left(\mu_{t}\right)$ is right differentiable
at $t=0$ and 
\[
\frac{d}{dt}^{+}\cP\left(\mu_{0}\right)=\left\langle G_{\mu_{0}},\dot{\mu}_{0}\right\rangle .
\]
\end{lemma}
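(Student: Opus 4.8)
The plan is to differentiate $\cP(\mu_t) = u_{\mu_t}(0,h) - L(\mu_t)$ term by term, where $L(\mu) = \frac{1}{2}\int_0^1 \xi''(s)\mu[0,s]s\,ds$. The linear term is easy: since $t \mapsto \mu_t$ is weakly differentiable and $s \mapsto \xi''(s)s$ is continuous (hence a legitimate test function), one has $\frac{d}{dt}L(\mu_t) = \frac{1}{2}\int_0^1 \xi''(s)s\,\dot\mu_t[0,s]\,ds$, which after an integration by parts (moving from the distribution function to the measure) can be written as a pairing $\langle \ell, \dot\mu_t\rangle$ for an explicit continuous $\ell$. The real content is the nonlinear term $u_{\mu_t}(0,h)$, so most of the work goes there.

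For the PDE term, first I would establish the formal computation rigorously. Let $u = u_{\mu_t}$ and $v = u_{\mu_{t+\eps}}$ solve the Parisi PDE with measures $\mu_t$ and $\mu_{t+\eps}$ respectively. Subtracting the two PDEs, the difference $w_\eps = v - u$ satisfies a linear backward parabolic equation of the form $(\partial_s + \cL^{(\eps)}_s)w_\eps + \frac{\xi''}{2}(\mu_{t+\eps} - \mu_t)[0,s]\, (\partial_x v)^2 = 0$ with terminal data $w_\eps(1,\cdot) = 0$, where $\cL^{(\eps)}_s$ has a drift coefficient depending on $v$; alternatively one may symmetrize so the zeroth-order-in-$w_\eps$ forcing involves $u_x^2$ up to controlled errors. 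Applying It\^o's lemma to $w_\eps(s, X_s)$ along the Auffinger--Chen diffusion (using the properties of $u$, $u_x$, $u_{xx}$ summarized in the PDE and SDE appendices — boundedness of $u_x$, regularity of $u$) and taking expectations gives a Duhamel-type representation
\[
u_{\mu_{t+\eps}}(0,h) - u_{\mu_t}(0,h) = \int_0^1 \frac{\xi''(s)}{2}\,(\mu_{t+\eps}-\mu_t)[0,s]\,\E_{X_0=h}\!\left[u_x^2(s,X_s)\right]\,ds + o(\eps),
\]
where the error term collects the discrepancies between the coefficients of the two SDEs and between $v_x^2$ and $u_x^2$, controlled via a Gronwall estimate together with stability of the Parisi PDE solution in the measure (available from \cite{JagTobSC15}, the cited PDE properties). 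Dividing by $\eps$ and sending $\eps\to 0$, using weak differentiability of $\mu_t$ and continuity of $s\mapsto \xi''(s)\E_{X_0=h}[u_x^2(s,X_s)]$, yields $\frac{d}{dt}u_{\mu_t}(0,h) = \frac{1}{2}\int_0^1 \xi''(s)\,\dot\mu_t[0,s]\,\E_{X_0=h}[u_x^2(s,X_s)]\,ds$. Combining with the linear term and recognizing the definition \eqref{eq:Gmu} of $G_\mu$ (after the integration by parts that converts the $\dot\mu_t[0,s]$ integrand into a $d\dot\mu_t$ pairing) gives $\frac{d}{dt}\cP(\mu_t) = \langle G_{\mu_t}, \dot\mu_t\rangle$.

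For the endpoint $t=0$, the same argument applies verbatim using one-sided difference quotients $\eps \downarrow 0$: the Duhamel representation is an identity for each $\eps>0$, and right weak differentiability of $\mu_t$ at $0$ together with the continuity of the kernel $s\mapsto \xi''(s)\E_{X_0=h}[u_x^2(s,X_s)]$ is exactly what is needed to pass to the limit in the difference quotient. I would note that the kernel does not depend on $\eps$ in the limit, which is why boundedness of $u_x$ (uniform over the relevant measures) is the key input that makes the $o(\eps)$ error legitimate.

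The main obstacle is controlling the error term in the Duhamel expansion — specifically, showing that the difference between the two Auffinger--Chen diffusions (whose drifts involve $\mu_{t+\eps}[0,s]u_x$ versus $\mu_t[0,s](v_x$ or $u_x)$) contributes only $o(\eps)$ after integration. This requires a quantitative stability estimate for the Parisi PDE and its $x$-derivative with respect to the measure, in the metric $d(\mu,\nu) = \int_0^1|\mu[0,s]-\nu[0,s]|\,ds$, of the form $\|v_x - u_x\|_\infty \lesssim d(\mu_{t+\eps},\mu_t)$, which one expects to be $O(\eps)$ by weak differentiability (in fact one only needs $o(1)$ here since it multiplies another small factor). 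Assembling this cleanly — making sure the a priori bounds on $u, u_x, u_{xx}$ are uniform over the one-parameter family $\{\mu_t\}$ and that It\^o's lemma applies despite $u$ being only a weak solution with bounded weak derivatives — is where the care is needed; the rest is bookkeeping.
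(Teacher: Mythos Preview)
Your proposal is correct and follows essentially the same route as the paper: subtract the two Parisi PDEs to get a linear equation for $\delta=\tilde u-u$ with forcing $(\tilde\mu-\mu)\tilde u_x^2+\mu\,\delta_x^2$, represent $\delta(0,h)$ via It\^o/Feynman--Kac along the Auffinger--Chen diffusion, and use the stability bound $\|\tilde u_x-u_x\|_\infty\lesssim d(\tilde\mu,\mu)$ to show the remainder is $O(d^2)$; then weak differentiability gives $d(\mu_{t+\eps},\mu_t)=O(\eps)$, so the remainder is $o(\eps)$. The paper packages this slightly more cleanly by proving the uniform two-point estimate $|\cP(\tilde\mu)-\cP(\mu)-\langle G_\mu,\tilde\mu-\mu\rangle|\le C\,d(\tilde\mu,\mu)^2$ once and for all, from which both the interior and one-sided derivative statements drop out immediately.
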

\begin{proof}
We prove the result by establishing an inequality of the form
\[
\abs{\cP\left(\tilde{\mu}\right)-\cP\left(\mu\right)-\left\langle G_{\mu},\tilde{\mu}-\mu\right\rangle }\leq Cd^{2}\left(\tilde{\mu},\mu\right)
\]
for all $\tilde{\mu},\mu\in\Pr\left[0,1\right]$. In the following, $C(\xi)$ will denote a constant depending only on $\xi$ which may change between lines.

With this in mind, let $\tilde{\mu},\mu\in\Pr\left[0,1\right]$ and
$\tilde{u}$, $u$ be the corresponding Parisi PDE solutions. Let
$X_{s}$ solve the Auffinger-Chen SDE corresponding to $\mu$ and let $\cL_{t,\mu}$ be the infinitesimal generator. 
Then if $\delta=\tilde{u}-u$,
\[
\left(\partial_{t}+\cL_{t,\mu}\right)\delta=-\frac{\xi''}{2}\left(\left(\tilde{\mu}[0,t]-\mu[0,t]\right)\tilde{u}_{x}^{2}+\mu[0,t]\delta_{x}^{2}\right)
\]
weakly with final time data $\delta\left(1,x\right)=0$. By the regularity
given in  Sect.\ \ref{sec:appendixPDE} we have the representation 
\[
\delta\left(0,h\right)=\E_{X_{0}=h}\int_{0}^{1}\frac{\xi''\left(s\right)}{2}\left[\left(\tilde{\mu}\left[0,s\right]-\mu\left[0,s\right]\right)\tilde{u}_{x}^{2}\left(s,X_{s}\right)+\mu\left[0,s\right]\delta_{x}^{2}\left(s,X_{s}\right)\right]\, ds.
\]
Therefore by Fubini's theorem,
\begin{align*}
&\delta\left(0,h\right) - \innerp{\int_t^1 \frac{\xi''(s)}{2}\E_{X_0=h}\left(u_x^2(s,X_s)\right)\,ds,\tilde{\mu}-\mu} \\
&\quad=\E_{X_{0}=h}\int_{0}^{1}\frac{\xi''\left(s\right)}{2}\left[\left(\tilde{\mu}\left[0,s\right]-\mu\left[0,s\right]\right)
\left(\tilde{u}_{x}^{2}-u_x^2\right)\left(s,X_{s}\right)+\mu\left[0,s\right]\delta_{x}^{2}\left(s,X_{s}\right)\right]\, ds.
\end{align*}
Note that the results in  Sect.\ \ref{sec:appendixPDE} give that
\[
\norm{\tilde{u}_{x}^{2}-u_{x}^{2}}_{\infty}\vee\norm{\delta_{x}}_{\infty}\leq C\left(\xi\right)d\left(\tilde{\mu},\mu\right),
\]
so that by the triangle inequality we have that
\[
\abs{\delta\left(0,h\right) - \innerp{\int_t^1 \frac{\xi''(s)}{2}\E_{X_0=h}\left(u_x^2(s,X_s)\right)\,ds ,\tilde{\mu}-\mu}}\leq C\left(\xi\right)d^{2}\left(\tilde{\mu},\mu\right).
\]
Applying Fubini's theorem to the linear term in the Parisi functional, we have that
\[
L\left(\mu\right)=  \innerp{\int_{t}^{1}\frac{\xi''\left(s\right)}{2}s\, ds , \mu}.
\]
Therefore, by the definition of $G_{\mu}$ in (\ref{eq:Gmu}) and the linearity of $L$, we have the inequality
\[
\abs{\cP\left(\tilde{\mu}\right)-\cP\left(\mu\right)-\left\langle G_{\mu},\tilde{\mu}-\mu\right\rangle }\leq C\left(\xi\right)d^{2}\left(\tilde{\mu},\mu\right).
\]

Now we prove the claims. Given $\mu_{t}:\left[0,1\right]\to\Pr\left[0,1\right]$
we have that
\[
\abs{\frac{1}{\epsilon}\left(\cP\left(\mu_{t+\epsilon}\right)-\cP\left(\mu_{t}\right)\right)-\left\langle G_{\mu_{t}},\frac{\mu_{t+\epsilon}-\mu_{t}}{\epsilon}\right\rangle }\leq C\left(\xi\right)\frac{1}{\epsilon}d^{2}\left(\mu_{t+\epsilon},\mu_{t}\right)
\]
by the above. Since the weak convergence of $\left(\mu_{t+\epsilon}-\mu_{t}\right)/\epsilon$
in $\cM$ implies the bound
$d\left(\mu_{t+\epsilon},\mu_{t}\right)/\epsilon\leq C$,
we immediately conclude that
\[
\lim_{\epsilon\to0}\,\frac{1}{\epsilon}\left(\cP\left(\mu_{t+\epsilon}\right)-\cP\left(\mu_{t}\right)\right)=\left\langle G_{\mu_{t}},\dot{\mu}_{t}\right\rangle .
\]
The proof of right-differentiability at $t=0$ is the same.
\end{proof}

\begin{definition}
We call the function $G_{\mu}$ defined in (\ref{eq:Gmu}) the
\emph{first-variation of $\cP$ at $\mu$}, and we write
\[
\delta\cP\left(\mu\right)=G_{\mu}.
\]
If $\mu_{t}:\left[0,1\right]\to\Pr\left[0,1\right]$ is weakly differentiable (right weakly differentiable at $t=0$)
we refer to 
\[
\delta_{\dot{\mu}_{t}}\cP\left(\mu_{t}\right)=\left\langle G_{\mu_{t}},\dot{\mu}_{t}\right\rangle 
\]
as the (one-sided) \emph{variation of $\cP$ at $\mu_{t}$ in the direction of
$\dot{\mu}_{t}$. }
%If $\mu_{t}$ is right weakly differentiable at
%$t=0$ we refer to
%\[
%\delta_{\dot{\mu}_{0}}\cP\left(\mu_{0}\right)=\left\langle G_{\mu_{0}},\dot{\mu}_{0}\right\rangle 
%\]
%as the \emph{one-sided variation of $\cP$ at $\mu_{0}$ in the direction
%of $\dot{\mu}_{0}$.}
\end{definition}
We finish this section with a particularly useful example. 
\begin{example}
\label{ex:mixing}Let $\mu,\tilde{\mu}\in\Pr\left[0,1\right]$ and define
the mixing variation
\[
\mu_{\theta}=\mu+\theta\left(\tilde{\mu}-\mu\right),\quad\theta\in\left[0,1\right].
\]
The path $\mu_{\theta}$ is linear and therefore weakly differentiable
(right weakly differentiable at $\theta=0$), and the weak derivative (right
weak derivative) is given by
\[
\dot{\mu}_{\theta}=\tilde{\mu}-\mu,\quad \theta\in[0,1).
\]

\end{example}

\subsection{Optimality conditions}
We establish necessary conditions on the first-variation $\delta \cP$ at a minimizing measure. 
As the Parisi functional is convex, these conditions are also sufficient. 
\begin{lemma}
\label{lem:onesidedEL} The measure $\mu\in\Pr\left[0,1\right]$ minimizes the
Parisi functional if and only if for every right weakly differentiable
path $\mu_{t}$ with $\mu_{0}=\mu$, $\dot{\mu}_{0}=\sigma$ we have
\[
\delta_{\sigma}\cP\left(\mu\right)\geq0.
\]
Furthermore, $\mu$ minimizes the Parisi functional if and only if for every mixing variation
 
 \[
\mu_{\theta}=\mu + \theta\left(\tilde{\mu}-\mu\right),\ \theta\in\left[0,1\right],\ \tilde{\mu}\in\Pr[0,1],  
 \]
 we have
 \[
  \delta_{\tilde{\mu}-\mu}\cP\left(\mu\right)\geq0.
 \]
\end{lemma}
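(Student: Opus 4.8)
The plan is to prove Lemma~\ref{lem:onesidedEL} in two stages: first establishing the equivalence between minimality and nonnegativity of the one-sided variation along \emph{all} right weakly differentiable paths, and then showing that it suffices to test against the restricted class of mixing variations. The second stage is where convexity of $\cP$ enters essentially.

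For the forward direction of the first equivalence, suppose $\mu$ minimizes $\cP$ and let $\mu_t$ be a right weakly differentiable path with $\mu_0 = \mu$. Since each $\mu_t \in \Pr[0,1]$, the function $t \mapsto \cP(\mu_t)$ attains its minimum over $[0,1)$ at $t = 0$; hence its right derivative there, which exists and equals $\delta_\sigma\cP(\mu) = \langle G_{\mu_0}, \dot\mu_0\rangle$ by Lemma~\ref{lem:right-derivP}, must be $\geq 0$. For the converse, suppose $\delta_\sigma\cP(\mu) \geq 0$ for every such path. It is enough to compare $\cP(\mu)$ with $\cP(\tilde\mu)$ for an arbitrary $\tilde\mu \in \Pr[0,1]$; for this, use the mixing variation $\mu_\theta = \mu + \theta(\tilde\mu - \mu)$ from Example~\ref{ex:mixing}, which is right weakly differentiable at $\theta = 0$ with $\dot\mu_0 = \tilde\mu - \mu$. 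Here I would invoke the quadratic estimate established in the proof of Lemma~\ref{lem:right-derivP}, namely $|\cP(\tilde\mu) - \cP(\mu) - \langle G_\mu, \tilde\mu - \mu\rangle| \leq C(\xi) d^2(\tilde\mu,\mu)$, applied to $\mu_\theta$ in place of $\tilde\mu$: since $d(\mu_\theta,\mu) = \theta\, d(\tilde\mu,\mu)$, this gives
\[
\cP(\mu_\theta) \geq \cP(\mu) + \theta\,\langle G_\mu, \tilde\mu - \mu\rangle - C(\xi)\theta^2 d^2(\tilde\mu,\mu).
\]
By convexity of $\cP$ (the strict convexity proven in \cite{AuffChen14}, or just convexity), $\cP(\mu_\theta) \leq (1-\theta)\cP(\mu) + \theta\cP(\tilde\mu)$, so rearranging and dividing by $\theta > 0$ yields $\cP(\tilde\mu) - \cP(\mu) \geq \langle G_\mu, \tilde\mu - \mu\rangle - C(\xi)\theta d^2(\tilde\mu,\mu)$; letting $\theta \to 0^+$ gives $\cP(\tilde\mu) \geq \cP(\mu) + \delta_{\tilde\mu-\mu}\cP(\mu) \geq \cP(\mu)$. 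Hence $\mu$ is a minimizer. (Alternatively, one can argue without convexity: the restriction of $t\mapsto\cP(\mu_t)$ to $[0,1]$ along the mixing path is $C^1$ with nonnegative right derivative at $0$; combined with convexity of $\theta \mapsto \cP(\mu_\theta)$ this forces $\cP(\mu_1) \geq \cP(\mu_0)$.)

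For the second equivalence, one direction is immediate: if the one-sided variation is nonnegative along every right weakly differentiable path, then in particular it is nonnegative along every mixing variation, since those are a special case (Example~\ref{ex:mixing}). For the other direction, suppose $\delta_{\tilde\mu - \mu}\cP(\mu) \geq 0$ for every mixing variation. Then the converse argument of the previous paragraph applies verbatim — it only ever used mixing variations — and shows $\mu$ is a minimizer; and then, by the first equivalence, $\delta_\sigma\cP(\mu) \geq 0$ along \emph{all} right weakly differentiable paths. This closes the loop.

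I do not expect a genuine obstacle here, since the hard analytic work — the first-variation formula and the quadratic remainder bound — is already done in Lemma~\ref{lem:right-derivP}. The one point requiring a little care is making sure the comparison with a \emph{general} competitor $\tilde\mu$ is legitimately reduced to a mixing path and that one is allowed to pass to the limit $\theta \to 0^+$ inside the inequality; this is handled cleanly by the uniform quadratic estimate. A secondary bookkeeping point is that $\Pr[0,1]$ is convex, so $\mu_\theta$ indeed stays in $\Pr[0,1]$ for $\theta \in [0,1]$, which is what lets us use it as a competitor and invoke convexity of $\cP$ along it.
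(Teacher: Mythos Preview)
Your proof is correct and follows the same overall strategy as the paper---forward direction by definition of minimum plus Lemma~\ref{lem:right-derivP}, converse via the mixing variation and convexity of $\cP$. The only notable difference is in the converse: you combine the quadratic remainder estimate from the proof of Lemma~\ref{lem:right-derivP} (giving a lower bound on $\cP(\mu_\theta)$) with the convexity upper bound $\cP(\mu_\theta)\leq(1-\theta)\cP(\mu)+\theta\cP(\tilde\mu)$, then let $\theta\to0^+$. This works, but the quadratic estimate is redundant here. The paper argues by contrapositive and uses only convexity: since $\theta\mapsto\cP(\mu_\theta)$ is convex and right-differentiable at $0$, one has
\[
\delta_{\tilde\mu-\mu}\cP(\mu)=\frac{d}{d\theta}^{+}\cP(\mu_0)=\inf_{\theta\in(0,1]}\frac{\cP(\mu_\theta)-\cP(\mu)}{\theta}\leq\cP(\tilde\mu)-\cP(\mu),
\]
so if $\mu$ is not a minimizer there is a mixing direction with strictly negative one-sided variation. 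Your parenthetical ``alternative'' is essentially this observation, and is the cleaner route; the main argument you give front-loads an estimate that convexity already subsumes.
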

\begin{proof}
Suppose that $\mu$ minimizes $\cP$, i.e.,
\[
\cP\left(\tilde{\mu}\right)-\cP\left(\mu\right)\geq 0
\]
for all $\tilde{\mu}\in\Pr\left[0,1\right]$. Let $\mu_{t}$ be right
weakly differentiable with $\mu_{0}=\mu$ and $\dot{\mu}_{0}=\sigma$,
then by Lemma \ref{lem:right-derivP} and the definition of $\delta\cP$
we find that
\[
\delta_{\sigma}\cP\left(\mu\right)=\frac{d}{dt}^+\cP\left(\mu_0\right)\geq 0.
\]

On the other hand, if $\mu$ does not minimize $\cP$ there exists a distinct $\tilde{\mu}\in\Pr\left[0,1\right]$
with
\[
\cP\left(\tilde{\mu}\right)-\cP\left(\mu\right)<0.
\]
Consider the mixing variation $\mu_{\theta}$ defined above, and note that $\dot{\mu}_0 = \tilde{\mu}-\mu$.
By Lemma \ref{lem:right-derivP}, the function $\theta\to\cP\left(\mu_{\theta}\right)$ is right differentiable at $\theta=0$. By the convexity of $\cP$ and the linearity of this variation, we see that 
$\theta\to\cP\left(\mu_{\theta}\right)$ is convex. It immediately follows that
\[
\frac{d}{d\theta}^{+}\cP\left(\mu_0\right)=\inf_{\theta\in(0,1]}\,\frac{\cP\left(\mu_{\theta}\right)-\cP\left(\mu\right)}{\theta},
\]
 and hence that 
\[
\delta_{\tilde{\mu}-\mu}\cP\left(\mu\right)=\frac{d}{d\theta}^{+}\cP\left(\mu_{0}\right)\leq\cP\left(\tilde{\mu}\right)-\cP\left(\mu\right)<0.
\]
\end{proof}
We refer to the following as the optimality conditions which $\mu$ must satisfy to be the Parisi
measure.
\begin{corollary}[optimality conditions] \label{cor:optimality}
The measure $\mu\in\Pr\left[0,1\right]$ minimizes the Parisi functional if and
only if 
\[
\mu\left(\left\{ t\,:\,\text{min}\, G_{\mu}=G_{\mu}\left(t\right)\right\} \right)=1.
\]
\end{corollary}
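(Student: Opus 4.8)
The plan is to deduce the optimality conditions from Lemma~\ref{lem:onesidedEL} by testing the variational inequality against a well-chosen family of mixing variations. Recall from that lemma that $\mu$ minimizes $\cP$ if and only if $\innerp{G_\mu, \tilde\mu - \mu} \geq 0$ for every $\tilde\mu \in \Pr[0,1]$. Since $G_\mu \in C[0,1]$ (by the regularity results of Sect.~\ref{sec:appendixPDE}), the function $G_\mu$ attains its minimum $m := \min G_\mu$ on $[0,1]$, and the claim is that this inequality holds for all $\tilde\mu$ precisely when $\mu$ is supported on the (closed, nonempty) set $K := \{t : G_\mu(t) = m\}$.

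First I would prove the ``if'' direction: suppose $\mu(K) = 1$. Then $\innerp{G_\mu, \mu} = \int G_\mu\, d\mu = m$ since $G_\mu \equiv m$ on $K$ and $\mu$ charges only $K$. On the other hand, for any $\tilde\mu \in \Pr[0,1]$ we have $\innerp{G_\mu, \tilde\mu} = \int G_\mu\, d\tilde\mu \geq m$ because $G_\mu \geq m$ pointwise. Hence $\innerp{G_\mu, \tilde\mu - \mu} \geq m - m = 0$, and Lemma~\ref{lem:onesidedEL} gives that $\mu$ is a minimizer. Second, the ``only if'' direction: suppose $\mu$ is a minimizer but $\mu(K) < 1$. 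Then $\int G_\mu\, d\mu > m$ strictly, since $G_\mu > m$ on the positive-measure set $[0,1]\setminus K$ and $G_\mu \geq m$ everywhere; more carefully, pick $\delta > 0$ with $\mu(\{G_\mu \geq m + \delta\}) > 0$, which exists because $\mu([0,1]\setminus K) > 0$ and $[0,1]\setminus K = \bigcup_n \{G_\mu \geq m + 1/n\}$. Now choose $t_0 \in K$ (so $G_\mu(t_0) = m$) and take the mixing variation toward $\tilde\mu = \delta_{t_0}$: then $\innerp{G_\mu, \delta_{t_0} - \mu} = m - \int G_\mu\, d\mu < 0$, contradicting the optimality criterion of Lemma~\ref{lem:onesidedEL}.

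There is essentially no hard step here; this is a routine consequence of the preceding lemma combined with the continuity of $G_\mu$. The only point requiring a modicum of care is verifying that $\int G_\mu\, d\mu > \min G_\mu$ strictly when $\mu$ does not concentrate on the argmin set, which follows from the decomposition of $[0,1]\setminus K$ into the countable union of closed sets $\{G_\mu \geq m + 1/n\}$ and continuity from below of the measure $\mu$. One should also note at the outset that $K$ is closed (preimage of a point under the continuous map $G_\mu$) and nonempty (compactness of $[0,1]$), so that the statement $\mu(K) = 1$ is meaningful and a Dirac mass $\delta_{t_0}$ with $t_0 \in K$ exists to serve as the test measure.

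Finally, I would remark that this argument is exactly the standard first-order optimality characterization for minimizing a Gateaux-differentiable convex functional over a simplex of measures: the ``$\delta\cP$ is constant on the support and minimal there'' condition. The strict convexity of $\cP$ (cited from \cite{AuffChen14}, reproved in \cite{JagTobSC15}) is not needed for this corollary itself---it only guarantees that the minimizer, hence the set $K$ together with the constraint $\mu(K)=1$, is unique---but it is what makes the characterization genuinely useful downstream, since it pins down the Parisi measure. The self-consistency conditions \eqref{eq:self-consistency} of Proposition~\ref{prop:opt-conds} will then be extracted from \eqref{eq:EL-rel} by differentiating $G_\mu$: the first-order condition $G_\mu' = 0$ on the support yields $\E_h u_x^2(q, X_q) = q$, and the second-order condition $G_\mu'' \geq 0$ at interior support points (a local minimum of $G_\mu$) yields $\xi''(q)\E_h u_{xx}^2(q,X_q) \leq 1$, but that is the content of the subsequent section rather than of this corollary.
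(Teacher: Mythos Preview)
Your proof is correct and follows exactly the same approach as the paper: both reduce to Lemma~\ref{lem:onesidedEL} and then observe that the inequality $\innerp{G_\mu,\tilde\mu}\geq\innerp{G_\mu,\mu}$ for all $\tilde\mu\in\Pr[0,1]$ is equivalent to $\mu$ being supported on the argmin of $G_\mu$. The paper simply writes ``the claim follows immediately'' after displaying that inequality, whereas you spell out both directions explicitly---your added care about the strict inequality $\int G_\mu\,d\mu > m$ via the countable decomposition is fine but perhaps more than necessary, since $G_\mu$ is continuous and the complement of $K$ is open.
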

\begin{proof} 
By Lemma \ref{lem:onesidedEL} and the definition of $\delta P(\mu)$,
we see that $\mu\in\Pr[0,1]$ minimizes the Parisi functional if and only if for every $\tilde{\mu}\in\Pr[0,1]$ we have that
 \[
  0\leq \innerp{G_{\mu},\tilde{\mu}-\mu}=\innerp{G_{\mu},\tilde{\mu}} - \innerp{G_{\mu},\mu}.
 \]
 The claim follows immediately.
\end{proof}

As a result of Corollary \ref{cor:optimality}, we can prove that  
the class of mixing variations involving adding a single atom is enough to test for optimality.
\begin{corollary} \label{cor:add1atom}
 The measure $\mu\in\Pr\left[0,1\right]$ minimizes the Parisi functional if and only if for every mixing variation of the form
 $\mu_{\theta}=\mu + \theta\left(\delta_q-\mu\right)$, $\theta\in\left[0,1\right]$, $q\in[0,1]$, we have
 \[
  \delta_{\delta_q-\mu}\cP\left(\mu\right)\geq0.
 \]
\end{corollary}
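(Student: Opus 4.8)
The plan is to deduce this directly from the optimality conditions in Corollary \ref{cor:optimality}, by observing that the single-atom mixing variations already detect whether $G_\mu$ attains its minimum $\mu$-almost everywhere.

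For the forward implication I would simply specialize what we already know: if $\mu$ minimizes $\cP$, then Lemma \ref{lem:onesidedEL} gives $\delta_{\tilde{\mu}-\mu}\cP(\mu)\geq 0$ for every $\tilde{\mu}\in\Pr[0,1]$, and taking $\tilde{\mu}=\delta_q$ yields the asserted inequality for each $q\in[0,1]$.

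For the reverse implication, the key computation is that for the mixing variation $\mu_\theta=\mu+\theta(\delta_q-\mu)$ one has, by the definition of $\delta\cP$ and the linearity of the duality pairing,
\[
\delta_{\delta_q-\mu}\cP(\mu)=\innerp{G_\mu,\delta_q-\mu}=G_\mu(q)-\innerp{G_\mu,\mu}.
\]
So the hypothesis says $G_\mu(q)\geq \innerp{G_\mu,\mu}$ for all $q\in[0,1]$. Since $G_\mu$ is continuous on the compact interval $[0,1]$ (it is the definite integral with variable lower endpoint of a bounded function, by the regularity recalled in Sect.\ \ref{sec:appendixPDE}), the minimum $\min G_\mu$ is attained, and taking the infimum over $q$ gives $\min G_\mu\geq \innerp{G_\mu,\mu}=\int_{[0,1]}G_\mu\,d\mu$. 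On the other hand $G_\mu\geq \min G_\mu$ pointwise, so integrating against $\mu$ gives $\int G_\mu\,d\mu\geq \min G_\mu$. Hence $\int (G_\mu-\min G_\mu)\,d\mu=0$ with a nonnegative integrand, which forces $G_\mu=\min G_\mu$ $\mu$-a.e., i.e.\ $\mu(\{t:G_\mu(t)=\min G_\mu\})=1$. By Corollary \ref{cor:optimality}, $\mu$ minimizes $\cP$.

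There is no genuine obstacle here; the only point requiring a moment's care is that $\min G_\mu$ is actually attained, so that ``$G_\mu(q)\geq\innerp{G_\mu,\mu}$ for all $q$'' upgrades to ``$\min G_\mu\geq\innerp{G_\mu,\mu}$'' — and this is immediate from continuity of $G_\mu$. Everything else reduces to the elementary fact that a nonnegative function with zero integral against a probability measure vanishes almost everywhere with respect to that measure.
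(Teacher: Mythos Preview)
Your proof is correct and follows essentially the same approach as the paper: reduce the one-atom condition to $G_\mu(q)\geq\innerp{G_\mu,\mu}$ for all $q$, and show this is equivalent to the characterization in Corollary \ref{cor:optimality}. The paper's proof simply asserts this equivalence in one line, whereas you spell out the standard argument (sandwich $\min G_\mu$ and $\innerp{G_\mu,\mu}$, then use that a nonnegative integrand with zero integral vanishes a.e.); your added remark about continuity of $G_\mu$ ensuring the minimum is attained is a harmless bit of extra care.
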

\begin{proof}
 By the definition of $\delta P(\mu)$, the claim is that $\mu$ minimizes if and only if
 \[
  0\leq \innerp{G_\mu,\delta_q - \mu} = G_\mu(q) - \innerp{G_\mu,\mu}\quad \forall\,q\in[0,1],
 \]
and this is equivalent to the statement given in Corollary \ref{cor:optimality}.
\end{proof}

%Recall the notion of 1-stability from \prettyref{defn:1stable}. 
%\begin{corollary} The Parisi functional is 1-stable.
%\end{corollary}
%\begin{proof}
%If $\mu$ does not minimize the Parisi functional, then by Corollary \ref{cor:add1atom} there is a mixing variation of the form
% $\mu_{\theta}=\mu + \theta\left(\delta_q-\mu\right)$, $\theta\in\left[0,1\right]$, $q\in[0,1]$ such that 
% \[
% \frac{d}{d\theta}^+ \cP \left(\mu_{0} \right) = \delta_{\delta_q-\mu}\cP\left(\mu\right)<0.
%\]
%It immediately follows that 
% \[
% \cP (\mu_{\theta}) < \cP(\mu)
% \]
% for $\theta$ small enough, and hence $\cP$ is 1-stable.
%\end{proof}

\subsection{Self-consistency conditions for minimizers}
We give two preliminary results on the support of the minimizing measure. Then we derive self-consistency conditions for Parisi measures.

\begin{lemma}\label{lem:1notsupp}
$1$ is not in the support of the minimizer.\end{lemma}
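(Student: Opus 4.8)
The plan is to use the optimality condition from Corollary \ref{cor:optimality} together with the explicit formula \eqref{eq:Gmu} for $G_\mu$. Suppose for contradiction that $1 \in \text{supp}\,\mu$ for the Parisi measure $\mu$. Since $G_\mu(1) = 0$ from the definition \eqref{eq:Gmu} (the integral is over the empty interval), the optimality condition forces $\min G_\mu = G_\mu(1) = 0$, and in particular $G_\mu \geq 0$ on all of $[0,1]$. The strategy is to examine the behavior of $G_\mu$ near $t = 1$ and derive a contradiction by showing $G_\mu$ must dip strictly below zero just to the left of $1$.

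First I would compute $G_\mu'(t) = -\frac{\xi''(t)}{2}\left(\E_h[u_x^2(t,X_t)] - t\right)$. The sign of $G_\mu'$ near $t=1$ is therefore governed by the sign of $\E_h[u_x^2(t,X_t)] - t$. At $t = 1$, the terminal condition $u(1,x) = \log\cosh x$ gives $u_x(1,x) = \tanh x$, so $u_x^2(1,x) = \tanh^2 x < 1$, and hence $\E_h[u_x^2(1,X_1)] < 1$. By continuity (using the regularity of $u$ and the SDE solution summarized in Sect.\ \ref{sec:appendixPDE} and Sect.\ \ref{sec:appendix}), there is a left-neighborhood $(1-\delta, 1]$ on which $\E_h[u_x^2(t,X_t)] - t < 0$, so that $G_\mu'(t) < 0$ there (using $\xi''(t) > 0$ for $t$ near $1$, which holds since $\xi_0$ is a nonconstant power series with nonnegative coefficients). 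Thus $G_\mu$ is strictly decreasing on $(1-\delta, 1]$, which means $G_\mu(t) > G_\mu(1) = 0$ for $t \in (1-\delta, 1)$. That alone does not contradict $G_\mu \geq 0$; rather, the contradiction comes from the optimality condition applied at the atom at $1$: since $1 \in \text{supp}\,\mu$ and $G_\mu(1) = 0 = \min G_\mu$, fine — but I then need to rule this out another way.

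The cleaner route: since $1 \in \text{supp}\,\mu$, for every $\delta > 0$ the measure $\mu$ assigns positive mass to $(1-\delta, 1]$. But we just showed $G_\mu(t) > \min G_\mu$ on $(1-\delta', 1)$ for some $\delta' > 0$, so $\mu$ must put all of its mass in $(1-\delta',1]$ that lies at optimal points into the single point $\{1\}$ — i.e., $\mu(\{1\}) > 0$. Now I would exploit this atom directly: an atom of $\mu$ at $1$ changes $\mu[0,t]$ only at the single point $t=1$, so it does not affect $u_\mu$ or the SDE on $[0,1)$; one checks that moving this atom slightly inward strictly decreases $\cP$. Concretely, using the self-consistency-type computation, a positive atom at $q=1$ requires $\E_h u_x^2(1,X_1) = 1$ by the first equation of \eqref{eq:self-consistency} (this is the $G_\mu' = 0$ condition at an interior-of-support point, adapted to the endpoint), but $\E_h u_x^2(1,X_1) = \E_h \tanh^2(X_1) < 1$ strictly, a contradiction. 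The main obstacle is making the endpoint version of the self-consistency/stationarity argument rigorous — i.e., justifying that at a boundary atom $q=1$ the condition $\E_h u_x^2(q,X_q) = q$ still must hold (or equivalently that $G_\mu'(1^-) = 0$ is forced), since Corollary \ref{cor:optimality} only gives $G_\mu(1) = \min G_\mu$ and not a derivative condition at the boundary. I would resolve this by the monotonicity argument above: $G_\mu$ strictly decreasing on $(1-\delta',1]$ already shows $1$ is a strict local minimizer approached only from the left, so no mass of $\mu$ near $1$ other than possibly at $\{1\}$ itself can be optimal; then a short separate argument (perturbing the atom at $1$ to $1-\eps$) shows keeping mass exactly at $1$ is suboptimal because $G_\mu$ is strictly smaller infinitesimally to the left — wait, it is strictly larger to the left. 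Let me instead note $G_\mu(1)=0$ and, since $G_\mu \ge 0$ everywhere with $G_\mu$ strictly decreasing near $1$, we need $\min G_\mu = 0$ attained at $1$; but then consider any other point $q_0 \in \text{supp}\,\mu$ with $q_0 < 1$: there $G_\mu(q_0) = 0$ too, yet $G_\mu > 0$ on $(1-\delta',1)$, which is consistent. The genuine contradiction must use that $G_\mu(1)=0$ combined with $G_\mu' < 0$ near $1$ implies $G_\mu > 0$ immediately left of $1$, while the atom at $1$ contributes to $L(\mu)$ a term $\tfrac12\int \xi''(s)\mu[0,s]s\,ds$ whose derivative in the atom location is strictly negative at $s=1$ — so shifting the atom left strictly decreases $\cP$, contradicting minimality. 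I expect formalizing this last perturbation (an explicit mixing variation $\mu_\theta = \mu + \theta(\delta_{1-\eps} - \mu)$ and showing $\delta_{\delta_{1-\eps}-\mu}\cP(\mu) < 0$ via Lemma \ref{lem:right-derivP}, which amounts to $G_\mu(1-\eps) < \innerp{G_\mu,\mu} = G_\mu(1) = 0$ — impossible since $G_\mu \ge 0$) to be the subtle point, and it shows the hypothesis $1 \in \text{supp}\,\mu$ is untenable once one observes $G_\mu(1-\eps)>0$; hence the resolution is simply that $G_\mu(1-\eps) > 0 = \min G_\mu$ for small $\eps$, so no neighborhood of $1$ can support $\mu$ and the only candidate is the atom at $1$ itself, which the perturbation argument then excludes.
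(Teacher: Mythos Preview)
You have the right setup and the right derivative formula, but you make a sign error that derails the whole argument. From
\[
G_\mu'(t) = -\frac{\xi''(t)}{2}\left(\E_h[u_x^2(t,X_t)] - t\right)
\]
and $\E_h[u_x^2(t,X_t)] - t < 0$ near $t=1$, you conclude $G_\mu'(t) < 0$. In fact the product of two negatives gives $G_\mu'(t) > 0$ on $(1-\delta,1]$. With the correct sign, $G_\mu$ is strictly \emph{increasing} near $1$, so $G_\mu(t) < G_\mu(1)$ for $t$ just below $1$, which immediately contradicts $G_\mu(1) = \min G_\mu$. This is exactly the paper's proof: $1\in\text{supp}\,\mu$ forces $\frac{d}{dt}^- G_\mu(1)\le 0$, but the computation gives $\frac{d}{dt}^- G_\mu(1) = -\frac{\xi''(1)}{2}(\E_h\tanh^2(X_1)-1) > 0$.

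All of your subsequent meandering --- the attempt to isolate an atom at $1$, the perturbation $\mu_\theta = \mu + \theta(\delta_{1-\eps}-\mu)$, the self-consistency argument at the endpoint --- is a symptom of this sign error. You correctly sense that your conclusion ``$G_\mu(t) > G_\mu(1)$ to the left'' is not a contradiction, and you never manage to close the loop (you even note the perturbation inequality $G_\mu(1-\eps) < 0$ is ``impossible since $G_\mu\ge 0$'', which means that perturbation argument fails rather than succeeds). Once the sign is corrected, none of this is needed: the contradiction is one line.
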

\begin{proof}
If $1$ is in the support, then $G_{\mu}\left(1\right)=\min\, G_{\mu}$ by Corollary \ref{cor:optimality}
so that $\frac{d}{dt}^- G_{\mu}\left(1\right)\leq0$. By the definition of $G_{\mu}$ in (\ref{eq:Gmu}), we find that
\[
\frac{\xi^{\prime\prime}(1)}{2}(\E_{h}\tanh^{2}(X_{1})-1)\geq0,
\]
which is absurd.
\end{proof}
\begin{lemma}\label{lem:0notsupp}
If $h\neq0$, then $0$ is not in the support of the minimizer. In fact, if $\mu$ is minimizing 
\[
u_{x}^{2}\left(0,h\right)\leq\inf\text{supp}\,\mu.
\]
\end{lemma}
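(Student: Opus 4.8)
The plan is to exploit the optimality condition from Corollary \ref{cor:optimality} in the same spirit as Lemma \ref{lem:1notsupp}, but now working at the left endpoint $t=0$ and extracting more than mere membership. Suppose $q_0 = \inf\text{supp}\,\mu$. By Corollary \ref{cor:optimality}, $G_\mu(q_0) = \min G_\mu$, and since $G_\mu$ is $C^1$ on $[0,1)$ (by the regularity in Sect.\ \ref{sec:appendixPDE} and the definition (\ref{eq:Gmu}), $G_\mu$ is an integral of a continuous integrand), we must have $\frac{d}{dt}G_\mu(q_0) \geq 0$ if $q_0 > 0$, i.e. $\frac{\xi''(q_0)}{2}(\E_h u_x^2(q_0,X_{q_0}) - q_0) \leq 0$, which would already give $\E_h u_x^2(q_0,X_{q_0})\leq q_0$. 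To get the full claim $u_x^2(0,h)\leq q_0$ I would instead integrate: since $\mu[0,s]=0$ for $s\in[0,q_0)$, the Parisi PDE on $(0,q_0)$ reduces to the linear backward heat equation $\partial_t u + \frac{\xi''}{2}\partial_{xx}u = 0$, and correspondingly the SDE (\ref{eq:AC-SDE}) has no drift on $[0,q_0)$, so $X_s = h + \int_0^s \sqrt{\xi''(r)}\,dW_r$ is a (time-changed) Brownian motion there.

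The key step is then a martingale/convexity observation: on $[0,q_0)$ the process $u_x(s,X_s)$ is a martingale. Indeed, differentiating the (linear) PDE in $x$ gives $\partial_t u_x + \frac{\xi''}{2}\partial_{xx}u_x = 0$, so by It\^o's lemma $du_x(s,X_s) = \sqrt{\xi''(s)}\,u_{xx}(s,X_s)\,dW_s$ has no $ds$ term on this interval. Hence $s\mapsto \E_h[u_x^2(s,X_s)]$ is nondecreasing on $[0,q_0]$ (the square of a martingale is a submartingale), and in particular
\[
u_x^2(0,h) = \E_h[u_x^2(0,X_0)] \leq \E_h[u_x^2(q_0,X_{q_0})].
\]
Combining this with the first-order inequality $\E_h[u_x^2(q_0,X_{q_0})]\leq q_0$ obtained above yields $u_x^2(0,h)\leq q_0 = \inf\text{supp}\,\mu$, which is the "in fact" statement. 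The assertion that $0\notin\text{supp}\,\mu$ when $h\neq 0$ then follows: if $q_0=0$ we would need $u_x^2(0,h)\leq 0$, i.e.\ $u_x(0,h)=0$; but I would rule this out by noting that $u_x(0,\cdot)$ is strictly monotone (from strict convexity / the strong maximum principle for $u_{xx}$, using that the terminal data $\log\cosh$ has $u_{xx}(1,\cdot)=\sech^2>0$), so $u_x(0,h)=0$ forces $h=0$, a contradiction. Alternatively, $u_x(0,h) = \E_h[\tanh(X_1)]$ up to the drift correction, and by symmetry considerations this vanishes only at $h=0$.

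The main obstacle I anticipate is the edge case $q_0 = 0$ in the derivative argument: the inequality $\frac{d}{dt}G_\mu(q_0)\geq 0$ as stated requires $q_0$ to be an interior local minimum, whereas at $q_0=0$ one only gets $\frac{d}{dt}^+G_\mu(0)\geq 0$, which is still enough since the formula for $G_\mu'$ is continuous up to $t=0$; so in fact the argument goes through uniformly, giving $\E_h u_x^2(q_0,X_{q_0})\leq q_0$ for any $q_0=\inf\text{supp}\,\mu\in[0,1)$, and then the martingale step and the strict monotonicity of $u_x(0,\cdot)$ close both halves of the lemma. A secondary point to handle carefully is the justification that $u_x(s,X_s)$ is genuinely a martingale and not merely a local martingale on $[0,q_0)$ — but this follows from the uniform bound $\norm{u_x}_\infty\leq 1$ (since $\abs{u_x(1,\cdot)}=\abs{\tanh}\leq 1$ and this $L^\infty$ bound propagates backward, as recorded in Sect.\ \ref{sec:appendixPDE}), so $u_x(s,X_s)$ is bounded and the submartingale property of its square is immediate.
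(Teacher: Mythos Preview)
Your approach is essentially the same as the paper's: use Corollary \ref{cor:optimality} to see that $q_0=\inf\text{supp}\,\mu$ minimizes $G_\mu$, extract from this that $\E_h u_x^2$ is bounded by $q_0$ near $q_0$, propagate down to $0$ via the monotonicity of $s\mapsto\E_h u_x^2(s,X_s)$ (your submartingale argument; the paper computes the derivative via It\^o's lemma, which is the same thing), and finally invoke $u_{xx}>0$ and even symmetry to rule out $u_x(0,h)=0$ for $h\neq 0$.

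There is one small but genuine gap. You extract $\E_h u_x^2(q_0,X_{q_0})\leq q_0$ from the pointwise condition $G_\mu'(q_0)\geq 0$, and then claim this ``goes through uniformly'' at $q_0=0$ via the right derivative. But $G_\mu'(q_0)=-\tfrac{\xi''(q_0)}{2}\bigl(\E_h u_x^2(q_0,X_{q_0})-q_0\bigr)$, so when $q_0=0$ and $\xi''(0)=0$ (i.e.\ $\beta_2=0$, as in any pure $p$-spin model with $p\geq 3$) the inequality $G_\mu'(0^+)\geq 0$ is vacuous and you cannot conclude $u_x^2(0,h)\leq 0$. The paper sidesteps this by using the integrated inequality $G_\mu(y)-G_\mu(y+\epsilon)\leq 0$ and a mean value argument to produce a point $t\in(y,y+\epsilon)$ with $\E_h u_x^2(t,X_t)\leq t$; since $\xi''(s)>0$ for $s>0$ this works regardless of whether $\xi''(0)$ vanishes, and then your monotonicity step finishes as before with $u_x^2(0,h)\leq y+\epsilon\to y$. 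This is an easy fix, but without it the edge case is not covered.
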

\begin{proof}
Given the inequality, we observe that  $h\neq0$ implies $0\notin\text{supp}\,\mu$. Indeed, by even symmetry of 
$u(t,\cdot)$ we have 
that $u_x(0,0)=0$, and by the results of Sect.\ \ref{sec:appendixPDE} we have that $u_{xx}>0$.

Now we prove the inequality. Call $y=\inf\text{supp}\,\mu$. By Corollary \ref{cor:optimality}, we have that
$G_{\mu}\left(y\right)=\min\, G_{\mu}$, and therefore
\[
G_{\mu}\left(y\right)-G_{\mu}\left(y+\epsilon\right)\leq0
\]
for sufficiently small $\epsilon > 0 $. By the definition
of $G_{\mu}$, 
\[
\fint_{y}^{y+\epsilon}\frac{\xi''\left(s\right)}{2}\left(\E_{h}u_{x}^{2}\left(s,X_{s}\right)-s\right)\, ds\leq0,
\]
hence there exists $t\in\left(y,y+\epsilon\right)$ with
\[
\E_{h}u_{x}^{2}\left(t,X_{t}\right)\leq t.
\]
Using It\^{o} calculus (see Sect.\ \ref{sec:AC-SDE}), we have that
\[
\frac{d}{ds}\E_{h}u_{x}^{2}\left(s,X_{s}\right)=\xi''\left(s\right)\E_{h}u_{xx}^{2}\left(s,X_{s}\right)\geq0,
\]
hence
\[
u_{x}^{2}\left(0,h\right)\leq y+\epsilon
\]
for sufficiently small $\epsilon>0$. This proves the result.\end{proof}

Using these results, we can derive the following set of self-consistency conditions that Parisi measures must satisfy.
Note by the definition of $G_\mu$ in (\ref{eq:Gmu}) and It\^{o} calculus (see Sect.\ \ref{sec:AC-SDE})
we know that $G_{\mu}\in C^2$.

\begin{corollary}[Consistency conditions]\label{cor:consistency}
If $\mu$ minimizes the Parisi functional, 
\begin{align*}
 G_{\mu}'\left(y\right)=0 \qquad
 G_{\mu}''\left(y\right)\geq0
\end{align*}
and
\begin{align*}
\E_{h}\left[u_{x}^{2}\left(y,X_{y}\right)\right]  =y \qquad
\xi''\left(y\right)\E_{h}\left[u_{xx}^{2}\left(y,X_{y}\right)\right]  \leq1
\end{align*}
for all $y\in\text{supp}\,\mu$. (At $y=0$ the derivatives are understood to be right-derivatives.) \end{corollary}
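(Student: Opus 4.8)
The plan is to derive the four conditions in two stages: first the pair of derivative conditions $G_\mu'(y)=0$ and $G_\mu''(y)\ge 0$ at every $y\in\text{supp}\,\mu$, and then translate these into the statements about $\E_h u_x^2$ and $\xi''\E_h u_{xx}^2$ using the explicit form of $G_\mu$ in \eqref{eq:Gmu} together with It\^o calculus for the Auffinger--Chen process.

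For the first stage, fix $y\in\text{supp}\,\mu$. By Lemmas \ref{lem:1notsupp} and \ref{lem:0notsupp}, if $h\ne 0$ then $y\in(0,1)$ is an interior point; if $h=0$ one must allow $y=0$ and interpret the derivatives as right-derivatives, as the statement indicates. By Corollary \ref{cor:optimality}, $G_\mu(y)=\min G_\mu$, so $y$ is a (one-sided, if $y=0$) local minimum of the $C^2$ function $G_\mu$. The first-order condition gives $G_\mu'(y)=0$ (a genuine two-sided statement at interior points, a right-derivative statement at $y=0$), and the second-order condition at a minimum gives $G_\mu''(y)\ge 0$. The only subtlety is justifying that these elementary calculus facts apply at a point of the \emph{support} rather than an isolated minimizer: since $y\in\text{supp}\,\mu$, every neighborhood of $y$ meets $\{G_\mu=\min G_\mu\}$, but in fact one only needs that $y$ itself is a minimizer, which Corollary \ref{cor:optimality} already supplies, so the standard argument goes through verbatim.

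For the second stage, differentiate $G_\mu(t)=\int_t^1 \tfrac{\xi''(s)}{2}\bigl(\E_{h}u_x^2(s,X_s)-s\bigr)\,ds$ directly. The fundamental theorem of calculus gives
\[
G_\mu'(t) = -\frac{\xi''(t)}{2}\bigl(\E_{h}u_x^2(t,X_t)-t\bigr),
\]
and since $\xi''(y)>0$ for $y\in\text{supp}\,\mu$ (note $y>0$ when $h\ne0$, and when $h=0$ and $y=0$ one uses that $\xi_0''(0)\ge 0$ together with a limiting/continuity argument, or simply that $\E_h u_x^2(0,h)=0=y$ holds trivially by $u_x(0,0)=0$ as in the proof of Lemma \ref{lem:0notsupp}), $G_\mu'(y)=0$ forces $\E_h u_x^2(y,X_y)=y$. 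Differentiating once more and using the It\^o-calculus identity $\tfrac{d}{ds}\E_h u_x^2(s,X_s)=\xi''(s)\E_h u_{xx}^2(s,X_s)$ from Sect.\ \ref{sec:AC-SDE} (already invoked in Lemma \ref{lem:0notsupp}), one computes
\[
G_\mu''(t) = -\frac{\xi'''(t)}{2}\bigl(\E_h u_x^2(t,X_t)-t\bigr) - \frac{\xi''(t)}{2}\bigl(\xi''(t)\E_h u_{xx}^2(t,X_t)-1\bigr).
\]
Evaluating at $y$, the first term vanishes because $\E_h u_x^2(y,X_y)=y$, so $G_\mu''(y)\ge 0$ becomes $-\tfrac{\xi''(y)}{2}\bigl(\xi''(y)\E_h u_{xx}^2(y,X_y)-1\bigr)\ge 0$, and dividing by the positive quantity $\xi''(y)/2$ yields $\xi''(y)\E_h u_{xx}^2(y,X_y)\le 1$.

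The main obstacle is not any single hard estimate but rather the careful bookkeeping of the $h=0$, $y=0$ boundary case: one must check that the one-sided derivative identities for $G_\mu$ remain valid there, that $\xi''(0)$ (possibly zero for non-uniformly parabolic models) does not render the division illegitimate, and that the claimed equality $\E_h u_x^2(0,h)=0$ is consistent. In the degenerate case $\xi_0''(0)=0$ one handles the conditions at $y=0$ by continuity, taking limits $y\to 0^+$ along the support or observing the conditions hold trivially; this is the only place requiring genuine care, and it is exactly the kind of edge case the parenthetical remark in the statement is flagging. Everything else is a direct consequence of Corollary \ref{cor:optimality}, the regularity $G_\mu\in C^2$ noted just before the statement, and the It\^o identity already established.
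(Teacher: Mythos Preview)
Your proof is correct and follows essentially the same approach as the paper: use Corollary \ref{cor:optimality} to get that every $y\in\text{supp}\,\mu$ is a global minimizer of $G_\mu$, compute $G_\mu'$ and $G_\mu''$ explicitly via the It\^o identity, and handle the endpoint $y=0$ (where $h=0$ by Lemma \ref{lem:0notsupp}) by invoking $u_x(0,0)=0$ directly. One small expositional point: at $y=0$ the first-order condition at an endpoint minimum only gives $G_\mu'(0^+)\ge 0$, not $=0$, so the equality must be obtained \emph{first} from $u_x(0,0)=0$ (as the paper does) before one can conclude $G_\mu''(0^+)\ge 0$; you do supply this ingredient, but the logical order in your ``first stage'' should be reversed at the boundary.
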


\begin{remark} This result can be used to generate self-consistency conditions for a model to be kRSB.
As the solution to the Parisi PDE can be described explicitly in the case of k-atomic measures via the Cole-Hopf transformation, 
in principle these conditions can be checked. We discuss the simplest case $k=1$ in greater detail in Sect.\ \ref{sec:GenATLine}.
\end{remark}

\begin{proof}
Let $\mu$ be minimizing and recall that by Corollary \ref{cor:optimality}, we know $G_\mu$ is minimized on $\text{supp}\,{\mu}$. Using It\^{o} calculus
(see Sect.\ \ref{sec:AC-SDE}) we have that
\begin{align*}
G_{\mu}'\left(y\right) & =-\frac{\xi''\left(y\right)}{2}\left(\E_{h}\left[u_{x}^{2}\left(y,X_{y}\right)\right]-y\right) \\
G_{\mu}''\left(y\right)& =-\frac{\xi'''\left(y\right)}{2}\left(\E_{h}\left[u_{x}^{2}\left(y,X_{y}\right)\right]-y\right)-\frac{\xi''\left(y\right)}{2}\left(\xi''\left(y\right)\E_{h}\left[u_{xx}^{2}\left(y,X_{y}\right)\right]-1\right)
\end{align*}
for all $y\in[0,1]$, where in the cases $y=0$ and $y=1$ we understand the derivatives as right and left derivatives respectively. Therefore
the claims follow immediately at every $y\in\text{supp}\,\mu\cap(0,1)$ and we
only need to check the cases $y=1\in\text{supp}\,\mu$ and $y=0\in\text{supp}\,\mu$.  

By Lemma \ref{lem:1notsupp} the case $y=1$ never occurs. Let $y=0\in \text{supp}\,\mu$, then by Lemma \ref{lem:0notsupp} we have that $h=0$. Therefore
\[
\frac{d}{dy}^{+}G_{\mu}\left(0\right)=-\frac{\xi''\left(0\right)}{2}u_{x}^{2}\left(0,0\right)=0
\]
as desired, while 
\[
\frac{d}{dy}^{+}\left(\frac{d}{dy}^{+}G_{\mu}\right)\left(0\right) =-\frac{\xi''\left(0\right)}{2}\left(\xi''\left(0\right)u_{xx}^{2}\left(0,0\right)-1\right).
\]
Since $G_\mu(0)=\min\,G_\mu$ and $\frac{d}{dy}^{+}G_{\mu}(0)=0$ we have that $\frac{d}{dy}^{+}\left(\frac{d}{dy}^{+}G_{\mu}\right)(0)\geq 0$. Therefore,
\[
\xi''\left(0\right)u_{xx}^{2}\left(0,0\right)\leq1.
\]
\end{proof}
\subsection{Proofs of Proposition \ref{prop:opt-conds} and Theorem \ref{thm:RSBalphabigger1}} \label{sec:GenATLine}

We now prove Proposition \ref{prop:opt-conds} and Theorem \ref{thm:RSBalphabigger1}. We begin with the first, which we restate 
for the convenience of the reader.
\begin{proposition*}\textbf{\ref{prop:opt-conds}}
$\mu$ is a Parisi measure if and only if
\begin{equation*}
\mu(\{t:G_{\mu}(t)=\min G_{\mu}\})=1.
\end{equation*}
Furthermore, if $\mu$ is a Parisi measure, it must satisfy the self-consistency conditions,
\begin{equation*}
\begin{cases}
\E_h u_{x}^2(q,X_q)&=q\\
\xi''(q)\E_h u_{xx}^2(q,X_q)&\leq 1
\end{cases}
\end{equation*}
for all $q\in \text{supp}\,\mu$.
\end{proposition*}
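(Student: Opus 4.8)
The plan is to assemble Proposition \ref{prop:opt-conds} from the lemmas and corollaries already established in Section \ref{sec:First-Var}. The first sentence — that $\mu$ is a Parisi measure if and only if $\mu(\{t : G_\mu(t) = \min G_\mu\}) = 1$ — is precisely Corollary \ref{cor:optimality}, so nothing further is needed there; one only needs to recall that the first variation $\delta\cP(\mu) = G_\mu$ is given by (\ref{eq:Gmu}) via Lemma \ref{lem:right-derivP}, and that strict convexity of $\cP$ (cited from \cite{AuffChen14,JagTobSC15}) is what makes the first-order condition sufficient as well as necessary. The point worth emphasizing in the write-up is that the relevant tangent directions $\sigma = \tilde\mu - \mu$ are exactly the weak right-derivatives at $0$ of mixing paths (Example \ref{ex:mixing}), so Lemma \ref{lem:onesidedEL} applies.

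For the self-consistency conditions, the plan is to invoke Corollary \ref{cor:consistency} directly: if $\mu$ minimizes $\cP$, then for every $y \in \text{supp}\,\mu$ one has $G_\mu'(y) = 0$ and $G_\mu''(y) \geq 0$, and computing these derivatives via Itô calculus applied to (\ref{eq:Gmu}) (using $\frac{d}{ds}\E_h u_x^2(s,X_s) = \xi''(s)\E_h u_{xx}^2(s,X_s)$ from Section \ref{sec:AC-SDE}) yields
\[
G_\mu'(y) = -\frac{\xi''(y)}{2}\left(\E_h u_x^2(y,X_y) - y\right), \qquad
G_\mu''(y) = -\frac{\xi'''(y)}{2}\left(\E_h u_x^2(y,X_y) - y\right) - \frac{\xi''(y)}{2}\left(\xi''(y)\E_h u_{xx}^2(y,X_y) - 1\right).
\]
From $G_\mu'(y) = 0$ and $\xi''(y) > 0$ on $(0,1]$ one reads off $\E_h u_x^2(y,X_y) = y$; substituting this into $G_\mu''(y) \geq 0$ kills the first term and leaves $\xi''(y)\E_h u_{xx}^2(y,X_y) \leq 1$. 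The boundary cases are handled by the two support lemmas: $y = 1 \notin \text{supp}\,\mu$ by Lemma \ref{lem:1notsupp}, and at $y = 0 \in \text{supp}\,\mu$ Lemma \ref{lem:0notsupp} forces $h = 0$, in which case even symmetry gives $u_x(0,0) = 0$ so the first condition $\E_h u_x^2(0,X_0) = u_x^2(0,h) = 0 = y$ holds trivially, and the one-sided second-derivative argument of Corollary \ref{cor:consistency} gives $\xi''(0)u_{xx}^2(0,0) \leq 1$.

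There is essentially no genuine obstacle here — the Proposition is a repackaging of Corollaries \ref{cor:optimality} and \ref{cor:consistency} for the reader's convenience, as the text signals by restating it verbatim. The only mild subtlety, worth a sentence, is the regularity input: one needs $G_\mu \in C^2$ to even write $G_\mu''$, which follows from the representation (\ref{eq:Gmu}) together with the spatial smoothness and bounds on $u_x, u_{xx}$ from Section \ref{sec:appendixPDE} and the Itô formula from Section \ref{sec:AC-SDE}; this is already noted in the remark preceding Corollary \ref{cor:consistency}. So the write-up can be short: cite Corollary \ref{cor:optimality} for the first claim, then cite Corollary \ref{cor:consistency} for the second, perhaps reproducing the one-line derivation of the two inequalities from $G_\mu' = 0$, $G_\mu'' \geq 0$ for completeness.
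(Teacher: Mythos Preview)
Your proposal is correct and matches the paper's approach exactly: the paper's proof is the single sentence ``This follows immediately from combining Corollary \ref{cor:optimality} and Corollary \ref{cor:consistency},'' and your plan is precisely that, with additional (accurate) commentary on the content of those corollaries. If anything, you are supplying more detail than the paper does.
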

\begin{proof}
This follows immediately from combining Corollary \ref{cor:optimality} and Corollary \ref{cor:consistency}.
\end{proof}

We turn to Theorem \ref{thm:RSBalphabigger1}. 
We will require the following facts.
\begin{fact}\label{fact:explicitsolns}
For all $t\geq \sup \text{supp}\,\mu$, the solution $u$ of the Parisi PDE satisfies
\begin{align*}
u(t,x)=\log\cosh(x)+\frac{1}{2}\left(\xi'(1)-\xi^{\prime}(t)\right)& \qquad 
u_x(t,x) = \tanh(x) \\
u_{xx}\left(t,x\right)  =\sech^{2}(x)&\qquad
u_{xxx}\left(t,x\right)  =-2\tanh\, x\cdot\text{sech}^{2}(x)
\end{align*}
For all $t\leq \inf \text{supp}\,\mu$, the solution $X_t$ of the Auffinger-Chen SDE with initial data $X_0=h$ satisfies
\[
 X_t = h+\int_0^t \sqrt{\xi''(s)} dW_s.
\]
\end{fact}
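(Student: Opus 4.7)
The plan is to treat the two claims separately, exploiting the fact that in each region the Parisi PDE and the Auffinger-Chen SDE simplify drastically. On $[\sup\text{supp}\,\mu,1]$ the measure weight $\mu[0,t]$ equals $1$, while on $[0,\inf\text{supp}\,\mu]$ it equals $0$, so the nonlinear term in the PDE and the drift term in the SDE respectively become trivial.

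For the PDE formulas, the equation (\ref{eq:PPDE}) reduces on $[\sup\text{supp}\,\mu,1]$ to the semilinear equation
\[
\partial_t u + \tfrac{\xi''(t)}{2}\bigl(u_{xx} + u_x^2\bigr) = 0, \qquad u(1,x) = \log\cosh(x).
\]
I would linearize via the Cole-Hopf transformation $v = e^u$: since $v_{xx} = v(u_{xx} + u_x^2)$, the function $v$ satisfies the backward linear heat equation $\partial_t v + \tfrac{\xi''(t)}{2} v_{xx} = 0$ with terminal data $v(1,x) = \cosh(x)$. The time change $\tau = \xi'(1) - \xi'(t)$ converts this into $\partial_\tau v = \tfrac{1}{2} v_{xx}$ with initial data $\cosh$, whose solution admits the Gaussian representation $v(\tau,x) = \E\cosh(x + \sqrt{\tau} Z)$. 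Writing $\cosh(x) = \tfrac{1}{2}(e^x + e^{-x})$ and applying the Gaussian moment generating function $\E e^{\sigma Z} = e^{\sigma^2/2}$ yields $v(\tau,x) = e^{\tau/2}\cosh(x)$, so taking logarithms recovers
\[
u(t,x) = \log\cosh(x) + \tfrac{1}{2}\bigl(\xi'(1) - \xi'(t)\bigr).
\]
The stated formulas for $u_x$, $u_{xx}$, and $u_{xxx}$ then follow by direct differentiation using the identities $(\log\cosh)' = \tanh$, $(\tanh)' = \sech^2$, and $(\sech^2)' = -2\tanh\cdot\sech^2$. The only technical point to verify is that this explicit classical solution agrees with the weak solution $u_\mu$ of (\ref{eq:PPDE}); this follows from the uniqueness of weak solutions with essentially bounded weak derivative recalled in Section \ref{sec:appendixPDE}.

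For the SDE formula, on $s \in [0,\inf\text{supp}\,\mu]$ we have $\mu[0,s]=0$ (a single atom at the endpoint contributes to a measure-zero set of times and does not affect the stochastic integral), so the drift term in (\ref{eq:AC-SDE}) vanishes and the equation collapses to $dX_s = \sqrt{\xi''(s)}\,dW_s$. Integrating from $0$ to $t$ with initial condition $X_0 = h$ gives the stated representation $X_t = h + \int_0^t \sqrt{\xi''(s)}\,dW_s$. There is no real obstacle here: once one identifies the region where $\mu[0,t]$ is constant, both halves of the Fact amount to solving a linear equation, and the main work is the routine Cole-Hopf computation outlined above.
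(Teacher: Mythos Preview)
The paper states this result as a \emph{Fact} without proof, treating it as elementary and well known. Your argument is correct and complete: the Cole--Hopf linearization on $[\sup\text{supp}\,\mu,1]$ is the standard way to derive the formula for $u$, and the observation that $\mu[0,s]=0$ for $s<\inf\text{supp}\,\mu$ immediately kills the drift in the SDE. Your remark that an atom at the left endpoint affects only a null set of times (and hence not the Lebesgue or stochastic integrals) is the right way to handle the boundary point, and your appeal to the uniqueness result in Section~\ref{sec:appendixPDE} to identify the explicit classical solution with $u_\mu$ is the correct closing step.

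One could shorten the PDE half by simply verifying the proposed formula: with $u(t,x)=\log\cosh x+\tfrac{1}{2}(\xi'(1)-\xi'(t))$ one has $u_t=-\tfrac{1}{2}\xi''(t)$ and $u_{xx}+u_x^2=\sech^2 x+\tanh^2 x=1$, so the PDE and terminal data are satisfied, and uniqueness finishes. Your Cole--Hopf derivation is slightly longer but has the advantage of being constructive rather than merely a verification.
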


\begin{theorem*}\textbf{\ref{thm:RSBalphabigger1}} For any model $\xi_0$, $RS\subset AT$. 
\end{theorem*}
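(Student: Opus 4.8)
The plan is to show that whenever the Parisi measure is a single atom $\delta_q$, the pair $(\beta,h)$ automatically satisfies the generalized de Almeida--Thouless condition $\alpha(\beta,h)\le 1$. The input is the self-consistency conditions of Proposition \ref{prop:opt-conds} (equivalently Corollary \ref{cor:consistency}) specialized to the one-atomic case, together with the explicit formulas of Fact \ref{fact:explicitsolns}.

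First I would fix $(\beta,h)\in RS$, so that the Parisi measure is $\mu=\delta_q$ for some $q\in[0,1]$; by Lemma \ref{lem:1notsupp} in fact $q<1$. Since $\mu=\delta_q$, one has $\inf\text{supp}\,\mu=\sup\text{supp}\,\mu=q$, so Fact \ref{fact:explicitsolns} applies at time $t=q$ from both sides: on the one hand $u_x(q,\cdot)=\tanh$ and $u_{xx}(q,\cdot)=\sech^2$, and on the other hand the Auffinger--Chen process satisfies $X_q=h+\int_0^q\sqrt{\xi''(s)}\,dW_s$, which is Gaussian with mean $h$ and variance $\int_0^q\xi''(s)\,ds=\xi'(q)$ (using $\xi'(0)=0$). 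Hence $X_q\eqdist\sqrt{\xi'(q)}Z+h$ with $Z$ standard Gaussian, so that
\[
\E_h u_x^2(q,X_q)=\E\tanh^2\big(\sqrt{\xi'(q)}Z+h\big),\qquad \xi''(q)\E_h u_{xx}^2(q,X_q)=\xi''(q)\E\sech^4\big(\sqrt{\xi'(q)}Z+h\big)=\alpha(q,\beta,h).
\]

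Next I would invoke the two self-consistency conditions at $y=q\in\text{supp}\,\mu$. The first, $\E_h u_x^2(q,X_q)=q$, combined with the identity above reads $\E\tanh^2(\sqrt{\xi'(q)}Z+h)=q$, i.e.\ $q\in Q_*(\beta,h)$ by (\ref{eq:q-alpha-def}). The second, $\xi''(q)\E_h u_{xx}^2(q,X_q)\le 1$, reads $\alpha(q,\beta,h)\le 1$. Since $q\in Q_*$, this gives $\alpha(\beta,h)=\min_{q'\in Q_*}\alpha(q',\beta,h)\le\alpha(q,\beta,h)\le 1$, i.e.\ $(\beta,h)\in AT$. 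There is no serious obstacle; the only points requiring minor care are the justification that Fact \ref{fact:explicitsolns} may be applied at the boundary value $t=q$ (legitimate since the PDE and SDE solutions are continuous and the stated ranges $t\ge\sup\text{supp}\,\mu$, $t\le\inf\text{supp}\,\mu$ are closed), and the edge case $q=0$ (possible only when $h=0$ by Lemma \ref{lem:0notsupp}), which the same formulas handle directly.
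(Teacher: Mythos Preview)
Your proof is correct and follows essentially the same approach as the paper: apply the self-consistency conditions (Corollary~\ref{cor:consistency}) at the unique atom $q$, use Fact~\ref{fact:explicitsolns} to rewrite them as $\E\tanh^2(\sqrt{\xi'(q)}Z+h)=q$ and $\xi''(q)\E\sech^4(\sqrt{\xi'(q)}Z+h)\le 1$, and conclude $q\in Q_*$ and $\alpha\le 1$. Your version is slightly more detailed (explicitly identifying the law of $X_q$ and discussing the boundary cases), but the argument is the same.
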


\begin{proof}
If the minimizer is $\mu=\delta_q$, then by the consistency conditions Corollary \ref{cor:consistency} and Fact \ref{fact:explicitsolns}
we find that
\begin{align*}
 \E\tanh^2( \sqrt{\xi'(q)}Z+ h) &=q \\
 \xi''\left(q\right)\E\text{sech}^4( \sqrt{\xi'(q)}Z+ h) &\leq 1.
\end{align*}

Therefore by the definitions of $Q_*$ and $\alpha$ in (\ref{eq:q-alpha-def}), we conclude that
$q\in Q_*$ and $\alpha \leq 1$. This completes the proof.
\end{proof}

\section{The RS phase} \label{sec:RSprelims}
In this section we present a preliminary analysis of Conjecture \ref{conj:ATline}. We begin by presenting some
reductions of the question. We then analyze the problem for moderate temperatures. 

Recall the definition of $q_*$ and $\alpha$ from (\ref{eq:q-alpha-def}).
By the optimality conditions in Corollary \ref{cor:optimality}, 
\[
G_{\delta_{q_*}}(q_*)=\min_y\, G_{\delta_{q_*}}(y)
\]
if and only if $\delta_{q_*}$ is the minimizer. 
To prove the required equality, we first note that $G_{\delta_{q_*}}$ is non-increasing on $[0,q_*]$. The problem reduces
to showing that $G_{\delta_{q_*}}$ is non-decreasing on $[q_*,1]$, which is implied by certain
conditions related to derivatives of $G_{\delta_{q_*}}$. 

\paragraph{Notation} Before we begin we introduce the following notation that will be used throughout Sects. 4, 6, and 7. Since
these sections refer only to the setting of RS, \textbf{we will
always take $\mu=\delta_{q_*}$ in these sections. 
To this end, we suppress the dependence of $u$, its derivatives, $G_{\delta_{q_*}}$, and  $X_t$ on $\delta_{q_*}$.} Furthermore, we will make frequent use of the 
following function:
\begin{equation}\label{eq:g-def}
g(y) = \E_{h}\left[u_{x}^{2}\left(y,X_{y}\right)\right]-y.
\end{equation}
We note here that as a consequence of It\^{o}'s lemma (Sect.\ \ref{sec:AC-SDE}),
%\[
%\E_{h}\left[u_{xx}^{2}\left(s,X_{s}\right)\right]  =  \int_{0}^{s}\xi''\left(t\right)\E_{h}\left[u_{xxx}^{2}\left(t,X_{t}\right)-2\mu\left[0,t\right]u_{xx}^{3}\left(t,X_{t}\right)\right]\, dt+u_{xx}^{2}\left(0,h\right)
%\]
%so that
\begin{equation}\label{eq:g'}
g'(y) = \xi''(y) \E_{h}\left[u_{xx}^{2}\left(y,X_{y}\right)\right]-1.
\end{equation}
Again, though the function $g$ can be defined for general $\mu$, for
the remainder of this paper it will \textbf{always}
be understood with $\mu=\delta_{q_*}$. For the reader more 
familiar with the notation of \cite{TalBK11,TalBK11vol2}, please see Sect. \ref{sec:appendix}, specifically \eqref{eq:g-def-alt}.

We begin with the following preliminary lemma.
\begin{lemma} Suppose $(\beta,h)\in AT$.
\begin{enumerate}
\item We have that
\[
G_{\delta_{q_{*}}}\left(y\right)\geq G_{\delta_{q_*}}\left(q_{*}\right),\quad y\leq q_{*}.
\]
\item If in addition $\alpha<1$, then $q_{*}$ is a local minimizer for $G_{\delta_{q_{*}}}$. 
\end{enumerate}
\end{lemma}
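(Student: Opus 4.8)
The plan is to reduce everything to sign information about the auxiliary function $g$ from \eqref{eq:g-def} via the two identities already available: from the computation in the proof of Corollary \ref{cor:consistency} (specialized to $\mu=\delta_{q_*}$) one has $G_{\delta_{q_*}}'(y)=-\tfrac{\xi''(y)}{2}\,g(y)$, while \eqref{eq:g'} gives $g'(y)=\xi''(y)\,\E_h u_{xx}^2(y,X_y)-1$. Since $\xi''\ge 0$, part (1) amounts to showing $g\ge 0$ on $[0,q_*]$, and part (2) to showing in addition that $g$ becomes negative just to the right of $q_*$.

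First I would record that $g(q_*)=0$: by Fact \ref{fact:explicitsolns}, since $q_*=\sup\text{supp}\,\delta_{q_*}=\inf\text{supp}\,\delta_{q_*}$, we have $u_x(q_*,\cdot)=\tanh$ and $X_{q_*}\eqdist \sqrt{\xi'(q_*)}\,Z+h$, so $g(q_*)=\E\tanh^2(\sqrt{\xi'(q_*)}Z+h)-q_*=0$ because $q_*\in Q_*$. Next comes the key step. On $[0,q_*]$ the measure $\delta_{q_*}$ puts no mass on $[0,s)$ for $s<q_*$, so there the drift in \eqref{eq:AC-SDE} vanishes and the Parisi PDE reduces to the backward heat equation; It\^o's lemma then shows that $s\mapsto u_{xx}(s,X_s)$ is a martingale on $[0,q_*]$ (the drift terms cancel against $\partial_s u_{xx}+\tfrac{\xi''}{2}\partial_{xx}u_{xx}=0$), hence $s\mapsto \E_h u_{xx}^2(s,X_s)$ is non-decreasing on $[0,q_*]$ and
\[
\E_h u_{xx}^2(s,X_s)\ \le\ \E_h u_{xx}^2(q_*,X_{q_*})\ =\ \E\,\sech^4(\sqrt{\xi'(q_*)}Z+h),\qquad s\in[0,q_*],
\]
the last equality again by Fact \ref{fact:explicitsolns}. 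Since $\xi''$ is non-decreasing on $[0,1]$ and both factors below are non-negative, \eqref{eq:g'} now yields, for $s\in[0,q_*]$,
\[
g'(s)+1\ =\ \xi''(s)\,\E_h u_{xx}^2(s,X_s)\ \le\ \xi''(q_*)\,\E\,\sech^4(\sqrt{\xi'(q_*)}Z+h)\ =\ \alpha(q_*,\beta,h)\ =\ \alpha(\beta,h)\ \le\ 1,
\]
using the definition of $q_*$ in \eqref{eq:q-alpha-def} and the hypothesis $(\beta,h)\in AT$. Thus $g'\le 0$ on $[0,q_*]$, and since $g(q_*)=0$ this forces $g\ge 0$ on $[0,q_*]$; therefore $G_{\delta_{q_*}}'=-\tfrac{\xi''}{2}g\le 0$ on $[0,q_*]$, so $G_{\delta_{q_*}}$ is non-increasing there and $G_{\delta_{q_*}}(y)\ge G_{\delta_{q_*}}(q_*)$ for $y\le q_*$, which is (1).

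For (2), assuming $\alpha<1$, evaluate \eqref{eq:g'} at $y=q_*$ with Fact \ref{fact:explicitsolns}: $g'(q_*)=\xi''(q_*)\,\E\,\sech^4(\sqrt{\xi'(q_*)}Z+h)-1=\alpha(q_*,\beta,h)-1=\alpha(\beta,h)-1<0$. Together with $g(q_*)=0$ this produces an $\epsilon>0$ with $g<0$ on $(q_*,q_*+\epsilon)$, so $G_{\delta_{q_*}}'=-\tfrac{\xi''}{2}g\ge 0$ on $(q_*,q_*+\epsilon)$; combined with the monotonicity on $[0,q_*]$ from (1), $q_*$ is a local minimizer of $G_{\delta_{q_*}}$. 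The only step with real content is the monotonicity bound $\E_h u_{xx}^2(s,X_s)\le \E\,\sech^4(\sqrt{\xi'(q_*)}Z+h)$ on $[0,q_*]$: it hinges on the fact that $\delta_{q_*}$ carries no mass below $q_*$, which collapses the coupled PDE/SDE on $[0,q_*]$ to a heat-equation/Brownian-motion pair and makes $u_{xx}(\cdot,X_\cdot)$ a martingale, after which Jensen closes the estimate; everything else is bookkeeping with $G_{\delta_{q_*}}'$, $g'$, the definition of $q_*$, and Fact \ref{fact:explicitsolns}. The only case needing a separate word is $q_*=0$: then Lemma \ref{lem:0notsupp} forces $h=0$, the statement in (1) is vacuous, and in (2) the same computation gives $g'(0^+)=\xi''(0)-1=\alpha-1<0$ once all derivatives at $0$ are read as right derivatives.
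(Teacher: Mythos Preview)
Your proof is correct and follows essentially the same route as the paper: both reduce part (1) to $g'\le 0$ on $[0,q_*]$ via the monotonicity of $s\mapsto \E_h u_{xx}^2(s,X_s)$ (you package this as a martingale/Jensen argument, the paper writes out the equivalent integral identity from Sect.~\ref{sec:AC-SDE} and drops the nonnegative $u_{xxx}^2$-term), and for part (2) your first-derivative argument is equivalent to the paper's second-derivative test $G'(q_*)=0$, $G''(q_*)>0$. One small quibble: in the $q_*=0$ aside, Lemma~\ref{lem:0notsupp} concerns the Parisi measure, not $q_*$; the implication $q_*=0\Rightarrow h=0$ follows instead directly from $q_*\in Q_*$ and $\xi'(0)=0$, which gives $\tanh^2(h)=0$.
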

\begin{proof}
To prove 1, we will show that $G'\left(y\right)\leq0$ for $y\leq q_{*}$.
Note that by the definition of $g$ in \eqref{eq:g-def}, it suffices to show that
\[
g(y)\geq 0 \qquad \forall y\leq q_{*}.
\]
Since $g(q_{*})=0$, it is enough to check that $g'\leq0$
for $y \leq q_{*}$. 
By \eqref{eq:g'} and It\^o's lemma (Sect. \ref{sec:AC-SDE}), we have that
\[
g'\left(y\right) = \xi''(y) \E_h [u_{xx}^2(q_*,X_{q_*})] -\xi''(y)\int_y^{q_*} \xi''\left(t\right)\E_{h}\left[u_{xxx}^{2}\left(t,X_{t}\right)-2\mu\left[0,t\right]u_{xx}^{3}\left(t,X_{t}\right)\right]\, dt -1.
\]
Then, since $\alpha \leq 1$, $\xi''$ is increasing, $y\leq q_*$, and $\mu=\delta_{q_*}$, 
\begin{align*}
g'\left(y\right) &\leq \xi''(q_*) \E_h [u_{xx}^2(q_*,X_{q_*})] -1 -\xi''(y)\int_y^{q_*} \xi''\left(t\right)\E_{h}\left[u_{xxx}^{2}\left(t,X_{t}\right)\right]\, dt\\
&\leq  \xi''(q_*) \E_h [u_{xx}^2(q_*,X_{q_*})] -1 \leq 0.
\end{align*}

To prove 2, note that by the definition of G in (\ref{eq:Gmu}), and the definitions of $q_*$ and $\alpha$ in (\ref{eq:q-alpha-def}) we get that
\[
G_{\delta_{q_*}}'\left(q_{*}\right)  =0 \text{ and }
G_{\delta_{q_*}}''\left(q_{*}\right)  > 0.
\]
\end{proof}
By a similar argument we get the following potential strategies for studying the AT line conjecture.
\begin{lemma}\label{lem:strats}
If one of the following holds:
\begin{enumerate}
\item $g(y)\leq 0$ for all $y\geq q_*$
\item $g'(y)\leq 0$ for $y\geq q_*$,
\end{enumerate}
then
\[
G(y)\geq G(q_*), \qquad y\geq q_*.
\]
In particular, if $(\beta,h)\in AT$ and one of the above holds, then $(\beta,h)\in RS$.
\end{lemma}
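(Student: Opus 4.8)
The plan is to read everything off from the sign identity
\[
G'(y) = -\tfrac{\xi''(y)}{2}\,g(y), \qquad y\in[0,1],
\]
which follows by differentiating the definition \eqref{eq:Gmu} of $G=G_{\delta_{q_*}}$ and recalling the definition \eqref{eq:g-def} of $g$ (this is exactly the first formula appearing in the proof of Corollary \ref{cor:consistency}). Since $\xi=\beta^2\xi_0$ with $\xi_0$ a power series with nonnegative coefficients, $\xi''\geq 0$ on $[0,1]$. Hence under hypothesis (1) we get $G'(y)\geq 0$ for \emph{all} $y\geq q_*$ simultaneously, so $G$ is non-decreasing on $[q_*,1]$ and therefore $G(y)\geq G(q_*)$ for $y\geq q_*$, as claimed.

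For hypothesis (2) I would first observe that $g(q_*)=0$. Indeed, $q_*$ is both $\sup\text{supp}\,\delta_{q_*}$ and $\inf\text{supp}\,\delta_{q_*}$, so Fact \ref{fact:explicitsolns} gives $u_x(q_*,\cdot)=\tanh$ and $X_{q_*}\sim\cN(h,\xi'(q_*))$ (using $\xi'(0)=0$), whence $\E_h[u_x^2(q_*,X_{q_*})]=\E\tanh^2(\sqrt{\xi'(q_*)}Z+h)=q_*$ by the definition of $Q_*$ in \eqref{eq:q-alpha-def} and the fact that $q_*\in Q_*$. Integrating the hypothesis $g'\leq 0$ from $q_*$ then yields $g(y)=g(q_*)+\int_{q_*}^y g'(s)\,ds\leq 0$ for every $y\geq q_*$, which puts us back in the situation of hypothesis (1); apply the previous paragraph.

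For the ``in particular'' clause, suppose $(\beta,h)\in AT$ and one of (1), (2) holds. Part (1) of the preceding lemma already gives $G(y)\geq G(q_*)$ for $y\leq q_*$; combining this with what we have just shown, $G(q_*)=\min_{[0,1]}G$, i.e.\ $\delta_{q_*}(\{y:G(y)=\min G\})=1$. By the optimality conditions (Corollary \ref{cor:optimality}) this says precisely that $\delta_{q_*}$ minimizes the Parisi functional; since the Parisi measure is unique and $\delta_{q_*}$ is $1$-atomic, $(\beta,h)\in RS$.

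Internally to this lemma there is no real obstacle: it is a short consequence of the sign identity $G'=-\tfrac{\xi''}{2}g$ together with the normalization $g(q_*)=0$. The genuine work lies elsewhere — the $y\leq q_*$ half of the minimization is supplied by the preceding lemma (and there uses $\alpha\leq 1$, i.e.\ $(\beta,h)\in AT$), while verifying that hypothesis (1) or (2) actually holds in concrete regions of the $(\beta,h)$-plane is exactly what the dispersive Gaussian estimates of Sections \ref{sec:disp-gauss}, \ref{sec:23-arg}, and \ref{sec:long-time} are built to do, and that is where all the difficulty will be.
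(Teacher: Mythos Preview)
Your proof is correct and follows exactly the approach the paper has in mind: the paper omits an explicit proof, writing only ``By a similar argument we get the following,'' and your argument is precisely the mirror image of the preceding lemma's proof --- use the sign identity $G' = -\tfrac{\xi''}{2}g$, reduce (2) to (1) via $g(q_*)=0$, and invoke the optimality conditions together with the $y\leq q_*$ half already established.
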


\subsection{Preliminary results at moderate temperatures}
In this section we present some preliminary observations. 
\begin{lemma}\label{lem:beta-alr}
If $\xi''(1)\leq 1$, then $(\beta,h)\in RS$.
\end{lemma}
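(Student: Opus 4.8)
The plan is to use the strategy from Lemma \ref{lem:strats}, specifically condition (2): show that if $\xi''(1)\leq 1$ then $g'(y)\leq 0$ for all $y\geq q_*$, which together with $g(q_*)=0$ (the first self-consistency condition, valid since $q_*\in Q_*$) forces $g\leq 0$ on $[q_*,1]$, hence $G_{\delta_{q_*}}$ is non-decreasing there; combined with part (1) of the preceding lemma this shows $q_*$ is a global minimizer of $G_{\delta_{q_*}}$, so by Corollary \ref{cor:optimality} the measure $\delta_{q_*}$ is the Parisi measure and $(\beta,h)\in RS$. Actually, one should be slightly careful: Lemma \ref{lem:strats} is stated for $(\beta,h)\in AT$, so I would first dispatch the case $(\beta,h)\notin AT$ separately, or better, observe directly that $\xi''(1)\leq 1$ forces $(\beta,h)\in AT$. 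Indeed, since $\xi''$ is increasing, $\xi''(q)\leq \xi''(1)\leq 1$ for any $q$, and $\E\sech^4(\cdot)\leq 1$, so $\alpha(q,\beta,h)\leq 1$ for every $q\in Q_*$, whence $\alpha(\beta,h)\leq 1$ and $(\beta,h)\in AT$. (One must check $Q_*$ is nonempty, but this is standard — $q\mapsto \E\tanh^2(\sqrt{\xi'(q)}Z+h)$ maps $[0,1]$ into itself continuously, so it has a fixed point.)

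For the main estimate, I would use formula \eqref{eq:g'}, $g'(y) = \xi''(y)\E_h[u_{xx}^2(y,X_y)] - 1$, together with Fact \ref{fact:explicitsolns}: since $\mu = \delta_{q_*}$ and $y\geq q_*$, we have $u_{xx}(y,x) = \sech^2(x)$, so $\E_h[u_{xx}^2(y,X_y)] = \E_h[\sech^4(X_y)] \leq 1$ pointwise, because $\sech \leq 1$. Therefore
\[
g'(y) = \xi''(y)\,\E_h[\sech^4(X_y)] - 1 \leq \xi''(1)\cdot 1 - 1 \leq 0
\]
for all $y\in[q_*,1]$, using monotonicity of $\xi''$ in the first inequality. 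This gives condition (2) of Lemma \ref{lem:strats}.

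Then I would simply invoke Lemma \ref{lem:strats}: since $(\beta,h)\in AT$ and $g'\leq 0$ on $[q_*,1]$, we get $G_{\delta_{q_*}}(y)\geq G_{\delta_{q_*}}(q_*)$ for $y\geq q_*$. Combined with part (1) of the unnamed lemma immediately preceding Lemma \ref{lem:strats} (which gives $G_{\delta_{q_*}}(y)\geq G_{\delta_{q_*}}(q_*)$ for $y\leq q_*$), we conclude $q_*$ is a global minimizer of $G_{\delta_{q_*}}$, so by Corollary \ref{cor:optimality}, $\delta_{q_*}$ is the Parisi measure and hence $(\beta,h)\in RS$. There is no real obstacle here — the only points requiring a word of care are (i) confirming $Q_*\neq\emptyset$ so that $q_*$ is well-defined, and (ii) the bookkeeping that $\xi''(1)\leq 1$ automatically places us in $AT$ so that the hypotheses of Lemma \ref{lem:strats} are met. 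Everything else is a one-line application of the explicit solution formulas and the bound $\sech\leq 1$.
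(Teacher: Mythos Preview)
Your proposal is correct and follows essentially the same approach as the paper: bound $g'(y)=\xi''(y)\E_h[u_{xx}^2(y,X_y)]-1\leq \xi''(1)-1\leq 0$ and invoke Lemma~\ref{lem:strats}. The only cosmetic differences are that the paper uses the general a~priori bound $|u_{xx}|\leq 1$ from Sect.~\ref{sec:appendixPDE} (valid for all $y$) rather than the explicit formula $u_{xx}=\sech^2$ on $[q_*,1]$, and the paper leaves the verification $(\beta,h)\in AT$ implicit---your explicit check of this (and of $Q_*\neq\emptyset$) is a welcome bit of care but not a different argument.
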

\begin{proof}
Recall the definition of $g$ in (\ref{eq:g-def}) and the formula for $g'$ given in \eqref{eq:g'}. Using
the fact that $\abs{u_{xx}}\leq 1$ (Sect.\ \ref{sec:appendixPDE}) and that $\xi''$ is non-decreasing, we conclude that
\begin{align*}
 g'(y) = \xi''(y)\E_h[u_{xx}^2(y,X_y)] -1 
       \leq \xi''(1) -1.
\end{align*}
The result follows from Lemma \ref{lem:strats}. 
\end{proof}
\begin{lemma}\label{lem:sk-3/2}
For the SK model, 
\[
\{(\beta,h):\alpha \leq 1,\ \beta \leq 3/2\} \subset RS.
\]
\end{lemma}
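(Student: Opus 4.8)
The plan is to use Lemma \ref{lem:strats}, strategy (1): it suffices to show that for the SK model, whenever $\alpha \leq 1$ and $\beta \leq 3/2$, we have $g(y) \leq 0$ for all $y \geq q_*$. Recall that in the SK case $\xi_0(t) = t^2/2$, so $\xi(t) = \beta^2 t^2/2$, $\xi'(t) = \beta^2 t$, $\xi''(t) \equiv \beta^2$, and $\xi''' \equiv 0$. The constancy of $\xi''$ is what makes this tractable. Since $\mu = \delta_{q_*}$, for $t \geq q_*$ we have $\mu[0,t] = 1$, and the Auffinger-Chen process has drift $\beta^2 u_x(s,X_s)$ on $[q_*,1]$.

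First I would write, using \eqref{eq:g-def} and It\^o's lemma (as in \eqref{eq:g'}), that for $y \geq q_*$,
\[
g(y) = g(q_*) + \int_{q_*}^y g'(t)\,dt = \int_{q_*}^y \left(\beta^2 \E_h[u_{xx}^2(t,X_t)] - 1\right) dt,
\]
since $g(q_*) = 0$ by the definition of $Q_*$. So it suffices to show $\beta^2 \E_h[u_{xx}^2(t,X_t)] \leq 1$ for all $t \in [q_*,1]$, i.e.\ that $g'(t) \leq 0$ on $[q_*,1]$. Next, differentiating again via It\^o's lemma (with $\xi''' = 0$ and $\mu[0,t] = 1$ on this range),
\[
\frac{d}{dt}\E_h[u_{xx}^2(t,X_t)] = \beta^2 \E_h\left[u_{xxx}^2(t,X_t) - 2 u_{xx}^3(t,X_t)\right].
\]
The key analytic input is that for $t \geq q_*$, the solution is explicit (Fact \ref{fact:explicitsolns}): $u_{xx}(t,x) = \sech^2 x$ and $u_{xxx}(t,x) = -2\tanh x \, \sech^2 x$, so $u_{xxx}^2 = 4\tanh^2 x\,\sech^4 x = 4\sech^4 x - 4\sech^6 x$ and $2u_{xx}^3 = 2\sech^6 x$, giving $u_{xxx}^2 - 2u_{xx}^3 = 4\sech^4 x - 6\sech^6 x$. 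Hence
\[
\frac{d}{dt}\E_h[u_{xx}^2(t,X_t)] = \beta^2 \E_h\left[4\sech^4(X_t) - 6\sech^6(X_t)\right].
\]
The bracketed quantity $4s^2 - 6s^3$ for $s = \sech^2 x \in (0,1]$ is bounded above by its maximum over $[0,1]$; elementary calculus gives $\max_{s\in[0,1]}(4s^2-6s^3) = 4s^2-6s^3$ at $s = 4/9$, with value $(4\cdot 16 - 6\cdot 64/9)/81 = (64 - 128/3)/81 = (64/3)/81 = 64/243$. So $\E_h[u_{xx}^2(t,X_t)]$ grows at rate at most $\beta^2 \cdot \tfrac{64}{243}$, meaning $\E_h[u_{xx}^2(t,X_t)] \leq \E_h[u_{xx}^2(q_*,X_{q_*})] + \beta^2 \tfrac{64}{243}(t - q_*) \leq \E_h[u_{xx}^2(q_*,X_{q_*})] + \tfrac{64}{243}\beta^2$ for $t \in [q_*,1]$.

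Now combine: $\beta^2 \E_h[u_{xx}^2(q_*,X_{q_*})] = \alpha(q_*,\beta,h) = \alpha \leq 1$ by hypothesis and the definition of $q_*$ in \eqref{eq:q-alpha-def} (together with Fact \ref{fact:explicitsolns} identifying $X_{q_*} = \sqrt{\xi'(q_*)}Z + h$ in distribution). Therefore for $t \in [q_*,1]$,
\[
\beta^2 \E_h[u_{xx}^2(t,X_t)] \leq \alpha + \tfrac{64}{243}\beta^4 \leq 1 + \tfrac{64}{243}\beta^4.
\]
This bound is not yet below $1$, so I would instead feed back the improved information: since $4\sech^4 - 6\sech^6$ is itself negative when $\sech^2 x > 2/3$ and only mildly positive otherwise, and since $\E_h[4\sech^4(X_t) - 6\sech^6(X_t)]$ can be controlled by the fact that $\E_h[\sech^4(X_{q_*})] = \alpha/\beta^2 \leq 1/\beta^2$, one expects the integrated increment to be genuinely small when $\beta$ is bounded. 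The cleaner route — and the one I would actually pursue — is to note that $\E_h[4\sech^4(X_t)-6\sech^6(X_t)] \leq 4\E_h[\sech^4(X_t)]$, and control $\E_h[\sech^4(X_t)]$ for $t \geq q_*$ by a monotonicity/comparison argument (the variance of $X_t$ only increases and the drift pushes mass toward $0$... ) — but this is delicate. The honest main obstacle is precisely this: turning the sign structure of $4\sech^4 - 6\sech^6$ into a quantitative bound that, when integrated over $[q_*,1]$ and combined with $\alpha \leq 1$, stays $\leq 1$ exactly on the range $\beta \leq 3/2$. I expect the constant $3/2$ to emerge from optimizing a bound of the shape $\alpha + C(\beta)(1-q_*) \leq 1$ where $C(\beta)$ involves $\beta^2$ times the max of $4s^2-6s^3$ weighted against an a priori upper bound on $\E_h[\sech^4]$, using also the a priori lower bound on $q_*$ (as referenced in Lemma \ref{lem:lbq}); making the arithmetic close exactly at $\beta = 3/2$ is where the real work lies.
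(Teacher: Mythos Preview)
Your setup is correct through the computation $f'(y) = \beta^2 \E_h[4\sech^4(X_y) - 6\sech^6(X_y)]$ for $f(y) = \E_h[\sech^4(X_y)]$, and you correctly identify that the goal is $\beta^2 f(y) \leq 1$ on $[q_*,1]$. But the argument then stalls, as you yourself acknowledge: bounding $4s^2 - 6s^3$ pointwise by its maximum $64/243$ only gives $f' \leq \beta^2 \cdot 64/243$, which is too weak to close, and the integrated bound you write grows with $\beta^4$. Your attempted repairs (monotonicity of $\E_h[\sech^4(X_t)]$, lower bounds on $q_*$) are vague and do not lead anywhere; in particular $\E_h[\sech^4(X_t)]$ need not be monotone in $t$ on $[q_*,1]$ a priori.

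The missing idea is to bound $f'$ in terms of $f$ itself rather than by a constant. Since $\sech^6 = (\sech^4)^{3/2}$ and $x \mapsto x^{3/2}$ is convex on $[0,1]$, Jensen's inequality gives $\E_h[\sech^6(X_y)] \geq f(y)^{3/2}$, hence
\[
f' \leq 2\beta^2\bigl(2f - 3f^{3/2}\bigr).
\]
The right-hand side vanishes at the stationary value $f = 4/9$ and is negative for $f > 4/9$. Comparison with the ODE $\phi' = 2\beta^2(2\phi - 3\phi^{3/2})$ now yields a clean dichotomy: if $f(q_*) \leq 4/9$ then $f(y) \leq 4/9$ for all $y \geq q_*$, so $\beta^2 f(y) \leq (3/2)^2 \cdot 4/9 = 1$; if $f(q_*) > 4/9$ then $f$ is non-increasing on $[q_*,1]$, so $\beta^2 f(y) \leq \beta^2 f(q_*) = \alpha \leq 1$. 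In either case $g' \leq 0$ on $[q_*,1]$. The constant $3/2$ is thus exactly the largest $\beta$ for which $\beta^2 \cdot 4/9 \leq 1$; it does not emerge from any of the mechanisms you speculate about.
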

\begin{remark}
This shows that the AT line is the RS to RSB phase boundary for the SK model in the $(\beta,h)$-plane even for
$\beta\leq 3/2$. Note that this upper bound is larger than the critical temperature at $h=0$. This suggests that the 
$(\beta,h)=(1,0)$ critical point is not identified in these analyses for models with $\xi''(0)>0$. Indeed one can extend this argument
to such models in a case-by-case fashion. For example, it will hold if $\xi'''(1)$ is sufficiently small with respect to $\xi''(1)$.
We believe the criticality as $\xi''(0)\rightarrow0$ is highly nontrivial.
As the reader will see, the argument breaks down when $\xi''(0)=0$. 
\end{remark}
\begin{proof}
Recall that $\xi''(t) = \beta^2$ in the SK model. Let $g$ be as in (\ref{eq:g-def}), and recall by Lemma \ref{lem:strats} that it is enough to prove that $g'\leq0$ for $y\geq q_*$. 
By \eqref{eq:g'}, we have that $g' = \beta^2 f - 1$ where 
\[
f(y) = \E_h[u_{xx}^2(y,X_y)].
\]
Using It\^o's lemma (Sect.\ \ref{sec:AC-SDE}), the fact that $\mu = \delta_{q_*}$, and that $u$ satisfies the equations from Fact \ref{fact:explicitsolns}, 
\[
 f'(y) = \beta^2\E_{h}\left[4\text{sech}^{4}(X_{y})-6\text{sech}^{6}(X_{y})\right],\quad \forall\,y > q_*.
\]
It immediately follows from an application of Jensen's inequality that $f$ satisfies the differential inequality
\[
f'\leq 2 \beta^2 (2f-3f^{3/2}).
\]
Note the following comparison principle: if for each $x$, $y \to \phi(y,x)$ solves the ordinary differential equation 
\begin{equation}\label{eq:odecomparison}
\phi' = 2\beta^2 (2\phi - 3\phi^{3/2}) \quad \forall\,y> q_*
\end{equation}
with initial condition $\phi(q_*,x) = x$, then 
\[
f(y)\leq \phi(y,f(q_*)) \quad \forall\,y\geq q_*.
\] 

We now complete the proof by a case analysis. Suppose that $f(q_*)\leq 4/9$. Since the constant $4/9$ is a stationary solution of \eqref{eq:odecomparison},
the comparison principle gives that $f(y)\leq 4/9$ for $y\geq q_*$. Hence,
\[
g'(y) = \beta^2 f(y) - 1\leq \beta^2\frac{4}{9}-1\leq 0.
\]
Now suppose that $f(q_*)> 4/9$. Note that the solution $\phi(y,f(q_*))$ to \eqref{eq:odecomparison}  is non-increasing in $y$. Thus, the comparison principle implies that $f(y) \leq f(q_*)$ for all $y\geq q_*$. It follows from the definitions of $f$ and $\alpha$ that
\[
g'(y) \leq g'(q_*) = \alpha -1\leq 0.
\]
In either case, we have that $g'(y)\leq 0$ for all $y \geq q_*$.
\end{proof}

\section{Dispersive estimates of Gaussians}\label{sec:disp-gauss}
In the first subsection, we develop the dispersive estimates that we need when $d=1$. In the second subsection, we develop the dispersive estimates that we need
in $d=2$. In the following, we always use the Fourier transform with normalization
\[
\hat{f}(k)=\frac{1}{(2\pi)^{d/2}}\int f(x)e^{-i\innerp{k,x}}dx.
\]

\subsection{Dispersive estimates in 1-d}\label{sec:1d-disp}
In this subsection, we take $d=1$. We begin with the following soft lemma. 
\begin{lemma}\label{lem:softdisp}
Let $Z$ be a standard Gaussian and let $f\in L_1(dx)\cap L_2(dx)$. For any sequence
$(\sigma,h(\sigma))$ where $\sigma\rightarrow\infty$, 
\[
\limsup_{\sigma\rightarrow\infty}\sigma\E f(h+\sigma Z)=\frac{e^{-\frac{1}{2}\liminf\left(\frac{h}{\sigma}\right)^2}}{\sqrt{2\pi}}\int f(x)dx
\]
\end{lemma}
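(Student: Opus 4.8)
The plan is to write the expectation as an explicit Gaussian integral, make the natural change of variables, and then pass to the limit using dominated/Fatou-type arguments. First I would write
\[
\sigma\,\E f(h+\sigma Z) = \sigma\int_{\R} f(h+\sigma z)\,\frac{e^{-z^2/2}}{\sqrt{2\pi}}\,dz
= \int_{\R} f(x)\,\frac{e^{-\frac{1}{2}\left(\frac{x-h}{\sigma}\right)^2}}{\sqrt{2\pi}}\,dx,
\]
where the second equality is the substitution $x = h+\sigma z$, $dx = \sigma\,dz$. This already isolates the mechanism: the density $\tfrac{1}{\sqrt{2\pi}}\exp(-\tfrac12((x-h)/\sigma)^2)$ is bounded by $\tfrac{1}{\sqrt{2\pi}}$ uniformly in $x,\sigma,h$, and for fixed $x$ it converges, along the given sequence, to $\tfrac{1}{\sqrt{2\pi}}\exp(-\tfrac12 \ell^2)$ where $\ell$ is whatever limit $h/\sigma$ has along a subsequence.

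The main point is that one wants a $\limsup$ statement with $\liminf (h/\sigma)^2$ inside, so I would argue along subsequences. Pick a subsequence of $\sigma\to\infty$ realizing the $\limsup$ on the left-hand side; along a further subsequence, $h/\sigma$ converges to some $\ell \in [0,\infty]$ (allowing $\ell=+\infty$), with $\ell^2 \geq \liminf (h/\sigma)^2$ over the original sequence. Now apply dominated convergence: the integrands $f(x)\tfrac{1}{\sqrt{2\pi}}\exp(-\tfrac12((x-h)/\sigma)^2)$ are dominated by $|f| \in L^1(dx)$ (using the uniform bound $\tfrac{1}{\sqrt{2\pi}}$ on the Gaussian factor) and converge pointwise a.e.\ to $f(x)\tfrac{1}{\sqrt{2\pi}}\exp(-\tfrac12 \ell^2)$ (interpreted as $0$ if $\ell = \infty$). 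Hence the chosen subsequence of $\sigma\,\E f(h+\sigma Z)$ converges to $\tfrac{1}{\sqrt{2\pi}}e^{-\ell^2/2}\int f$, which is at most $\tfrac{1}{\sqrt{2\pi}}e^{-\frac12\liminf(h/\sigma)^2}\int f$. Since this bounds the $\limsup$, and the reverse inequality comes from choosing a subsequence of the original sequence along which $h/\sigma \to \liminf (h/\sigma)^2$ and applying dominated convergence again, the two match and we get equality.

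The only mild subtlety — and the step I'd expect to need the most care — is the bookkeeping with $\liminf$ versus $\limsup$ and the extended-real-valued limit $\ell$, together with the sign of $\int f(x)\,dx$: the statement as written implicitly allows $\int f$ to be negative, in which case "$\limsup$" on the left should correspond to choosing $\ell$ as small as possible (namely $\liminf h/\sigma$), which is exactly what is claimed. I would therefore organize the argument as: (i) reduce to showing both $\leq$ and $\geq$ between the two sides; (ii) for each direction choose the appropriate subsequence making $h/\sigma$ converge; (iii) invoke dominated convergence with dominating function $\tfrac{1}{\sqrt{2\pi}}|f|$. No other estimates are needed, since $f \in L^1 \cap L^2$ is more than enough for this soft argument — the $L^2$ hypothesis is presumably there for the quantitative refinements that follow, not for this lemma.
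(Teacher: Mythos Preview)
Your argument is correct and, in fact, more elementary than the paper's. The paper works in Fourier space: it applies Plancherel to write
\[
\E f(h+\sigma Z)=\frac{1}{\sigma}\Bigl\langle e^{i\frac{h}{\sigma}x}\hat{f}\Bigl(\frac{x}{\sigma}\Bigr),\frac{e^{-x^{2}/2}}{\sqrt{2\pi}}\Bigr\rangle_{L^2},
\]
then uses $\hat f\in C_0(\R)$ (Riemann--Lebesgue) and dominated convergence on the Fourier side. Your change of variables $x=h+\sigma z$ and dominated convergence with majorant $|f|/\sqrt{2\pi}$ accomplishes the same thing directly in physical space. Your route only uses $f\in L^1$ --- you are right that the $L^2$ hypothesis is not needed here --- whereas the paper's route uses $L^2$ via Plancherel. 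The payoff of the paper's Fourier approach is that it sets up exactly the framework for the quantitative refinement in the next lemma (where one Taylor-expands $\hat f(x/\sigma)-\hat f(0)$) and for the 2-d estimates later; the soft lemma is written this way to foreshadow that machinery.

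One small caution: your discussion of the sign case is slightly backwards. If $\int f<0$, the $\limsup$ on the left is the \emph{least negative} subsequential limit, which corresponds to $e^{-\ell^2/2}$ being \emph{small}, i.e.\ $\ell$ large, not small. So the identity with $\liminf(h/\sigma)^2$ really is tailored to $\int f\geq 0$ (or to sequences where $h/\sigma$ converges). The paper's proof glosses over this too, and in the applications $f\geq 0$, so it is not a material issue.
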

\begin{proof}
Note that
\[
\E f\left(h+\sigma Z\right)=\frac{1}{\sigma}\innerp{ e^{i\frac{h}{\sigma}x}\hat{f}\left(\frac{x}{\sigma}\right),\frac{e^{-x^{2}/2}}{\sqrt{2\pi}}}_{L^{2}\left(dx\right)}.
\]
Since $\hat{f}\in C_0(\R)$, we can apply the dominated
convergence theorem to conclude  that
\[
\limsup_{\sigma\to\infty}\,\left\langle e^{i\frac{h}{\sigma}x}\hat{f}\left(\frac{x}{\sigma}\right),\frac{e^{-x^{2}/2}}{\sqrt{2\pi}}\right\rangle _{L_2\left(dx\right)}=\limsup_{\sigma\to\infty}\,\hat{f}\left(0\right)\left\langle e^{i\frac{h}{\sigma}x},\frac{e^{-x^{2}/2}}{\sqrt{2\pi}}\right\rangle _{L_2\left(dx\right)}=\hat{f}\left(0\right)e^{-\frac{1}{2}\liminf\left(\frac{h}{\sigma}\right)^{2}}.
\]
\end{proof}
We now quantify the rate of convergence of 
$\sigma\E f\left(h+\sigma Z\right)$
as $\sigma\to\infty$. In the following, let 
$\cE=\{f: f\in L_1((x^2\vee 1)dx),\, f(x)=f(-x)\}.$
\begin{lemma}\label{lem:1d-disp}
Suppose that $f\in \cE$.
We have that
\[
\abs{\sigma\E f\left(h+\sigma Z\right)-\frac{e^{-\frac{1}{2}\left(\frac{h}{\sigma}\right)^{2}}}{\sqrt{2\pi}}\int f\left(x\right)\, dx}
\leq\frac{1}{2}\frac{1}{\sqrt{2\pi}}\frac{1}{\sigma^{2}}\norm{f}_{L_1(x^2dx)}.
\]
\end{lemma}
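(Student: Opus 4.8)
The plan is to Fourier-transform and separate the Gaussian weight from its value at zero frequency, then bound the error by a quantitative version of the argument in Lemma \ref{lem:softdisp}. First I would record the identity
\[
\sigma\E f(h+\sigma Z)=\Big\langle e^{i\frac{h}{\sigma}x}\hat f\Big(\frac{x}{\sigma}\Big),\frac{e^{-x^2/2}}{\sqrt{2\pi}}\Big\rangle_{L_2(dx)},
\]
which holds because $f\in L_1(dx)$ (ensuring $\hat f$ is well-defined, bounded, and continuous) and because the right-hand side is just the inverse Fourier transform of the Gaussian density evaluated via Parseval. Since $\int f\,dx=\sqrt{2\pi}\,\hat f(0)$, subtracting the claimed main term amounts to replacing $\hat f(x/\sigma)$ by $\hat f(0)$ inside the pairing, so the error is
\[
\Big\langle e^{i\frac{h}{\sigma}x}\big(\hat f(\tfrac{x}{\sigma})-\hat f(0)\big),\frac{e^{-x^2/2}}{\sqrt{2\pi}}\Big\rangle_{L_2(dx)}.
\]

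Next I would control $\hat f(k)-\hat f(0)$ pointwise. Because $f$ is even, $\hat f$ is real and even, so its derivative at $0$ vanishes; combined with the fact that $f\in L_1(x^2\,dx)$, the function $\hat f$ is $C^2$ with $\hat f''(k)=-\frac{1}{\sqrt{2\pi}}\int x^2 f(x)e^{-ikx}\,dx$, hence $\|\hat f''\|_\infty\le \frac{1}{\sqrt{2\pi}}\|f\|_{L_1(x^2dx)}$. A second-order Taylor estimate with $\hat f'(0)=0$ then gives
\[
\big|\hat f(\tfrac{x}{\sigma})-\hat f(0)\big|\le \frac{1}{2}\,\frac{x^2}{\sigma^2}\,\|\hat f''\|_\infty\le \frac{1}{2}\,\frac{x^2}{\sigma^2}\,\frac{1}{\sqrt{2\pi}}\|f\|_{L_1(x^2dx)}.
\]
Plugging this into the error pairing, bounding $|e^{i h x/\sigma}|\le 1$, and using $\int x^2 \frac{e^{-x^2/2}}{\sqrt{2\pi}}\,dx=1$ yields exactly the claimed bound
\[
\Big|\sigma\E f(h+\sigma Z)-\frac{e^{-\frac12(h/\sigma)^2}}{\sqrt{2\pi}}\int f\,dx\Big|\le \frac{1}{2}\,\frac{1}{\sqrt{2\pi}}\,\frac{1}{\sigma^2}\,\|f\|_{L_1(x^2dx)}.
\]
Note that the $h$-dependence of the main term comes out automatically: $\langle e^{ihx/\sigma},\frac{e^{-x^2/2}}{\sqrt{2\pi}}\rangle = e^{-\frac12(h/\sigma)^2}$, so there is no further error incurred there.

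The main obstacle, such as it is, is purely a regularity bookkeeping issue: justifying that $\hat f\in C^2$ with the stated bound on $\hat f''$, and that the Parseval identity above is valid, under only the hypothesis $f\in\cE=\{f\in L_1((x^2\vee 1)dx): f(x)=f(-x)\}$. This follows from differentiating under the integral sign (legitimate since $x^2 f\in L_1$) and from the fact that $L_1$ functions have bounded continuous Fourier transforms, together with Plancherel/Parseval applied to the Gaussian, which is Schwartz. One should also remark that the evenness of $f$ is exactly what forces $\hat f'(0)=\frac{-i}{\sqrt{2\pi}}\int x f(x)\,dx=0$, which is what upgrades the naive first-order $O(1/\sigma)$ bound to the $O(1/\sigma^2)$ bound claimed; without the symmetry hypothesis the estimate would be weaker. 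I expect no genuine difficulty beyond this, and the whole argument is a quantitative refinement of the soft statement already proved in Lemma \ref{lem:softdisp}.
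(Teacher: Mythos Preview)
Your proof is correct and follows essentially the same Fourier-side approach as the paper: write $\sigma\E f(h+\sigma Z)$ as a pairing with a Gaussian, subtract $\hat f(0)$, and bound $|\hat f(x/\sigma)-\hat f(0)|$ by $\tfrac{1}{2\sqrt{2\pi}}\tfrac{x^2}{\sigma^2}\|f\|_{L_1(x^2dx)}$ before integrating against the Gaussian. The only cosmetic difference is that the paper obtains this pointwise bound directly from $|\cos\theta-1|\le\theta^2/2$ applied inside the integral (using evenness to drop the sine), whereas you package the same computation as a second-order Taylor estimate on $\hat f$ with $\hat f'(0)=0$; the paper also invokes density in $L_2$ to justify the Parseval step rather than your remark that the Gaussian is Schwartz.
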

\begin{proof}
Note that by density, it suffices to check this for $f\in L_2(dx)\cap L_1((x^2\vee 1)dx)$. By the triangle inequality,
\[
\abs{\left\langle e^{i\frac{h}{\sigma}x}\hat{f}\left(\frac{x}{\sigma}\right),\frac{e^{-x^{2}/2}}{\sqrt{2\pi}}\right\rangle _{L_2\left(dx\right)}-\hat{f}\left(0\right)e^{-\frac{1}{2}\left(\frac{h}{\sigma}\right)^{2}}}\leq\int\abs{\hat{f}\left(\frac{x}{\sigma}\right)-\hat{f}\left(0\right)}\frac{e^{-x^{2}/2}}{\sqrt{2\pi}}\, dx.
\]
Since $f$ is even,
\[
\hat{f}\left(\frac{x}{\sigma}\right)-\hat{f}\left(0\right)=\frac{1}{\sqrt{2\pi}}\int f\left(y\right)\left(e^{i\frac{x}{\sigma}y}-1\right)\, dy=\frac{1}{\sqrt{2\pi}}\int f\left(y\right)\left(\cos\left(\frac{x}{\sigma}y\right)-1\right)\, dy
\]
so that
\[
\abs{\hat{f}\left(\frac{x}{\sigma}\right)-\hat{f}\left(0\right)}\leq\frac{1}{2}\frac{1}{\sqrt{2\pi}}\frac{x^{2}}{\sigma^{2}}\norm{f}_{L_1(x^2dx)}
\]
since $\abs{1-\cos\theta} \leq\frac{1}{2}\theta^{2}$. Therefore
\[
\abs{\left\langle e^{i\frac{h}{\sigma}x}\hat{f}\left(\frac{x}{\sigma}\right),
\frac{e^{-x^{2}/2}}{\sqrt{2\pi}}\right\rangle _{L^{2}\left(dx\right)}-\hat{f}\left(0\right)e^{-\frac{1}{2}\left(\frac{h}{\sigma}\right)^{2}}}
\leq\frac{1}{2}\frac{1}{\sqrt{2\pi}}\frac{1}{\sigma^{2}}\norm{f}_{L_1(x^2dx)}.
\]
\end{proof}
This result shows that the curves
\[
\sigma e^{-\frac{1}{2}\left(\frac{h}{\sigma}\right)^{2}}=const.
\]
are distinguished for problems of the above type. This is made more clear by the following corollary.
\begin{corollary}\label{cor:limit-ratio}
Suppose that $f,g\in\cE$. Assume that $\int f\neq0$ and that
$\lim_{\sigma\to\infty}\,\sigma^{2}\E f\left(h+\sigma Z\right)$ is finite.
Then 
\[
\lim_{\sigma\to\infty}\frac{\sigma^{2}\E f\left(h+\sigma Z\right)}{\int f\left(x\right)\, dx}=\lim_{\sigma\to\infty}\frac{\sigma e^{-\frac{1}{2}\left(\frac{h}{\sigma}\right)^{2}}}{\sqrt{2\pi}}.
\]
Furthermore,
\[
\lim_{\sigma\to\infty}\frac{\E g\left(h+\sigma Z\right)}{\E f\left(h+\sigma Z\right)}=\frac{\int g}{\int f}.
\]
\end{corollary}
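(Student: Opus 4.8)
The plan is to read off both limits directly from the quantitative estimate of Lemma~\ref{lem:1d-disp}. Introduce the shorthand $a(\sigma)=\frac{\sigma e^{-\frac12(h/\sigma)^2}}{\sqrt{2\pi}}$, and for $\phi\in\cE$ set
\[
E_\phi(\sigma)=\sigma^2\,\E\phi(h+\sigma Z)-a(\sigma)\int\phi(x)\,dx.
\]
Multiplying the inequality in Lemma~\ref{lem:1d-disp} by $\sigma$ shows $\abs{E_\phi(\sigma)}\le\frac{1}{2\sqrt{2\pi}\,\sigma}\norm{\phi}_{L_1(x^2dx)}$, so that $E_\phi(\sigma)\to0$ as $\sigma\to\infty$; I would apply this with $\phi=f$ and with $\phi=g$.

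First I would handle $f$. By hypothesis $\sigma^2\E f(h+\sigma Z)$ has a finite limit $L$, and since $E_f(\sigma)\to0$ we get $a(\sigma)\int f\to L$. As $\int f\neq0$, this forces $a(\sigma)\to L/\int f$, and dividing by $\int f$ gives
\[
\lim_{\sigma\to\infty}\frac{\sigma^2\E f(h+\sigma Z)}{\int f(x)\,dx}=\frac{L}{\int f}=\lim_{\sigma\to\infty}a(\sigma),
\]
which is exactly the first assertion (in particular the right-hand limit is shown to exist).

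For the ``furthermore'', I would apply the same decomposition to $g$ to get $\sigma^2\E g(h+\sigma Z)=a(\sigma)\int g+E_g(\sigma)\to\frac{L}{\int f}\int g$. Rewriting $\frac{\E g(h+\sigma Z)}{\E f(h+\sigma Z)}=\frac{\sigma^2\E g(h+\sigma Z)}{\sigma^2\E f(h+\sigma Z)}$ and passing to the limit then yields $\frac{(L/\int f)\int g}{L}=\frac{\int g}{\int f}$.

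The only genuinely delicate step — and hence the ``main obstacle'', such as it is — is the final division: it is legitimate because the denominator tends to $L\neq0$. This nonvanishing holds in the regime in which the corollary is invoked, where $(\sigma,h(\sigma))$ runs along a curve $\sigma e^{-(h/\sigma)^2/2}=\text{const.}$ and hence $a(\sigma)$ tends to a strictly positive constant, forcing $L=(\int f)\lim_{\sigma}a(\sigma)\neq0$. Everything else is just the observation that the error in Lemma~\ref{lem:1d-disp} is $O(\sigma^{-2})$ and so washes out after multiplication by $\sigma$.
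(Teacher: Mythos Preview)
Your proof is correct and follows essentially the same route as the paper: both arguments multiply the bound from Lemma~\ref{lem:1d-disp} by $\sigma$ to see that $\sigma^2\E\phi(h+\sigma Z)$ and $a(\sigma)\int\phi$ differ by $o(1)$, deduce the first limit from the hypothesis on $f$, and then feed that back in with $\phi=g$. If anything, you are slightly more explicit than the paper in flagging that the final division requires $L\neq0$; the paper's proof tacitly makes the same assumption when it divides $a\,\frac{\int g}{\int f}$ by $a$, and as you note this is harmless in the applications since there $a(\sigma)$ tends to a positive constant.
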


\begin{proof}
The assumption that 
\[
\lim_{\sigma\to\infty}\,\sigma^{2}\E f\left(h+\sigma Z\right)=a\in \R
\qquad\text{ 
implies that
 }\qquad
\lim_{\sigma\to\infty}\,\frac{\sigma e^{-\frac{1}{2}\left(\frac{h}{\sigma}\right)^{2}}}{\sqrt{2\pi}}\int f\left(x\right)\, dx=a
\]
by the lemma above. Then by that same lemma,
\[
\sigma^{2}\E g\left(h+\sigma Z\right)  =\frac{\sigma e^{-\frac{1}{2}\left(\frac{h}{\sigma}\right)^{2}}}{\sqrt{2\pi}}\int g\left(x\right)\, dx+o\left(1\right) =a\frac{\int g\left(x\right)\, dx}{\int f\left(x\right)\, dx}+o\left(1\right).\qquad
\]
\end{proof}
We end this section with the following observation.
\begin{corollary}\label{cor:1d-disp-diff-ulb}
Assume that $f,g\in\cE$ and $\int f \neq 0$. Then
\[
\abs{\sigma^{2}\E g\left(h+\sigma Z\right)-\frac{\int g}{\int f}\cdot\sigma^{2}\E f\left(h+\sigma Z\right)}\leq
\frac{1}{2}\frac{1}{\sqrt{2\pi}}\frac{1}{\sigma}\left(\frac{\norm{f}_{L_1(y^2dy)}}{\abs{\int f}}\norm{g}_{L_1(dx)}+\norm{g}_{L_1(y^2dy)}\right).
\]
\end{corollary}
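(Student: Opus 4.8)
The plan is to reduce the estimate to a single application of the $1$-dimensional dispersive estimate, Lemma \ref{lem:1d-disp}. Write $E_\sigma = e^{-\frac12(h/\sigma)^2}/\sqrt{2\pi}$, noting $0<E_\sigma\leq 1/\sqrt{2\pi}$. Since both $f$ and $g$ lie in $\cE$, Lemma \ref{lem:1d-disp} applies to each separately, giving remainders $R_f, R_g$ with
\[
\sigma\,\E f(h+\sigma Z) = E_\sigma\int f + R_f,\qquad \sigma\,\E g(h+\sigma Z) = E_\sigma\int g + R_g,
\]
and $\abs{R_f}\leq \tfrac{1}{2\sqrt{2\pi}\,\sigma^2}\norm{f}_{L_1(x^2dx)}$, $\abs{R_g}\leq \tfrac{1}{2\sqrt{2\pi}\,\sigma^2}\norm{g}_{L_1(x^2dx)}$.

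Next I would multiply each identity by $\sigma$ and form the combination appearing in the statement, weighting the $f$-identity by $\int g/\int f$. The crucial point is that the leading-order ($E_\sigma$) terms cancel exactly, since $E_\sigma\int g - \tfrac{\int g}{\int f}E_\sigma\int f = 0$. This leaves
\[
\sigma^2\,\E g(h+\sigma Z) - \frac{\int g}{\int f}\,\sigma^2\,\E f(h+\sigma Z) = \sigma\Bigl(R_g - \frac{\int g}{\int f}R_f\Bigr).
\]

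Finally, I would apply the triangle inequality together with the bounds on $R_f$ and $R_g$ and the elementary estimate $\abs{\int g}\leq \norm{g}_{L_1(dx)}$, obtaining
\[
\abs{\sigma^2\,\E g(h+\sigma Z) - \frac{\int g}{\int f}\,\sigma^2\,\E f(h+\sigma Z)} \leq \frac{1}{2}\frac{1}{\sqrt{2\pi}}\frac{1}{\sigma}\Bigl(\norm{g}_{L_1(x^2dx)} + \frac{\norm{g}_{L_1(dx)}}{\abs{\int f}}\norm{f}_{L_1(x^2dx)}\Bigr),
\]
which is the claimed inequality (the two terms on the right-hand side are merely listed in the opposite order). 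There is no substantive obstacle here: the only point requiring care is that the factor $E_\sigma$ does not decay in $\sigma$, so it must be made to drop out by the exact cancellation above \emph{before} any estimation; once that is done, the rest is routine bookkeeping with the triangle inequality.
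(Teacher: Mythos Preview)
Your proof is correct and is essentially the same as the paper's: both apply Lemma \ref{lem:1d-disp} once to $f$ and once to $g$, arrange that the leading-order term $\sigma E_\sigma\int g$ cancels, and then bound the remaining terms by the triangle inequality together with $\abs{\int g}\leq\norm{g}_{L_1}$. The only cosmetic difference is that the paper writes the triangle inequality as a splitting through the intermediate quantity $\sigma E_\sigma\int g$, whereas you first compute the exact identity $\sigma(R_g-\tfrac{\int g}{\int f}R_f)$ and then estimate; these are the same argument.
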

\begin{proof}
Recall that by Lemma \ref{lem:1d-disp}
\[
\abs{\sigma^{2}\E g\left(h+\sigma Z\right) - \frac{\sigma e^{-\frac{1}{2}\left(\frac{h}{\sigma}\right)^{2}}}{\sqrt{2\pi}}\int g\left(x\right)\, dx}
\leq \frac{1}{2}\frac{1}{\sqrt{2\pi}}\frac{1}{\sigma}\norm{g}_{L_1(y^2dy)}.
\]
Similarly, by Lemma \ref{lem:1d-disp} we get that
\[
\abs{\frac{\sigma e^{-\frac{1}{2}(\frac{h}{\sigma})^2}}{\sqrt{2\pi}}-\frac{\sigma^{2}\E f\left(h+\sigma Z\right)}{\int f}}
\leq \frac{1}{2\sqrt{2\pi}\sigma}\frac{\norm{f}_{L_1(y^2dy)}}{\abs{\int f}}.
\]
Note that
\begin{align*}
\abs{\sigma^{2}\E g\left(h+\sigma Z\right)-\frac{\int g}{\int f}\cdot\sigma^{2}\E f\left(h+\sigma Z\right)}
&\leq \abs{\sigma^{2}\E g\left(h+\sigma Z\right) - \frac{\sigma e^{-\frac{1}{2}\left(\frac{h}{\sigma}\right)^{2}}}{\sqrt{2\pi}}\int g\left(x\right)\, dx}\\
&+ \abs{\frac{\sigma e^{-\frac{1}{2}(\frac{h}{\sigma})^2}}{\sqrt{2\pi}}-\frac{\sigma^{2}\E f\left(h+\sigma Z\right)}{\int f}}\cdot\norm{g}_1.
\end{align*}
Combining this with the above two estimates gives the result.
\end{proof}

\subsection{Dispersive estimates in 2-d}\label{sec:2d-disp}
In this subsection, we develop  higher dimensional analogues of the previous results. 
As we will only use the $d=2$ result, and as we think it is illustrative of the heart of the matter, we focus only on this case. 
The setting of this subsection is substantially different from the previous in scope, thus with the eventual application of these results
in mind, we organize this subsection as follows. We begin by stating the main result in full generality. We then focus on a 
special case that clarifies the nature and form of these estimates.  This result will not be used in the proof
of the main results of this paper, but has the added benefit of simplifying the application of these results 
under an additional hypothesis (to be introduced in Sect. 6). 

We begin by stating the main theorem
and its proof. Suppose that we have a family of positive-definite linear operators on $\R^2$ and vectors in $\R^2$ indexed by 
some set $T$, $\{(\Sigma\left(t\right),\mathbf{m}(t))\}_{t\in T}$, where
\[
\Sigma\left(t\right)={\lambda}_{1}\left(t\right){v}_{1}\otimes {v}_{1}\left(t\right)+{\lambda}_{2}\left(t\right)v_{2}\otimes v_{2}\left(t\right)
\]
with $\norm{v_{i}}_2=1$, $v_{1}\perp v_{2}$. Suppose that $\{w_i\}$ is an orthonormal frame in $\R^2$ and let $\nu$ be a positive real number. 

Define the set $\Tbull=T\cup\{\infty\}$, and extend the family of eigenvalues and eigenvectors to $t\in \Tbull$ by
\begin{equation}
\lambda_1(t) =
\begin{cases}
\lambda_1(t) & t\in T\\
\nu & t= \infty
\end{cases}
\qquad
\lambda_2(t) =
\begin{cases}
\lambda_2(t) & t\in T\\
\infty & t= \infty
\end{cases}
\qquad
v_i (t) =
\begin{cases}
v_i(t) & t\in T\\
w_i & t=\infty.
\end{cases}
\end{equation}
For $t\in\Tbull$, let
\begin{equation}\label{eq:a-def}
a(y;t) =
\begin{cases}
\left\langle \mathbf{m}(t),v_{1}(t)\right\rangle +\lambda_{1}^{1/2}y & t\in T\\
 \nu^{1/2}y & t=\infty. 
 \end{cases}
\end{equation}
\begin{definition}
The $\fG_t$-\emph{transform} of $f$ for $t\in\Tbull$ 
is 
\begin{equation}\label{eq:frakg-def}
\fG_t[f](x)=\E\left[f\left(a\left(Z;t\right)v_{1}(t)+xv_{2}(t)\right)\right]
\end{equation}
where the expectation is in $Z$ where $Z$ is a standard Gaussian.
\end{definition}
Given $f\in L_1(\R^2,dx)$ and the frame $\{w_i\}$, we define the \emph{bracket}
of $f$ with respect to $\{w_i\}$ by
\begin{equation} \label{eq:brac-def}
\brac{f}(x)=\int f(x w_1+ y w_2)dy.
\end{equation}

Now we can state our 2-d dispersive estimates.
Define the remainder 
\begin{equation}\label{eq:remainder}
R\left(t\right)=\lambda_{2}^{1/2}I-\frac{e^{-\frac{1}{2}\frac{m_{2}^{2}}{\lambda_{2}}}}{\sqrt{2\pi}}\E\left[\left\langle f\right\rangle \left(\nu^{1/2}Z\right)\right]
\end{equation}
for all $t\in T$,
where 
\[
I=\E f\left(\mathbf{m}+\sqrt{\Sigma}\mathbf{Z}\right),
\quad\text{and}\quad 
m_{i}  =\left\langle \mathbf{m},v_{i}\right\rangle,\  i=1,2,
\]
and where $\mathbf{Z}$ is a standard Gaussian vector. 
Define the errors
\begin{equation}\label{eq:errorsei}
\Delta_{1}(t)=\norm{\fG_t[f]-\fG_\infty[f]}_{L_{1}(dy)}\quad\text{and}\quad \Delta_{2}(t)=\norm{\fG_t[f]-\fG_\infty[f]}_{L_{1}(\abs{y}dy)}.
\end{equation}
Define the family of linear maps $A(t):\R^{2}\to\R^{2}$ by 
\begin{equation}\label{eq:Aop-def}
A(t) = \lambda_1^{1/2}v_1\tensor e_1 + v_2\tensor e_2 
\end{equation}
for $t\in\Tbull$, where $\{e_i\}$ is the standard basis.
Finally, let 
$\cA=\{f: f\in L_1(\norm{x}^2\vee 1dx)\cap L_2(dx),\ f(x)=f(-x)\}$. In the following, we let
$\norm{A}$ denote the Frobenius norm.

\begin{theorem}\label{thm:2d-disp-bound} Let $f\in \cA$. Then for all $t\in T$, 
\begin{equation}
\abs{R}\leq\frac{1}{\lambda_{2}}\frac{1}{2\sqrt{2\pi}}\norm{\fG_\infty[f]}_{L_{1}(\abs{y}^{2}dy)}+\frac{e^{-\frac{1}{2}\frac{m_{2}^{2}}{\lambda_{2}}}}{\sqrt{2\pi}}\Delta_1+\frac{1}{\pi}\frac{1}{\lambda_{2}^{1/2}}\Delta_2.
\end{equation}
Suppose furthermore that $f\in Lip(\R^2)$ 
and has exponential decay
\[
\abs{f\left(x\right)}\leq c_{1}e^{-c_{2}\norm{x}_2}
\]
for some constants $c_{1},c_{2}>0$. Let $c=\max\{\frac{\sqrt{2}}{c_{2}},\left(\frac{\sqrt{2}}{c_{2}}\right)^{2}\}$. Then for every $M\geq 2$ we have the estimates
\[
\Delta_{i}(t)\leq Lip(f)\left[\norm{A(t)-A(\infty)}\left(M^{3}(1+\frac{4}{M}\frac{1-e^{-M^{2}/2}}{\sqrt{2\pi}})\right)+M^2\abs{m_{1}}\right]+4c_{1}c\left[ e^{-\frac{M}{c}}(M+1)+ e^{-\frac{M^{2}}{2}}\right].
\]
Finally we have that $\norm{\fG_\infty[f]}_{L_1(\abs{y}^2dy)}\leq C(f)<\infty$.
\end{theorem}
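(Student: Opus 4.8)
To establish the final bound, the plan is a direct Tonelli-plus-change-of-variables argument. By the definition of the $\fG_\infty$-transform in \eqref{eq:frakg-def}, the definition of $a(\cdot;\infty)$ in \eqref{eq:a-def}, and the facts $v_1(\infty)=w_1$, $v_2(\infty)=w_2$, we have
\[
\fG_\infty[f](x)=\E\left[f\left(\nu^{1/2}Zw_1+xw_2\right)\right]=\int_{\R}f(yw_1+xw_2)\,\frac{e^{-y^2/(2\nu)}}{\sqrt{2\pi\nu}}\,dy,
\]
where $Z$ is a standard Gaussian. First I would bound $\abs{\fG_\infty[f](x)}$ by pulling the absolute value inside this integral, multiply by $x^2$, integrate in $x$, and invoke Tonelli's theorem (legitimate since the integrand is nonnegative) to exchange the order of integration.

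The one genuinely useful observation is the pointwise inequality $x^2\le\norm{yw_1+xw_2}_2^2=y^2+x^2$, which holds because $\{w_1,w_2\}$ is orthonormal; combining it with the crude bound $e^{-y^2/(2\nu)}/\sqrt{2\pi\nu}\le 1/\sqrt{2\pi\nu}$ yields
\[
\norm{\fG_\infty[f]}_{L_1(\abs{y}^2dy)}\le\frac{1}{\sqrt{2\pi\nu}}\int_{\R}\int_{\R}\norm{yw_1+xw_2}_2^2\,\abs{f(yw_1+xw_2)}\,dy\,dx.
\]
Next I would perform the substitution $u=yw_1+xw_2$, which is an orthogonal linear isomorphism of $\R^2$ and hence measure-preserving, so the right-hand side equals $\tfrac{1}{\sqrt{2\pi\nu}}\int_{\R^2}\norm{u}_2^2\,\abs{f(u)}\,du=\tfrac{1}{\sqrt{2\pi\nu}}\norm{f}_{L_1(\norm{x}^2dx)}$.

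Finally, since $f\in\cA$ entails in particular $f\in L_1((\norm{x}^2\vee1)\,dx)$, we have $\norm{f}_{L_1(\norm{x}^2dx)}\le\norm{f}_{L_1((\norm{x}^2\vee1)\,dx)}<\infty$, so the claim holds with $C(f)=\tfrac{1}{\sqrt{2\pi\nu}}\norm{f}_{L_1(\norm{x}^2dx)}$. I do not anticipate any serious obstacle here; the only points requiring attention are the justification of Tonelli (via nonnegativity) and the observation that passing from the coordinates $(y,x)$ to $u\in\R^2$ through an orthonormal frame has unit Jacobian.
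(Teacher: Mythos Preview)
Your argument is correct and does establish finiteness, but the constant you obtain, $C(f)=\tfrac{1}{\sqrt{2\pi\nu}}\norm{f}_{L_1(\norm{x}^2dx)}$, depends on $\nu$ as well as on $f$. In the paper this is not innocuous: in the application (Sect.~\ref{sec:long-time}) one takes $\nu=\nu(\tilde\alpha,\tau)=\tfrac34\tilde\alpha\tau$ with $\tau$ ranging over $(0,1]$, so a bound that blows up like $\nu^{-1/2}$ as $\tau\to0$ is unusable. Remark~\ref{rem:disp-bound} flags exactly this issue, noting that the estimates ``will explode as $\nu\to0$ in general'' and that extra regularity on $f$ is what prevents this.

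The paper takes a different route precisely to avoid the $\nu$-dependence. It reads the ``Finally'' clause under the exponential-decay hypothesis, and instead of substituting $y=\nu^{1/2}z$ (which turns the Gaussian into $\tfrac{1}{\sqrt{2\pi\nu}}e^{-y^2/(2\nu)}$ and produces your $\nu^{-1/2}$), it keeps the standard Gaussian as a probability measure:
\[
\int\abs{\fG_\infty[f](v)}\,v^2\,dv \le \int\!\!\int \abs{f(\nu^{1/2}u\,w_1+v\,w_2)}\,v^2\,dv\;\frac{e^{-u^2/2}}{\sqrt{2\pi}}\,du \le \sup_{s\in\R}\int\abs{f(s\,w_1+v\,w_2)}\,v^2\,dv,
\]
i.e.\ the anisotropic norm $\norm{f}_{L_\infty(du)L_1(v^2dv)}$. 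This is finite because $\abs{f(x)}\le c_1e^{-c_2\norm{x}_2}$, and it is manifestly independent of $\nu$. So: your proof is cleaner and uses only $f\in\cA$, but the paper's proof buys uniformity in $\nu$, which is the point here and requires the stronger decay hypothesis.
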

%\begin{remark}
%
%\end{remark}
\begin{proof}
The first part is the content of Lemma \ref{lem:2d-disp-bound}. The second part is the content of Corollary \ref{cor:deltai-bounds}. 
The last part comes from the exponential decay of $f$. In particular,
since $f$ has exponential decay, $f=f(uw_1 + vw_2)$ is in the anisotropic Lebesgue space $f\in L_\infty(du)L_1(v^2dv)= B$, 
so that
\begin{align*}
\int \abs{\fG_\infty [f][v]}v^2dv &= \int \abs{\int f(\nu^{1/2}uw_1+ v w_2)e^{-u^2/2}du/\sqrt{2\pi}}v^2 dv\\
&\leq \int \int \abs{f(\nu^{1/2}uw_1+ v w_2)}v^2 dv e^{-u^2/2}du/\sqrt{2\pi} \\
&\leq \norm{f}_{B}.
\end{align*}

\end{proof}
\subsubsection{A Motivating Example: $T=\R_+$} \label{sec:disp-motiv-exam} % fix me as an example 
In this setting we consider the specific example where $T=\R_+$.
To this end, fix $\nu > 0$ and for each $\nu$
let $T_\nu=\R_+$. Suppose that for each $\nu$, we have a one-parameter family $\{\Sigma\left(t\right)\}_{t\in T_\nu}$
of symmetric positive-definite matrices 
\[
\Sigma\left(t\right)=\lambda_{1}\left(t\right)v_{1}\otimes v_{1}\left(t\right)+\lambda_{2}\left(t\right)v_{2}\otimes v_{2}\left(t\right)
\]
with $\norm{v_{i}}_2=1$, $v_{1}\perp v_{2}$, and a
one-parameter family $\{\mathbf{m}\left(t\right)\}_{t\in T_\nu}$ of vectors in
$\R^{2}$. 

Assume further that as $t\to+\infty$: 
\begin{itemize}
\item $\left\{ v_{1},v_{2}\right\} \to\left\{ w_1,w_2\right\} $
\item $\lambda_{1}\to\nu$, $\lambda_{2}\to+\infty$
\item $\left\langle \mathbf{m},v_{1}\right\rangle \to0$.
\end{itemize}

Nominally, the goal of higher dimensional dispersive estimates is to study the asymptotics of 
\[
\E f\left(\mathbf{m}(t)+\sqrt{\Sigma(t)}\mathbf{Z}\right),
\]
in this limit where $\mathbf{Z}$ is a standard Gaussian vector. 
In particular, we seek to develop estimates
that depend on the asymptotic spectral properties of $\Sigma_t$. The main technical difficulty that presents itself
is in obtaining estimates that are uniform in $\nu$ as $\nu\rightarrow 0$.

The reader will observe that in 1-d the essential idea is that by inverting, i.e. by moving to Fourier space and rescaling, the large noise limit becomes a small noise limit, so that we are in the classical setting of concentration of measure.
When searching for the analogous 
estimates in higher dimensions, one is tempted to ``invert in every direction'', thereby finding estimates that are in terms of the norm
of $\Sigma^{-1/2}$. This will inevitably lead to issues as $\nu\rightarrow 0$. Put simply, if one seeks mixed large noise-small noise limits, one cannot simply work in physical or Fourier space alone.
The main idea behind these results is that one should only
invert in the directions that disperse and use the regularity of $f$ to obtain estimates that are uniform 
in the remaining variables.

%%We begin with the following argument. Though we do not use this theorem in the subsequent it is very
%%important for motivation. The reader will observe that there is a more elementary proof of this result. This
%%extra effort is to clarify why it is that the main inequality from
%%Theorem \ref{thm:2d-disp-bound} should be in the form stated. 

Now we prove the analog of Lemma \ref{lem:softdisp}. These results will not be used in the subsequent. Instead they are motivational. The reader will observe that the decision not to invert in both directions is reflected by the iterated Gaussian-bracket structure of the integral in Proposition \ref{prop:2dsoftdisp}, and by the appearance of $\nu$ in the argument of the integrand.  We begin by observing the following bounds.
\begin{lemma}\label{lem:frakG-bounds}
Under the assumptions of Section \ref{sec:disp-motiv-exam}, the operators $\fG_t: L_1(\R^2,dx)\rightarrow L_1(\R,dx)$ satisfy
\[
\norm{\fG_t}\leq \frac{1}{\sqrt{2\pi \lambda_1(t)}}.
\]
In particular since $\lambda_1\rightarrow \nu >0$, they are uniformly bounded in $t$. Furthermore,
$\fG_t\rightarrow\fG_\infty$ in the strong operator topology.
\end{lemma}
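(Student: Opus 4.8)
\textbf{Proof proposal for Lemma \ref{lem:frakG-bounds}.}

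The plan is to treat the two assertions (operator bound; strong convergence) separately, and to reduce both to the one-dimensional structure of the $\fG_t$-transform. First I would prove the norm bound. By definition \eqref{eq:frakg-def}, for $t\in T$ we have $\fG_t[f](x)=\E\left[f\left(a(Z;t)v_1(t)+x v_2(t)\right)\right]$ where $a(Z;t)=\innerp{\mathbf{m}(t),v_1(t)}+\lambda_1^{1/2}(t)Z$, so that $a(Z;t)$ is Gaussian with mean $m_1(t)$ and variance $\lambda_1(t)$. Writing out the expectation as an integral against the Gaussian density, performing the change of variables $s=a(z;t)$ which introduces a Jacobian factor $\lambda_1^{-1/2}(t)$, and using that $\{v_1(t),v_2(t)\}$ is orthonormal so $(s,x)\mapsto s v_1(t)+x v_2(t)$ is an isometry of $\R^2$, Tonelli's theorem gives
\[
\int_{\R}\abs{\fG_t[f](x)}dx \leq \frac{1}{\sqrt{2\pi\lambda_1(t)}}\int_{\R}\int_{\R}\abs{f\left(s v_1(t)+x v_2(t)\right)}\,ds\,dx=\frac{1}{\sqrt{2\pi\lambda_1(t)}}\norm{f}_{L_1(\R^2,dx)}.
\]
The case $t=\infty$ is identical with $\lambda_1(\infty)=\nu$ and $v_i(\infty)=w_i$. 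Since $\lambda_1(t)\to\nu>0$ as $t\to\infty$ and is continuous and positive on $T$, the family is uniformly bounded in $t$ on $\Tbull$ (shrinking $T$ to a neighborhood of $\infty$ if needed — in the motivating example $T=\R_+$ and $\lambda_1$ is bounded below on all of $\Tbull$).

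Next I would prove strong convergence $\fG_t[f]\to\fG_\infty[f]$ in $L_1(\R,dx)$ as $t\to\infty$. By the uniform bound just established and density of, say, $C_c(\R^2)$ in $L_1(\R^2,dx)$, together with a standard $3\epsilon$-argument, it suffices to prove convergence for $f$ continuous with compact support (or, even better, for $f\in\cA\cap C_c$, which is what the applications need). For such an $f$ the integrand $f\left(a(Z;t)v_1(t)+x v_2(t)\right)$ converges pointwise in $(Z,x)$ to $f\left(\nu^{1/2}Z\, w_1+x w_2\right)$ as $t\to\infty$, using the three limiting assumptions of Section \ref{sec:disp-motiv-exam}: $v_i(t)\to w_i$, $\lambda_1(t)\to\nu$, and $m_1(t)=\innerp{\mathbf{m}(t),v_1(t)}\to 0$. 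Since $f$ is bounded and compactly supported, the map $x\mapsto f\left(a(z;t)v_1(t)+x v_2(t)\right)$ is supported, uniformly in $t$ large and $z$ in a compact set, inside a fixed bounded $x$-interval; combining this with boundedness of $f$ and the Gaussian weight in $z$ furnishes an integrable dominating function on $\R_z\times\R_x$, so dominated convergence gives $\fG_t[f](x)\to\fG_\infty[f](x)$ and in fact $\norm{\fG_t[f]-\fG_\infty[f]}_{L_1(dx)}\to 0$. (One must mildly justify the tail control in $x$ uniformly in $t$; for $f\in C_c$ this is immediate since the affine images of $\mathrm{supp}\,f$ under the nearly-converged frame stay in a fixed compact set.)

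The main obstacle is the interchange of limit and integral in the strong-convergence step, specifically producing a $t$-uniform integrable majorant on the full product space $\R_Z\times\R_x$: the difficulty is that the $x$-support of $\fG_t[f]$, though bounded, depends on $t$ and on $Z$, and one needs this dependence controlled uniformly for $t$ in a neighborhood of $\infty$. For $f\in C_c$ this is handled by noting that the relevant affine maps converge uniformly on compacts, so there is a single compact set containing all the supports for $t$ large; the Gaussian factor then dominates in the $Z$ variable. The reduction to $C_c$ via the uniform operator bound from the first part is what makes this clean; without it one would have to fight the $\nu\to 0$ issues discussed in the surrounding text, but here $\nu$ is fixed and $\lambda_1$ is bounded below, so no such subtlety arises.
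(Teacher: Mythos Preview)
The paper states this lemma without proof, treating it as a routine observation. Your argument is correct and supplies exactly the details one would expect: the operator bound follows from bounding the Gaussian density by $1/\sqrt{2\pi}$, changing variables $s=a(z;t)$ (Jacobian $\lambda_1^{-1/2}$), and using that $(s,x)\mapsto sv_1(t)+xv_2(t)$ is an isometry of $\R^2$; the strong convergence then follows from the uniform bound, density of $C_c(\R^2)$ in $L_1(\R^2)$, and dominated convergence.

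One minor simplification in your closing paragraph: the ``main obstacle'' you flag (uniform-in-$t$ control of the $x$-support) is not actually present. Since $\{v_1(t),v_2(t)\}$ is orthonormal for every $t$, the condition $f\big(a(z;t)v_1(t)+xv_2(t)\big)\neq 0$ forces
\[
|x|=\big|\big\langle a(z;t)v_1(t)+xv_2(t),\,v_2(t)\big\rangle\big|\leq R
\]
whenever $\text{supp}\,f\subset B(0,R)$, and likewise for the $\fG_\infty$ term. So the $x$-support is contained in $[-R,R]$ for \emph{all} $t$ and $z$, and the majorant $2\norm{f}_\infty\,\indicator{[-R,R]}(x)\,e^{-z^2/2}/\sqrt{2\pi}$ is available immediately, without any appeal to convergence of the frames.
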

\begin{remark}\label{rem:LinfL1}
Similar estimates are true for $L_1(\R^2,(\norm{x}^k\vee 1)dx)$, and $L_2$ as well. Note that if 
$f(u v_1 + w v_2)\in L_{\infty}^uL_1^w$ we get that $\fG_t$ has $L^u_\infty L^w_1\rightarrow L_1$ norm at most 1. 
\end{remark}

\begin{proposition} \label{prop:2dsoftdisp}
Let $f\in L_{1}\left(\R^{2},dx\right)\cap L_2\left(\R^2,dx\right)$. We have that
\[
\limsup\,\lambda_{2}^{1/2}\E f\left(\mathbf{m}+\sqrt{\Sigma}\mathbf{Z}\right)=\frac{e^{-\frac{1}{2}\liminf\frac{m_2^2}{\lambda_{2}}}}{\sqrt{2\pi}}\E\left[\left\langle f\right\rangle \left(\nu^{1/2}Z\right)\right]
\]
where $\brac{f}$ is defined in (\ref{eq:brac-def}) and $Z$ is a standard normal random variable.
\end{proposition}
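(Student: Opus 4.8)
The plan is to reduce the two-dimensional statement to the one-dimensional soft dispersive estimate (Lemma \ref{lem:softdisp}) by decomposing the Gaussian vector $\mathbf{m}+\sqrt{\Sigma}\mathbf{Z}$ along the orthonormal frame $\{v_1(t),v_2(t)\}$. Writing $\mathbf{Z}=(Z_1,Z_2)$ in this frame, we have $\mathbf{m}+\sqrt{\Sigma}\mathbf{Z}=(m_1+\lambda_1^{1/2}Z_1)v_1+(m_2+\lambda_2^{1/2}Z_2)v_2$, so that by Fubini and the independence of $Z_1,Z_2$,
\[
\lambda_2^{1/2}\E f\left(\mathbf{m}+\sqrt{\Sigma}\mathbf{Z}\right)=\lambda_2^{1/2}\E_{Z_2}\Big[\fG_t[f]\big(m_2+\lambda_2^{1/2}Z_2\big)\Big],
\]
where $\fG_t[f](x)=\E_{Z_1}f\big(a(Z_1;t)v_1(t)+xv_2(t)\big)$ is exactly the $\fG_t$-transform from \eqref{eq:frakg-def}, with $a(y;t)=m_1+\lambda_1^{1/2}y$. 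This is precisely the object $\sigma\E \phi(h+\sigma Z)$ appearing in Lemma \ref{lem:softdisp}, with $\sigma=\lambda_2^{1/2}\to\infty$, $h=m_2$, and the test function $\phi=\fG_t[f]$ — except that $\phi$ itself depends on $t$.

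First I would verify the hypotheses needed to apply Lemma \ref{lem:softdisp}: by Lemma \ref{lem:frakG-bounds} (and Remark \ref{rem:LinfL1}), $\fG_t[f]\in L_1(dx)$ uniformly in $t$, and a similar argument gives $\fG_t[f]\in L_2(dx)$ when $f\in L_2(\R^2,dx)$; moreover $\fG_t\to\fG_\infty$ in the strong operator topology, so $\fG_t[f]\to\fG_\infty[f]$ in $L_1$. Then I would handle the $t$-dependence of the test function by splitting
\[
\lambda_2^{1/2}\E_{Z_2}\big[\fG_t[f](m_2+\lambda_2^{1/2}Z_2)\big]
=\lambda_2^{1/2}\E_{Z_2}\big[\fG_\infty[f](m_2+\lambda_2^{1/2}Z_2)\big]
+\lambda_2^{1/2}\E_{Z_2}\big[(\fG_t[f]-\fG_\infty[f])(m_2+\lambda_2^{1/2}Z_2)\big].
\]
For the first term, Lemma \ref{lem:softdisp} applied with $f\mapsto\fG_\infty[f]$, $\sigma=\lambda_2^{1/2}$, $h=m_2$ gives
$\limsup \lambda_2^{1/2}\E_{Z_2}[\fG_\infty[f](m_2+\lambda_2^{1/2}Z_2)]=\frac{1}{\sqrt{2\pi}}e^{-\frac12\liminf(m_2^2/\lambda_2)}\int\fG_\infty[f](x)\,dx$, and one checks directly from the definitions \eqref{eq:frakg-def}, \eqref{eq:brac-def} that $\int\fG_\infty[f](x)\,dx=\E[\brac{f}(\nu^{1/2}Z)]$ (again by Fubini, integrating out the $v_2$-direction first). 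The second (error) term is bounded by $\lambda_2^{1/2}\cdot\frac{C}{\lambda_2^{1/2}}\norm{\fG_t[f]-\fG_\infty[f]}_{L_1(dx)}$ — wait, more carefully: since $\lambda_2^{1/2}\E_{Z_2}[\psi(m_2+\lambda_2^{1/2}Z_2)]$ has the representation $\int\psi(x)\frac{e^{-(x-m_2)^2/(2\lambda_2)}}{\sqrt{2\pi}}\,dx$, its absolute value is at most $\frac{1}{\sqrt{2\pi}}\norm{\psi}_{L_1(dx)}$, so the error term is at most $\frac{1}{\sqrt{2\pi}}\norm{\fG_t[f]-\fG_\infty[f]}_{L_1(dx)}\to0$ as $t\to\infty$. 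Combining the three displays yields the claimed identity for the $\limsup$.

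The main obstacle I anticipate is the interchange of the limit with the $t$-dependent test function: one cannot directly invoke Lemma \ref{lem:softdisp} because its statement is for a fixed $f$, so the uniform-in-$t$ $L_1$ bound on $\fG_t[f]$ (from Lemma \ref{lem:frakG-bounds}) together with the $L_1$-convergence $\fG_t[f]\to\fG_\infty[f]$ must be used to absorb the difference into a vanishing error, as in the splitting above. A secondary technical point is justifying the Fubini steps and confirming $\fG_t[f]\in L_1\cap L_2$ for $f\in L_1(\R^2,dx)\cap L_2(\R^2,dx)$, which follows from Minkowski's integral inequality and the Gaussian normalization; this is routine given Remark \ref{rem:LinfL1}. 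Everything else is bookkeeping with the definitions of $\fG_t$ and $\brac{\cdot}$.
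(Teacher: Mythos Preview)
Your proposal is correct and follows essentially the same overall strategy as the paper --- decompose along the eigenframe $\{v_1,v_2\}$, rewrite the expectation as a one-dimensional integral of $\fG_t[f]$, and use the $L_1$-convergence $\fG_t[f]\to\fG_\infty[f]$ from Lemma~\ref{lem:frakG-bounds}. The execution differs in one place: you split $\fG_t[f]=\fG_\infty[f]+(\fG_t[f]-\fG_\infty[f])$ in physical space, invoke Lemma~\ref{lem:softdisp} as a black box on the fixed part, and bound the error term by $\frac{1}{\sqrt{2\pi}}\norm{\fG_t[f]-\fG_\infty[f]}_{L_1}$ directly. The paper instead passes to Fourier space via Plancherel, writes $\lambda_2^{1/2}I$ as $\langle e^{ik_2 m_2/\lambda_2^{1/2}}\widehat{\fG_t[f]}(k_2/\lambda_2^{1/2}),\gamma\rangle$, and takes the limit by bounded convergence after observing that $\widehat{\fG_t[f]}\to\widehat{\fG_\infty[f]}$ uniformly (since $\wedge:L_1\to C_0$ is continuous).

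Your route is the ``more elementary proof'' that the paper's own remark acknowledges exists; it is cleaner for this proposition alone. The paper's Fourier-space detour is chosen deliberately because the identity \eqref{eq:I-in-fourier} is exactly what is reused to prove the quantitative Lemma~\ref{lem:2d-disp-bound} (and hence Theorem~\ref{thm:2d-disp-bound}) with essentially no extra work --- the remainder $R$ there is decomposed in Fourier space in the same way. So your argument buys simplicity for this soft statement, while the paper's buys a unified setup for the hard estimate that follows.
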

\begin{remark}
The reader will observe that there is a more elementary proof of this result. The extra effort will be rewarded in the proceeding as it will allow us to read off the proof of Theorem \ref{thm:2d-disp-bound} with ease, as in the 1-d case.
\end{remark}
\begin{proof}
Fix $t\in T$. Since
$\left\{ v_{1},v_{2}\right\} $ is an eigenbasis for $\Sigma$ and
an orthonormal basis for $\R^{2}$, we have that
\[
\mathbf{m}+\sqrt{\Sigma}\mathbf{Z}=\left(\left\langle \mathbf{m},v_{1}\right\rangle +\lambda_{1}^{1/2}\left\langle \mathbf{Z},v_{1}\right\rangle \right)v_{1}+\left(\left\langle \mathbf{m},v_{2}\right\rangle +\lambda_{2}^{1/2}\left\langle \mathbf{Z},v_{2}\right\rangle \right)v_{2},
\]
and
\[
I=\int_{\R^{2}}f\left(\left(\left\langle \mathbf{m},v_{1}\right\rangle +\lambda_{1}^{1/2}y_{1}\right)v_{1}+\left(\left\langle \mathbf{m},v_{2}\right\rangle +\lambda_{2}^{1/2}y_{2}\right)v_{2}\right)e^{-\frac{1}{2}\abs{y}^{2}}\,\frac{dy}{2\pi}.
\]
Thus we can write $I$ as an iterated integral, 
\[
I= \innerp{\fG_t[f](\innerp{\bm,v_2}+\lambda_2^{1/2}y_2),\frac{e^{-\frac{1}{2}\abs{y_2}^2}}{\sqrt{2\pi}}}_{L_2 (dy_2)}.
\]
Now we can apply 1-d Fourier methods. Introduce the dual variable $k_{2}\leftrightarrow y_{2}$
and apply Plancherel to get that
\begin{equation}\label{eq:I-in-fourier}
\lambda_2^{1/2}I 
= \innerp{e^{ik_2\frac{\innerp{\bm,v_2}}{\lambda_2^{1/2}}}\widehat{\fG_t[f]}\left(\frac{k_2}{\lambda_2^{1/2}}\right),
\frac{e^{-\frac{1}{2} k_2^2}}{\sqrt{2\pi}}}.
\end{equation}
We want to take $t\to+\infty$. 

By assumption, we have that 
$a\left(y_{1};t\right)\to\nu^{1/2}y_{1}$
pointwise. We also have that
\[
\fG_t[f](y_{2})\to\fG_\infty[f](y_2)=\E\left[f\left(\nu^{1/2}Zw_1+y_{2}w_2\right)\right]
\]
in $L_1\left(dy_{2}\right)$ by Lemma \ref{lem:frakG-bounds}.
As $\wedge:L_{1}\left(\R^{1}\right)\to C_0\left(\R^{1}\right)$
continuously, it follows that $\widehat{\fG_t[f]}\to\widehat{\fG_\infty[f]}$ uniformly.
Therefore
\[
\widehat{\fG_t[f]}\left(\frac{k_{2}}{\lambda_{2}^{1/2}}\right)\to\widehat{\fG_\infty[f]}\left(0\right)
\]
pointwise. By the bounded convergence theorem, we then get that
\begin{align*}
\limsup\,\lambda_{2}^{1/2}I  =\limsup\,\left\langle e^{ik_{2}\frac{\left\langle \mathbf{m},v_{2}\right\rangle }{\lambda_{2}^{1/2}}}\widehat{\fG_\infty[f]}\left(0\right),\frac{e^{-\frac{1}{2}\abs{k_{2}}^{2}}}{\sqrt{2\pi}}\right\rangle _{L^{2}\left(dk_{2}\right)} =e^{-\frac{1}{2}\liminf\left(\frac{\left\langle \mathbf{m},v_{2}\right\rangle }{\lambda_{2}^{1/2}}\right)^{2}}\widehat{\fG_\infty[f]}\left(0\right).
\end{align*}
Finally,
\[
\widehat{\fG_\infty[f]}\left(0\right)=\frac{1}{\sqrt{2\pi}}\int \fG_\infty[f]\left(y_{2}\right)\, dy_{2}=\frac{1}{\sqrt{2\pi}}\E\left[\left\langle f\right\rangle \left(\nu^{1/2}Z\right)\right].
\]
\end{proof}

\subsubsection{Proof of Main Estimate in General Setting}

\begin{lemma}\label{lem:2d-disp-bound} Let $f\in \cA$. Then for all $t\in T$, 
\begin{equation}
\abs{R}\leq\frac{1}{\lambda_{2}}\frac{1}{2\sqrt{2\pi}}\norm{\fG_\infty[f]}_{L_{1}(\abs{y_{2}}^{2}dy_{2})}+\frac{e^{-\frac{1}{2}\frac{m_{2}^{2}}{\lambda_{2}}}}{\sqrt{2\pi}}\Delta_1+\frac{1}{\pi}\frac{1}{\lambda_{2}^{1/2}}\Delta_2.
\end{equation}
\end{lemma}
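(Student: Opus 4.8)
The plan is to follow exactly the template set by the 1-d estimate in Lemma \ref{lem:1d-disp}, but now carried out inside the iterated Gaussian-bracket representation derived in the proof of Proposition \ref{prop:2dsoftdisp}. First I would recall the formula \eqref{eq:I-in-fourier}, namely
\[
\lambda_2^{1/2}I = \innerp{e^{ik_2 m_2/\lambda_2^{1/2}}\,\widehat{\fG_t[f]}\!\left(\tfrac{k_2}{\lambda_2^{1/2}}\right),\ \tfrac{e^{-k_2^2/2}}{\sqrt{2\pi}}}_{L_2(dk_2)},
\]
together with the identity $\frac{1}{\sqrt{2\pi}}\E[\brac{f}(\nu^{1/2}Z)] = \widehat{\fG_\infty[f]}(0)$ (from the end of that proof) and the elementary Gaussian integral $\E[e^{ik_2 m_2/\lambda_2^{1/2}}\cdot e^{-k_2^2/2}]/\sqrt{2\pi}\cdot(\text{with weight }1)= e^{-\frac{1}{2} m_2^2/\lambda_2}$. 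Thus $R$ is the difference of the above pairing and $e^{-\frac12 m_2^2/\lambda_2}\widehat{\fG_\infty[f]}(0)$, and the triangle inequality splits it as
\[
\abs{R}\le \abs{\innerp{e^{ik_2 m_2/\lambda_2^{1/2}}\big(\widehat{\fG_t[f]}(\tfrac{k_2}{\lambda_2^{1/2}}) - \widehat{\fG_\infty[f]}(\tfrac{k_2}{\lambda_2^{1/2}})\big),\tfrac{e^{-k_2^2/2}}{\sqrt{2\pi}}}} + \abs{\innerp{e^{ik_2 m_2/\lambda_2^{1/2}}\big(\widehat{\fG_\infty[f]}(\tfrac{k_2}{\lambda_2^{1/2}}) - \widehat{\fG_\infty[f]}(0)\big),\tfrac{e^{-k_2^2/2}}{\sqrt{2\pi}}}}.
\]

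\textbf{Bounding the two terms.} For the second term I would argue exactly as in Lemma \ref{lem:1d-disp}: since $f$ is even, so is $\fG_\infty[f]$ (the $\fG_\infty$-transform integrates against a symmetric Gaussian in the $v_1=w_1$ direction and the reflection $y_2\mapsto -y_2$ combined with $Z\mapsto -Z$ preserves the integral), hence $\widehat{\fG_\infty[f]}(k)-\widehat{\fG_\infty[f]}(0) = \frac{1}{\sqrt{2\pi}}\int \fG_\infty[f](y)(\cos(ky)-1)\,dy$, and $\abs{1-\cos\theta}\le \frac12\theta^2$ gives
\[
\abs{\widehat{\fG_\infty[f]}\!\left(\tfrac{k_2}{\lambda_2^{1/2}}\right) - \widehat{\fG_\infty[f]}(0)} \le \frac{1}{2}\frac{1}{\sqrt{2\pi}}\frac{k_2^2}{\lambda_2}\norm{\fG_\infty[f]}_{L_1(\abs{y}^2 dy)};
\]
pairing against $e^{-k_2^2/2}/\sqrt{2\pi}$ and using $\frac{1}{\sqrt{2\pi}}\int k_2^2 e^{-k_2^2/2}dk_2 = 1$ produces the first term $\frac{1}{\lambda_2}\frac{1}{2\sqrt{2\pi}}\norm{\fG_\infty[f]}_{L_1(\abs{y}^2dy)}$ on the right-hand side. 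For the first term, I bound $\abs{\widehat{\fG_t[f]}(k)-\widehat{\fG_\infty[f]}(k)}\le \frac{1}{\sqrt{2\pi}}\norm{\fG_t[f]-\fG_\infty[f]}_{L_1(dy)} = \frac{1}{\sqrt{2\pi}}\Delta_1$ pointwise in $k$, and then pairing the constant $\frac{\Delta_1}{\sqrt{2\pi}}$ against $\abs{e^{ik_2 m_2/\lambda_2^{1/2}}}e^{-k_2^2/2}/\sqrt{2\pi}$ — wait, this only gives a bound proportional to $\Delta_1$ but without the $e^{-m_2^2/(2\lambda_2)}$ gain; to recover the stated $\frac{e^{-m_2^2/(2\lambda_2)}}{\sqrt{2\pi}}\Delta_1$ I would instead not estimate the Fourier difference in $L_\infty$ but rather undo the Plancherel step on this single term, writing it back as $\lambda_2^{1/2}\innerp{(\fG_t[f]-\fG_\infty[f])(m_2 + \lambda_2^{1/2}y_2),\ e^{-y_2^2/2}/\sqrt{2\pi}}_{L_2(dy_2)}$, i.e.\ $\lambda_2^{1/2}\E[(\fG_t[f]-\fG_\infty[f])(m_2+\lambda_2^{1/2}Z)]$, and then apply precisely Lemma \ref{lem:1d-disp} with the function $\fG_t[f]-\fG_\infty[f]\in\cE$ and $\sigma = \lambda_2^{1/2}$: this bounds it by $\frac{e^{-m_2^2/(2\lambda_2)}}{\sqrt{2\pi}}\Delta_1 + \frac{1}{2\sqrt{2\pi}\lambda_2}\norm{\fG_t[f]-\fG_\infty[f]}_{L_1(y^2dy)}$. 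Hmm, that introduces an $L_1(y^2 dy)$ error rather than the $\Delta_2 = \norm{\cdot}_{L_1(\abs{y}dy)}$ appearing in the claim.

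\textbf{Reconciling the error structure.} The presence of $\Delta_2$ (weight $\abs{y}$, not $y^2$) and the $\lambda_2^{-1/2}$ prefactor with constant $1/\pi$ signals that the first term must instead be handled by a first-order, not second-order, expansion: I would split $\fG_t[f]-\fG_\infty[f]$ as its value plus the oscillation, keeping one power of $k_2$. Concretely, writing $e^{i\theta y}-1 = iy\int_0^\theta e^{isy}ds$ for the difference quotient of the Fourier transform of $\fG_t[f]-\fG_\infty[f]$ — using that this function is even so its transform is real and the relevant bound is $\abs{\widehat{g}(k/\lambda_2^{1/2}) - \widehat{g}(0)}\le \frac{1}{\sqrt{2\pi}}\frac{\abs{k}}{\lambda_2^{1/2}}\norm{g}_{L_1(\abs{y}dy)}$ for $g = \fG_t[f]-\fG_\infty[f]$ — then combining with the crude pointwise bound $\abs{\widehat{g}(0)}\le \frac{1}{\sqrt{2\pi}}\Delta_1$ for the non-oscillating part, and pairing against $e^{-k_2^2/2}/\sqrt{2\pi}$ using $\frac{1}{\sqrt{2\pi}}\int \abs{k}e^{-k^2/2}dk = \sqrt{2/\pi}$, yields exactly $\frac{e^{-m_2^2/(2\lambda_2)}}{\sqrt{2\pi}}\Delta_1 + \frac{1}{\pi}\frac{1}{\lambda_2^{1/2}}\Delta_2$ once the $e^{-m_2^2/(2\lambda_2)}$ factor is extracted from the $\widehat g(0)$ pairing as above. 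Summing the two contributions gives the claimed inequality.

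\textbf{Main obstacle.} The routine parts are the evenness/reflection arguments and the Gaussian moment computations; the one genuinely delicate point is organizing the triangle-inequality split so that the $e^{-m_2^2/(2\lambda_2)}$ Gaussian gain is captured on the $\Delta_1$ term while the remaining oscillation error is measured in the weaker $L_1(\abs{y}dy)$ norm $\Delta_2$ with the correct $\lambda_2^{-1/2}$ scaling and constant — in other words, deciding \emph{how many derivatives of the Fourier transform to expand on which piece}. Getting the exponents and constants ($\tfrac{1}{2\sqrt{2\pi}}$, $\tfrac{1}{\sqrt{2\pi}}$, $\tfrac{1}{\pi}$) to line up is the bookkeeping heart of the proof; everything else transcribes the 1-d argument through the iterated Gaussian-bracket representation \eqref{eq:I-in-fourier}.
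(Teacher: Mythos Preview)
Your final argument is correct and is essentially the paper's proof: both arrive at the same three-piece decomposition (the second-order $\fG_\infty$ piece giving $\frac{1}{2\sqrt{2\pi}\lambda_2}\norm{\fG_\infty[f]}_{L_1(y^2dy)}$, the $\widehat g(0)$ piece carrying the Gaussian factor $e^{-m_2^2/(2\lambda_2)}$, and the first-order oscillation of $\widehat g$ giving the $\Delta_2$ term), only with the two triangle-inequality splits taken in the opposite order --- the paper first separates $\widehat{\fG_t[f]}(k/\lambda_2^{1/2})-\widehat{\fG_t[f]}(0)$ from $\widehat{\fG_t[f]}(0)-\widehat{\fG_\infty[f]}(0)$ and then splits the former, whereas you first separate $\widehat{\fG_t[f]}-\widehat{\fG_\infty[f]}$ at the moving frequency and then split that. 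One minor inaccuracy: $\fG_t[f]$ is \emph{not} even in general when $m_1\neq 0$, so your claim that $g=\fG_t[f]-\fG_\infty[f]\in\cE$ is false --- but this is harmless, since the first-order bound $|\widehat g(k)-\widehat g(0)|\le \frac{|k|}{\sqrt{2\pi}}\norm{g}_{L_1(|y|dy)}$ follows from $|e^{i\theta}-1|\le|\theta|$ and requires no parity.
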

\begin{remark}\label{rem:disp-bound}
Since the operators are converging strongly, we expect the last two terms to vanish as $t\rightarrow\infty$, though we
expect the rates of convergence to depend on the regularity of $f$. Note, that these expressions will explode
as $\nu \rightarrow 0$ in general. Upon adding mild regularity requirements on $f$,
however, we see that these expressions remain bounded as $\nu\rightarrow 0$. 
Furthermore the righthand side immediately reduces to the 1-d estimates from Lemma \ref{lem:1d-disp}
when
\begin{itemize}
\item $v_{2}=w_2$ (so that $v_{1}=w_1$)
\item $\lambda_{1}=\nu=0$
\item $m_1=0$.
\end{itemize}
Indeed, under these stronger assumptions you get $\fG_t=\fG_\infty= R$ where $R$ is the restriction
operator in the direction of $w_2$, and the lemma
becomes
\[
\abs{\lambda_{2}^{1/2}I-\frac{e^{-\frac{1}{2}\frac{m_{2}^{2}}{\lambda_{2}}}}{\sqrt{2\pi}}\left\langle f\right\rangle \left(0\right)}\leq\frac{1}{\lambda_{2}}\frac{1}{2\sqrt{2\pi}}\norm{\fG_\infty[f]}_{L_{1}\left(\abs{y_{2}}^{2}dy_{2}\right)}
\]
which is exactly the 1-d dispersive bound.
\end{remark}
\begin{proof}
 Fix $t\in T$ and let $\lambda_i$, $m_i$ be their values at $t$. Recall $R$ from \eqref{eq:remainder}. As in the proof of Proposition \ref{prop:2dsoftdisp} (specifically \eqref{eq:I-in-fourier}), observe that
\begin{align*}
R & =
\innerp{e^{ik_{2}\frac{m_{2}}{\lambda_{2}^{1/2}}}\left(\widehat{\fG_t[f]}\left(\frac{k_{2}}{\lambda_{2}^{1/2}}\right)-\widehat{\fG_\infty[f]}(0)\right),
\frac{e^{-\frac{1}{2}\abs{k_{2}}^{2}}}{\sqrt{2\pi}}}_{L^{2}\left(dk_{2}\right)}\\
 & =\innerp{ e^{ik_{2}\frac{m_{2}}{\lambda_{2}^{1/2}}}\left(\widehat{\fG_t[f]}\left(\frac{k_{2}}{\lambda_{2}^{1/2}}\right)-\widehat{\fG_t[f]}\left(0\right)\right),\frac{e^{-\frac{1}{2}\abs{k_{2}}^{2}}}{\sqrt{2\pi}}}+ \innerp{e^{ik_{2}\frac{m_{2}}{\lambda_{2}^{1/2}}}\left(\widehat{\fG_t[f]}(0)-\widehat{\fG_\infty[f]}\left(0\right)\right),\frac{e^{-\frac{1}{2}\abs{k_{2}}^{2}}}{\sqrt{2\pi}}}\\
 & =E_{1}+E_{2}.
\end{align*}

For $E_{1}$, write
\begin{align*}
&\widehat{\fG_t[f]}\left(\frac{k_{2}}{\lambda_{2}^{1/2}}\right)-\widehat{\fG_t[f]}(0) =\frac{1}{\sqrt{2\pi}}\int \fG_t[f](y_{2})\left(e^{-i\frac{k_{2}}{\lambda_{2}^{1/2}}y_{2}}-1\right)dy_{2}\\
 & =\frac{1}{\sqrt{2\pi}}\int\left(\fG_t[f](y_{2})-\fG_\infty[f](y_{2})\right)\left(e^{-i\frac{k_{2}}{\lambda_{2}^{1/2}}y_{2}}-1\right)dy_{2}+\frac{1}{\sqrt{2\pi}}\int \fG_\infty[f](y_{2})\left(e^{-i\frac{k_{2}}{\lambda_{2}^{1/2}}y_{2}}-1\right)dy_{2}\\
 & =\delta_{1}+\delta_{2}.
\end{align*}
Now
\begin{align*}
\abs{\delta_{1}} 
  \leq\frac{1}{\sqrt{2\pi}}\frac{\abs{k_{2}}}{\lambda_{2}^{1/2}}\norm{\fG_t[f](y_{2})-\fG_\infty[f](y_{2})}_{L_{1}\left(\abs{y_{2}}dy_{2}\right)},
\end{align*}
and
\begin{align*}
\delta_{2} & =\frac{1}{\sqrt{2\pi}}\int \fG_\infty[f](y_{2})\left(\cos\left(\frac{k_{2}}{\lambda_{2}^{1/2}}y_{2}\right)-1\right)\, dy_{2}
\end{align*}
because $\fG_\infty[f](y_{2})=\fG_\infty[f](-y_{2})$ by the assumptions on $f$. Therefore
\[
\abs{\delta_{2}}\leq\frac{1}{\lambda_{2}}\frac{\abs{k_{2}}^{2}}{2\sqrt{2\pi}}\norm{\fG_\infty[f]}_{L_{1}\left(\abs{y_{2}}^{2}dy_{2}\right)}.
\]
Combining the above and using the triangle inequality yields
\begin{align*}
\abs{E_{1}} &\leq \innerp{ \abs{\delta_{1}}+\abs{\delta_{2}},\frac{e^{-\frac{1}{2}\abs{k_{2}}^{2}}}{\sqrt{2\pi}}}%_{L_2\left(dk_{2}\right)}
% &\leq \frac{1}{\sqrt{2\pi}}\frac{1}{\lambda_{2}^{1/2}}\norm{\fG_t[f]-\fG_\infty[f]}_{L_{1}(\abs{y}dy)}
% \innerp{ \abs{k_{2}},\frac{e^{-\frac{1}{2}\abs{k_{2}}^{2}}}{\sqrt{2\pi}}}
% +\frac{1}{\lambda_{2}}\frac{1}{2\sqrt{2\pi}}\norm{\fG_\infty[f]}_{L_{1}\left(\abs{y_{2}}^{2}dy_{2}\right)}\left\langle \abs{k_{2}}^{2},\frac{e^{-\frac{1}{2}\abs{k_{2}}^{2}}}{\sqrt{2\pi}}\right\rangle\\
 \leq \frac{1}{\pi}\frac{1}{\lambda_{2}^{1/2}}\norm{\fG_t[f]-\fG_\infty[f]}_{L_{1}(\abs{y}dy)}+\frac{1}{\lambda_{2}}\frac{1}{2\sqrt{2\pi}}\norm{\fG_\infty[f]}_{L_{1}(\abs{y_{2}}^{2}dy_{2})}.
\end{align*}
(Here we used that the first absolute moment of a standard Gaussian is $\sqrt{2/\pi}$.)

For $E_{2}$, write
\[
\abs{E_{2}}  =e^{-\frac{1}{2}\frac{m_{2}^{2}}{\lambda_{2}}}\abs{\widehat{\fG_t[f]}(0)-\widehat{\fG_\infty[f]}(0)}
  \leq \frac{e^{-\frac{1}{2}\frac{m_{2}^{2}}{\lambda_{2}}}}{\sqrt{2\pi}}\norm{\fG_t[f]-\fG_\infty[f]}_{L_{1}(dy)}.
\]
Adding the bounds for $E_{1}$, $E_{2}$ and rearranging gives the
claim.
\end{proof}

\subsubsection{Rates of convergence from energy estimates}
In this subsection we will be concerned with computing estimates on the errors $\Delta_i$. For ease of reading, and with applications in mind, we assume in this section that 
\[
w_1 = \frac{(-1,1)}{\sqrt{2}} \quad \text{and} \quad w_2 = \frac{(1,1)}{\sqrt{2}}.
\]
The reader will observe that as the inequalities in Theorem
\ref{thm:2d-disp-bound} are rotationally invariant, we can assume this without loss of generality. In this subsection, we assume that $t\in T$.
%%
%%{FIX THE BLAH} As we only know convergence in the strong operator topology, we expect these rates to depend on energy estimates for $f$. 
%%In this section, we study the problem of computing these rates. In particular, we reduce the determination of these rates to the asymptotic spectral analysis of the family of operators $\Sigma(t)$ and some regularity properties of $f$.

In this subsection, we assume that $f\in Lip\left(\R^{2}\right)$ and has exponential decay
\[
\abs{f\left(x\right)}\leq c_{1}e^{-c_{2}\norm{x}_2}
\]
for some constants $c_{1},c_{2}>0$.  Let
\begin{align*}
d\mu_{1}  =d\gamma\left(y_{1}\right)\otimes dy_{2}\quad\text{ and }\quad
d\mu_{2}  =d\gamma\left(y_{1}\right)\otimes\abs{y_{2}}dy_{2}.
\end{align*}
where $d\gamma(x)$ is the standard Gaussian measure on $\R$ and define the measures
\begin{align*}
\kappa_{i}\left(K\right)  =\int_{K}\norm{y}_2\, d\mu_{i}\left(y\right)\qquad\text{and}\qquad
\upsilon_{i}\left(K;t\right)  =\int_{\R^{2}\backslash K}e^{-c_{2}\norm{A(t)y+m_{1}v_{1}}_2}+e^{-c_{2}\norm{A(\infty)y}_2}\, d\mu_{i}\left(y\right)
\end{align*}
for $K\in\mathcal{B}$. 

\begin{lemma}
For $f$ as above and $t\in T$,
\[
\Delta_{i}(t)\leq Lip\left(f\right)\left(\norm{A(t)-A(\infty)}\cdot\kappa_{i}\left(K\right)+\abs{m_{1}(t)}\cdot\mu_{i}\left(K\right)\right)+c_{1}\upsilon_{i}\left(K;t\right),\quad i=1,2.
\]
\end{lemma}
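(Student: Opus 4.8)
The plan is to reduce $\Delta_i(t)$ to a single integral over $\R^2$ of $\abs{f(A(t)y+m_1v_1)-f(A(\infty)y)}$ against the measure $d\mu_i$, and then estimate that integral by splitting $\R^2$ into $K$, where the Lipschitz bound on $f$ is used, and $\R^2\setminus K$, where the exponential decay of $f$ is used.

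First I would record the representation
\[
\fG_t[f](y_2)=\E\!\left[f\big(A(t)(Ze_1+y_2e_2)+m_1(t)v_1(t)\big)\right],\qquad \fG_\infty[f](y_2)=\E\!\left[f\big(A(\infty)(Ze_1+y_2e_2)\big)\right],
\]
which follows directly from the definitions \eqref{eq:frakg-def}, \eqref{eq:a-def}, \eqref{eq:Aop-def}, the identity $(v\otimes e_j)\big(\sum_k c_ke_k\big)=c_jv$, and the fact that $a(\cdot;\infty)$ carries no mean term; here $Z$ is a standard Gaussian and $\{e_j\}$ the standard basis. Subtracting, pushing the absolute value inside the expectation, and then inside the $L_1(dy_2)$ (resp. $L_1(\abs{y_2}dy_2)$) integral defining $\Delta_1$ (resp. $\Delta_2$), Tonelli's theorem applied to the non-negative integrand collapses the iterated integral in $(y_1,y_2)=(Z,y_2)$ to
\[
\Delta_i(t)\le\int_{\R^2}\abs{f\big(A(t)y+m_1v_1\big)-f\big(A(\infty)y\big)}\,d\mu_i(y),\qquad i=1,2,
\]
since $d\gamma(y_1)\otimes dy_2=d\mu_1$ and $d\gamma(y_1)\otimes\abs{y_2}dy_2=d\mu_2$ by definition.

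Next I would split the integral as $\int_K+\int_{\R^2\setminus K}$. On $K$, apply $\abs{f(a)-f(b)}\le Lip(f)\norm{a-b}_2$ with $a=A(t)y+m_1v_1$ and $b=A(\infty)y$, then bound $\norm{a-b}_2\le\norm{A(t)-A(\infty)}\norm{y}_2+\abs{m_1}\norm{v_1}_2=\norm{A(t)-A(\infty)}\norm{y}_2+\abs{m_1}$, using that the Frobenius norm dominates the operator norm and $\norm{v_1}_2=1$; integrating against $d\mu_i$ over $K$ gives precisely $Lip(f)\big(\norm{A(t)-A(\infty)}\kappa_i(K)+\abs{m_1}\mu_i(K)\big)$ by the definitions of $\kappa_i$ and $\mu_i$. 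On $\R^2\setminus K$, use the triangle inequality and then the decay estimate $\abs{f(x)}\le c_1e^{-c_2\norm{x}_2}$ on each of the two terms to obtain $c_1\int_{\R^2\setminus K}\big(e^{-c_2\norm{A(t)y+m_1v_1}_2}+e^{-c_2\norm{A(\infty)y}_2}\big)d\mu_i(y)=c_1\,\upsilon_i(K;t)$. Summing the two contributions yields the claimed inequality.

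I do not expect a genuine obstacle here; the only care required is in matching the $\fG_t$-transform with the affine maps $y\mapsto A(t)y+m_1v_1$ in the opening identity, and in observing that the reduction to the $\R^2$-integral is via Tonelli on a non-negative integrand, so no integrability hypothesis is needed for it. The exponential decay of $f$ plays no role in the validity of the bound itself; it is what guarantees that $\upsilon_i(K;t)$ — and hence the entire right-hand side — is finite when the estimate is later invoked with $K$ a suitable ball.
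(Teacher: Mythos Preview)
Your proposal is correct and follows essentially the same route as the paper: write $\fG_t[f]$ and $\fG_\infty[f]$ via the affine maps $y\mapsto A(t)y+m_1v_1$ and $y\mapsto A(\infty)y$, push the absolute value inside the expectation and apply Tonelli to obtain $\Delta_i\le\int_{\R^2}\abs{f(A(t)y+m_1v_1)-f(A(\infty)y)}\,d\mu_i$, then split into $K$ (Lipschitz estimate) and $\R^2\setminus K$ (decay estimate). The only cosmetic difference is that you make the identification with $A(t)$ explicit up front, whereas the paper writes out the integrals in coordinates first and only then recognizes the affine structure.
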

\begin{proof}
Let $\lambda_i$, $m_i$ be their values at $t\in T$. 
We have that
\begin{align*}
\Delta_{1} & =\norm{\E\left[f\left(a\left(Z;t\right)v_{1}+y_{2}v_{2}\right)-f\left(\nu^{1/2}Zw_1+y_{2}w_2\right)\right]}_{L_{1}\left(dy_{2}\right)}\\
 & =\int\abs{\int\left(f\left(\left(m_{1}+\lambda_{1}^{1/2}y_{1}\right)v_{1}+y_{2}v_{2}\right)-f\left(\nu^{1/2}y_{1}w_1+y_{2}w_2\right)\right)\frac{e^{-y_{1}^{2}/2}}{\sqrt{2\pi}}\, dy_{1}}\, dy_{2}
\end{align*}
and
\begin{align*}
\Delta_{2} & =\norm{\E\left[f\left(a\left(Z;t\right)v_{1}+y_{2}v_{2}\right)-f\left(\nu^{1/2}Zw_1+y_{2}w_2\right)\right]}_{L_{1}\left(\abs{y_{2}}dy_{2}\right)}\\
 & =\int\abs{\int\left(f\left(\left(m_{1}+\lambda_{1}^{1/2}y_{1}\right)v_{1}+y_{2}v_{2}\right)-f\left(\nu^{1/2}y_{1}w_1+y_{2}w_2\right)\right)\frac{e^{-y_{1}^{2}/2}}{\sqrt{2\pi}}\, dy_{1}}\abs{y_{2}}\, dy_{2}.
\end{align*}
The errors satisfy
\begin{align*}
\Delta_{i} & \leq\int\abs{f\left(A(t)y+m_{1}v_{1}\right)-f\left(A(\infty)y\right)}\, d\mu_{i}\left(y\right),\quad i=1,2.
\end{align*}
Since $f$ is Lipschitz, 
\[
\abs{f\left(A(t)y+m_{1}v_{1}\right)-f\left(A(\infty)y\right)}\leq Lip\left(f\right)\norm{\left(A(t)-A(\infty)\right)y+m_{1}v_{1}}\leq Lip\left(f\right)\left(\norm{A(t)-A(\infty)}\cdot\norm{y}+\abs{m_{1}}\right)
\]
so that
\begin{align*}
\int_{K}\abs{f\left(A(t)y+m_{1}v_{1}\right)-f\left(A(\infty)y\right)}\, d\mu_{i}\left(y\right) & \leq Lip\left(f\right)\left(\norm{A(t)-A(\infty)}\int_{K}\norm{y}\, d\mu_{i}\left(y\right)+\abs{m_{1}}\mu_{i}\left(K\right)\right)\\
 & =Lip\left(f\right)\left(\norm{A(t)-A(\infty)}\cdot\kappa_{i}\left(K\right)+\abs{m_{1}}\cdot\mu_{i}\left(K\right)\right).
\end{align*}
By the exponential decay of $f$, we get that
\begin{align*}
\int_{\R^{2}\backslash K}\abs{f\left(A(t)y+m_{1}v_{1}\right)-f\left(A(\infty)y\right)}\, d\mu_{i}\left(y\right) & \leq\int_{\R^{2}\backslash K}\abs{f\left(A(t)y+m_{1}v_{1}\right)}+\abs{f\left(A(\infty)y\right)}\, d\mu_{i}\left(y\right)\\
 & \leq c_{1}\int_{\R^{2}\backslash K}e^{-c_{2}\norm{A(t)y+m_{1}v_{1}}}+e^{-c_{2}\norm{A(\infty)y}}\, d\mu_{i}\left(y\right)
  =c_{1}\upsilon_{i}\left(K;t\right).
\end{align*}
as desired.
\end{proof}
The next step is to optimize over $K$. 
\begin{lemma}
(The $\upsilon_{i}$ estimates.)  Let $c=\max\{\frac{\sqrt{2}}{c_{2}},\left(\frac{\sqrt{2}}{c_{2}}\right)^{2}\}$.
Then if $K=[-M,M]^{2}$ and $M\geq 1$, we have that
\[
\upsilon_{i}(K(M);t)\leq4\left[c e^{-\frac{M}{c}}\left(M+1\right)
+c e^{-\frac{M^{2}}{2}}\right].
\]
\end{lemma}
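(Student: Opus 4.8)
The plan is to dominate the integrand of $\upsilon_i$ by a one-dimensional exponential decay in the $y_2$-variable that is \emph{uniform} in $t$ and in all of the spectral data $\lambda_1,\nu,m_1$. The key geometric observation is that, since $\{v_1,v_2\}$ is an orthonormal frame,
\[
\norm{A(t)y+m_1 v_1}_2^2=(\lambda_1^{1/2}y_1+m_1)^2+y_2^2\ge y_2^2,
\]
and similarly $\norm{A(\infty)y}_2^2=\nu y_1^2+y_2^2\ge y_2^2$. Because $s\mapsto e^{-c_2 s}$ is decreasing on $[0,\infty)$, the integrand $e^{-c_2\norm{A(t)y+m_1v_1}_2}+e^{-c_2\norm{A(\infty)y}_2}$ is bounded pointwise by $2e^{-c_2\abs{y_2}}$, with no dependence on $t$ or on $\nu$.

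Next I would split the exterior region. Since $\R^2\setminus[-M,M]^2\subseteq\{\abs{y_1}>M\}\cup\{\abs{y_2}>M\}$ and the integrand is nonnegative,
\[
\upsilon_i(K(M);t)\le\int_{\abs{y_1}>M}2e^{-c_2\abs{y_2}}\,d\mu_i(y)+\int_{\abs{y_2}>M}2e^{-c_2\abs{y_2}}\,d\mu_i(y).
\]
Each of the four resulting integrals factors as a product of a Gaussian integral in $y_1$ and an elementary exponential integral in $y_2$. On $\{\abs{y_1}>M\}$, the standard tail bound $\int_M^\infty e^{-y^2/2}\,dy\le M^{-1}e^{-M^2/2}$ together with $M\ge 1$ gives $\int_{\abs{y_1}>M}d\gamma(y_1)\le e^{-M^2/2}$, while on $\{\abs{y_2}>M\}$ the $y_1$-integral is simply $1$. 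The $y_2$-integrals are $\int_{\R}e^{-c_2\abs{y_2}}\,dy_2=2/c_2$ and $\int_{\R}\abs{y_2}e^{-c_2\abs{y_2}}\,dy_2=2/c_2^2$, and, by one integration by parts, $\int_M^\infty e^{-c_2 y}\,dy=c_2^{-1}e^{-c_2 M}$ and $\int_M^\infty y e^{-c_2 y}\,dy=(M/c_2+1/c_2^2)e^{-c_2 M}$.

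Finally I would assemble these estimates. With $c=\max\{\sqrt2/c_2,(\sqrt2/c_2)^2\}$ one has $c\ge\max\{1/c_2,1/c_2^2\}$, hence $c_2\ge 1/c$ and so $e^{-c_2 M}\le e^{-M/c}$; using also that $M+1\ge 1$, the two $\{\abs{y_2}>M\}$ contributions are each at most $4c(M+1)e^{-M/c}$ (for $i=1$ one inserts the harmless factor $M+1$), and the two $\{\abs{y_1}>M\}$ contributions are each at most $4c\,e^{-M^2/2}$. Summing yields $\upsilon_i(K(M);t)\le 4\!\left[c e^{-M/c}(M+1)+c e^{-M^2/2}\right]$ for $i=1,2$.

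This argument is essentially bookkeeping and presents no real obstacle; the only conceptual point is the uniformity in $\nu$, which is secured in the very first step by discarding the possibly degenerate $y_1$-direction from the exponent of $f$ and retaining only the clean, parameter-free $\abs{y_2}$-decay. The fiddliest step is merely tuning the intermediate constants so that the final bound emerges in exactly the stated two-term normalized form.
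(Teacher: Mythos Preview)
Your proof is correct and follows essentially the same approach as the paper: bound the integrand uniformly by an exponential in $|y_2|$ alone (securing the $\nu$-uniformity), split $K^c$ into the two strips $\{|y_1|>M\}$ and $\{|y_2|>M\}$, and evaluate the resulting Gaussian tail and exponential integrals. The only minor difference is that the paper reaches $\norm{A(t)y+m_1v_1}_2\ge \tfrac{1}{\sqrt{2}}|y_2|$ via the $\ell_1$--$\ell_2$ norm inequality, whereas you use orthonormality of $\{v_1,v_2\}$ to get the sharper bound $\norm{A(t)y+m_1v_1}_2\ge |y_2|$ directly; this is a small simplification, and the constant $c=\max\{\sqrt{2}/c_2,(\sqrt{2}/c_2)^2\}$ comfortably absorbs either version.
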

\begin{proof}
Write 
\[
d\mu_{i}=d\gamma(y_{1})\tensor d\ell_{i}(y_{2})
\]
where $d\ell_{1}(y)=dy$ and $d\ell_{2}=\abs{y}dy$, and recall that
\[
\upsilon_{i}(K)=\int_{K^{c}}e^{-c_{2}\norm{A(t)y+m_{1}v_{1}}_{2}}+e^{-c_{2}\norm{A(\infty)y}_{2}}d\mu_{i}.
\]
Note that 
\begin{align*}
\norm{A(t)y+m_{1}v_{1}}_{2}  \geq\frac{1}{\sqrt{2}}\left(\abs{m_{1}+\lambda_{1}^{1/2}y_{1}}+\abs{y_{2}}\right)\qquad
\norm{A(\infty) y}_{2}  \geq\frac{1}{\sqrt{2}}\left(\nu^{1/2}\abs{y_{1}}+\abs{y_{2}}\right)
\end{align*}
by the $\ell_{1}-\ell_{2}$ norm inequality, so that 
\[
\upsilon_{i}(K)\leq\int_{K^{c}}e^{-\frac{c_{2}}{\sqrt{2}}\left(\abs{\lambda_{1}^{1/2}y_{1}+m_{1}}+\abs{y_{2}}\right)}+e^{-\frac{c_{2}}{\sqrt{2}}\left(\nu^{1/2}\abs{y_{1}}+\abs{y_{2}}\right)}d\mu_{i}
\leq 2\int_{K^{c}}e^{-\frac{c_{2}}{\sqrt{2}}\abs{y_{2}}}d\mu_{i}.
\]

%%
%%Consider the quantity 
%%\[
%%\int_{K^{c}}e^{-\frac{c_{2}}{\sqrt{2}}\abs{y_{2}}}d\mu_{i}.
%%\]
By Gaussian concentration, 
\begin{align*}
\int_{K^{c}}e^{-\frac{c_{2}}{\sqrt{2}}\abs{y_{2}}}d\mu_{i} & \leq2\left[\int_{\R}\int_{M}^{\infty}e^{-\frac{c_{2}}{\sqrt{2}}\abs{y_{2}}}d\ell_{i}(y_{2})d\gamma(y_{1})+\int_{\R}\int_{M}^{\infty}e^{-\frac{c_{2}}{\sqrt{2}}\abs{y_{2}}}d\gamma(y_{1})d\ell_{i}(y_{2})\right]\\
 & \leq2\left[1\cdot\int_{M}^{\infty}e^{-\frac{c_{2}}{\sqrt{2}}w}d\ell_{i}(w)+e^{-\frac{M^{2}}{2}}\int_{0}^{\infty}e^{-\frac{c_{2}}{\sqrt{2}}w}d\ell_{i}(w)\right]=I.
\end{align*}
Note that we have the inequalities
\begin{align*}
\int_{0}^{\infty}e^{-\frac{c_{2}}{\sqrt{2}}w}d\ell_{i}(w)\leq c \qquad \text{ and }\qquad
\int_{M}^{\infty}e^{-\frac{c_{2}}{\sqrt{2}}w}d\ell_{i}(w) %&\leq\int_{M}^{\infty}e^{-\frac{c_{2}}{\sqrt{2}}w}wdw
  \leq c e^{-\frac{M}{c}}\left(M+1\right)
\end{align*}
so that 
\begin{align*}
I & \leq2\left[ce^{-\frac{M}{c}}\left(M+1\right)+c e^{-\frac{M^{2}}{2}}\right].
\end{align*}
Note that this bound does not depend on $\nu$, so that 
\[
\upsilon_{i}(K)\leq4\left[ce^{-\frac{M}{c}}\left(M+1\right)+ c e^{-\frac{M^{2}}{2}}\right].\qquad 
\]
\end{proof}
Note the following elementary estimates which follow from Gaussian concentration.
\begin{lemma}
(The $\kappa_{i}$ estimates.) If $K\left(M\right)=\left[-M,M\right]^{2}$, then
\begin{align*}
\kappa_{1}\left(K\left(M\right)\right)  &\leq M^{2}\cdot\left(1+\frac{4}{M}\frac{1-e^{-M^{2}/2}}{\sqrt{2\pi}}\right)\\%\qquad \text{ and }\qquad
\kappa_{2}\left(K\left(M\right)\right)  &\leq M^{3}\cdot\left(\frac{2}{3}+\frac{2}{M}\frac{1-e^{-M^{2}/2}}{\sqrt{2\pi}}\right).
\end{align*}
\end{lemma}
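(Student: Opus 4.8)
The plan is to reduce both bounds to one-dimensional integrals using the $\ell^1$--$\ell^2$ norm inequality together with the product structure of $K(M)=[-M,M]^2$ and of the measures $\mu_i$. First I would bound $\norm{y}_2 = \sqrt{y_1^2+y_2^2}\le \abs{y_1}+\abs{y_2}$ pointwise, so that $\kappa_i(K(M)) = \int_{K(M)}\norm{y}_2\, d\mu_i \le \int_{K(M)}(\abs{y_1}+\abs{y_2})\,d\mu_i$. Writing $d\mu_i = d\gamma(y_1)\otimes d\ell_i(y_2)$ with $d\ell_1(y)=dy$ and $d\ell_2(y)=\abs{y}\,dy$, Fubini's theorem and the product structure of $K(M)$ split the right-hand side as
\[
\left(\int_{-M}^M\abs{y_1}\,d\gamma(y_1)\right)\left(\int_{-M}^M d\ell_i(y_2)\right) + \left(\int_{-M}^M d\gamma(y_1)\right)\left(\int_{-M}^M \abs{y_2}\,d\ell_i(y_2)\right).
\]

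It then remains to evaluate four elementary integrals. One has $\int_{-M}^M\abs{y_1}\,d\gamma(y_1) = \frac{2}{\sqrt{2\pi}}(1-e^{-M^2/2})$ by the substitution $u=y_1^2/2$, and $\int_{-M}^M d\gamma(y_1)\le 1$ since $\gamma$ is a probability measure. For $i=1$ the remaining integrals are $\int_{-M}^M d\ell_1(y_2) = 2M$ and $\int_{-M}^M \abs{y_2}\,d\ell_1(y_2) = M^2$, which gives $\kappa_1(K(M))\le \frac{4M}{\sqrt{2\pi}}(1-e^{-M^2/2}) + M^2$, i.e.\ the stated bound after factoring out $M^2$. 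For $i=2$ the remaining integrals are $\int_{-M}^M d\ell_2(y_2) = M^2$ and $\int_{-M}^M \abs{y_2}\,d\ell_2(y_2) = \int_{-M}^M y_2^2\,dy_2 = \frac{2M^3}{3}$, which gives $\kappa_2(K(M))\le \frac{2M^2}{\sqrt{2\pi}}(1-e^{-M^2/2}) + \frac{2M^3}{3}$, i.e.\ the stated bound after factoring out $M^3$.

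There is no genuine obstacle here: the computation is entirely elementary. The only point requiring attention is the bookkeeping of which one-dimensional factor is estimated by the trivial bound $\int_{-M}^M d\gamma\le 1$ and which is evaluated exactly, so that the powers of $M$ and the constants come out precisely as written; I would carry out the cases $i=1$ and $i=2$ in parallel to make this transparent.
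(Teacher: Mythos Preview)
Your proof is correct and is precisely the elementary computation the paper has in mind; the paper itself omits the proof entirely, simply remarking that these estimates ``follow from Gaussian concentration.'' Your use of the $\ell^1$--$\ell^2$ inequality, Fubini, and the exact evaluation of $\int_{-M}^{M}\abs{y_1}\,d\gamma(y_1)=\tfrac{2}{\sqrt{2\pi}}(1-e^{-M^2/2})$ together with the trivial bound $\int_{-M}^{M}d\gamma\le 1$ is exactly what is needed to recover the stated constants.
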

%%\begin{proof}
%%We get
%%\begin{align*}
%%\kappa_{1}\left(K\left(M\right)\right)  =\int_{K\left(M\right)}\norm{y}\, d\gamma\left(y_{1}\right)dy_{2}
%%   \leq\int_{K\left(M\right)}\left(\abs{y_{1}}+\abs{y_{2}}\right)\, d\gamma\left(y_{1}\right)dy_{2}%\\
%%% & \leq2M\int_{-M}^{M}\abs{y_{1}}\, d\gamma\left(y_{1}\right)+\int_{-M}^{M}\abs{y_{2}}dy_{2}\cdot\int_{-M}^{M}d\gamma\left(y_{1}\right)
%%    \leq 4M\frac{1-e^{-M^{2}/2}}{\sqrt{2\pi}}+M^{2},
%%\end{align*}
%%and 
%%\begin{align*}
%%\kappa_{2}\left(K\left(M\right)\right)  =\int_{K\left(M\right)}\norm{y}\abs{y_{2}}\, d\gamma\left(y_{1}\right)dy_{2}
%%  \leq\int_{K\left(M\right)}\left(\abs{y_{1}}\abs{y_{2}}+\abs{y_{2}}^{2}\right)\, d\gamma\left(y_{1}\right)dy_{2}%\\
%%% & =\int_{-M}^{M}\abs{y_{2}}\, dy_{2}\cdot\int_{-M}^{M}\abs{y_{1}}\, d\gamma\left(y_{1}\right)+\int_{-M}^{M}\abs{y_{2}}^{2}dy_{2}\cdot\int_{-M}^{M}d\gamma\left(y_{1}\right)
%%  \leq 2M^{2}\frac{1-e^{-M^{2}/2}}{\sqrt{2\pi}}+\frac{2}{3}M^{3}.
%%\end{align*}
%%\end{proof}
As a result, we get:
\begin{corollary}\label{cor:deltai-bounds}
Let $c=\max\{\frac{\sqrt{2}}{c_{2}},\left(\frac{\sqrt{2}}{c_{2}}\right)^{2}\}$ and assume that $M\geq 2$.
Then we have the estimates
\[
\Delta_{i}(t)\leq Lip(f)\left[\norm{A(t)-A(\infty)}\left(M^{3}(1+\frac{4}{M}\frac{1-e^{-M^{2}/2}}{\sqrt{2\pi}})\right)+M^2\abs{m_{1}}\right]+4c_{1}\left[ce^{-\frac{M}{c}}(M+1)+c e^{-\frac{M^{2}}{2}}\right].
\]
\end{corollary}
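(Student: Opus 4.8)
The plan is to specialize the three preceding lemmas of this subsection to the square $K = K(M) = [-M,M]^2$ and then absorb the lower-order contributions using the hypothesis $M \geq 2$; there is no genuine obstacle, only bookkeeping.

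First I would invoke the first lemma of this subsection, which states that for every $K \in \mathcal{B}$ and $t \in T$,
\[
\Delta_i(t) \leq Lip(f)\bigl(\norm{A(t)-A(\infty)}\cdot\kappa_i(K) + \abs{m_1(t)}\cdot\mu_i(K)\bigr) + c_1\upsilon_i(K;t), \quad i = 1,2.
\]
Setting $K = K(M)$ with $M \geq 2$, the ``$\upsilon_i$ estimates'' lemma bounds the final term by $4c_1\bigl[ce^{-M/c}(M+1) + ce^{-M^2/2}\bigr]$, which already has the form appearing in the claim, so it remains only to estimate $\kappa_i(K(M))$ and $\mu_i(K(M))$.

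For the $\kappa_i$ terms I would apply the ``$\kappa_i$ estimates'' lemma, which gives $\kappa_1(K(M)) \leq M^2\bigl(1 + \tfrac{4}{M}\tfrac{1-e^{-M^2/2}}{\sqrt{2\pi}}\bigr)$ and $\kappa_2(K(M)) \leq M^3\bigl(\tfrac{2}{3} + \tfrac{2}{M}\tfrac{1-e^{-M^2/2}}{\sqrt{2\pi}}\bigr)$; since $M \geq 1$ and $\tfrac23 \leq 1$, $2 \leq 4$, both are at most $M^3\bigl(1 + \tfrac{4}{M}\tfrac{1-e^{-M^2/2}}{\sqrt{2\pi}}\bigr)$, the coefficient in the statement. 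For the $\mu_i$ terms I would compute directly from the definitions $d\mu_1 = d\gamma(y_1)\otimes dy_2$ and $d\mu_2 = d\gamma(y_1)\otimes\abs{y_2}dy_2$ that $\mu_1(K(M)) = \gamma([-M,M])\cdot 2M \leq 2M$ and $\mu_2(K(M)) = \gamma([-M,M])\cdot M^2 \leq M^2$, where $\gamma$ is standard Gaussian measure on $\R$; since $M \geq 2$ we have $2M \leq M^2$, so $\mu_i(K(M)) \leq M^2$ for $i = 1,2$.

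Substituting these three bounds into the displayed inequality and taking the worst constant in each place yields
\[
\Delta_i(t) \leq Lip(f)\Bigl[\norm{A(t)-A(\infty)}\,M^3\bigl(1 + \tfrac{4}{M}\tfrac{1-e^{-M^2/2}}{\sqrt{2\pi}}\bigr) + M^2\abs{m_1}\Bigr] + 4c_1\bigl[ce^{-M/c}(M+1) + ce^{-M^2/2}\bigr],
\]
which is precisely the asserted estimate. The only place the sharper hypothesis $M \geq 2$ (rather than the $M \geq 1$ needed for the auxiliary lemmas) is actually used is the step $\mu_1(K(M)) \leq 2M \leq M^2$; every other step is a direct substitution, so this is the ``hard part'' only in the sense of being the single inequality where one must attend to the range of $M$.
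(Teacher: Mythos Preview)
Your proposal is correct and follows essentially the same approach as the paper. The paper's proof is extremely terse—it simply says that by the preceding lemmas one only needs to bound $\mu_i(K)$, and then notes $\mu_i([-M,M]^2)\leq M^2$—so your write-up is a faithful expansion of exactly that argument, including the observation that $M\geq 2$ is needed precisely for the step $\mu_1(K(M))\leq 2M\leq M^2$.
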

\begin{proof}
By the above lemmas from this subsection, we see that we only need to study $\mu_i(K)$. The inequality follows after noting that 
\[
\mu_i([-M,M])\leq M^2.\qquad
\]
%We only need to study $\mu_{i}\left(K\right)$ where
%\[
%d\mu_{i}=d\gamma(y_{1})\tensor d\lambda_{i}(y_{2}),
%\]
%with $d\lambda_{1}(y)=dy$ and $d\lambda_{2}=\abs{y}dy$. We find
%\[
%\mu_{i}\left(\left[-M,M\right]^{2}\right)\leq M^2
%\]
%and that gives the result.
\end{proof}

\section{The 2/3rd AT line}\label{sec:23-arg}
In this section, we study the 2/3-AT line argument outlined in Sect.\ \ref{sec:23-ATline}.
We begin by a reduction of the problem. Then, by using the 1-d techniques from
Sect.\ \ref{sec:disp-gauss} we prove the result. Recall the definitions of $q_*$ and $\alpha$ in (\ref{eq:q-alpha-def}), and the definition
of the sets $AT$ and $RS$ in (\ref{eq:AT}). We will be following the notation introduced in Sect. \ref{sec:RSprelims}. We remind
the reader here that in this section and in the following, \textbf{we 
will always take $\mu=\delta_{q_*}$}.

\begin{lemma}\label{lem:23-reduction}
If $(\beta,h)\in AT$ and 
\[
\xi''(y)(1-q_*)\leq 1 \qquad \forall y\geq q_*
\]
then $(\beta,h)\in  RS$.
\end{lemma}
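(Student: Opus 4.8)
The plan is to verify the second hypothesis of Lemma \ref{lem:strats}, namely that $g'(y)\le 0$ for all $y\ge q_*$; since we are assuming $(\beta,h)\in AT$, Lemma \ref{lem:strats} then delivers $(\beta,h)\in RS$ immediately, with no need to treat the region $y\le q_*$ separately. Throughout I would keep the standing convention of this section, $\mu=\delta_{q_*}$, so that $q_*=\inf\text{supp}\,\mu=\sup\text{supp}\,\mu$ and the explicit formulas of Fact \ref{fact:explicitsolns} are available for $u$ and its derivatives on all of $[q_*,1]$, and for $X$ on all of $[0,q_*]$.

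The first step is to record an \emph{a priori} lower bound on the ``energy'' $\E_h[u_x^2(y,X_y)]$ over $[q_*,1]$. By \eqref{eq:g'} (which is It\^o's lemma applied to $u_x^2$), one has
\[
\frac{d}{ds}\,\E_h\!\left[u_x^2(s,X_s)\right]=\xi''(s)\,\E_h\!\left[u_{xx}^2(s,X_s)\right]\ge 0,
\]
so $s\mapsto \E_h[u_x^2(s,X_s)]$ is non-decreasing on $[0,1]$. Moreover, since $\xi'(0)=0$, Fact \ref{fact:explicitsolns} gives $X_{q_*}\eqdist \sqrt{\xi'(q_*)}\,Z+h$ and $u_x(q_*,\cdot)=\tanh$, and because $q_*\in Q_*$ the defining relation of $Q_*$ yields
\[
\E_h\!\left[u_x^2(q_*,X_{q_*})\right]=\E\tanh^2\!\left(\sqrt{\xi'(q_*)}\,Z+h\right)=q_*.
\]
Combining these two facts, $\E_h[u_x^2(y,X_y)]\ge q_*$ for every $y\ge q_*$.

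The second step is to bound $g'$ on $[q_*,1]$. For $y\ge q_*$, Fact \ref{fact:explicitsolns} gives $u_x(y,x)=\tanh x$ and $u_{xx}(y,x)=\sech^2 x$, whence the pointwise inequality $u_{xx}^2=\sech^4\le \sech^2=1-\tanh^2=1-u_x^2$. Since $\xi''\ge 0$, taking $\E_h$ and using \eqref{eq:g'} together with the lower bound from the first step gives
\[
g'(y)=\xi''(y)\,\E_h\!\left[u_{xx}^2(y,X_y)\right]-1\le \xi''(y)\left(1-\E_h\!\left[u_x^2(y,X_y)\right]\right)-1\le \xi''(y)(1-q_*)-1 .
\]
By the hypothesis $\xi''(y)(1-q_*)\le 1$ for all $y\ge q_*$, the right-hand side is $\le 0$, so $g'(y)\le 0$ on $[q_*,1]$, which is exactly condition (2) of Lemma \ref{lem:strats}, completing the argument.

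I do not expect a substantial obstacle: this is a streamlined cousin of the proof of Lemma \ref{lem:sk-3/2} in which the ODE comparison is replaced by the one-line monotonicity of the energy, and the only points needing care are that the formulas of Fact \ref{fact:explicitsolns} are invoked on the correct subintervals ($[q_*,1]$ for $u$, $[0,q_*]$ for $X$), that the monotonicity of $s\mapsto\E_h[u_x^2(s,X_s)]$ uses only $\xi''\ge 0$ and $\E_h[u_{xx}^2]\ge 0$, and that passing from the pointwise bound $\sech^4\le 1-\tanh^2$ to the corresponding inequality for $\E_h$-expectations is legitimate because $\xi''(y)\ge 0$.
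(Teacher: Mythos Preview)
Your proof is correct. Both you and the paper start from the same pointwise inequality $\sech^{4}\le \sech^{2}=1-\tanh^{2}$, but you diverge in how you close the argument: the paper writes $\E_{h}\tanh^{2}(X_{y})=y+g(y)$ by definition, obtains the differential inequality $g'+\xi''g\le\xi''(1-q_*)-1\le 0$, and then invokes an ODE comparison (with $g(q_*)=0$) to deduce condition~(1) of Lemma~\ref{lem:strats}, namely $g\le 0$ on $[q_*,1]$. You instead use the monotonicity $\frac{d}{ds}\E_{h}[u_{x}^{2}(s,X_{s})]\ge 0$ together with $\E_{h}[u_{x}^{2}(q_*,X_{q_*})]=q_*$ to get $\E_{h}\tanh^{2}(X_{y})\ge q_*$ directly, which yields condition~(2), $g'\le 0$ on $[q_*,1]$, without any comparison argument. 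Your route is more elementary and in fact proves the stronger conclusion (condition~(2) implies condition~(1) since $g(q_*)=0$); the paper's route packages the same information into a linear ODE, which is perhaps more in keeping with the style of Lemma~\ref{lem:sk-3/2} but is not needed here.
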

\begin{proof}
Suppose that $(\beta,h)\in AT$. Recall  from Lemma \ref{lem:strats} that it suffices to show that $g\leq 0$ on $[q_*,1]$.
Using \eqref{eq:g'} and the formulas for $u$ in Fact \ref{fact:explicitsolns},  we have that
\begin{align*}
g'(y)&=\xi''(y)\E_h \sech^4(X_y)-1\leq \xi''(y)\E_h\sech^2(X_y)-1 \\
&=\xi''(y)\E_h(1-\tanh^2(X_y))-1= \xi''(y)(1-y)-\xi''(y)g(y)-1.
\end{align*}
Thus, $g$ satisfies the differential inequality
\[
g'(y)+\xi''(y)g(y)\leq \xi''(y)(1-y)-1\leq\xi''(y)(1-q_*)-1\leq 0.
\]
Since $g(q_*)=0$, a comparison argument (as in the proof of Lemma \ref{lem:sk-3/2}) shows that $g(y)\leq0$ for all $y \geq q_*$.
\end{proof}

We will need the following lemmas which will be used frequently in the subsequent. The first lemma concerns a lower bound on $q_*$.
\begin{lemma}\label{lem:lbq} 
We have the following lower bounds on $q_*$:
\begin{enumerate}
\item For all models $\xi_0$, 
\[
q_* \geq \frac{1}{2}\tanh^2h.
\]
\item If $\xi''_0(0)=2\beta_2^2\neq 0$ then for $\beta>0$,
\[
q_*\geq 1-\frac{\sqrt{\alpha}}{\sqrt{2}\beta\beta_2}.
\]
\end{enumerate}
\end{lemma}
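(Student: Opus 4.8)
The plan is to use two facts recorded just after \eqref{eq:q-alpha-def}: that $q_*\in Q_*(\beta,h)$, so $q_* = \E\tanh^2(\sqrt{\xi'(q_*)}Z+h)$, and that $\alpha(\beta,h)=\alpha(q_*,\beta,h) = \xi''(q_*)\,\E\sech^4(\sqrt{\xi'(q_*)}Z+h)$. Write $a = \sqrt{\xi'(q_*)}\geq 0$ throughout.

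For part (1) I would restrict the defining expectation to the half-line $\{Z\geq 0\}$: since $h\geq 0$ and $a\geq 0$, on that event $aZ+h\geq h\geq 0$, and since $x\mapsto\tanh^2 x$ is nondecreasing on $[0,\infty)$ we get $\tanh^2(aZ+h)\geq\tanh^2 h$ there, hence
\[
q_* = \E\tanh^2(aZ+h)\ \geq\ \E\big[\tanh^2(aZ+h)\,\mathbf{1}_{Z\geq 0}\big]\ \geq\ \tanh^2(h)\,\prob(Z\geq 0) = \tfrac12\tanh^2 h .
\]
(The degenerate case $a=0$ forces $q_*=0$, hence $h=0$, which is consistent with the bound.)

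For part (2) I would instead bound $1-q_*$ from above. Using $\sech^2 = 1-\tanh^2$ we have $1-q_* = \E\sech^2(aZ+h)$, so by Cauchy--Schwarz (equivalently Jensen applied to $x\mapsto x^2$),
\[
(1-q_*)^2 = \big(\E\sech^2(aZ+h)\big)^2\ \leq\ \E\sech^4(aZ+h) = \frac{\alpha(\beta,h)}{\xi''(q_*)} .
\]
It then remains to bound $\xi''(q_*)$ below: since $\xi_0''(t)=\sum_{p\geq 2}\beta_p^2\,p(p-1)t^{p-2}$ has nonnegative coefficients, $\xi''(t)=\beta^2\xi_0''(t)\geq\beta^2\xi_0''(0)=2\beta^2\beta_2^2$ on $[0,1]$, so with $\beta>0$ and $\beta_2\neq 0$ this gives $1-q_*\leq\sqrt{\alpha}/\sqrt{\xi''(q_*)}\leq\sqrt{\alpha}/(\sqrt{2}\,\beta\beta_2)$, which is the asserted bound.

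I do not anticipate a genuine obstacle: each part is a one-line monotonicity, resp.\ Cauchy--Schwarz, argument once the fixed-point equation for $q_*$ and the value of $\alpha$ at $q_*$ are invoked. The one point worth flagging is that part (2) uses the uniform-parabolicity hypothesis $\xi_0''(0)=2\beta_2^2\neq 0$ precisely to keep $\xi''(q_*)$ bounded below by a positive constant independent of the (a priori unknown) location of $q_*$; without it the denominator $\sqrt{\xi''(q_*)}$ can degenerate and the estimate becomes vacuous, which is consistent with the surrounding discussion of why $\xi_0''(0)>0$ matters.
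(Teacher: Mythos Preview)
Your proof is correct and follows essentially the same approach as the paper: for part~(1) the paper invokes the definition of $q_*$ and monotonicity of $\tanh^2$ on $[0,\infty)$ (your restriction to $\{Z\geq 0\}$ makes explicit how the factor $\tfrac12$ appears), and for part~(2) the paper applies Jensen's inequality to obtain $\xi''(q_*)(1-q_*)^2\leq \alpha$ and then uses $\xi''(q_*)\geq \xi''(0)=2\beta^2\beta_2^2$, exactly as you do.
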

\begin{remark}
Here we see the main difference between the setting $\xi''(0)>0$ and the setting $\xi''(0)\geq0$ for our arguments. This lemma is the main reason for the
assumption $h\geq h_0>0$ in the following. 
When focusing on the case when $\xi''(0)>0$, 
one can take $h_0=0$ if one assumes that $\beta$ is sufficiently large and the following analysis will hold \emph{mutatis mutandis}.
\end{remark}
\begin{proof}
The first claim follows from the definition of $q_*$ in (\ref{eq:q-alpha-def}) and the fact that $\tanh^2x$ is non-decreasing for $x\geq 0$.
To prove the second claim, observe that by Jensen's inequality
\[
\xi''(q_*)(1-q_*)^2\leq \xi''(q_*)\E\sech^4\left(\sqrt{\xi'(q_*)}Z+h\right)=\alpha 
\]
and therefore
\[
1-q_* \leq \frac{\sqrt{\alpha}}{\sqrt{2}\beta\beta_2}.\qquad
\]
\end{proof}
%%As a consequence of the above lemma, we observe that we can define $h_0 = h_0(\xi_0)$ such that $h\geq h_0$ implies that $q_* >0$. In particular, let $\tilde{h}$ be any number in $(0,\infty)$ and define $h_0$ by 
%%\begin{equation}\label{eq:h_0}
%%h_0 =
%%\begin{cases}
%%\tilde{h} & \xi_0''(0) =0\\
%%0 & \xi_0''(0)>0
%%\end{cases}.
%%\end{equation}
%%This definition will be used in the subsequent.

The next lemma and its corollary use the techniques from Sect.\ \ref{sec:disp-gauss} to estimate $q_*$ in terms of $\alpha$.
Recall that $\Lambda_0=(\pi^2-3)/(6\sqrt{2\pi})$.
\begin{lemma}
Let $f(x)=\sech^{4}(x)$ and $g(x)=\sech^{2}(x)$. We have the inequality
\[
\abs{\sigma^{2}\E g\left(h+\sigma z\right)-\frac{3}{2}\sigma^{2}\E f\left(h+\sigma z\right)}\leq\frac{\Lambda_0}{\sigma}.
\]
\end{lemma}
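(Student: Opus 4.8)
The statement to prove is
\[
\abs{\sigma^{2}\E g\left(h+\sigma z\right)-\frac{3}{2}\sigma^{2}\E f\left(h+\sigma z\right)}\leq\frac{\Lambda_0}{\sigma},
\]
where $f(x)=\sech^4(x)$ and $g(x)=\sech^2(x)$, and $\Lambda_0=(\pi^2-3)/(6\sqrt{2\pi})$. This is a direct application of Corollary \ref{cor:1d-disp-diff-ulb} with the chosen $f$ and $g$, once I verify the hypotheses and compute the three quantities $\int f$, $\int g$, and the relevant weighted $L_1$ norms appearing on the right-hand side of that corollary.

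First I would check that $f,g\in\cE$, i.e.\ that both are even and lie in $L_1((x^2\vee 1)dx)$; this is immediate since $\sech^2$ and $\sech^4$ decay exponentially, and evenness is clear. Then I would recall the elementary definite integrals $\int_{\R}\sech^2 x\,dx = 2$ and $\int_{\R}\sech^4 x\,dx = 4/3$, so that $\int g/\int f = 3/2$, which is exactly the constant appearing in the statement. Plugging into Corollary \ref{cor:1d-disp-diff-ulb} with this $f$ and $g$ gives
\[
\abs{\sigma^{2}\E g(h+\sigma z)-\tfrac32\,\sigma^2\E f(h+\sigma z)}\leq \frac{1}{2\sqrt{2\pi}}\frac{1}{\sigma}\left(\frac{\norm{f}_{L_1(y^2dy)}}{\abs{\int f}}\norm{g}_{L_1(dx)}+\norm{g}_{L_1(y^2dy)}\right).
\]

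The remaining task is to evaluate the bracket and match it to $\Lambda_0$. Here I need $\norm{f}_{L_1(y^2dy)}=\int_{\R} y^2\sech^4 y\,dy$, $\norm{g}_{L_1(dx)}=\int_{\R}\sech^2 x\,dx = 2$, and $\norm{g}_{L_1(y^2dy)}=\int_{\R} y^2\sech^2 y\,dy$. Using the standard evaluations $\int_{\R} y^2\sech^2 y\,dy = \pi^2/6$ and $\int_{\R} y^2\sech^4 y\,dy = (\pi^2-3)/9$ (the latter via $\sech^4 = \sech^2 - \tfrac13\frac{d^2}{dy^2}(\sech^2\cdot(\text{stuff}))$-type reductions, or integration by parts against $\tanh$), together with $\abs{\int f} = 4/3$, the bracket becomes
\[
\frac{(\pi^2-3)/9}{4/3}\cdot 2 + \frac{\pi^2}{6} = \frac{\pi^2-3}{6} + \frac{\pi^2}{6}\,;
\]
this does not obviously collapse to $(\pi^2-3)/3$, so I expect the main obstacle to be bookkeeping: either I must use the \emph{sharper} route inside Corollary \ref{cor:1d-disp-diff-ulb}'s proof (apply Lemma \ref{lem:1d-disp} directly to $g-\tfrac32 f$, whose integral vanishes, getting the single term $\tfrac1{2\sqrt{2\pi}}\tfrac1{\sigma^2}\norm{g-\tfrac32 f}_{L_1(y^2dy)}$), or I must recompute the moment integrals. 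Taking the direct route, $\norm{g-\tfrac32 f}_{L_1(y^2dy)} = \int y^2\abs{\sech^2 y - \tfrac32\sech^4 y}\,dy$; since $\sech^2 y - \tfrac32\sech^4 y \le 0$ exactly when $\sech^2 y \ge 2/3$, the absolute value splits the integral, but if instead the intended bound drops the absolute value (bounding by $\int y^2(\tfrac32\sech^4 y - \sech^2 y)\,dy$ is wrong in sign), one should use $\int y^2(\sech^2 y)\,dy - \tfrac32\int y^2\sech^4 y\,dy = \pi^2/6 - \tfrac32\cdot(\pi^2-3)/9 = (\pi^2-3)/6 \cdot(\text{check})$. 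Indeed $\pi^2/6 - (\pi^2-3)/6 = 1/2$, so I would instead write $\norm{g-\tfrac32 f}_{L_1(y^2dy)} \le \norm{g}_{L_1(y^2dy)} = \pi^2/6$ only as a crude bound; the precise constant $\Lambda_0=(\pi^2-3)/(6\sqrt{2\pi})$ forces the exact identity $\tfrac1{2\sqrt{2\pi}}\cdot(\text{moment}) = (\pi^2-3)/(6\sqrt{2\pi})$, i.e.\ the moment must equal $(\pi^2-3)/3$, which is $2\cdot(\pi^2-3)/6$. I would therefore reconcile this by carefully re-deriving $\int_\R y^2\sech^4 y\,dy$ and verifying $\int y^2\abs{\sech^2 y-\tfrac32\sech^4 y}\,dy = (\pi^2-3)/3$ — the sign-change region being the only subtlety. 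Modulo getting this one moment integral and its sign-split correct, the proof is otherwise a one-line invocation of Lemma \ref{lem:1d-disp} applied to the mean-zero function $g - \tfrac32 f$.
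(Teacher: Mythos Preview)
Your approach is exactly the paper's: verify $f,g\in\cE$, note $\int g/\int f = 3/2$, and plug into Corollary~\ref{cor:1d-disp-diff-ulb}. The only problem is a single miscomputed integral, which is the source of all the subsequent confusion and detours in your proposal.

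The correct value is
\[
\int_\R y^2\sech^4 y\,dy \;=\; \frac{\pi^2-6}{9},
\]
not $(\pi^2-3)/9$. With this, the bracket in Corollary~\ref{cor:1d-disp-diff-ulb} evaluates to
\[
\frac{(\pi^2-6)/9}{4/3}\cdot 2 + \frac{\pi^2}{6} \;=\; \frac{\pi^2-6}{6} + \frac{\pi^2}{6} \;=\; \frac{2\pi^2-6}{6} \;=\; \frac{\pi^2-3}{3},
\]
and hence the right-hand side is $\frac{1}{2\sqrt{2\pi}}\cdot\frac{1}{\sigma}\cdot\frac{\pi^2-3}{3} = \Lambda_0/\sigma$, as required. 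There is no need to apply Lemma~\ref{lem:1d-disp} to $g-\tfrac32 f$ directly, no sign-split issue, and no sharper route required; the corollary already gives the exact constant once the moment is computed correctly.
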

\begin{proof}
Note that \cite{IntegralTable}
\begin{align*}
\int f=\frac{4}{3}  \qquad\int g=2 \qquad
\int f\left(y\right)y^{2}=\frac{1}{9}\left(\pi^{2}-6\right)  \qquad\int g\left(y\right)y^{2}=\frac{\pi^{2}}{6}\qquad
\frac{\int g}{\int f}=\frac{3}{2}
\end{align*}
and 
\begin{equation}\label{eq:Lambda_0}
\frac{1}{2}\frac{1}{\sqrt{2\pi}}\left(\frac{\int f\left(y\right)y^{2}\, dy}{\int f\left(x\right)\, dx}\int g\left(x\right)\, dx+\int g\left(y\right)y^{2}\, dy\right)=\frac{1}{2}\frac{\pi^{2}-3}{3\sqrt{2\pi}}
=\Lambda_0.
\end{equation}
The result then follows by Corollary \ref{cor:1d-disp-diff-ulb}.
\end{proof} 

%If $h>0$, it follows that $q_*>0$ from its definition. 
%In this case, we may take $\sigma = \sqrt{\xi'(q_*)}$ in the previous lemma to get the following estimate on $q_*$.
\begin{corollary}\label{cor:q-lim}
If $h>0$, we have the estimate
\[
\abs{\xi''(q_*)(1-q_*)- \frac{3}{2}\alpha}\leq \frac{\Lambda_0\xi''_0(q_*)}{\beta(\xi'_0(q_*))^{3/2}}.
\]
\end{corollary}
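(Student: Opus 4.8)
The plan is to deduce Corollary~\ref{cor:q-lim} from the preceding lemma by specializing the variable $\sigma$ to the value dictated by the Auffinger--Chen process at time $q_*$, namely $\sigma^2 = \xi'(q_*)$, and by using that $q_*\in Q_*$ to rewrite the leading term. Recall from Fact~\ref{fact:explicitsolns} that for $\mu=\delta_{q_*}$ the process satisfies $X_{q_*} = h + \int_0^{q_*}\sqrt{\xi''(s)}\,dW_s$, so $X_{q_*}\eqdist h + \sqrt{\xi'(q_*)}\,Z$ with $Z$ a standard Gaussian; thus $\sigma = \sqrt{\xi'(q_*)}$ in the notation of the lemma. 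I would first observe that $g(x)=\sech^2(x) = 1-\tanh^2(x)$, so $\E g(h+\sigma Z) = 1 - \E\tanh^2(h+\sigma Z) = 1 - q_*$, where the last equality is exactly the defining relation $q_*\in Q_*(\beta,h)$ from \eqref{eq:q-alpha-def}. Hence $\sigma^2\,\E g(h+\sigma Z) = \xi'(q_*)(1-q_*)$, but we actually want $\xi''(q_*)(1-q_*)$; the clean way is to multiply through so that the leading comparison is in terms of $\xi''$. Concretely, note $\xi''(q_*)\E\sech^4(h+\sigma Z) = \alpha(q_*,\beta,h) = \alpha$ by the definition of $\alpha$ and the fact (stated after \eqref{eq:q-alpha-def}) that $\alpha(\beta,h)=\alpha(q_*,\beta,h)$.

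The second step is the bookkeeping with the prefactors. The lemma gives $\abs{\sigma^2\E g - \tfrac32 \sigma^2 \E f} \le \Lambda_0/\sigma$ with $f=\sech^4$, $g=\sech^2$. Multiply by $\xi''(q_*)$ and substitute $\sigma^2 = \xi'(q_*) = \beta^2\xi_0'(q_*)$, $\sigma = \sqrt{\beta^2\xi_0'(q_*)} = \beta\sqrt{\xi_0'(q_*)}$, and $\xi''(q_*) = \beta^2\xi_0''(q_*)$:
\[
\Bigl| \xi''(q_*)\sigma^2\E g - \tfrac32 \xi''(q_*)\sigma^2 \E f \Bigr| \le \frac{\xi''(q_*)\Lambda_0}{\sigma} = \frac{\beta^2 \xi_0''(q_*)\Lambda_0}{\beta\sqrt{\xi_0'(q_*)}} = \frac{\beta\,\xi_0''(q_*)\,\Lambda_0}{\sqrt{\xi_0'(q_*)}}.
\]
Here $\xi''(q_*)\sigma^2\E g = \xi''(q_*)\xi'(q_*)(1-q_*)$ and $\xi''(q_*)\sigma^2\E f = \xi'(q_*)\cdot\xi''(q_*)\E\sech^4(h+\sigma Z) = \xi'(q_*)\,\alpha$. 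So dividing the whole inequality by $\xi'(q_*) = \beta^2\xi_0'(q_*)$ yields
\[
\Bigl| \xi''(q_*)(1-q_*) - \tfrac32\alpha \Bigr| \le \frac{\beta\,\xi_0''(q_*)\,\Lambda_0}{\beta^2 (\xi_0'(q_*))^{3/2}} = \frac{\Lambda_0\,\xi_0''(q_*)}{\beta (\xi_0'(q_*))^{3/2}},
\]
which is exactly the claimed bound.

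There is one point that requires a little care: the lemma is stated as a pointwise inequality valid for all $\sigma$, but implicitly requires $\sigma>0$; one should check that $\xi'(q_*)>0$, i.e. that $q_*>0$, which holds because $h>0$ forces $q_*\ge \tfrac12\tanh^2 h > 0$ by Lemma~\ref{lem:lbq}(1) (this is presumably why the hypothesis $h>0$ appears in the statement). Also one should make sure that $f=\sech^4$ and $g=\sech^2$ lie in the class $\cE$ so that the earlier 1-d dispersive machinery (via Corollary~\ref{cor:1d-disp-diff-ulb}) applies — this is immediate since both are even, smooth, and exponentially decaying, hence in $L_1((x^2\vee1)dx)$; but this has already been verified in the proof of the preceding lemma, so nothing new is needed. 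I do not expect a genuine obstacle here: the entire content is to correctly identify $\sigma=\sqrt{\xi'(q_*)}$ from the Auffinger--Chen representation and to match $\E\sech^2 = 1-q_*$ and $\xi''(q_*)\E\sech^4 = \alpha$ against the definitions in \eqref{eq:q-alpha-def}; the rest is algebraic substitution. The only mildly delicate bit is keeping the powers of $\beta$ straight when converting between $\xi,\xi',\xi''$ and $\xi_0,\xi_0',\xi_0''$, which the computation above records.
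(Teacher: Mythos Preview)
Your proof is correct and is exactly the argument the paper intends: specialize the preceding lemma with $\sigma=\sqrt{\xi'(q_*)}$, identify $\E\sech^2(h+\sigma Z)=1-q_*$ and $\xi''(q_*)\E\sech^4(h+\sigma Z)=\alpha$ from the definitions in \eqref{eq:q-alpha-def}, and track the powers of $\beta$. The paper omits the proof as immediate; your identification of the role of $h>0$ via Lemma~\ref{lem:lbq}(1) to guarantee $\sigma>0$ is the right justification for that hypothesis. (A cosmetic shortcut: instead of multiplying by $\xi''(q_*)$ and then dividing by $\xi'(q_*)$, you can multiply the lemma's inequality once by $\xi''(q_*)/\sigma^2$ and arrive at the claim in one step.)
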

%\begin{remark}\label{rem:hypothH}

\paragraph{Hypothesis H} \label{par:hypH} Given certain additional assumptions on the structure of the level sets $\alpha(\beta,h)=$ const., we can compute the rescaled limit of $1-q_*$ as $\beta\to \infty$. 
We call \textbf{hypothesis H} the assumption that the level sets $\{(\beta,h) : \alpha(\beta,h)=const.\}$ are unbounded in $\beta$. We note that hypothesis H is not used in the proof of the main results of this paper, but only used for motivational calculations.
With this, the previous corollary immediately implies the following result.
\begin{corollary} Assume hypothesis H holds. Let $\{(\beta_n,h_n)\}$ be a sequence belonging to the level set $\{(\beta,h) : \alpha(\beta,h) = \tilde{\alpha}\}$ such that $\beta_n\to\infty$ and $h_n\geq h_0>0$. Then,
\[
\lim_{n\to\infty} \, \xi''(q_*)(1-q_*) = \frac{3}{2}\tilde{\alpha}.
\]
\end{corollary}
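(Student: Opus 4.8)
The plan is to feed the hypothesis H sequence into Corollary \ref{cor:q-lim} and take the limit $\beta_n \to \infty$, checking that the error term vanishes. This is essentially immediate, but one must verify two things: that the error term on the right of Corollary \ref{cor:q-lim} goes to zero, and that the quantity $\xi'_0(q_*)$ stays bounded away from zero along the sequence (otherwise the denominator $(\xi'_0(q_*))^{3/2}$ could cause trouble). The role of hypothesis H is precisely to guarantee that such a sequence exists — i.e., that level sets are unbounded in $\beta$ — and the role of $h_n \geq h_0 > 0$ is to pin down $q_*$ away from $0$.

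First I would recall from Corollary \ref{cor:q-lim} that, since $h_n > 0$,
\[
\abs{\xi''(q_*)(1-q_*) - \tfrac{3}{2}\alpha(\beta_n,h_n)} \leq \frac{\Lambda_0\,\xi''_0(q_*)}{\beta_n\,(\xi'_0(q_*))^{3/2}},
\]
where here $q_* = q_*(\beta_n, h_n)$ and $\alpha(\beta_n,h_n) = \tilde\alpha$ by the choice of the sequence. Next I would bound the right-hand side: by Lemma \ref{lem:lbq}(1) we have $q_* \geq \tfrac12 \tanh^2 h_n \geq \tfrac12 \tanh^2 h_0 =: c_0 > 0$, hence $\xi'_0(q_*) \geq \xi'_0(c_0) > 0$ since $\xi'_0$ is strictly increasing and positive on $(0,1]$ (as $\xi_0$ has non-negative coefficients and is non-constant). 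Meanwhile $\xi''_0(q_*) \leq \xi''_0(1) < \infty$ by monotonicity of $\xi''_0$ and the assumption $\xi_0(1+\epsilon) < \infty$. Therefore the right-hand side is at most $\Lambda_0 \xi''_0(1) / (\beta_n (\xi'_0(c_0))^{3/2})$, which tends to $0$ as $\beta_n \to \infty$. Taking $n\to\infty$ in the displayed inequality gives $\lim_n \xi''(q_*)(1-q_*) = \tfrac32\tilde\alpha$, as claimed. (Note $\xi''(q_*) = \beta_n^2 \xi''_0(q_*)$; the notation $\xi''(q_*)(1-q_*)$ in the statement is understood with the $\beta_n$-dependence absorbed, matching Corollary \ref{cor:q-lim}.)

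The only mild subtlety — and the one place one should be careful — is the lower bound on $\xi'_0(q_*)$: it is what prevents the error from blowing up, and it is exactly here that the hypothesis $h_n \geq h_0 > 0$ is used, via Lemma \ref{lem:lbq}(1). Everything else is a direct substitution. So there is no real obstacle; the content of the corollary is entirely contained in Corollary \ref{cor:q-lim} together with the a priori bound on $q_*$, and the proof is two lines once those are invoked.
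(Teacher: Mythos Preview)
Your proposal is correct and matches the paper's approach exactly: the paper states that ``the previous corollary immediately implies the following result'' and gives no further proof, so your two-line argument via Corollary~\ref{cor:q-lim} and the lower bound on $q_*$ from Lemma~\ref{lem:lbq}(1) is precisely the intended justification.
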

%\end{remark}

We conclude this section by proving the following theorem:
\begin{theorem*}\textbf{\ref{prop:23-AT}}
 For any model $\xi_0$, 
\begin{equation}
\{ h>0,\ \alpha \leq \frac{2}{3}\frac{\xi''_0(q_*)}{\xi''_0(1)}\left(1-\frac{\Lambda_0\xi''_0(1)}{\beta(\xi'_0(q_*))^{3/2}}\right)\} \subset RS.
\end{equation}
%%Furthermore, if $\xi''_0(0) > 0$  then
%%\begin{equation}\label{eq:23-ATline-unifpar}
%%\{ \beta>\sqrt{\frac{2}{3\cdot \xi''_0(0)}},\ \alpha \leq \frac{2}{3}\frac{\xi''_0(q_*)}{\xi''_0(1)}\left(1-\frac{\Lambda_0\xi''_0(1)}{\beta(\xi'_0(q_*))^{3/2}}\right)\} \subset RS.
%%\end{equation}
\end{theorem*}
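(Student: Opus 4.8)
The plan is to obtain Proposition \ref{prop:23-AT} as a short consequence of the reduction Lemma \ref{lem:23-reduction} together with the dispersive estimate in Corollary \ref{cor:q-lim}; the technical heavy lifting has already been done in those two results, so what remains is a chain of elementary inequalities.

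First I would check that the region in \eqref{eq:23-ATline} is contained in $AT$, so that Lemma \ref{lem:23-reduction} is applicable. If $(\beta,h)$ lies in that region then, since $\alpha\ge 0$ and $\xi''_0$ is non-decreasing (so $\xi''_0(q_*)/\xi''_0(1)\le 1$), the upper bound on $\alpha$ in \eqref{eq:23-ATline} is a non-negative number that is at most $\tfrac23$; hence $\alpha\le\tfrac23<1$ and $(\beta,h)\in AT$. Note also that $h>0$ forces $q_*\ge\tfrac12\tanh^2 h>0$ by Lemma \ref{lem:lbq}, so $\xi'_0(q_*)>0$ and $\xi''_0(q_*)>0$; this is precisely the regime in which Corollary \ref{cor:q-lim} is valid.

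By Lemma \ref{lem:23-reduction} it then suffices to prove that $\xi''(y)(1-q_*)\le 1$ for every $y\in[q_*,1]$, and since $\xi''$ is non-decreasing this collapses to the single inequality $\xi''(1)(1-q_*)\le 1$. Using $\xi''=\beta^2\xi''_0$, I factor
\[
\xi''(1)(1-q_*)=\frac{\xi''_0(1)}{\xi''_0(q_*)}\,\xi''(q_*)(1-q_*),
\]
and bound the last factor via Corollary \ref{cor:q-lim}, namely $\xi''(q_*)(1-q_*)\le\tfrac32\alpha+\Lambda_0\xi''_0(q_*)/(\beta(\xi'_0(q_*))^{3/2})$. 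This yields
\[
\xi''(1)(1-q_*)\le\frac{3}{2}\,\frac{\xi''_0(1)}{\xi''_0(q_*)}\,\alpha+\frac{\Lambda_0\xi''_0(1)}{\beta(\xi'_0(q_*))^{3/2}},
\]
and the hypothesis $\alpha\le\frac{2}{3}\frac{\xi''_0(q_*)}{\xi''_0(1)}\bigl(1-\frac{\Lambda_0\xi''_0(1)}{\beta(\xi'_0(q_*))^{3/2}}\bigr)$, after multiplying both sides by $\tfrac32\,\xi''_0(1)/\xi''_0(q_*)$, says exactly that the right-hand side above is $\le 1$. Hence $(\beta,h)\in RS$ by Lemma \ref{lem:23-reduction}.

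The point where one might expect trouble --- controlling $\xi''(q_*)(1-q_*)=\xi''(q_*)\E_h\sech^2(\sqrt{\xi'(q_*)}Z+h)$ and relating it to $\alpha=\xi''(q_*)\E_h\sech^4(\sqrt{\xi'(q_*)}Z+h)$ uniformly as $\beta$ and $h$ grow --- has already been absorbed into the $1$-d dispersive machinery of Section \ref{sec:disp-gauss}, which exploits the ratio $\int\sech^2/\int\sech^4=3/2$ and gives the $O(1/\beta)$ error recorded in Corollary \ref{cor:q-lim}. Given that input, the only genuinely new steps here are the two observations above: that the region \eqref{eq:23-ATline} automatically sits inside $AT$, and that monotonicity of $\xi''$ reduces the pointwise requirement of Lemma \ref{lem:23-reduction} to its value at $y=1$.
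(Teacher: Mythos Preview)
Your proof is correct and follows essentially the same route as the paper: reduce via Lemma \ref{lem:23-reduction} and monotonicity of $\xi''$ to the single inequality $\xi''(1)(1-q_*)\le 1$, then bound $\xi''(q_*)(1-q_*)$ by Corollary \ref{cor:q-lim} and rearrange. You add a bit more detail than the paper (explicitly checking $(\beta,h)\in AT$ via $\alpha\le 2/3$, and invoking Lemma \ref{lem:lbq} to ensure $q_*>0$ so that Corollary \ref{cor:q-lim} applies), but the argument is the same.
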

%\begin{remark}
%Note that in the SK model, the condition on $\alpha$ reduces to 
%\begin{align*}
%\alpha & \leq\frac{2}{3}\left(1-\frac{\Lambda_0}{\beta q_{*}^{3/2}}\right).
%\end{align*}
%%($\beta_{0}\approx1.7091$). 
%\end{remark}
\begin{proof}
Observe first that $(\beta,h)\in AT$.
Since $\xi''$ is increasing, we see from Lemma \ref{lem:23-reduction} that if $(\beta,h)\in AT$ satisfes
\[
 \xi''(1)(1-q_*)\leq 1
\]
then $(\beta,h)\in RS$. We also see from Corollary \ref{cor:q-lim} that
\[
 \xi''(1)(1-q_*) \leq \frac{\xi''(1)}{\xi''(q_*)} \left (\frac{3}{2}\alpha + \frac{\Lambda_0 \xi''_0(q_*)}{\beta \left(\xi'_0(q_*)\right)^{3/2}}\right).
\]

Combining these gives the result.
\end{proof}

 \section{The long time argument}\label{sec:long-time}
 
In this section, we show that the AT line conjecture is true for $\beta$ large enough. In particular, we prove Theorem \ref{thm:long-time}. We will be following the notation introduced in Sect. \ref{sec:RSprelims}. We remind
the reader here that in this section, \textbf{we 
will always take $\mu=\delta_{q_*}$}.

Observe that by
Lemma \ref{lem:lbq}, if we define $q_0=q_0(h_0)$ by
\begin{equation}\label{eq:q_0}
q_0 =
\frac{1}{2}\tanh^2(h_0),
\end{equation}
it follows that
$q_* \geq q_0$ for $h\geq h_0$. 
The reader will observe that in the following, if a model 
satisfies $\xi''_0(0)>0$, then  by Lemma \ref{lem:lbq}, $q_*$
has a lower bound that depends only on $\beta_2$ and $\beta$ in
the region $\alpha\leq 1$. For such models, one 
can take $h_0=0$ in the following, provided one makes the changes
described at the end of the proof of Theorem \ref{thm:long-time}.

We begin by stating the main technical lemma and use this to prove the theorem. We then end with the proof of said lemma.

\begin{lemma}\label{lem:longtime-main-lem}
For any model $\xi_0$ and any $\alpha_0,h_0>0$, there exist constants $c,C,\beta_0>0$ depending only on $\xi_0,h_0,\alpha_0$
such that for all $\beta,h$ satisfying $\beta\geq\beta_0$, $h\geq h_0$, and $\alpha\in (\alpha_0,1]$, we have that
\[
\E_h\left(4 \sech^4(X_t)-6\sech^6(X_t)\right)\leq -\frac{c}{\beta^2}+\frac{C\log(\beta)^3}{\beta^{5/2}},\quad t\geq q_*.
\]
\end{lemma}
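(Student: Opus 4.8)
The plan is to turn the expectation against the Auffinger--Chen process into a two--dimensional Gaussian integral, to establish the a priori bound $q_* = 1 - O(\beta^{-2})$, and then to feed the Gaussian integral into the $2$-d dispersive estimates of Section \ref{sec:2d-disp}, comparing with a fixed $\beta=\infty$ limiting integral whose sign is computed by hand. For the reduction: since $\mu=\delta_{q_*}$, for $s\ge q_*$ Fact \ref{fact:explicitsolns} gives $u_x(s,x)=\tanh x$ and $u(s,x)=\log\cosh x+\tfrac12(\xi'(1)-\xi'(s))$, and the SDE is $dX_s=\xi''(s)\tanh(X_s)\,ds+\sqrt{\xi''(s)}\,dW_s$ with $X_{q_*}\sim\cN(h,\xi'(q_*))$. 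Writing $v(s,x)=\E[f_0(X_t)\mid X_s=x]$ with $f_0(x)=4\sech^4x-6\sech^6x$, the substitution $v=\tilde w\,e^{-u}$ turns the backward Kolmogorov equation for $v$ into the heat equation for $\tilde w$ (the first-order term cancels automatically and the zeroth-order term cancels by the Parisi PDE), and unwinding yields
\[
\E_h\!\big(f_0(X_t)\big)=e^{-\frac12(\xi'(t)-\xi'(q_*))}\,\E\!\big[F(B_{q_*},B_t)\big],\qquad F(x_1,x_2)=\sech(x_1)\big(4\sech^3x_2-6\sech^5x_2\big),
\]
where $(B_{q_*},B_t)$ is jointly Gaussian with mean $(h,h)$, $\mathrm{Var}(B_{q_*})=\xi'(q_*)$, $\mathrm{Var}(B_t)=\xi'(t)$, $\mathrm{Cov}=\xi'(q_*)$. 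One checks $F\in\cA$ and that $F$ is Lipschitz with exponential decay, so Theorem \ref{thm:2d-disp-bound} applies.

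Next I would extract the geometry of this Gaussian. Since $h\ge h_0>0$, Lemma \ref{lem:lbq} gives $q_*\ge q_0:=\tfrac12\tanh^2h_0>0$, hence $\xi'(q_*)=\beta^2\xi'_0(q_*)\ge\beta^2\xi'_0(q_0)\to\infty$; since $\E\tanh^2(\sigma Z+h)\to1$ as $\sigma\to\infty$, this forces $q_*\to1$, and combining $1-q_*=\E\sech^2(\sqrt{\xi'(q_*)}Z+h)$ with Lemma \ref{lem:1d-disp} and $\alpha\in(\alpha_0,1]$ (which pins $e^{-h^2/2\xi'(q_*)}\asymp\alpha/\beta$) yields $1-q_*\asymp\alpha\beta^{-2}$. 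Therefore $\xi'(t)-\xi'(q_*)=O(1)$ uniformly in $t\in[q_*,1]$, so: the prefactor $e^{-\frac12(\xi'(t)-\xi'(q_*))}$ lies in $[c_0,1]$ for a fixed $c_0>0$; the small eigenvalue $\lambda_1(t)$ of the covariance lies in $[0,\nu_{\max}]$ with $\nu_{\max}$ depending only on $\xi_0,h_0,\alpha_0$; the eigenframe $\{v_1(t),v_2(t)\}$ converges to the fixed frame $w_1=(-1,1)/\sqrt2,\ w_2=(1,1)/\sqrt2$ at rate $O(\beta^{-2})$; $m_1(t)=\innerp{(h,h),v_1(t)}=O(\sqrt{\log\beta}/\beta)$; the large eigenvalue $\lambda_2(t)\asymp\beta^2$; and $e^{-m_2(t)^2/2\lambda_2(t)}\asymp\alpha/\beta$, so $e^{-m_2^2/2\lambda_2}\lambda_2^{-1/2}\asymp\alpha\beta^{-2}$. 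The endpoint $t=q_*$ has $\lambda_1=0$ and $B_{q_*}=B_t$, so it is the $1$-d case: Lemma \ref{lem:1d-disp} gives $\E_h(f_0(X_{q_*}))=-\tfrac45\,\alpha\xi''_0(q_*)^{-1}\beta^{-2}+O(\beta^{-3})$, already $\le -c\beta^{-2}$.

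Now I would apply Theorem \ref{thm:2d-disp-bound} to $I:=\E[F(B_{q_*},B_t)]$ with the fixed frame $\{w_i\}$ and $\nu:=\lambda_1(t)>0$ (for $t>q_*$). Then $A(\infty)$ differs from $A(t)$ only through $v_i(t)-w_i$, so $\norm{A(t)-A(\infty)}=O(\beta^{-2})$; taking the cutoff $M\asymp\log\beta$ in Corollary \ref{cor:deltai-bounds} kills the exponential-in-$M$ terms and leaves $\Delta_i=O((\log\beta)^{5/2}/\beta)$ (dominated by the $M^2\abs{m_1}$ term). Dividing the conclusion of Theorem \ref{thm:2d-disp-bound} by $\lambda_2^{1/2}$,
\[
I=\frac{e^{-m_2^2/2\lambda_2}}{\sqrt{2\pi}\,\lambda_2^{1/2}}\,\E\!\big[\brac{F}(\lambda_1(t)^{1/2}Z)\big]+O\!\Big(\tfrac{(\log\beta)^{5/2}}{\beta^3}\Big),\qquad \brac{F}(x)=\sqrt2\!\int_{\R}\!\sech(\sqrt2\,x-u)\big(4\sech^3u-6\sech^5u\big)\,du,
\]
where $\brac{F}$ is the bracket with respect to $\{w_i\}$, a fixed function independent of $\beta$. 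Multiplying by the $\Theta(1)$ prefactor and using the preceding estimates, $\E_h(f_0(X_t))=c_0'\,\alpha\beta^{-2}\,\E[\brac{F}(\lambda_1(t)^{1/2}Z)](1+o(1))+O((\log\beta)^3\beta^{-5/2})$ for a bounded $c_0'>0$. (If one prefers to compare with a genuinely $\beta$-independent problem as in the outline, one reparametrizes $t\in[q_*,1]$ by a variable in a fixed compact interval and replaces $\lambda_1(t)$ by its limit; this is equivalent up to the same error.)

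It remains to show $\sup_{0\le\nu\le\nu_{\max}}\E[\brac{F}(\nu^{1/2}Z)]<0$. At $\nu=0$, $\brac{F}(0)=\sqrt2\int(4\sech^4-6\sech^6)=-\tfrac{16\sqrt2}{15}<0$. As $\nu\to\infty$, $\sqrt\nu\,\E[\brac{F}(\nu^{1/2}Z)]\to\tfrac1{\sqrt{2\pi}}\int\brac{F}=\tfrac1{\sqrt{2\pi}}\big(\int\sech\big)\big(4\int\sech^3-6\int\sech^5\big)=\tfrac{\pi}{\sqrt{2\pi}}\big(2\pi-\tfrac{9\pi}4\big)=-\tfrac{\pi^2}{4\sqrt{2\pi}}<0$; so $\nu\mapsto\E[\brac{F}(\nu^{1/2}Z)]$, continuous and negative at both ends, is checked to be strictly negative on the compact interval $[0,\nu_{\max}]$ (using $4\sech^3u-6\sech^5u=2\sech^3u(2-3\sech^2u)$ and dominating the convolution by the negative core near $u=0$), so its maximum there is some $-c_4<0$ depending only on $\xi_0,h_0,\alpha_0$. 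Combining with the last display, for $\beta\ge\beta_0$ large, $\E_h(f_0(X_t))\le -c_0'c_4\,\alpha_0\beta^{-2}(1+o(1))+O((\log\beta)^3\beta^{-5/2})\le -c\beta^{-2}+C(\log\beta)^3\beta^{-5/2}$, uniformly in $t\in[q_*,1]$. The main obstacle is twofold: the a priori bound $q_*=1-O(\beta^{-2})$ (without it the prefactor $e^{-\frac12(\xi'(t)-\xi'(q_*))}$ is exponentially small for $t$ away from $q_*$ and the claim fails), and obtaining the dispersive estimate uniformly down to the degenerate case $\lambda_1(t)=0$ at $t=q_*$ — a genuinely mixed large-noise/small-noise limit, which is exactly what the $\nu$-uniform $2$-d estimates of Section \ref{sec:2d-disp} are built for — together with checking the uniform negativity of $\E[\brac{F}(\nu^{1/2}Z)]$ on the relevant compact range.
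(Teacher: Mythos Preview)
Your approach is essentially the same as the paper's: reduce via a change of measure to a two--dimensional Gaussian integral of $\Psi(x,y)=\sech(x)\,(4\sech^3 y-6\sech^5 y)$, feed that into the $2$--d dispersive estimate (Theorem~\ref{thm:2d-disp-bound}) with cutoff $M\asymp\log\beta$, control the spectral data via the $q_*=1-O(\beta^{-2})$ bound, and conclude by the strict negativity of the limiting integral. Your derivation of the Girsanov formula via the Cole--Hopf substitution is equivalent to the paper's Corollary~\ref{cor:girsanov}. Your choice $\nu:=\lambda_1(t)$ rather than the paper's fixed $\nu(\tilde\alpha,\tau)=\tfrac34\tilde\alpha\tau$ is a neat variation: it kills the $|\lambda_1^{1/2}-\nu^{1/2}|$ contribution to $\norm{A(t)-A(\infty)}$ and upgrades the paper's $\Delta_i=O((\log\beta)^3/\sqrt\beta)$ to your $O((\log\beta)^{5/2}/\beta)$, though this refinement is not needed for the stated bound.

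There is, however, a genuine gap in the sign step. You need $\sup_{0\le\nu\le\nu_{\max}}\E\big[\brac{F}(\nu^{1/2}Z)\big]<0$, but ``continuous and negative at both ends'' does not imply negativity on the whole interval, and your parenthetical about ``dominating the convolution by the negative core near $u=0$'' is not a proof. The paper handles this by proving the stronger \emph{pointwise} fact $\brac{\Psi}(x)<0$ for all $x\in\R$ (Fact~\ref{fact:sign-fact}): the convolution is computed in closed form,
\[
\tfrac{1}{\sqrt2}\brac{\Psi}(\sqrt2\,x)=\int\frac{(4\sech^2 y-6\sech^4 y)\,\sech^2 y}{\cosh(2x)+\sinh(2x)\tanh y}\,dy
= -\frac{2}{\sinh^5(2x)}\big(-3\sinh(4x)+4x\cosh(4x)+8x\big),
\]
and one checks the numerator $N(x)=-3\sinh(4x)+4x\cosh(4x)+8x$ has the same sign as $\sinh^5(2x)$ by noting $N^{(j)}(0)=0$ for $j\le4$ and $N^{(5)}(x)=2048\cosh(4x)(1+2x\tanh 4x)>0$. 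Once $\brac{\Psi}<0$ pointwise, $\E[\brac{\Psi}(sZ)]\le -c$ for $s$ in any compact set is immediate by continuity and compactness, and your argument goes through.
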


\begin{theorem*}\emph{\textbf{\ref{thm:long-time}}}
For any model $\xi_0$ and positive external field $h_0>0$, there is a $\beta_u$ such that for $\beta\geq\beta_u$ and $h\geq h_0$, the region $\alpha\leq 1$ is in the RS phase. That is, 
\[
AT\cap \{ \beta \geq \beta_u,\ h\geq h_0\} \subset RS.
\]
Furthermore, if $\xi_0''(0)> 0$, then we can take $h_0=0$.
\end{theorem*}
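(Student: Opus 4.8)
The plan is to prove that if $(\beta,h)\in AT$ with $h\geq h_0$ and $\beta$ sufficiently large, then $\delta_{q_*}$ is the Parisi measure, i.e.\ $(\beta,h)\in RS$. By Lemma~\ref{lem:strats} it suffices to show $g'(y)\leq 0$ for all $y\in[q_*,1]$, where, in the notation of Sect.~\ref{sec:RSprelims}, $g'(y)=\xi''(y)\E_h[u_{xx}^2(y,X_y)]-1$ by \eqref{eq:g'}. I would split the region $AT\cap\{\beta\geq\beta_u,\ h\geq h_0\}$ according to whether $\alpha$ lies below a fixed threshold $\alpha_0>0$ --- handled at once by Proposition~\ref{prop:23-AT} --- or in $(\alpha_0,1]$, which is the range in which the long-time Lemma~\ref{lem:longtime-main-lem} gives information.

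First, for small $\alpha$: by Lemma~\ref{lem:lbq}(1), $h\geq h_0$ forces $q_*\geq q_0:=\tfrac12\tanh^2h_0>0$, hence $\xi_0''(q_*)\geq\xi_0''(q_0)>0$ and $\xi_0'(q_*)\geq\xi_0'(q_0)>0$. Therefore the quantity $\tfrac{2}{3}\tfrac{\xi_0''(q_*)}{\xi_0''(1)}\bigl(1-\tfrac{\Lambda_0\xi_0''(1)}{\beta(\xi_0'(q_*))^{3/2}}\bigr)$ of Proposition~\ref{prop:23-AT} is, for every $\beta$ past a threshold depending only on $\xi_0,h_0$, at least some fixed $\alpha_0=\alpha_0(\xi_0,h_0)>0$; since $h\geq h_0>0$, Proposition~\ref{prop:23-AT} then places $\{h\geq h_0,\ \alpha\leq\alpha_0\}$ in $RS$.

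Next, for $\alpha\in(\alpha_0,1]$: for $y\geq q_*$, Fact~\ref{fact:explicitsolns} gives $u_{xx}(y,\cdot)=\sech^2$, so $f(y):=\E_h[u_{xx}^2(y,X_y)]=\E_h[\sech^4(X_y)]$, and applying It\^o's lemma to $\sech^4(X_y)$ --- whose drift coefficient is $\xi''(y)\tanh(X_y)$, since $\mu[0,y]=1$ and $u_x(y,\cdot)=\tanh$ --- yields $f'(y)=\xi''(y)\,\E_h[4\sech^4(X_y)-6\sech^6(X_y)]$. Substituting $f=(g'+1)/\xi''$ into the derivative of $g'=\xi''f-1$ shows that $y\mapsto g'(y)$ solves, on $[q_*,1]$, the linear ODE $(g')'(y)-\tfrac{\xi'''(y)}{\xi''(y)}g'(y)=\tfrac{\xi'''(y)}{\xi''(y)}+\xi''(y)^2\,\E_h[4\sech^4(X_y)-6\sech^6(X_y)]$, with initial value $g'(q_*)=\alpha-1\leq 0$ (by \eqref{eq:q-alpha-def} and $(\beta,h)\in AT$). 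The integrating factor is $\xi''(q_*)/\xi''(y)$, so variation of constants gives
\[
g'(y)=\frac{\xi''(y)}{\xi''(q_*)}(\alpha-1)+\int_{q_*}^{y}\frac{\xi''(y)}{\xi''(s)}\left(\frac{\xi'''(s)}{\xi''(s)}+\xi''(s)^2\,\E_h\left[4\sech^4(X_s)-6\sech^6(X_s)\right]\right)ds .
\]
The first term is $\leq 0$. In the integrand $\xi'''/\xi''=\xi_0'''/\xi_0''$ is $\beta$-independent and bounded on $[q_0,1]$ by a constant $A=A(\xi_0,h_0)$, while by Lemma~\ref{lem:longtime-main-lem} (applied with this $\alpha_0$ and $h_0$) the bracket is negative once $\beta$ is large and, using $\xi''(s)=\beta^2\xi_0''(s)\geq\beta^2\xi_0''(q_0)$,
\[
\xi''(s)^2\,\E_h\left[4\sech^4(X_s)-6\sech^6(X_s)\right]\leq\beta^4\xi_0''(q_0)^2\left(-\frac{c}{\beta^2}+\frac{C\log(\beta)^3}{\beta^{5/2}}\right)\leq-\frac{c}{2}\,\xi_0''(q_0)^2\,\beta^2 .
\]
Hence the integrand is at most $A-\tfrac{c}{2}\xi_0''(q_0)^2\beta^2<0$ once $\beta$ exceeds a threshold depending only on $\xi_0,h_0$; then $g'\leq 0$ on $[q_*,1]$ and $(\beta,h)\in RS$ by Lemma~\ref{lem:strats}. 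Taking $\beta_u$ to be the largest of the finitely many $\beta$-thresholds produced above finishes the case $h_0>0$.

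For the case $\xi_0''(0)>0$ with $h_0=0$, I would use Lemma~\ref{lem:lbq}(2) in place of part (1): it gives $q_*\geq 1-\sqrt\alpha/(\sqrt2\,\beta\beta_2)\geq\tfrac12$ once $\beta$ is large, an $h$-independent lower bound on $q_*$, so both steps above run with $q_0=1/2$ (with the obvious adjustment of the small-$\alpha$ step at $h=0$), and one may take $h_0=0$. The hard part is not this deduction but Lemma~\ref{lem:longtime-main-lem}, whose proof calls on the uniform two-dimensional dispersive estimates of Sect.~\ref{sec:2d-disp}; within the argument above the only genuine point is the amplification mechanism just used --- the $O(\beta^{-2})$ dispersive gain of that lemma, multiplied by $\xi''(s)^2\asymp\beta^4$, produces a term of size $\beta^2$ that overwhelms the $\beta$-independent curvature contribution $\xi'''/\xi''$.
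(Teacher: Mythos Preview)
Your proof is correct and follows the same strategy as the paper's: reduce to $\alpha>\alpha_0$ via Proposition~\ref{prop:23-AT}, then show $g'\le 0$ on $[q_*,1]$ by combining the initial value $g'(q_*)=\alpha-1\le 0$ with Lemma~\ref{lem:longtime-main-lem}. The only cosmetic difference is that you obtain the representation of $g'$ by solving a first-order linear ODE with integrating factor $1/\xi''$, whereas the paper writes $g'(y)=\xi''(y)f(q_*)-1+\xi''(y)\int_{q_*}^{y}\xi''(s)\,\E_h[4\sech^4-6\sech^6]\,ds$ directly from It\^o; since $\int_{q_*}^{y}\xi'''(s)/\xi''(s)^2\,ds=1/\xi''(q_*)-1/\xi''(y)$, your formula and the paper's are literally the same, and your bounding of the bracket $\xi'''/\xi''+\xi''^2\,\E_h[\cdots]$ is equivalent to the paper's separate treatment of the ``middle term'' $(\xi''(y)-\xi''(q_*))/\xi''(q_*)$ and the integral.
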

\begin{proof}
%Recall by Lemma \ref{lem:lbq} and the definition of $q_0$ above, we have the bound $q_*\geq q_0(h_0)>0$. As explained above, if $\xi_0''(0)\neq0$
%we can take $h_0=0$ and still get such a bound. 
%
%Thus b
By Proposition \ref{prop:23-AT},
we see that for $\beta$ sufficiently large we may assume that $\alpha>\alpha_0$ for some $\alpha_0>0$. Similarly, we may assume that the right hand side of the bound in Lemma \ref{lem:longtime-main-lem} is negative.

Now recall that by Lemma \ref{lem:strats}, it suffices to prove that $g'\leq0$ on $[q_*,1]$ to conclude that $(\beta,h)\in RS$. We observe by \eqref{eq:g'}, It\^o's lemma (Sect. \ref{sec:AC-SDE}), and Fact \ref{fact:explicitsolns} that for $y\geq q_*$,
\begin{align*}
g'(y)
&= \xi''(y)\E_hu_{xx}^2(q_*,X_{q_*})-1 + \xi''(y)\int_{q_*}^y\xi''(t)\E_h\left[ 4 \sech^4(X_t)-6\sech^6(X_t)\right]\\
&= \frac{\xi''(y)}{\xi''(q_*)}(\alpha-1)+\frac{\xi''(y)-\xi''(q_*)}{\xi''(q_*)}+ \xi''(y)\int_{q_*}^y\xi''(t)\E_h\left[ 4 \sech^4(X_t)-6\sech^6(X_t)\right]\\
&\leq \frac{\xi'''_0(1)}{\xi''_0(q_0)}(y-q_*)+ \xi''(y)\int_{q_*}^y\xi''(t)\E_h\left[ 4 \sech^4(X_t)-6\sech^6(X_t)\right] = (*)
\end{align*}
where the inequality follows by using the fact that $\alpha \leq1$ on the first term and the mean value theorem, the fact that $q_*\geq q_0$, and the fact that $\xi$ and all of its derivatives are monotone on the second term. Lemma \ref{lem:longtime-main-lem}
then implies that for $\beta$ large enough, 
\[
(*) \leq \frac{\xi'''_0(1)}{\xi''_0(q_0)}(y-q_*)+\xi''(q_*)(\xi'(y)-\xi'(q_*))\left(-\frac{c}{\beta^2}+C\frac{1}{\beta^{5/2-\delta}}\right) =(**).
\]
where $c$ and $C$ are independent of $\beta$. By a similar mean value and monotonicity argument, observe that
\[
\xi''(q_*)(\xi'(y)-\xi'(q_*))\geq \beta^4 \xi_0''(q_0)^2(y-q_*),
\]
from which it follows that
\[
(**) \leq \frac{\xi'''_0(1)}{\xi''_0(q_0)}(y-q_*)+\xi''_0(q_0)^2\beta^2(y-q_*)\left(-c+C\frac{1}{\beta^{1/2-\delta}}\right)
\leq (y-q_*)\beta^2\left( \frac{\xi'''_0(1)}{\xi''_0(q_0)\beta^2}-c^\prime+C^\prime\frac{1}{\beta^{1/2-\delta}}\right) 
\]
where $c'$ and $C'$ are independent of $\beta$. Since the second term in the last inequality is $-c'+o_\beta(1)$, the result follows for $\beta$ sufficiently large. 

We now turn to the case $\xi_0''(0)>0$.  The reader will observe
that in the above, the lower bound $h\geq h_0$ was required only to
produce the lower bound $q_*\geq q_0>0$. Recall that by Lemma \ref{lem:lbq},  we have such a lowerbound for $\beta$ sufficiently large. For example, $\beta \geq (1+\eps)/\sqrt{\xi''_0(0)}$, for $\epsilon\in (0,1)$, yields $q_* \geq \eps/(1+\eps)>0$. If one
then adjusts the proofs of Proposition \ref{prop:23-AT} and Lemma \ref{lem:longtime-main-lem} \emph{mutatis mutandis}, the above argument still holds.
%RSB in the region $\alpha>1$ is already proven in Theorem \ref{thm:RSBalphabigger1}.
\end{proof}

Note that by Girsanov's theorem (Corollary \ref{cor:girsanov}), if we let 
\begin{equation}\label{eq:Psi}
\Psi\left(x,y\right)=\left(4\text{sech}^{3}\left(y\right)-6\text{sech}^{5}\left(y\right)\right)\text{sech}\left(x\right)
\end{equation}
we get that for $t> q_*$
\begin{equation}\label{eq:afterGirsanov}
\E_h\left(4 \sech^4(X_t)-6\sech^6(X_t)\right) = \chi(t;\beta,h)e^{-\frac{1}{2}(\xi'(t)-\xi'(q_*))}
\end{equation}
where 
\[
\chi(t;\beta,h) = \E\Psi\left(\mathbf{h}+\sqrt{{S}\left(t,\beta,h\right)}\mathbf{Z}\right),
\]
and
\begin{align}\label{eq:cov-and-h}
{S}  &=\beta^2\sigma(q_*)\left(\begin{matrix}1 & 1\\
1 & \sigma(t)/\sigma(q_{*})
\end{matrix}\right) \\%\qquad
\mathbf{h}  &=h\left(1,1\right)
\end{align}
and $\mathbf{Z}$ is a standard Gaussian vector in $\R^{2}$. Here we define 
\[
\sigma(s)=\xi'_0(s)
\]
 for ease of notation.
Thus the problem is of the form studied in 
Sect.\ \ref{sec:disp-gauss}. 

We end this section with the following motivational proposition
 which follows using the techniques from 
Sect.\ \ref{sec:1d-disp} under the additonal assumption that hypothesis H holds (see p.\ \pageref{par:hypH}). The purpose of this proposition is to 
illustrate to the reader why they might expect Lemma \ref{lem:longtime-main-lem}. We remind the reader here that hypothesis H is not used in the proofs of the main results.
\begin{proposition} Assume hypothesis H holds. Let $\{(\beta_n,h_n)\}$ be a sequence belonging to the level set $\{(\beta,h) : \alpha(\beta,h) = \tilde{\alpha}\}$ such that $\beta_n\to\infty$ and $h_n\geq h_0>0$. Then,

\[
\lim_{n\to\infty}\xi''(q_*)\E_{h}\left[4\text{sech}^{4}\left(X_{q_{*}}\right)-6\text{sech}^{6}\left(X_{q_{*}}\right)\right]
=-\frac{4}{5}\tilde{\alpha}.
\]
\end{proposition}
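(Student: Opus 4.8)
The plan is to reduce everything to the one-dimensional dispersive estimates of Section~\ref{sec:1d-disp}. Since $\mu=\delta_{q_*}$ and $q_*=\inf\text{supp}\,\mu$, Fact~\ref{fact:explicitsolns} gives $X_{q_*}=h+\int_0^{q_*}\sqrt{\xi''(s)}\,dW_s$, so $X_{q_*}$ is Gaussian with mean $h$ and variance $\xi'(q_*)$; in law $X_{q_*}=h+\sigma Z$ with $\sigma=\sigma(\beta,h):=\sqrt{\xi'(q_*)}=\beta\sqrt{\xi'_0(q_*)}$ and $Z$ a standard Gaussian. The first point to record is that $\sigma_n:=\sigma(\beta_n,h_n)\to\infty$: by Lemma~\ref{lem:lbq}(1) and $h_n\geq h_0>0$ we have $q_{*,n}\geq q_0:=\tfrac12\tanh^2 h_0>0$, hence $\xi'_0(q_{*,n})\geq\xi'_0(q_0)>0$ and $\sigma_n\geq\beta_n\sqrt{\xi'_0(q_0)}\to\infty$. (It is here, and only here, that hypothesis~H enters: it guarantees that a sequence with $\beta_n\to\infty$ lying on a fixed level set exists in the first place.)

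Next I would apply Corollary~\ref{cor:1d-disp-diff-ulb} with $f(x)=\sech^4(x)$ and $g(x)=4\sech^4(x)-6\sech^6(x)$; both are even and lie in $\cE$, and $\int f=\tfrac43\neq 0$. From the reduction formula $\int_{\R}\sech^{n}=\tfrac{n-2}{n-1}\int_{\R}\sech^{n-2}$ and $\int_{\R}\sech^{2}=2$ one gets $\int f=\tfrac43$ and $\int_{\R}\sech^6=\tfrac{16}{15}$, so $\int g=\tfrac{16}{3}-\tfrac{96}{15}=-\tfrac{16}{15}$ and $\int g/\int f=-\tfrac45$. The corollary then produces a constant $C_0$, depending only on the universal functions $f$ and $g$, such that
\[
\Bigl|\,\sigma^2\,\E g(h+\sigma Z)+\tfrac45\,\sigma^2\,\E f(h+\sigma Z)\,\Bigr|\leq\frac{C_0}{\sigma}.
\]

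Finally, multiply through by $\xi''(q_*)/\sigma^2=\xi''_0(q_*)/\xi'_0(q_*)$, which is bounded above by $C_1:=\xi''_0(1)/\xi'_0(q_0)$ uniformly along the sequence (since $q_*\in[q_0,1]$ and the derivatives of $\xi_0$ are monotone). Here $\xi''(q_*)\,\E g(h+\sigma Z)$ is exactly $\xi''(q_*)\,\E_h[4\sech^4(X_{q_*})-6\sech^6(X_{q_*})]$, the quantity of interest, while $\xi''(q_*)\,\E f(h+\sigma Z)=\alpha(q_*,\beta,h)$ by the definition in \eqref{eq:q-alpha-def}, and since $q_*\in Q_*$ realizes the minimum defining $\alpha(\beta,h)$ this equals $\alpha(\beta,h)=\tilde\alpha$ on the level set. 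Hence
\[
\Bigl|\,\xi''(q_{*,n})\,\E_{h_n}\bigl[4\sech^4(X_{q_{*,n}})-6\sech^6(X_{q_{*,n}})\bigr]+\tfrac45\tilde\alpha\,\Bigr|\leq\frac{C_0 C_1}{\sigma_n},
\]
and the right-hand side tends to $0$ because $\sigma_n\to\infty$, which is the assertion. There is no real obstacle once the process at time $q_*$ has been recognized as a single Gaussian with the explicit scaling above; the only steps demanding care are the \emph{a priori} lower bound $q_{*,n}\geq q_0$ (needed so that $\sigma_n\to\infty$) and the scale-invariance of $\xi''(q_*)/\sigma^2$ in $\beta$, which is what keeps the error term uniformly controlled and lets one avoid passing to subsequences.
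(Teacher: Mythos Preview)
Your proof is correct and follows essentially the same route as the paper: recognize $X_{q_*}$ as a Gaussian with mean $h$ and variance $\xi'(q_*)$, then feed into the one-dimensional dispersive estimates of Section~\ref{sec:1d-disp} together with the identity $\xi''(q_*)\E\sech^4(h+\sigma Z)=\tilde\alpha$. The only cosmetic difference is that the paper factors the expression as $4\tilde\alpha\bigl(1-\tfrac32\,\E\sech^6/\E\sech^4\bigr)$ and invokes Corollary~\ref{cor:limit-ratio} on the ratio, whereas you invoke Corollary~\ref{cor:1d-disp-diff-ulb} on the difference; your version has the minor advantage of yielding an explicit rate $O(1/\sigma_n)$ without needing to check separately that $\lim\sigma^2\E\sech^4(h+\sigma Z)$ exists.
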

\begin{proof}
Note that
\[
\xi''(q_*)\E_{h}\left[4\text{sech}^{4}\left(X_{q_{*}}\right)-6\text{sech}^{6}\left(X_{q_{*}}\right)\right]=4\tilde{\alpha} \left(1-\frac{3}{2}\frac{\E_{h}\left[\text{sech}^{6}\left(X_{q_{*}}\right)\right]}{\E_{h}\left[\text{sech}^{4}\left(X_{q_{*}}\right)\right]}\right).
\]
We then apply Corollary \ref{cor:limit-ratio} to find that
\[
\lim_{\beta\to\infty}\left(1-\frac{3}{2}\frac{\E_{h}\left[\text{sech}^{6}\left(X_{q_{*}}\right)\right]}{\E_{h}\left[\text{sech}^{4}\left(X_{q_{*}}\right)\right]}\right)=1-\frac{3}{2}\frac{\int\text{sech}^{6}\left(x\right)}{\int\text{sech}^{4}\left(x\right)}=1-\frac{3}{2}\cdot\frac{16/15}{4/3}=-\frac{1}{5}.
\]
Therefore,
\[
\lim\,\xi''(q_{*})\E_{h}\left[4\text{sech}^{4}\left(X_{q_{*}}\right)-6\text{sech}^{6}\left(X_{q_{*}}\right)\right]=-\frac{4}{5}\tilde{\alpha}.
\qquad
\]
\end{proof}
 
\subsection{Setting up the main estimate}\label{sec:setup}
\subsubsection{Translating to the language of 2D dispersive estimates}
We now translate to the setting of Sect. \ref{sec:2d-disp}. To 
this end, fix $\xi_0,h_0,$ and $\alpha_0$ as in the statement of 
Lemma \ref{lem:longtime-main-lem}. \textbf{For the remainder of this paper we think of these variables as fixed unless otherwise specified.}
Recall the definitions of the functions $q_*(\beta,h)$ and $\alpha(\beta,h)$ from \eqref{eq:q-alpha-def}. 
For each $\tau\in(0,1]$ define $t = t(\beta,h;\tau) \in (q_*,1]$ through the bijective relation
\begin{equation}\label{eq:taudefn}
\tau = \frac{\sigma(t)-\sigma(q_{*})}{\sigma(1)-\sigma(q_{*})}.
\end{equation}

The index set that we use will essentially be the level sets of $\alpha(\beta,h)$. To be precise, for each 
$\tilde{\alpha}\in(\alpha_0,1],\tau\in(0,1]$, define the (marked) level set 
\begin{equation}
T_{\tilde{\alpha},\tau} = \{ (\beta,h,\tau')\in (0,\infty)\times[h_0,\infty)\times\{\tau\} : \alpha(\beta,h) = \tilde{\alpha} \}.
\end{equation}
This will be our index set. 

Recall $S$ and $\mathbf{h}$ from (\ref{eq:cov-and-h}). Define $\Sigma(\beta,h;\tau)={S}(t(\beta,h;\tau),\beta,h)$ and $\mathbf{m}(\beta,h)=\mathbf{h}$. Thus for each $\tilde{\alpha}\in(\alpha_0,1], \tau \in (0,1]$, we have the $T_{\tilde{\alpha},\tau}$-indexed family $(\Sigma(\beta,h;\tau),\mathbf{m}(\beta,h;\tau))$.

Note that if we define $a=a(\beta,h;\tau)$ by
\begin{equation}\label{eq:a}
a(\beta,h;\tau)=\frac{\sigma(t(\beta,h;\tau))}{\sigma(q_{*}(\beta,h))},
\end{equation}
then $\Sigma$ is a multiple of the matrix
\[
\left(\begin{array}{cc}
1 & 1\\
1 & a
\end{array}\right)
\]
which has eigenvalues and (unnormalized) eigenvectors
\begin{align}\label{eq:eigenvalues}
\tilde{\lambda}_{1}  =\frac{1}{2}\left(2+\left(a-1\right)-\sqrt{\left(a-1\right)^{2}+4}\right) \qquad
\tilde{\lambda}_{2}  =\frac{1}{2}\left(2+\left(a-1\right)+\sqrt{\left(a-1\right)^{2}+4}\right)
\end{align}
and
\begin{align}\label{eq:eigenvectors}
\tilde{v}_{1}  =\left(\frac{1}{2}\left(-\left(a-1\right)-\sqrt{\left(a-1\right)^{2}+4}\right),1\right)\qquad
\tilde{v}_{2}  =\left(\frac{1}{2}\left(-\left(a-1\right)+\sqrt{\left(a-1\right)^{2}+4}\right),1\right).
\end{align}
Let $v_{1}=\tilde{v}_{1}/\norm{\tilde{v}_{1}}$ and $v_{2}=\tilde{v}_{2}/\norm{\tilde{v}_{2}}$
and let $\lambda_{i}=\beta^2\sigma(q_{*})\tilde{\lambda}_{i}$ . 
Note that these depend on $\beta,h,$ and $\tau$.
Let $\mathbf{m}=\mathbf{h}$. We remind the reader that
\begin{equation}
m_i = \gibbs{\mathbf{m},v_i}\quad i=1,2.
\end{equation}

We let
\[
\tilde{w}_1=(-1,1)
\text{ and }
\tilde{w}_2=(1,1)
\]
and let $w_i=\tilde{w_i}/\norm{\tilde{w_i}}$ . Define $\nu=\nu(\tilde{\alpha},\tau)$ by
\begin{equation}\label{eq:nu}
\nu(\tilde{\alpha},\tau)=\frac{3}{4}\tilde{\alpha}\cdot\tau.
\end{equation} 

Now we are in the setting of Sect. \ref{sec:2d-disp}. We now ask that the reader match the notation from that section and recall the definitions therein. We will use said notation from now on. As we will soon show, $\Psi$ 
is in the regularity class required for Theorem \ref{thm:2d-disp-bound}. Before we apply this theorem, however, it will be useful to 
control the related spectral variables.

%%
%%Let $A$ and $B$ be 
%%\begin{equation}\label{eq:ABdefn-1}
%%\begin{aligned}
%%A = \lambda_1^{1/2}v_1\tensor e_1 + v_2\tensor e_2 \qquad \text{ and } \qquad
%%B = \nu^{1/2}w\tensor e_1 + v\tensor e_2
%%\end{aligned}
%%\end{equation}
%%where $\{e_i\}$ is the standard basis for $\R^2$. This is to be compared with (\ref{eq:ABdefn}).

\subsubsection{Asymptotic spectral theory for certain operators}

For $(a,b,\tilde{q},\theta)\in[0,1]\times\R_+\times[0,1]^2$ define
\begin{align*}
C_{0}(a,b,\tilde{q})&=\frac{3}{2}a+\Lambda_0\frac{\xi''_{0}(1)}{(\xi'_{0}(\tilde{q}))^{3/2}}\frac{1}{b}\\
C_{1}(a,b,\tilde{q};\theta)&=\frac{\theta}{2}\frac{\sigma'(1)}{\sigma(\tilde{q})\sigma'(\tilde{q})} C_0(a,b,\tilde{q})\\
C_{2}(a,b,\tilde{q};\theta)&=  \frac{\theta\Lambda_0}{2}\frac{\sigma'(1)}{(\sigma(\tilde{q}))^{3/2}}
 +\frac{1}{2b}\left( \theta\frac{\sigma''(1)}{\sigma'(\tilde{q})}C_{0}(a,b,\tilde{q}) + \sigma(1)C_{1}(a,b,\tilde{q};\theta)^2\right).
 \end{align*}
Note these are increasing functions of $a$ and $\theta$ and decreasing functions of $b$ and $\tilde{q}$. We will be thinking of $a, b,\tilde{q},$ and $\theta$ as $\alpha,\beta,q_*$, and $\tau$ respectively. We have the following estimates whose proofs are deferred to the appendix (Sect.\ \ref{sec:spectral}).
\begin{lemma}\label{lem:spec-lem}
For all $(\beta,h,\tau)\in T_{\tilde{\alpha},\tau}$ we have the estimates:
\begin{align}
\abs{\left\langle w_1,v_{2}\right\rangle }=\abs{\left\langle w_2,v_{1}\right\rangle } &\leq \frac{C_{1}(1,\beta,q_0;1)}{\sqrt{2}\beta^{2}}\left(1+\frac{C_{1}(1,\beta,q_0;1)}{2\beta^{2}}\right)  \\
\abs{\lambda_{1}-\nu(\tilde{\alpha},\tau)}&\leq\frac{1}{\beta}C_2(1,\beta,q_0;1)\\
\abs{\frac{1}{\lambda_{2}^{1/2}}}&\leq\frac{1}{\beta\sqrt{2\sigma(q_0)}}\\
\abs{\tilde{\lambda}_{2}-2} & \leq\frac{1}{\beta^{2}}C_{1}(1,\beta_{0},q_0;1)\left(1+\frac{C_{1}(1,\beta_{0},q_0;1)}{2\beta^{2}}\right)
\end{align}
\end{lemma}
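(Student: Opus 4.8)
The plan is to reduce all four estimates to a single scalar a priori bound on $\epsilon:=a(\beta,h;\tau)-1\ge 0$, where $a=\sigma(t)/\sigma(q_*)$ as in \eqref{eq:a}, after which everything is read off from the explicit formulas \eqref{eq:eigenvalues}--\eqref{eq:eigenvectors}. First I would prove $\epsilon\le\tfrac{2}{\beta^2}C_1(1,\beta,q_0;1)=:\delta$. From \eqref{eq:taudefn}, $\sigma(t)-\sigma(q_*)=\tau\bigl(\sigma(1)-\sigma(q_*)\bigr)$, and since $\sigma'=\xi_0''$ is non-decreasing, $\sigma(1)-\sigma(q_*)=\int_{q_*}^1\sigma'\le\sigma'(1)(1-q_*)$; meanwhile Corollary \ref{cor:q-lim}, rewritten via $\xi''(q_*)=\beta^2\sigma'(q_*)$ and $\xi_0''(q_*)\le\xi_0''(1)=\sigma'(1)$, gives $\beta^2\sigma'(q_*)(1-q_*)\le C_0(\alpha,\beta,q_*)$. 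Combining these and then using $\alpha\le 1$, $\tau\le 1$, $q_*\ge q_0$ (Lemma \ref{lem:lbq}, valid since $h\ge h_0$) together with the monotonicity of $C_0,C_1$ yields $\epsilon\le\delta$; I would also record the companion bound $1-q_*\le C_0(1,\beta,q_0)/\bigl(\beta^2\sigma'(q_0)\bigr)$.

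Next I would treat the matrix with rows $(1,1)$ and $(1,1+\epsilon)$ perturbatively in $\epsilon$. From \eqref{eq:eigenvalues}, $\tilde\lambda_2-2=\tfrac12\bigl(\epsilon+(\sqrt{\epsilon^2+4}-2)\bigr)$ and $\sqrt{\epsilon^2+4}-2=\epsilon^2/(\sqrt{\epsilon^2+4}+2)\le\epsilon^2/4$, so $\tilde\lambda_2-2\le\tfrac{\epsilon}{2}\bigl(1+\tfrac{\epsilon}{4}\bigr)\le\tfrac{\delta}{2}\bigl(1+\tfrac{\delta}{4}\bigr)$; since $x\mapsto\tfrac{x}{\beta^2}(1+\tfrac{x}{2\beta^2})$ is increasing and $C_1(1,\beta,q_0;1)\le C_1(1,\beta_0,q_0;1)$ for $\beta\ge\beta_0$, this is the fourth estimate. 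The third is immediate: $\lambda_2=\beta^2\sigma(q_*)\tilde\lambda_2\ge 2\beta^2\sigma(q_0)$ because $\tilde\lambda_2\ge 2$ and $\sigma(q_*)\ge\sigma(q_0)$, so $\lambda_2^{-1/2}\le(\beta\sqrt{2\sigma(q_0)})^{-1}$. For the first, put $s:=\tfrac{-\epsilon+\sqrt{\epsilon^2+4}}{2}=\tfrac{2}{\epsilon+\sqrt{\epsilon^2+4}}\in(0,1]$; then \eqref{eq:eigenvectors} gives $\tilde v_2=(s,1)$ and $\tilde v_1=(-1/s,1)$ (the product of the two first coordinates being $-1$), so after normalizing and pairing against $w_1,w_2$ one gets $\langle w_1,v_2\rangle=-\langle w_2,v_1\rangle=\tfrac{1-s}{\sqrt2\,\sqrt{1+s^2}}$, and $1-s=\tfrac{\epsilon+\sqrt{\epsilon^2+4}-2}{\epsilon+\sqrt{\epsilon^2+4}}\le\tfrac{\epsilon}{2}(1+\tfrac{\epsilon}{4})\le\tfrac{\delta}{2}(1+\tfrac{\delta}{4})$ together with $\sqrt{1+s^2}\ge 1$ finishes it just as for $\tilde\lambda_2$.

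The remaining estimate, the one on $\lambda_1$, is the only one needing more than formal perturbation. Using $\tilde\lambda_1\tilde\lambda_2=\epsilon$ and $\sigma(q_*)\epsilon=\sigma(t)-\sigma(q_*)=\tau(\sigma(1)-\sigma(q_*))$, I would write $\lambda_1=\beta^2\sigma(q_*)\tilde\lambda_1=\tau\beta^2(\sigma(1)-\sigma(q_*))/\tilde\lambda_2$ and, via \eqref{eq:taudefn} and \eqref{eq:nu},
\[
\lambda_1-\nu(\tilde\alpha,\tau)=\tau\cdot\frac{\beta^2\bigl(\sigma(1)-\sigma(q_*)\bigr)-\tfrac34\tilde\alpha\,\tilde\lambda_2}{\tilde\lambda_2}.
\]
Then I would split $\tilde\lambda_2=2+(\tilde\lambda_2-2)$ and Taylor expand $\sigma(1)-\sigma(q_*)=\sigma'(q_*)(1-q_*)+\tfrac12\sigma''(\zeta)(1-q_*)^2$, $\zeta\in(q_*,1)$. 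Corollary \ref{cor:q-lim} bounds $|\beta^2\sigma'(q_*)(1-q_*)-\tfrac32\tilde\alpha|$ by $\Lambda_0\sigma'(1)/(\beta\sigma(q_0)^{3/2})$ — this is the leading $O(1/\beta)$ term and the first summand of $C_2$ — while the second-order remainder (bounded through $\beta^2(1-q_*)^2\le C_0(1,\beta,q_0)^2/(\beta^2\sigma'(q_0)^2)$), the deviation $\tilde\lambda_2-2\le\delta$, and $\epsilon\le\delta$ supply the $O(1/\beta^2)$ contributions; collecting them, using $\sigma,\sigma'$ monotone (so $\sigma(q_0)\le\sigma(1)$, $\sigma'(q_0)\le\sigma'(1)$) and the elementary lower bound $C_1(1,\beta,q_0;1)\ge 3/(4\sigma(1))$ to absorb the cross terms into $\sigma(1)C_1^2$, should reproduce exactly $\tfrac1\beta C_2(1,\beta,q_0;1)$.

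The main obstacle is precisely this last assembly: arranging the several $O(1/\beta^2)$ error contributions so that they sum to the particular constants $\theta\sigma''(1)C_0/\sigma'(\tilde q)$ and $\sigma(1)C_1^2$ appearing in $C_2$ is a delicate bookkeeping exercise, and getting those coefficients right is the only nontrivial analytic content — estimates (1), (3), (4) fall out immediately once $\epsilon\le\delta$ is in hand. Uniformity over $h\ge h_0$, $\tau\in(0,1]$, and over the level set is not an extra difficulty: it is handled throughout simply by replacing $q_*,\tau,\alpha$ by $q_0,1,1$ using the monotonicity of $C_0,C_1,C_2$ in their arguments.
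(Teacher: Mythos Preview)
Your approach is correct and matches the paper's structure: first establish the scalar bound $|a-1|\le \tfrac{2}{\beta^2}C_1(\tilde\alpha,\beta,q_*;\tau)$ (which the paper records as its displayed inequality~\eqref{eq:abound}), then read off all four estimates from the explicit eigendata \eqref{eq:eigenvalues}--\eqref{eq:eigenvectors}, and finally pass from $(\tilde\alpha,q_*,\tau)$ to $(1,q_0,1)$ via the monotonicity of the $C_i$. Your treatment of estimates (1), (3), (4) is essentially identical to the paper's Lemmas \ref{lem:eigenvectorbds}--\ref{lem:eigenvaluebds-2}; the algebraic packaging through $s=\tfrac{2}{\epsilon+\sqrt{\epsilon^2+4}}$ for the eigenvector bound is a cosmetic variant of the paper's direct computation of $\langle \tilde w_2,\tilde v_1\rangle$.

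The one genuine difference is in the $|\lambda_1-\nu|$ bound. The paper writes $|\lambda_1-\nu|\le \beta^2\sigma(q_*)\bigl|\tilde\lambda_1-\tfrac{a-1}{2}\bigr|+\bigl|\beta^2\sigma(q_*)\tfrac{a-1}{2}-\nu\bigr|$ and then bounds the two pieces separately; the first piece produces the $\sigma(1)C_1^2$ term in $C_2$ directly, and the second (via a single mean-value step on $\sigma(1)-\sigma(q_*)$) produces the $\Lambda_0$ and $\sigma''(1)C_0/\sigma'$ terms. Your route through the determinant identity $\tilde\lambda_1\tilde\lambda_2=a-1$ and the split $\tilde\lambda_2=2+(\tilde\lambda_2-2)$ is equally valid and arguably cleaner, but as you correctly anticipate it generates a slightly different collection of $O(1/\beta^2)$ error terms (e.g.\ $\sigma''(1)C_0^2/\sigma'(q_0)^2$ from the Taylor remainder rather than $\sigma''(1)C_0/\sigma'(q_0)$, and a $C_1$-linear term from $\tilde\lambda_2-2$). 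These will not combine to yield \emph{exactly} the constant $C_2$ as defined; the absorption you propose via $C_1\ge 3/(4\sigma(1))$ is not enough to force the coefficients to coincide. That said, this is purely cosmetic: the lemma is only invoked to supply $O(1/\beta)$ control with constants depending on $(\xi_0,h_0,\alpha_0)$, and your decomposition delivers that just as well. If you want the literal $C_2$ as stated, use the paper's split $\tilde\lambda_1\approx(a-1)/2$ instead of the determinant identity.
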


We note the following motivational proposition, which is a consequence of these estimates. In particular, this result clarifies the asymptotic structure of the problem and explains the choices made above. Recall hypothesis H from p.\ \pageref{par:hypH}.
\begin{proposition} 
Assume hypothesis H holds. Let $\{(\beta_n,h_n)\}$ be a sequence belonging to the level set $\{(\beta,h) : \alpha(\beta,h) = \tilde{\alpha}\}$ with $\tilde{\alpha}\in(0,1]$, such that $\beta_n\to\infty$ and $h_n\geq h_0>0$.
Then for each $\tau\in(0,1]$, 
\[
\lim\Sigma^{-1/2} =\frac{1}{\sqrt{\frac{3}{4}\tilde{\alpha}\tau}}w_1\tensor w_1 = \nu^{-1/2}w_1\tensor w_1.
\]
\end{proposition}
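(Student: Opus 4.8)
The plan is to read off the limits of the eigenvalues and eigenvectors of $\Sigma=\Sigma(\beta_n,h_n;\tau)$ from the spectral estimates of Lemma~\ref{lem:spec-lem} and then pass to the limit in the spectral decomposition
\[
\Sigma^{-1/2}=\lambda_{1}^{-1/2}\,v_{1}\tensor v_{1}+\lambda_{2}^{-1/2}\,v_{2}\tensor v_{2}.
\]
First I would note that since $h_n\ge h_0$, Lemma~\ref{lem:lbq}(1) gives $q_*(\beta_n,h_n)\ge q_0=\tfrac12\tanh^2 h_0>0$, so $(\beta_n,h_n,\tau)\in T_{\tilde\alpha,\tau}$ and the estimates of Lemma~\ref{lem:spec-lem} apply along the whole sequence. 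Next I would observe that $C_0(1,\beta,q_0)=\tfrac32+\Lambda_0\tfrac{\xi''_0(1)}{(\xi'_0(q_0))^{3/2}}\tfrac1\beta$ is bounded (indeed decreasing to $\tfrac32$) as $\beta\to\infty$ — here we use $\xi'_0(q_0)>0$, which holds because $q_0>0$ — and hence so are $C_1(1,\beta,q_0;1)$ and $C_2(1,\beta,q_0;1)$, each being built from $C_0$ together with explicit factors bounded in $\beta$.

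Granting these boundedness facts, Lemma~\ref{lem:spec-lem} yields directly, as $n\to\infty$ with $\tau$ held fixed:
(i) $\lambda_1\to\nu(\tilde\alpha,\tau)=\tfrac34\tilde\alpha\tau$, since $\abs{\lambda_1-\nu}\le\beta^{-1}C_2(1,\beta,q_0;1)\to0$;
(ii) $\lambda_2\to\infty$, since $\lambda_2^{-1/2}\le(\beta\sqrt{2\sigma(q_0)})^{-1}\to0$ and $\sigma(q_0)=\xi'_0(q_0)>0$;
(iii) $\langle w_2,v_1\rangle=\langle w_1,v_2\rangle\to0$. Because $\nu>0$, (i) gives $\lambda_1^{-1/2}\to\nu^{-1/2}$, and (ii) gives $\lambda_2^{-1/2}\to0$, so the second summand above drops out in the limit. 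For the first summand, (iii) forces $\langle w_1,v_1\rangle^2=1-\langle w_2,v_1\rangle^2\to1$ (as $\{w_1,w_2\}$ is an orthonormal basis of $\R^2$), so one may choose signs $\eps_n\in\{\pm1\}$ with $\eps_n v_1\to w_1$; since $v\mapsto v\tensor v$ is continuous and sign-blind, $v_1\tensor v_1\to w_1\tensor w_1$. Combining, $\Sigma^{-1/2}\to\nu^{-1/2}\,w_1\tensor w_1$, which is the claim.

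The computation is essentially bookkeeping; the only step requiring a touch of care is the eigenframe convergence (iii), because of the sign ambiguity in the eigenvectors — but as indicated this is handled cleanly by passing to the rank-one projections $v_i\tensor v_i$, on which the signs wash out. I should also note that hypothesis~H enters only to guarantee that a sequence $(\beta_n,h_n)$ on the level set $\{\alpha=\tilde\alpha\}$ with $\beta_n\to\infty$ exists at all; it plays no further role in the argument.
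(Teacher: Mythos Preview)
Your proof is correct and follows essentially the same approach as the paper: both read off the limits $\lambda_1\to\nu$, $\lambda_2\to\infty$, and $\{v_1,v_2\}\to\{w_1,w_2\}$ directly from Lemma~\ref{lem:spec-lem} and conclude via the spectral decomposition of $\Sigma^{-1/2}$. Your version is considerably more careful than the paper's one-line sketch, in particular your treatment of the boundedness of the constants $C_i$ and the sign ambiguity in the eigenframe via the rank-one projections, but the underlying route is identical.
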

\begin{proof}
From the previous definitions and the lemma above, we see that
\[
\lambda_{1}=\frac{3}{4}\alpha\tau +O(1/\beta),
\]
$\lambda_2\rightarrow\infty$, and $\{v_1,v_2\}\rightarrow\{w,v\}$ as $\beta\to \infty$.
\end{proof}

%Given the above definitions, we see that we are in the setting of Sect.\ \ref{sec:2d-disp}. 

Define
\[
\Theta(a,b,\tilde{q};\theta) =\sqrt{C_2(1,b, \tilde{q};1)}
  +\frac{C_{1}(1,b, \tilde{q};1)}{b^{3/2}}\left(1+\frac{C_{1}(1,b, \tilde{q};1)}{2b^{2}}\right) \left(\sqrt{\frac{3}{4}a\theta}+1\right),
\]
and note it is increasing in $a$ and $\theta$, and decreasing in $b$ and $\tilde{q}$.
Recall the definition of $A$ from \eqref{eq:Aop-def}.

\begin{lemma}\label{lem:Aop-control}
For all $(\beta,h,\tau)\in T_{\tilde{\alpha},\tau}$ we have the bound
\[
\norm{A(\beta,h;\tau) - A(\infty)}\leq \frac{\Theta(1,\beta,q_0;1)}{\sqrt{\beta}}.
\]
\end{lemma}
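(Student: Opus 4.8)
The plan is to expand $A(\beta,h;\tau)-A(\infty)$ in the standard basis and reduce the Frobenius norm to three elementary scalar/angle estimates, each of which is supplied by Lemma~\ref{lem:spec-lem}. From \eqref{eq:Aop-def} one has $A(\beta,h;\tau)=\lambda_{1}^{1/2}v_{1}\tensor e_{1}+v_{2}\tensor e_{2}$ and $A(\infty)=\nu^{1/2}w_{1}\tensor e_{1}+w_{2}\tensor e_{2}$, so
\[
A(\beta,h;\tau)-A(\infty)=(\lambda_{1}^{1/2}v_{1}-\nu^{1/2}w_{1})\tensor e_{1}+(v_{2}-w_{2})\tensor e_{2},
\]
and since $\{e_{i}\}$ is orthonormal, $\norm{A(\beta,h;\tau)-A(\infty)}^{2}=\norm{\lambda_{1}^{1/2}v_{1}-\nu^{1/2}w_{1}}_{2}^{2}+\norm{v_{2}-w_{2}}_{2}^{2}$, whence $\norm{A(\beta,h;\tau)-A(\infty)}\leq\norm{\lambda_{1}^{1/2}v_{1}-\nu^{1/2}w_{1}}_{2}+\norm{v_{2}-w_{2}}_{2}$. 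For the first column I would write $\lambda_{1}^{1/2}v_{1}-\nu^{1/2}w_{1}=(\lambda_{1}^{1/2}-\nu^{1/2})v_{1}+\nu^{1/2}(v_{1}-w_{1})$ and use $\norm{v_{1}}_{2}=1$ to get $\norm{\lambda_{1}^{1/2}v_{1}-\nu^{1/2}w_{1}}_{2}\leq\abs{\lambda_{1}^{1/2}-\nu^{1/2}}+\nu^{1/2}\norm{v_{1}-w_{1}}_{2}$.

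Next I would estimate the three pieces. For the two vector differences: since $v_{i}$ and $w_{i}$ are unit vectors with $\innerp{v_{i},w_{i}}\geq0$ (both $\tilde v_{i}$ from \eqref{eq:eigenvectors} and $\tilde w_{i}$ lie in the same open quadrant, $\tilde w_{2}=(1,1)$ and $\tilde w_{1}=(-1,1)$), one has $\norm{v_{i}-w_{i}}_{2}^{2}=2(1-\innerp{v_{i},w_{i}})\leq2(1-\innerp{v_{i},w_{i}}^{2})=2\innerp{v_{i},w_{3-i}}^{2}$, the last step using that $\{w_{1},w_{2}\}$ is orthonormal; hence $\norm{v_{i}-w_{i}}_{2}\leq\sqrt2\,\abs{\innerp{w_{3-i},v_{i}}}$. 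By the first estimate of Lemma~\ref{lem:spec-lem} (and the identity $\abs{\innerp{w_{1},v_{2}}}=\abs{\innerp{w_{2},v_{1}}}$ there), both $\norm{v_{1}-w_{1}}_{2}$ and $\norm{v_{2}-w_{2}}_{2}$ are bounded by $\frac{C_{1}(1,\beta,q_{0};1)}{\beta^{2}}\bigl(1+\frac{C_{1}(1,\beta,q_{0};1)}{2\beta^{2}}\bigr)$. For the scalar term, the elementary inequality $\abs{\sqrt a-\sqrt b}\leq\sqrt{\abs{a-b}}$ together with the second estimate of Lemma~\ref{lem:spec-lem} gives $\abs{\lambda_{1}^{1/2}-\nu^{1/2}}\leq\abs{\lambda_{1}-\nu}^{1/2}\leq\bigl(C_{2}(1,\beta,q_{0};1)/\beta\bigr)^{1/2}$; this is the one genuinely useful trick — bounding by the square root of the difference rather than dividing by $\nu^{1/2}$ is what keeps the estimate finite as $\nu\to0$. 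Finally $\nu=\tfrac34\tilde\alpha\tau\leq\tfrac34$ by \eqref{eq:nu} since $\tilde\alpha,\tau\leq1$, so $\nu^{1/2}\norm{v_{1}-w_{1}}_{2}\leq\sqrt{3/4}\,\frac{C_{1}(1,\beta,q_{0};1)}{\beta^{2}}\bigl(1+\frac{C_{1}(1,\beta,q_{0};1)}{2\beta^{2}}\bigr)$.

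Assembling the three bounds yields
\[
\norm{A(\beta,h;\tau)-A(\infty)}\leq\frac{\sqrt{C_{2}(1,\beta,q_{0};1)}}{\sqrt\beta}+\bigl(1+\sqrt{\tfrac34}\bigr)\frac{C_{1}(1,\beta,q_{0};1)}{\beta^{2}}\left(1+\frac{C_{1}(1,\beta,q_{0};1)}{2\beta^{2}}\right),
\]
and since $\frac{1}{\beta^{2}}=\frac{1}{\sqrt\beta}\cdot\frac{1}{\beta^{3/2}}$ and $\sqrt{\tfrac34\cdot1\cdot1}=\sqrt{3/4}$, the right-hand side is precisely $\Theta(1,\beta,q_{0};1)/\sqrt\beta$ by the definition of $\Theta$ (with $a=\theta=1$, $b=\beta$, $\tilde q=q_{0}$). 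The only real obstacle is bookkeeping: keeping the eigenvector sign conventions straight so that $\innerp{v_{i},w_{i}}\geq0$, and matching the constants coming out of Lemma~\ref{lem:spec-lem} against the definition of $\Theta$; no deeper input is needed.
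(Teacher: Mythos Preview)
Your proof is correct and follows essentially the same route as the paper: the same column-wise decomposition of $A(\beta,h;\tau)-A(\infty)$, the same splitting $\abs{\lambda_1^{1/2}-\nu^{1/2}}+\nu^{1/2}\norm{v_1-w_1}+\norm{v_2-w_2}$, the square-root trick $\abs{\sqrt a-\sqrt b}\leq\sqrt{\abs{a-b}}$, and the reduction of $\norm{v_i-w_i}$ to $\sqrt2\,\abs{\innerp{v_2,w_1}}$ before invoking Lemma~\ref{lem:spec-lem}. Your write-up is simply more explicit than the paper's about the sign condition $\innerp{v_i,w_i}\geq 0$ and about matching the assembled bound to the definition of $\Theta$.
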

\begin{proof}
In the following $\lambda_i$, $v_i$ are their values at $(\beta,h,\tau)\in T_{\tilde{\alpha},\tau}$. 
Observe that
\begin{align*}
\norm{A(\beta,h;\tau)-A(\infty)} & \leq \norm{ \lambda_1^{1/2}v_1\tensor e_1 - \nu^{1/2}w_1\tensor e_1}+\norm{v_2\tensor e_2-w_2\tensor e_2}\\
&= \norm{ \lambda_1^{1/2}v_1- \nu^{1/2}w_1}_2+\norm{v_2-w_2}_2 \\
&\leq \abs{\lambda_1^{1/2}-\nu^{1/2}}+\nu^{1/2}\norm{v_1-w_1}+\norm{v_2-w_2}\\
%\leq \sqrt{\abs{\lambda_1-\nu}} + \sqrt{2}(\nu^{1/2}+1)\left(1-\innerp{v_1,w}\right)^{1/2} \\
&\leq \sqrt{\abs{\lambda_1-\nu}} + \sqrt{2}(\nu^{1/2}+1)\abs{\innerp{v_2,w_1}},
\end{align*}
where in the last line we used that $\{v_1,v_2\}$ is an orthonormal basis. Combining this with Lemma \ref{lem:spec-lem} gives the result.
\end{proof}

%For the remainder of the section, we assume that $0<\alpha_0<\alpha\leq 1$ where $\alpha=\alpha(\beta,h)$ is given by (\ref{eq:q-alpha-def}).
%Now we reduce the work from Sect.\ \ref{sec:disp-gauss} to a more manageable form. 

%%We denote by $K$ a universal constant whose value may change from one instance to the next.
%%
%%We begin by bounding the errors $e_i$ from (\ref{eq:errorsei}), given the definitions from the previous section.
%%\begin{lemma} For all $\beta\geq \beta_0\geq e, h\geq h_0,\tau\in (0,1]$, we have the bounds
%%\[
%%e_{i}\leq K \left(\norm{A-B}\left(\log\beta\right)^{3}+\abs{m_{1}}\left(\log\beta\right)^2 +\frac{\log\beta}{\beta} \right),\quad i=1,2.
%%\]
%%\end{lemma}
%%\begin{proof}
%%Observe that $\Psi$ satisfies the bounds
%%\[
%%\abs{\Psi(x,y)}\leq Ke^{-\norm{(x,y)}_2} \qquad \text{ and } \qquad Lip(\Psi)\leq K.
%%\]
%%Now the claim follows from an application of Corollary \ref{cor:ei-bounds}
%%with $c_1=K$, $c_2=1$, and $M=2\log\beta$. Note that by assumption $M\geq2$.
%%\end{proof}
%%
%%
%%

Let $\Lambda_1 = \frac{\pi^2-6}{18\sqrt{2\pi}}=\frac{1}{2\sqrt{2\pi}}\int sech^4(x)x^2dx$. Then, we have the following lemma whose proof is an application of Lemma \ref{lem:1d-disp} (to the function $f(x)=\sech^4(x)$). 
\begin{lemma}\label{lem:h-lim}
For all $(\beta,h,\tau)\in T_{\tilde{\alpha},\tau}$, we have the inequality
\[
\abs{\tilde{\alpha}-\frac{4}{3}\frac{\xi''(q_{*})}{\sqrt{\xi'(q_{*})}}\frac{e^{-\frac{1}{2}\frac{h^{2}}{\xi'(q_{*})}}}{\sqrt{2\pi}}}\leq\Lambda_1\frac{\xi''\left(q_{*}\right)}{\left(\xi'\left(q_{*}\right)\right)^{3/2}}.
\]
Therefore, if $\beta$ is such that
\begin{equation}\label{eq:alpha-43-inequality}
\beta>\beta''(\xi_0,\alpha_0,h_0):=\Lambda_1\frac{\xi''_{0}\left(1\right)}{\alpha_0\left(\xi'_{0}\left(q_0\right)\right)^{3/2}}>0,
\end{equation}
then we have the bound
\[
\frac{h^{2}}{\beta^{2}}\leq2\xi'_{0}(1)\left(\log\beta+\Theta_{1}(\alpha_0,\beta,q_0)\right)
\]
where
\[
\Theta_{1}\left(a,b,\tilde{q}\right)=\log\left[\frac{4}{3\sqrt{2\pi}}\frac{\xi''_{0}\left(1\right)}{\sqrt{\xi'_{0}\left(\tilde{q}\right)}}\left(a-\Lambda_1\frac{\xi''_{0}\left(1\right)}{b\left(\xi'_{0}\left(\tilde{q}\right)\right)^{3/2}}\right)^{-1}\right].
\]
and $q_0$ is as per \eqref{eq:q_0}.
\end{lemma}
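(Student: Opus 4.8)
The plan is to read the first inequality directly off the one-dimensional dispersive estimate (Lemma \ref{lem:1d-disp}) applied to $f=\sech^4$. Since throughout this section $\mu=\delta_{q_*}$, Fact \ref{fact:explicitsolns} gives $X_{q_*}=h+\int_0^{q_*}\sqrt{\xi''(s)}\,dW_s$, which is Gaussian with mean $h$ and variance $\int_0^{q_*}\xi''(s)\,ds=\xi'(q_*)$; thus $X_{q_*}\eqdist h+\sqrt{\xi'(q_*)}\,Z$ with $Z$ standard Gaussian. By Fact \ref{fact:explicitsolns} we also have $u_{xx}(q_*,\cdot)=\sech^2$, so, by the definitions in \eqref{eq:q-alpha-def} together with the fact that $q_*$ attains the minimum defining $\alpha(\beta,h)$, on the level set $T_{\tilde\alpha,\tau}$ one has $\tilde\alpha=\alpha(\beta,h)=\xi''(q_*)\,\E[\sech^4(h+\sqrt{\xi'(q_*)}\,Z)]$. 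Writing $\sigma=\sqrt{\xi'(q_*)}$ and using that $\sech^4\in\cE$ with $\int\sech^4=\tfrac43$ and $\int\sech^4(x)x^2\,dx=\tfrac19(\pi^2-6)$, so that the error constant in Lemma \ref{lem:1d-disp} equals exactly $\Lambda_1$, I would multiply the resulting estimate through by $\xi''(q_*)/\sigma$. Using $\xi''(q_*)\E[\sech^4(h+\sigma Z)]=\tilde\alpha$, $\sigma^2=\xi'(q_*)$, and $\sigma^3=(\xi'(q_*))^{3/2}$, this is precisely the first displayed inequality of the lemma.

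For the second part, I would first check that $\tilde\alpha-\Lambda_1\xi''(q_*)/(\xi'(q_*))^{3/2}$ is strictly positive under the hypothesis $\beta>\beta''$. Writing $\xi=\beta^2\xi_0$, this quantity equals $\tilde\alpha-\Lambda_1\xi_0''(q_*)/(\beta(\xi_0'(q_*))^{3/2})$; using $\tilde\alpha>\alpha_0$, the bound $q_*\ge q_0$ (valid since $h\ge h_0$, by Lemma \ref{lem:lbq} and \eqref{eq:q_0}), and monotonicity of $\xi_0'$ and $\xi_0''$, it is at least $\alpha_0-\Lambda_1\xi_0''(1)/(\beta(\xi_0'(q_0))^{3/2})$, which is positive exactly when $\beta$ exceeds the threshold $\beta''$ of \eqref{eq:alpha-43-inequality}. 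The first inequality then provides a strictly positive lower bound for $e^{-h^2/(2\xi'(q_*))}$; taking logarithms (legitimate since the bounded expression is positive) and rearranging yields
\[
\frac{h^2}{\xi'(q_*)}\le 2\log\!\left[\frac{4}{3\sqrt{2\pi}}\,\frac{\xi''(q_*)}{\sqrt{\xi'(q_*)}}\left(\tilde\alpha-\Lambda_1\frac{\xi''(q_*)}{(\xi'(q_*))^{3/2}}\right)^{-1}\right].
\]
Substituting $\xi=\beta^2\xi_0$ extracts a term $2\log\beta$, and replacing $\xi_0''(q_*)$ by $\xi_0''(1)$, $\xi_0'(q_*)$ by $\xi_0'(q_0)$, and $(\tilde\alpha,q_*)$ by $(\alpha_0,q_0)$ in the residual bracket — each replacement enlarging it, by the same monotonicity and the positivity just established — bounds it by the argument of the logarithm defining $\Theta_1(\alpha_0,\beta,q_0)$; hence $h^2/\xi'(q_*)\le 2(\log\beta+\Theta_1(\alpha_0,\beta,q_0))$.

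Finally, since $h^2/\beta^2=\xi_0'(q_*)\cdot h^2/\xi'(q_*)$ and the left-hand side is nonnegative, we have $\log\beta+\Theta_1(\alpha_0,\beta,q_0)\ge0$, so enlarging $\xi_0'(q_*)$ to $\xi_0'(1)$ gives $h^2/\beta^2\le 2\xi_0'(1)(\log\beta+\Theta_1(\alpha_0,\beta,q_0))$, as claimed. I do not expect a genuine obstacle here; the only point requiring care is the bookkeeping: $\Theta_1(a,b,\tilde q)$ is decreasing in each of $a$, $b$, $\tilde q$, so the a priori bounds $\tilde\alpha\ge\alpha_0$ and $q_*\ge q_0$ both push $\Theta_1$ in the favorable direction, and the positivity needed before taking a logarithm is precisely the threshold $\beta''$ in \eqref{eq:alpha-43-inequality}. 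Everything else is substitution of the known integrals of $\sech^4$.
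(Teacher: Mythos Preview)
Your proposal is correct and follows exactly the approach the paper indicates: apply Lemma~\ref{lem:1d-disp} to $f=\sech^4$ with $\sigma=\sqrt{\xi'(q_*)}$, identify the error constant as $\Lambda_1=\tfrac{1}{2\sqrt{2\pi}}\int\sech^4(x)x^2\,dx$, multiply through by $\xi''(q_*)/\sigma$, and then unwind the logarithm using the monotonicity of $\xi_0',\xi_0''$ together with the a~priori bounds $\tilde\alpha>\alpha_0$ and $q_*\ge q_0$. The paper gives no more detail than this, and your bookkeeping for the second part (in particular the positivity check before taking the logarithm and the final replacement of $\xi_0'(q_*)$ by $\xi_0'(1)$ using nonnegativity of the right-hand side) is carried out carefully.
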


Note that $\Theta_1$ is decreasing in $a$, $b$, and $\tilde{q}$.

\begin{lemma}\label{lem:m1-control}
For all $(\beta,h,\tau)\in T_{\tilde{\alpha},\tau}$ with $\beta>\beta''$ \emph{(}see \eqref{eq:alpha-43-inequality}\emph{)}, we have that
\[
\abs{m_{1}} \leq \sqrt{2\xi'_{0}(1)\left(\log(\beta)+\Theta_{1}(\alpha_0,\beta,q_0)\right)}\frac{C_{1}(1,\beta, q_0;1)}{\beta}\left(1+\frac{C_{1}(1,\beta,q_0;1)}{2\beta^{2}}\right).
\]
\end{lemma}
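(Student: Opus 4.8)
The plan is to combine the estimate on $m_1 = \gibbs{\mathbf{m}, v_1}$ with the bound on $h$ from Lemma \ref{lem:h-lim} and the spectral control on $v_1$ from Lemma \ref{lem:spec-lem}. Recall that $\mathbf{m} = \mathbf{h} = h(1,1)$ and $w_2 = (1,1)/\sqrt{2}$, so $\mathbf{m} = \sqrt{2}\,h\, w_2$. Since $\{v_1,v_2\}$ is an orthonormal basis of $\R^2$, and $w_2$ can be decomposed along this basis, we have $\gibbs{\mathbf{m}, v_1} = \sqrt{2}\,h\,\gibbs{w_2, v_1}$. Therefore
\[
\abs{m_1} = \sqrt{2}\,h\,\abs{\gibbs{w_2,v_1}}.
\]

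The first step is thus to bound $\abs{\gibbs{w_2,v_1}}$. By Lemma \ref{lem:spec-lem} (first estimate), $\abs{\gibbs{w_2,v_1}} \leq \frac{C_1(1,\beta,q_0;1)}{\sqrt{2}\beta^2}\left(1+\frac{C_1(1,\beta,q_0;1)}{2\beta^2}\right)$. The second step is to bound $h$ itself: from Lemma \ref{lem:h-lim}, for $\beta > \beta''$ we have $\frac{h^2}{\beta^2} \leq 2\xi'_0(1)(\log\beta + \Theta_1(\alpha_0,\beta,q_0))$, hence $h \leq \beta\sqrt{2\xi'_0(1)(\log\beta+\Theta_1(\alpha_0,\beta,q_0))}$. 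The third step is simply to multiply these two bounds together: the factor $\sqrt{2}$ in $\abs{m_1} = \sqrt{2}\,h\,\abs{\gibbs{w_2,v_1}}$ cancels the $\frac{1}{\sqrt{2}}$ in the bound on $\abs{\gibbs{w_2,v_1}}$, and the factor $\beta$ from the bound on $h$ reduces the $\beta^{-2}$ to $\beta^{-1}$, yielding exactly
\[
\abs{m_1} \leq \sqrt{2\xi'_0(1)(\log\beta + \Theta_1(\alpha_0,\beta,q_0))}\,\frac{C_1(1,\beta,q_0;1)}{\beta}\left(1+\frac{C_1(1,\beta,q_0;1)}{2\beta^2}\right).
\]

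There is no real obstacle here; the lemma is a mechanical consequence of two previously established estimates plus the elementary identity $\mathbf{m} = \sqrt{2}\,h\,w_2$ and the fact that $\abs{\gibbs{w_2,v_1}} = \abs{\gibbs{w_1,v_2}}$ (since $\{v_i\}$ and $\{w_i\}$ are both orthonormal frames related by a rotation, the off-diagonal entries of the change-of-basis matrix have equal absolute value). The only point requiring a moment's care is confirming that the hypothesis $\beta > \beta''$ in the statement is exactly what is needed to invoke Lemma \ref{lem:h-lim}, which it is. One should also note that $C_1$ is decreasing in $b$, so all the $C_1(1,\beta,q_0;1)$ terms are controlled; no monotonicity in $\tau$ or $\tilde{\alpha}$ is needed since we have already specialized to $a=1$, $\tilde q = q_0$ (using $q_* \geq q_0$ together with the monotonicity of $C_1$ in $\tilde q$, as in the proof of the preceding lemmas).
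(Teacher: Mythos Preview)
Your proof is correct and follows exactly the same approach as the paper: write $m_1 = \sqrt{2}\,h\,\langle v_1, w_2\rangle$, then combine the bound on $\abs{\langle w_2, v_1\rangle}$ from Lemma~\ref{lem:spec-lem} with the bound on $h/\beta$ from Lemma~\ref{lem:h-lim}. The paper's proof is simply a two-line version of what you wrote.
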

\begin{proof}
Recall that 
\[
 m_{1}=\left\langle v_{1},\mathbf{m}\right\rangle =\sqrt{2}h\left\langle v_{1},w_2\right\rangle.
\]
Combining Lemma \ref{lem:spec-lem} and Lemma \ref{lem:h-lim} gives the result.
\end{proof}

%\subsubsection{Applying theorem [CITE]}

%%%We collect all of the above results, along with the results in Sect.\ \ref{sec:2d-disp} in the following lemma.
%%
%%%%%Combining the above gives that for $\beta$ sufficiently large,
%%%%%\begin{align*}
%%%%%\norm{A-B}  \leq\frac{1}{\sqrt{\beta}}\Theta \qquad \text{ and }\qquad
%%%%%\abs{m_{1}} \leq \sqrt{2\xi'_{0}(1)\left(\log(\beta)+\Theta_{1}\right)}\frac{C_{1}(\alpha,\beta_0;\tau)}{\beta}\left(1+\frac{C_{1}(\alpha,\beta_0;\tau)}{2\beta^{2}}\right)
%%%%%\end{align*}
%%%%%and finally 
%%%\[
%%%%e_{i}\leq K \left(\frac{\left(\log\beta\right)^{3}}{\sqrt{\beta}}\Theta
%%%%+ \frac{\left(\log\beta\right)^2}{\beta}\sqrt{2\xi'_{0}(1)\left(\log(\beta)+\Theta_{1}\right)}\frac{C_{1}(\alpha,\beta_0;\tau)}{\sqrt{2}}\left(1+\frac{C_{1}(\alpha,\beta_0;\tau)}{2\beta^{2}}\right) 
%%%%+\frac{\log\beta}{\beta} \right).
%%%\]
%%In particular,  we have:
%%\begin{lemma}\label{lem:e_ibounded}
%%There is a choice of $\beta'$ and a constant $K_1$ depending on $\alpha_0,\xi_0$ 
%%such that for all $\beta\geq \beta'$ and $\alpha\in(\alpha_0,1]$, we have that
%%\[
%%e_i\leq K_1\frac{(\log\beta)^3}{\sqrt{\beta}}.
%%\]
%%\end{lemma}

\subsection{Main estimate}
\begin{lemma}\label{lem:2ddispappl}
There is a universal constant $C=C(\Psi)$, and a choice of $\beta'$ and a constant $K_1$ depending on $\alpha_0,\xi_0,$ and $h_0$  
such that for all $(\beta,h,\tau)\in T_{\tilde{\alpha},\tau}$ with $\beta\geq \beta'$, we have that
\begin{align*}
\abs{&\E\Psi\left(\mathbf{h}+\sqrt{\Sigma}\mathbf{Z}\right)-\frac{e^{-\frac{1}{2}\frac{m_{2}^{2}}{\lambda_{2}}}}{\lambda_2^{1/2}\sqrt{2\pi}}\E\innerp{\Psi}(\nu(\tilde{\alpha},\tau)^{1/2} z)} 
\leq\frac{1}{\lambda_2^{1/2}}\left[\frac{C}{2\sqrt{2\pi}\lambda_2}
+\left(\frac{1}{\pi}\frac{1}{\lambda_{2}^{1/2}}
+\frac{e^{-\frac{1}{2}\frac{m_{2}^{2}}{\lambda_{2}}}}{\sqrt{2\pi}}\right)K_1\frac{\log(\beta)^3}{\sqrt{\beta}}\right].
\end{align*}
\end{lemma}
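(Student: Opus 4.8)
The plan is to apply Theorem \ref{thm:2d-disp-bound} directly to the function $\Psi$ defined in \eqref{eq:Psi}, with the spectral data $(\Sigma,\mathbf{m})$, eigenvalues $\lambda_i$, eigenvectors $v_i$, frame $\{w_i\}$, and $\nu=\nu(\tilde\alpha,\tau)$ set up in Sect.\ \ref{sec:setup}. First I would verify that $\Psi\in\cA$ and that $\Psi\in Lip(\R^2)$ with exponential decay: both are immediate from the explicit formula $\Psi(x,y)=(4\sech^3 y-6\sech^5 y)\sech x$, since $\sech$ and all the relevant products are smooth, even in each variable, bounded with bounded derivatives, and decay like $e^{-\norm{\cdot}}$; one can take $c_2=1$ (up to the $\ell_1$-$\ell_2$ constant) so that $c=\max\{\sqrt2,2\}=2$. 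This puts us squarely in the hypotheses of Theorem \ref{thm:2d-disp-bound}, which gives the bound
\[
\abs{R}\leq\frac{1}{\lambda_2}\frac{1}{2\sqrt{2\pi}}\norm{\fG_\infty[\Psi]}_{L_1(\abs{y}^2dy)}+\frac{e^{-\frac12\frac{m_2^2}{\lambda_2}}}{\sqrt{2\pi}}\Delta_1+\frac{1}{\pi}\frac{1}{\lambda_2^{1/2}}\Delta_2,
\]
where $R=\lambda_2^{1/2}\E\Psi(\mathbf h+\sqrt\Sigma\mathbf Z)-\frac{e^{-m_2^2/2\lambda_2}}{\sqrt{2\pi}}\E\innerp{\Psi}(\nu^{1/2}Z)$. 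Dividing through by $\lambda_2^{1/2}$ produces exactly the left-hand side of the asserted inequality, so the task reduces to bounding the three terms on the right by the claimed quantities.

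For the first term, $\norm{\fG_\infty[\Psi]}_{L_1(\abs{y}^2dy)}\leq C(\Psi)<\infty$ is the last assertion of Theorem \ref{thm:2d-disp-bound}, giving the $\frac{C}{2\sqrt{2\pi}\lambda_2}$ contribution. For the $\Delta_i$ terms, I would invoke the second part of Theorem \ref{thm:2d-disp-bound}, namely the estimate
\[
\Delta_i(t)\leq Lip(\Psi)\Bigl[\norm{A(t)-A(\infty)}\bigl(M^3(1+\tfrac4M\tfrac{1-e^{-M^2/2}}{\sqrt{2\pi}})\bigr)+M^2\abs{m_1}\Bigr]+4c_1c\bigl[e^{-M/c}(M+1)+e^{-M^2/2}\bigr],
\]
valid for any $M\geq2$. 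The strategy is to choose $M=M(\beta)$ growing like a multiple of $\log\beta$ — concretely $M=K\log\beta$ for a suitable constant $K=K(\xi_0,\alpha_0,h_0)$ large enough that $e^{-M/c}=\beta^{-K/c}$ decays faster than $\beta^{-1/2}$, say $K/c>1/2$, so that the exponential-tail term $4c_1c[e^{-M/c}(M+1)+e^{-M^2/2}]$ is $O(\beta^{-1/2})$, in fact $o(\beta^{-1/2}\log\beta)$. With this choice, $M^3\asymp(\log\beta)^3$ and $M^2\asymp(\log\beta)^2$, so the first bracket contributes a term of order $(\log\beta)^3\norm{A(\beta,h;\tau)-A(\infty)}+(\log\beta)^2\abs{m_1}$. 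Now Lemma \ref{lem:Aop-control} gives $\norm{A(\beta,h;\tau)-A(\infty)}\leq\Theta(1,\beta,q_0;1)/\sqrt\beta$, and since $\Theta(1,b,q_0;1)$ is a bounded (indeed decreasing in $b$, hence uniformly bounded for $\beta\geq\beta'$) function, this is $O(\beta^{-1/2})$; similarly Lemma \ref{lem:m1-control} gives $\abs{m_1}\leq\sqrt{2\xi'_0(1)(\log\beta+\Theta_1(\alpha_0,\beta,q_0))}\cdot\frac{C_1(1,\beta,q_0;1)}{\beta}(1+\cdots)=O(\sqrt{\log\beta}/\beta)$, which is also $O(\beta^{-1/2})$, in fact smaller. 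Collecting, $\Delta_i(t)\leq K_1\frac{(\log\beta)^3}{\sqrt\beta}$ uniformly over $(\beta,h,\tau)\in T_{\tilde\alpha,\tau}$ for $\beta\geq\beta'$, where $K_1$ absorbs $Lip(\Psi)$, $\sqrt{\xi_0'(1)}$, the constant bounds on $\Theta,\Theta_1,C_1$, and the constant $K$ from the choice of $M$; here $\beta'$ is chosen at least as large as $\beta''$ from \eqref{eq:alpha-43-inequality} so that Lemma \ref{lem:h-lim} and Lemma \ref{lem:m1-control} apply, and large enough that $M=K\log\beta\geq2$.

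Substituting these three bounds back into the displayed inequality for $\abs{R}/\lambda_2^{1/2}$ yields precisely the claimed estimate, with $C=C(\Psi)$ the constant from Theorem \ref{thm:2d-disp-bound} and $K_1$ as just described. The main obstacle I anticipate is purely bookkeeping: making sure that the constants $\Theta,\Theta_1,C_0,C_1,C_2$ appearing in Lemmas \ref{lem:spec-lem}--\ref{lem:m1-control} are genuinely bounded \emph{uniformly} over the index set $T_{\tilde\alpha,\tau}$ — this is where one uses that they are monotone in their arguments and that $q_*\geq q_0>0$ (from Lemma \ref{lem:lbq} via $h\geq h_0$) and $\alpha\leq1$, so that replacing $q_*$ by $q_0$, $\alpha$ by $1$, $\beta$ by $\beta'$, and $\tau$ by $1$ gives a legitimate uniform upper bound — and then tracking how the $(\log\beta)^3$ rate dominates all the competing terms. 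No genuinely new estimate is needed beyond what is already assembled; the content is in verifying the regularity class for $\Psi$ and choosing $M\asymp\log\beta$ to balance the exponential tail against the polynomial-in-$\beta^{-1/2}$ error.
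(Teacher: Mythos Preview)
Your proposal is correct and follows essentially the same approach as the paper: verify $\Psi$ satisfies the regularity hypotheses of Theorem \ref{thm:2d-disp-bound}, apply that theorem, then choose $M$ to be a constant multiple of $\log\beta$ and use Lemmas \ref{lem:Aop-control} and \ref{lem:m1-control} to control $\norm{A-A(\infty)}$ and $\abs{m_1}$, yielding $\Delta_i\leq K_1(\log\beta)^3/\sqrt\beta$. The paper takes $M=2\log\beta$ specifically (since $c_2=1$ gives $c=2$, so $e^{-M/c}=\beta^{-1}$), but your slightly more general choice $M=K\log\beta$ with $K/c>1/2$ works equally well.
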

\begin{proof}
This will follow from an application of Theorem \ref{thm:2d-disp-bound}. We begin by observing that
$\Psi \in \cA$ and satisfies the bounds
\[
\abs{\Psi(x,y)}\leq Ke^{-\norm{(x,y)}_2} \qquad \text{ and } \qquad Lip(\Psi)\leq K.
\]
Then, by the arguments in Sect. \ref{sec:setup}, we can apply Theorem \ref{thm:2d-disp-bound} to conclude that
\[
\abs{\E\Psi\left(\mathbf{h}+\sqrt{\Sigma}\mathbf{Z}\right)-\frac{e^{-\frac{1}{2}\frac{m_{2}^{2}}{\lambda_{2}}}}{\lambda_2^{1/2}\sqrt{2\pi}}\E\innerp{\Psi}(\nu(\tilde{\alpha},\tau)^{1/2} Z)} \leq
\frac{1}{\lambda^{1/2}_2}\left[\frac{1}{\lambda_{2}}\frac{1}{2\sqrt{2\pi}}\norm{\fG_\infty[\Psi]}_{L_{1}(\abs{y_{2}}^{2}dy_{2})}+\frac{e^{-\frac{1}{2}\frac{m_{2}^{2}}{\lambda_{2}}}}{\sqrt{2\pi}}\Delta_1+\frac{1}{\pi}\frac{1}{\lambda_{2}^{1/2}}\Delta_2\right],
\]
where for any $M\geq 2$, we have the estimates
\[
\Delta_{i}\leq K\left[\norm{A(\beta,h,\tau)-A(\infty)}\left(M^{3}(1+\frac{4}{M}\frac{1-e^{-M^{2}/2}}{\sqrt{2\pi}})\right)+M^2\abs{m_{1}}\right]+4c_{1}c\left[ e^{-\frac{M}{c}}(M+1)+ e^{-\frac{M^{2}}{2}}\right].
\]
Since $\tilde{\alpha}>\alpha_0$, we can take $\beta$ sufficiently large such that \eqref{eq:alpha-43-inequality} is satisfied. We can then apply Lemma \ref{lem:spec-lem} to control the spectral parameters. After choosing $\beta\geq e\vee\beta''$ and  $M=2\log\beta\geq 2$, we then have, by Lemmas \ref{lem:Aop-control} and \ref{lem:m1-control}, that
\begin{align*}
\Delta_i &\leq K \left(
\frac{(\log\beta)^{3}}{\sqrt{\beta}}\Theta(1,\beta,q_0;1)
+ \frac{(\log\beta)^2}{\beta}\sqrt{2\xi'_{0}(1)\left(\log(\beta)+\Theta_{1}(\alpha_0,\beta,q_0)\right)}\frac{C_{1}(1,\beta,q_0;1)}{\sqrt{2}}\left(1+\frac{C_{1}(1,\beta,q_0;1)}{2\beta^{2}}\right) \right.\\
&\qquad\qquad\left.+\frac{\log\beta}{\beta} \right).
\end{align*}
Since $\Theta_1$ has at most logarithmic growth in $\beta$, we see that the second  and third term together 
are $O(log(\beta)^3/\beta)$. Thus we see that for $\beta$ sufficiently large, there is a $K'$ such that 
\[
\Delta_i \leq K' \frac{(\log\beta)^3}{\sqrt{\beta}}
\]
Finally, by the last part of Theorem \ref{thm:2d-disp-bound}, we have
that 
\[
\norm{\fG_\infty[\Psi]}_{L_{1}(\abs{y_{2}}^{2}dy_{2})}\leq C(\Psi)<\infty.
\]
The result then follows by plugging in.
\end{proof}

%We collect all of the above results, along with the results in Sect.\ \ref{sec:2d-disp} in the following lemma.
%\begin{lemma}
%There is a universal constant $C$, and a choice of $\beta'$ and a constant $K_1$ depending on $\alpha_0,\xi_0$ 
%such that for all $\beta\geq \beta'$ and $\alpha\in(\alpha_0,1]$, we have that
%\begin{align*}
%\abs{&\E\Psi\left(\mathbf{h}+\sqrt{\Sigma\left(t,\beta,h\right)}\mathbf{Z}\right)-\frac{e^{-\frac{1}{2}\frac{m_{2}^{2}}{\lambda_{2}}}}{\lambda_2^{1/2}\sqrt{2\pi}}\E\innerp{\Psi}(\nu^{1/2}z)} 
%\leq\frac{1}{\lambda_2^{1/2}}\left[\frac{C}{2\sqrt{2\pi}\lambda_2}
%+\left(\frac{1}{\pi}\frac{1}{\lambda_{2}^{1/2}}
%+\frac{e^{-\frac{1}{2}\frac{m_{2}^{2}}{\lambda_{2}}}}{\sqrt{2\pi}}\right)K_1\frac{\log(\beta)^3}{\sqrt{\beta}}\right].
%\end{align*}
%\end{lemma}
%\begin{proof}
%This follows immediately from the above section  after recalling the definition of $e_i$ from (\ref{eq:errorsei}),
% plugging the bound from Lemma \ref{lem:e_ibounded} into Lemma \ref{lem:2d-disp-bound}, and dividing through by $\lambda_2^{1/2}$. Note that 
% we get the universal constant due to the exponential bound on $\Psi$ and Remark \ref{rem:LinfL1}.
%\end{proof}
Finally, we note the following facts.
\begin{lemma}\label{lem:auxbd1}
There exist constants $C,\beta'''$ depending only on $\xi_0,h_0$ such that
for all $(\beta,h,\tau)\in T_{\tilde{\alpha},\tau}$ with $\beta\geq\beta'''$, 
\[
e^{-\frac{1}{2}(\xi'(t)-\xi'(q_*))}\geq C.
\]
\end{lemma}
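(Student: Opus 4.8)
\emph{Proof plan.} The idea is to unwind the definitions so that the exponent is controlled by $\beta^2(1-q_*)$, and then invoke Corollary \ref{cor:q-lim}. Recall $\sigma=\xi_0'$ and $\xi'=\beta^2\xi_0'$, and that the parametrization \eqref{eq:taudefn} gives $\sigma(t)-\sigma(q_*)=\tau\bigl(\sigma(1)-\sigma(q_*)\bigr)$. Hence
\[
\xi'(t)-\xi'(q_*)=\beta^2\bigl(\sigma(t)-\sigma(q_*)\bigr)=\beta^2\tau\bigl(\sigma(1)-\sigma(q_*)\bigr)\le \beta^2\bigl(\xi_0'(1)-\xi_0'(q_*)\bigr),
\]
using $\tau\le 1$. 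By the mean value theorem and the monotonicity of $\xi_0''$ we have $\xi_0'(1)-\xi_0'(q_*)\le \xi_0''(1)(1-q_*)$, so it suffices to bound $\beta^2(1-q_*)$ by a constant depending only on $\xi_0,h_0$.

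To do this, fix $\beta'''=1$ and let $\beta\ge 1$. Since $(\beta,h,\tau)\in T_{\tilde\alpha,\tau}$ we have $h\ge h_0>0$ and $\alpha=\tilde\alpha\le 1$; by Lemma \ref{lem:lbq}(1), $q_*\ge q_0:=\tfrac12\tanh^2 h_0>0$, so by monotonicity $\xi_0''(q_*)\ge \xi_0''(q_0)>0$ (the model is nontrivial, $q_0>0$) and $\xi_0'(q_*)\ge \xi_0'(q_0)>0$. Applying Corollary \ref{cor:q-lim} (valid as $h>0$), bounding $\alpha\le1$, $\xi_0''(q_*)\le\xi_0''(1)$, $\xi_0'(q_*)\ge\xi_0'(q_0)$, and $\beta\ge1$, and using $\xi''(q_*)=\beta^2\xi_0''(q_*)\ge\beta^2\xi_0''(q_0)$, we obtain
\[
\beta^2\xi_0''(q_0)(1-q_*)\le \frac{3}{2}+\frac{\Lambda_0\,\xi_0''(1)}{\bigl(\xi_0'(q_0)\bigr)^{3/2}}=:C_0,
\]
a finite constant depending only on $\xi_0$ and $h_0$.

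Combining the two displays,
\[
\xi'(t)-\xi'(q_*)\le \xi_0''(1)\,\beta^2(1-q_*)\le \frac{C_0\,\xi_0''(1)}{\xi_0''(q_0)},
\]
and therefore $e^{-\frac12(\xi'(t)-\xi'(q_*))}\ge \exp\!\bigl(-\tfrac{C_0\,\xi_0''(1)}{2\,\xi_0''(q_0)}\bigr)=:C>0$, which proves the claim with $\beta'''=1$. There is essentially no obstacle here beyond recognizing that $\tau\le1$ removes the $\tau$-dependence and that the relevant bounded quantity is $\beta^2(1-q_*)$; the only point requiring a word is that $\xi_0''(q_0)>0$, which holds because $q_0>0$ and $\xi_0$ is a nontrivial model.
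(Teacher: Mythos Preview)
Your proof is correct and follows exactly the route the paper indicates: it invokes Lemma~\ref{lem:lbq} to get $q_*\ge q_0>0$ and then Corollary~\ref{cor:q-lim} to bound $\beta^2(1-q_*)$ uniformly, which is precisely what the paper's one-line proof ``This follows immediately from Lemma~\ref{lem:lbq} and Corollary~\ref{cor:q-lim}'' is gesturing at. Your write-up simply fills in the details (the $\tau\le1$ reduction, the mean value step, and the observation that $\xi_0''(q_0)>0$ for any nontrivial model since $q_0>0$), all of which are correct.
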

\begin{proof}
This follows immediately from  Lemma \ref{lem:lbq}  and Corollary \ref{cor:q-lim}.
\end{proof}
\begin{lemma}\label{lem:auxbd2}
There exists constants $K,\beta''''$ depending on $\xi_0,h_0,\alpha_0$ such that
for all  $(\beta,h,\tau)\in T_{\tilde{\alpha},\tau}$,  with $\beta\geq\beta''''$, 
\[
 \frac{1}{K\beta^2} \leq \frac{e^{-\frac{m^2_2}{2\lambda_2}}}{\sqrt{2\pi\lambda_2}}\leq  \frac{K}{\beta^2}.
\]
\end{lemma}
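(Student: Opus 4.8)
The plan is to estimate the two factors $e^{-m_2^2/(2\lambda_2)}$ and $(2\pi\lambda_2)^{-1/2}$ separately and show that each is bounded above and below by a constant multiple of $\beta^{-1}$, uniformly over $T_{\tilde\alpha,\tau}$ once $\beta$ is large. The factor $(2\pi\lambda_2)^{-1/2}$ is the easy one: recalling from Section \ref{sec:setup} that $\lambda_2=\beta^2\sigma(q_*)\tilde\lambda_2$ with $\sigma=\xi_0'$, and using that $q_0\le q_*\le 1$ (so $\sigma(q_*)$ lies between the positive constants $\xi_0'(q_0)$ and $\xi_0'(1)$, which depend only on $\xi_0,h_0$ via $q_0$ from \eqref{eq:q_0}) together with the bound $|\tilde\lambda_2-2|\le C/\beta^2$ from Lemma \ref{lem:spec-lem}, one gets $c\beta^2\le\lambda_2\le C\beta^2$ for $\beta$ large, hence $(2\pi\lambda_2)^{-1/2}$ is comparable to $\beta^{-1}$ with constants depending only on $\xi_0,h_0$.

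The substantive work is the exponential factor, which I would handle by first comparing $m_2^2/(2\lambda_2)$ with $h^2/(2\xi'(q_*))$. Since $\mathbf m=\mathbf h=\sqrt2\,h\,w_2$ by \eqref{eq:cov-and-h}, we have $m_2=\langle\mathbf m,v_2\rangle=\sqrt2\,h\,\langle w_2,v_2\rangle$, and since $\xi'(q_*)=\beta^2\sigma(q_*)$ while $\lambda_2=\xi'(q_*)\tilde\lambda_2$, a direct computation gives
\[
\frac{m_2^2}{2\lambda_2}=\frac{h^2}{2\xi'(q_*)}\cdot\frac{2\langle w_2,v_2\rangle^2}{\tilde\lambda_2}.
\]
By Lemma \ref{lem:spec-lem}, $\langle w_1,v_2\rangle^2=O(\beta^{-4})$, so $\langle w_2,v_2\rangle^2=1-O(\beta^{-4})$ (as $\{w_1,w_2\}$ is orthonormal and $|v_2|=1$), and again $\tilde\lambda_2=2+O(\beta^{-2})$; hence $\tfrac{2\langle w_2,v_2\rangle^2}{\tilde\lambda_2}=1+O(\beta^{-2})$, giving
\[
\Bigl|\frac{m_2^2}{2\lambda_2}-\frac{h^2}{2\xi'(q_*)}\Bigr|\le O(\beta^{-2})\cdot\frac{h^2}{2\xi'(q_*)}.
\]
To see this difference stays bounded — the one genuinely delicate point — I would invoke the bound on $h^2/\beta^2$ from Lemma \ref{lem:h-lim}: for $\beta\ge 2\beta''$ (with $\beta''$ from \eqref{eq:alpha-43-inequality}) one has $h^2/\beta^2\le 2\xi_0'(1)(\log\beta+\Theta_1(\alpha_0,\beta,q_0))$, and since $\Theta_1$ is decreasing in $\beta$ it is bounded by the finite value $\Theta_1(\alpha_0,2\beta'',q_0)$ on this range; thus $h^2/(2\xi'(q_*))\le h^2/(2\beta^2\xi_0'(q_0))=O(\log\beta)$. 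Consequently $\bigl|m_2^2/(2\lambda_2)-h^2/(2\xi'(q_*))\bigr|=O(\log\beta/\beta^2)\to0$, so $e^{-m_2^2/(2\lambda_2)}$ and $e^{-h^2/(2\xi'(q_*))}$ differ by a factor that tends to $1$, in particular lies in $[e^{-1},e]$ for $\beta$ large.

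Finally I would read off $e^{-h^2/(2\xi'(q_*))}\sim\beta^{-1}$ from the two-sided estimate of Lemma \ref{lem:h-lim}, which in the base-model variables ($\xi''(q_*)=\beta^2\xi_0''(q_*)$, $\xi'(q_*)=\beta^2\xi_0'(q_*)$) reads
\[
\Bigl|\tilde\alpha-\frac{4\beta}{3\sqrt{2\pi}}\frac{\xi_0''(q_*)}{\sqrt{\xi_0'(q_*)}}\,e^{-h^2/(2\xi'(q_*))}\Bigr|\le\frac{\Lambda_1}{\beta}\frac{\xi_0''(q_*)}{(\xi_0'(q_*))^{3/2}}.
\]
Since $\tilde\alpha\in(\alpha_0,1]$ and, on $q_*\in[q_0,1]$, the coefficient $\xi_0''(q_*)/\sqrt{\xi_0'(q_*)}$ is bounded above and below by positive constants depending only on $\xi_0,h_0$ (as $\xi_0'$ and $\xi_0''$ are continuous and strictly positive on $[q_0,1]$, $q_0>0$), choosing $\beta$ large enough that the right-hand side is at most $\alpha_0/2$ yields $\tfrac{\alpha_0}{2}\le\frac{4\beta}{3\sqrt{2\pi}}\frac{\xi_0''(q_*)}{\sqrt{\xi_0'(q_*)}}e^{-h^2/(2\xi'(q_*))}\le\tfrac32$, so $e^{-h^2/(2\xi'(q_*))}$ is comparable to $\beta^{-1}$. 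Multiplying the three estimates gives $\frac{1}{K\beta^2}\le\frac{e^{-m_2^2/(2\lambda_2)}}{\sqrt{2\pi\lambda_2}}\le\frac{K}{\beta^2}$ with $K$ depending only on $\xi_0,h_0,\alpha_0$, after absorbing all the ``$\beta$ large'' thresholds into $\beta''''$. The only real obstacle is the step in the previous paragraph: making sure the relative $O(\beta^{-2})$ discrepancy between $m_2^2/(2\lambda_2)$ and $h^2/(2\xi'(q_*))$ is not amplified into an unbounded multiplicative distortion by the $O(\log\beta)$ size of $h^2/(2\xi'(q_*))$ — which it is not, since their product still tends to $0$.
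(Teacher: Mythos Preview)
Your proof is correct and follows essentially the same route as the paper: bound $\lambda_2\asymp\beta^2$ via Lemma~\ref{lem:spec-lem}, reduce $e^{-m_2^2/(2\lambda_2)}$ to $e^{-h^2/(2\xi'(q_*))}$ by showing their exponents differ by $o(1)$, and then read off $e^{-h^2/(2\xi'(q_*))}\asymp\beta^{-1}$ from the two-sided estimate in Lemma~\ref{lem:h-lim}. The only cosmetic difference is that the paper controls $m_2^2$ through the identity $m_1^2+m_2^2=2h^2$ together with the bound on $|m_1|$ from Lemma~\ref{lem:m1-control}, whereas you work directly with the multiplicative relation $\tfrac{m_2^2}{2\lambda_2}=\tfrac{h^2}{2\xi'(q_*)}\cdot\tfrac{2\langle w_2,v_2\rangle^2}{\tilde\lambda_2}$; these are equivalent since $\langle w_2,v_1\rangle^2+\langle w_2,v_2\rangle^2=1$.
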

\begin{proof}
%By a manipulation of Lemma \ref{lem:spec-lem} and Lemma \ref{lem:h-lim} we get that
%\[
% \frac{1}{K\beta^2} \leq \frac{e^{-\frac{m^2_2}{2\lambda_2}}}{\sqrt{2\pi\lambda_2}}\leq  \frac{K}{\beta^2},
%\]
%for sufficiently large $\beta$. 
In the following, $K$ will denote a positive constant depending
on at most the aforementioned parameters. By
the triangle inequality, we have that
\[
e^{-\abs{\frac{m_{2}^{2}}{2\lambda_{2}}-\frac{h^{2}}{2\xi'(q_{*})}}}e^{-\frac{h^{2}}{2\xi'(q_{*})}}\leq e^{-\frac{m_{2}^{2}}{2\lambda_{2}}}\leq e^{\abs{\frac{m_{2}^{2}}{2\lambda_{2}}-\frac{h^{2}}{2\xi'(q_{*})}}}e^{-\frac{h^{2}}{2\xi'(q_{*})}}.
\]
Combining the fact that 
\[
m_{1}^{2}+m_{2}^{2}=2h^{2}
\]
with the bounds from Lemmas  \ref{lem:spec-lem}, \ref{lem:h-lim}, and \ref{lem:m1-control}, we see that
\[
\abs{\frac{m_{2}^{2}}{2\lambda_{2}}-\frac{h^{2}}{2\xi'(q_{*})}}\leq\frac{K}{\beta}
\]
for large enough $\beta$. An application of Lemma \ref{lem:h-lim} proves
that 
\[
\frac{1}{K\beta}\leq e^{-\frac{h^{2}}{2\xi'(q_{*})}}\leq\frac{K}{\beta}.
\]
Combining with the above then yields the inequality
\[
\frac{1}{K\beta}\leq e^{-\frac{m_{2}^{2}}{2\lambda_{2}}}\leq\frac{K}{\beta}
\]
for large enough $\beta$. Similarly, it follows from Lemma \ref{lem:spec-lem}
that
\[
\frac{1}{K\beta}\leq\frac{1}{\sqrt{\lambda_{2}}}\leq\frac{K}{\beta}
\]
for large enough $\beta$. Combining these bounds proves the result.
\end{proof}

The proof of the next fact is deferred to the appendix (Sect.\ \ref{sec:integralsign}). Recall the definition of $\Psi$  in \eqref{eq:Psi}.
\begin{fact}\label{fact:sign-fact}
For all $x\in\R$, 
\[
\innerp{\Psi}(x)<0.
\]
\end{fact}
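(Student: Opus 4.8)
The plan is to unfold the bracket, reduce to a one‑dimensional statement, and then recognize the resulting convolution as (minus) a manifestly positive one.

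First I would compute $\brac{\Psi}$ explicitly. With $w_1=(-1,1)/\sqrt2$, $w_2=(1,1)/\sqrt2$ and $\Psi$ as in \eqref{eq:Psi}, for $x\in\R$ one has $xw_1+yw_2=\bigl(\tfrac{y-x}{\sqrt2},\tfrac{y+x}{\sqrt2}\bigr)$, so $\Psi(xw_1+yw_2)=\bigl(4\sech^3\tfrac{y+x}{\sqrt2}-6\sech^5\tfrac{y+x}{\sqrt2}\bigr)\sech\tfrac{y-x}{\sqrt2}$. Substituting $s=(x+y)/\sqrt2$ in \eqref{eq:brac-def} (so $dy=\sqrt2\,ds$ and $(y-x)/\sqrt2=s-\sqrt2 x$) gives $\brac{\Psi}(x)=\sqrt2\,\phi(\sqrt2 x)$, where, using that $\sech$ is even,
\[
\phi(c)=\int_\R\bigl(4\sech^3(s)-6\sech^5(s)\bigr)\sech(s-c)\,ds=(F*\sech)(c),\qquad F:=4\sech^3-6\sech^5 .
\]
Thus it suffices to prove $\phi(c)<0$ for every $c\in\R$.

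Next I would pass to Fourier space, writing $\hat f(k)=\int_\R f(x)e^{-ikx}\,dx$. All functions involved are in $L^1\cap L^2$, so Plancherel and pointwise Fourier inversion apply. Using the classical transforms $\widehat{\sech}(k)=\pi\sech(\pi k/2)$, $\widehat{\sech^3}(k)=\tfrac{\pi}{2}(1+k^2)\sech(\pi k/2)$, and $\widehat{\sech^5}(k)=\tfrac{\pi}{24}(1+k^2)(9+k^2)\sech(\pi k/2)$ (standard, e.g.\ by residue calculus or the recursion for $\int\sech^{2n+1}$), the coefficients $4$ and $6$ conspire to give the key factorization
\[
\hat F(k)=\pi(1+k^2)\sech(\pi k/2)\Bigl(2-\tfrac{9+k^2}{4}\Bigr)=-\tfrac{\pi}{4}(1+k^2)^2\sech(\pi k/2),
\]
and hence $\hat\phi(k)=\hat F(k)\,\widehat{\sech}(k)=-\tfrac{\pi^2}{4}(1+k^2)^2\sech^2(\pi k/2)=-\bigl(\widehat{\sech^3}(k)\bigr)^2=-\widehat{\sech^3*\sech^3}(k)$. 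By Fourier inversion, $\phi(c)=-(\sech^3*\sech^3)(c)=-\int_\R\sech^3(y)\sech^3(c-y)\,dy$. The integrand is strictly positive everywhere and the integral converges, so $\phi(c)<0$ for all $c$, and therefore $\brac{\Psi}(x)=\sqrt2\,\phi(\sqrt2 x)<0$ for all $x\in\R$.

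The only non‑routine point — the main obstacle — is spotting the factorization of $\hat F$: one must verify that $4\widehat{\sech^3}-6\widehat{\sech^5}$ collapses to a negative multiple of $(1+k^2)^2\sech(\pi k/2)$, so that $\hat\phi$ becomes a perfect (negative) square and $-\phi$ is exactly the autoconvolution of the positive function $\sech^3$. Everything else (the change of variables, the Fourier formulas, the justification of inversion) is elementary. One could instead attempt to bound $\int F(s)\sech(s-c)\,ds$ directly, but since $F$ changes sign on a bounded interval around the origin this seems considerably more delicate, so I would favor the Fourier argument.
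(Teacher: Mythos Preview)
Your argument is correct, and it takes a genuinely different route from the paper's. After the same reduction to the one-variable convolution $\phi(c)=\int F(s)\sech(s-c)\,ds$ with $F=4\sech^3-6\sech^5$, the paper proceeds by brute force: it writes $\sech(y-2x)=(a\cosh y+b\sinh y)^{-1}$, evaluates the integral in closed form to obtain
\[
g(x)=-\frac{2}{\sinh^{5}(2x)}\bigl(-3\sinh(4x)+4x\cosh(4x)+8x\bigr),
\]
and then shows the bracketed expression has the same sign as $\sinh^5(2x)$ by checking that its first four derivatives vanish at the origin while the fifth is strictly positive. Your Fourier approach is more structural: the identity $\hat F(k)=-\tfrac{\pi}{4}(1+k^2)^2\sech(\pi k/2)=-\widehat{\sech^3}(k)^2/\widehat{\sech}(k)$ yields $\phi=-\sech^3\ast\sech^3$, which is manifestly strictly negative and explains \emph{why} the sign comes out right, rather than merely verifying it. The cost is that one must know (or derive via the recursion $(\sech^n)''=n^2\sech^n-n(n+1)\sech^{n+2}$) the transforms of $\sech$, $\sech^3$, $\sech^5$; the paper's computation is more self-contained but also more opaque, and its final sign analysis is exactly the kind of delicate step your method sidesteps.
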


We can now prove Lemma \ref{lem:longtime-main-lem}, which we restate for the convenience of the reader.
\begin{lemma*}\textbf{\ref{lem:longtime-main-lem}}
For all $\alpha_0,h_0>0$, there exist constants $c,C,\beta_0>0$ depending only on $\xi_0,h_0,\alpha_0$
such that for all $\beta,h$ satisfying $\beta\geq\beta_0$,  $h\geq h_0$, and $\alpha\in (\alpha_0,1]$, we have that
\[
\E_h\left(4 \sech^4(X_t)-6\sech^6(X_t)\right)\leq -\frac{c}{\beta^2}+\frac{C\log(\beta)^3}{\beta^{5/2}},\quad t\geq q_*.
\]
\end{lemma*}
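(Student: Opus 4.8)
The key identity is \eqref{eq:afterGirsanov}, which reduces the problem to understanding $\chi(t;\beta,h) = \E\Psi(\mathbf{h}+\sqrt{S(t,\beta,h)}\mathbf{Z})$ where $\Psi$ is given by \eqref{eq:Psi}. The plan is to fix $\tilde\alpha\in(\alpha_0,1]$ and $\tau\in(0,1]$, work along the marked level set $T_{\tilde\alpha,\tau}$, and apply the $2$-d dispersive estimate of Lemma \ref{lem:2ddispappl}. That lemma tells us that
\[
\chi(t;\beta,h) = \frac{e^{-\frac{1}{2}\frac{m_2^2}{\lambda_2}}}{\lambda_2^{1/2}\sqrt{2\pi}}\E\innerp{\Psi}(\nu(\tilde\alpha,\tau)^{1/2}z) + \text{Err},
\]
where $|\text{Err}| \leq \frac{1}{\lambda_2^{1/2}}\big[\frac{C}{2\sqrt{2\pi}\lambda_2} + (\frac{1}{\pi\lambda_2^{1/2}} + \frac{e^{-m_2^2/2\lambda_2}}{\sqrt{2\pi}})K_1\frac{(\log\beta)^3}{\sqrt{\beta}}\big]$. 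First I would feed in the spectral estimates: by Lemma \ref{lem:auxbd2}, $\frac{1}{K\beta^2}\leq \frac{e^{-m_2^2/2\lambda_2}}{\sqrt{2\pi\lambda_2}}\leq\frac{K}{\beta^2}$, and by Lemma \ref{lem:spec-lem}, $\lambda_2^{-1/2}\leq \frac{1}{\beta\sqrt{2\sigma(q_0)}}$. Hence the error term is $O((\log\beta)^3/\beta^{5/2})$ uniformly over $T_{\tilde\alpha,\tau}$, while the main term is $\frac{e^{-m_2^2/2\lambda_2}}{\sqrt{2\pi\lambda_2}}\E\innerp{\Psi}(\nu^{1/2}z)$, which is of order $\beta^{-2}$ times $\E\innerp{\Psi}(\nu^{1/2}z)$.

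The decisive point is the \emph{sign and non-degeneracy} of the main term. By Fact \ref{fact:sign-fact}, $\innerp{\Psi}(x)<0$ for all $x\in\R$, so $\E\innerp{\Psi}(\nu(\tilde\alpha,\tau)^{1/2}z)<0$; moreover, since $\nu(\tilde\alpha,\tau)=\frac34\tilde\alpha\tau$ ranges in a compact subinterval of $(0,\infty)$ as $\tilde\alpha\in[\alpha_0,1]$ and $\tau\in[\tau_1,1]$ are bounded away from $0$ and $\infty$, and $x\mapsto\E\innerp{\Psi}(x^{1/2}z)$ is continuous and strictly negative, there is a uniform bound $\E\innerp{\Psi}(\nu^{1/2}z)\leq -c_0<0$. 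Combined with the lower bound $\frac{e^{-m_2^2/2\lambda_2}}{\sqrt{2\pi\lambda_2}}\geq\frac{1}{K\beta^2}$ from Lemma \ref{lem:auxbd2}, the main term is $\leq -c_0/(K\beta^2)$. Finally, since $e^{-\frac12(\xi'(t)-\xi'(q_*))}$ is bounded above by $1$ (as $\xi'$ is increasing and $t\geq q_*$), the identity \eqref{eq:afterGirsanov} gives
\[
\E_h\big(4\sech^4(X_t)-6\sech^6(X_t)\big) = \chi(t;\beta,h)e^{-\frac12(\xi'(t)-\xi'(q_*))} \leq -\frac{c}{\beta^2} + \frac{C(\log\beta)^3}{\beta^{5/2}}
\]
for $\beta$ large enough, uniformly in $t\geq q_*$ (equivalently $\tau\in(0,1]$), which is the claim.

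One subtlety I need to handle carefully is the range $\tau\to 0$, i.e. $t\to q_*^+$: there $\nu(\tilde\alpha,\tau)\to 0$, so $\E\innerp{\Psi}(\nu^{1/2}z)\to\innerp{\Psi}(0)<0$, which is still strictly negative, so the uniform bound $\leq -c_0$ persists down to $\tau=0$ by continuity of $x\mapsto\E\innerp{\Psi}(x^{1/2}z)$ at $x=0$ and compactness of $\tilde\alpha\in[\alpha_0,1]$. (The genuinely delicate $\nu\to 0$ issue — uniformity of the $2$-d dispersive constants — has already been absorbed into Lemma \ref{lem:2ddispappl}.) Likewise, $\lambda_2\to\infty$ and the spectral estimates of Lemma \ref{lem:spec-lem} hold uniformly over $T_{\tilde\alpha,\tau}$ including $\tau=0$, since $C_1,C_2$ are evaluated at the $\tau$-independent argument $q_0$.

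The main obstacle is bookkeeping rather than conceptual: one must verify that every constant ($K$, $K_1$, $c_0$, the various $\beta^{(k)}$ thresholds) can be chosen to depend only on $\xi_0,h_0,\alpha_0$ and \emph{not} on $\tilde\alpha,\tau,h$ individually — this is exactly why all the auxiliary lemmas (Lemmas \ref{lem:spec-lem}, \ref{lem:Aop-control}, \ref{lem:m1-control}, \ref{lem:h-lim}, \ref{lem:auxbd1}, \ref{lem:auxbd2}) were stated with bounds in terms of $q_0$, $\alpha_0$, $\beta_0$ rather than pointwise quantities. Once that uniformity is in hand, taking $\beta_0 = \max$ of the finitely many thresholds and collecting the $O(\beta^{-2})$ and $O((\log\beta)^3/\beta^{5/2})$ terms finishes the proof.
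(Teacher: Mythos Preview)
Your overall strategy matches the paper's proof exactly, and your handling of the uniformity in $\tau$ (including $\tau\to 0$) is fine. There is, however, one genuine error in the final step.

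You write that since $e^{-\frac{1}{2}(\xi'(t)-\xi'(q_*))}\leq 1$, the identity \eqref{eq:afterGirsanov} gives
\[
\chi(t;\beta,h)\,e^{-\frac{1}{2}(\xi'(t)-\xi'(q_*))} \leq -\frac{c}{\beta^2}+\frac{C(\log\beta)^3}{\beta^{5/2}}.
\]
But this does not follow from the upper bound $e^{-\frac{1}{2}(\xi'(t)-\xi'(q_*))}\leq 1$. You have established $\chi\leq B$ with $B=-\frac{c_0}{K\beta^2}+\frac{C(\log\beta)^3}{\beta^{5/2}}$, and for $\beta$ large this $B$ is \emph{negative}. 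Multiplying a negative number by something in $(0,1]$ makes it \emph{less} negative, so from $e^{-\frac{1}{2}(\xi'(t)-\xi'(q_*))}\leq 1$ alone you can only conclude $\chi\cdot e^{-\frac{1}{2}(\xi'(t)-\xi'(q_*))}\leq 0$, which loses the crucial $-c/\beta^2$ term. A priori $\xi'(t)-\xi'(q_*)=\beta^2(\xi_0'(t)-\xi_0'(q_*))$ with $\beta\to\infty$, so without further input the exponential could be very small and kill the main term entirely.

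What is actually needed is a uniform \emph{lower} bound $e^{-\frac{1}{2}(\xi'(t)-\xi'(q_*))}\geq C>0$, and this is precisely the content of Lemma~\ref{lem:auxbd1} (which follows from $1-q_*=O(1/\beta^2)$ via Corollary~\ref{cor:q-lim}, so that $\xi'(t)-\xi'(q_*)\leq \xi'(1)-\xi'(q_*)=O(1)$). With that lower bound in hand, $\chi\cdot e^{-\frac{1}{2}(\xi'(t)-\xi'(q_*))}\leq B\cdot C$ gives the desired conclusion with adjusted constants. You even listed Lemma~\ref{lem:auxbd1} among the auxiliary results in your final paragraph, but never actually invoked it; replace the ``bounded above by $1$'' sentence with an appeal to Lemma~\ref{lem:auxbd1} and the proof is complete.
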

\begin{proof}
In the following, $K$ will denote a positive constant that depends at most on the aforementioned parameters
but may change between lines. 
%%By a manipulation of Lemma \ref{lem:spec-lem} and Lemma \ref{lem:h-lim} we get that
%%\[
%% \frac{1}{K\beta^2} \leq \frac{e^{-\frac{m^2_2}{2\lambda_2}}}{\sqrt{2\pi\lambda_2}}\leq  \frac{K}{\beta^2},
%%\]
%%for sufficiently large $\beta$. 
Recall equation \eqref{eq:afterGirsanov}, which states that
\[
\E_h\left(4 \sech^4(X_t)-6\sech^6(X_t)\right)
= e^{-\frac{1}{2}(\xi'(t) - \xi'(q_*))} \cdot \E\Psi\left(\mathbf{h}+\sqrt{{S}\left(t,\beta,h\right)}\mathbf{Z}\right)
\]
for all $t\in(q_*,1]$.
By the bijective correspondence \eqref{eq:taudefn}, we know that there is a $\tau\in(0,1]$ such that
\[
{S}(t,\beta,h)=\Sigma(\beta,h;\tau).
\]
Thus by Lemmas \ref{lem:spec-lem}, \ref{lem:2ddispappl}, and \ref{lem:auxbd2},
\[
\E\Psi\left(\mathbf{m}+\sqrt{S\left(t,\beta,h\right)}\mathbf{Z}\right)
\leq \frac{e^{-\frac{m^2_2}{2\lambda_2}}}{\sqrt{2\pi\lambda_2}}\E\innerp{\Psi}(\nu^{1/2}(\alpha(\beta,h),\tau)Z)
+K\left(\frac{\log(\beta)^3}{\beta^{5/2}}\right)
\]
for sufficiently large $\beta$. 
By Fact \ref{fact:sign-fact}, there is a $c>0$ such that for all $s\in[0,1]$,
\[
\E\innerp{\Psi}(s Z)\leq -c.
\]
Hence, by Lemma \ref{lem:auxbd2} we have that
\[
\E\Psi\left(\mathbf{m}+\sqrt{S\left(t,\beta,h\right)}\mathbf{Z}\right)
\leq  \frac{-c}{K\beta^2}
+K\left(\frac{\log(\beta)^3}{\beta^{5/2}}\right)
\]
for sufficiently large $\beta$.
Finally, using Lemma \ref{lem:auxbd1} we can conclude the result for $t>q_*$. A continuity argument then gives the result at $t=q_*$.
\end{proof}
\section{Appendix}\label{sec:appendix}

The appendix is organized as follows. In Sect.\ \ref{sec:appendixPDE} we state some preliminary facts about the Parisi PDE. In Sect.\ \ref{sec:AC-SDE} we state some useful formulas regarding the Auffinger-Chen SDE. In Sect.\ \ref{sec:girsanov} we state a useful change of variables through Girsanov's theorem. In Sect.\ \ref{sec:spectral} we record some important spectral estimates to be used in Sect.\ \ref{sec:long-time}. In Sect.\ \ref{sec:integralsign} we bound a certain integral whose sign is of interest. In Sect.\ \ref{sec:elementaryRS} we give an elementary argument for RS at sufficiently high temperature and external field, which when combined with the main theorems proves boundedness of the exceptional set for our arguments. We end in Sect.\ \ref{sec:ATline-line} with a discussion regarding topological properties of the level sets of $\alpha$. 

\subsection{Well-posedness of the Parisi PDE} \label{sec:appendixPDE}

The following three propositions are taken from the authors' paper
\cite{JagTobSC15}. We call a continuous function $u:\left[0,1\right]\times\R\to\R$
with essentially bounded weak derivative $u_{x}$ a weak solution
of the Parisi PDE (\ref{eq:PPDE}) if it satisfies 
\[
0=\int_{0}^{1}\int_{\R}-u\phi_{t}+\frac{\xi''\left(t\right)}{2}\left(u\phi_{xx}+\mu\left[0,t\right]u_{x}^{2}\phi\right)\, dxdt+\int_{\R}\phi\left(1,x\right)\log\cosh x\, dx
\]
for every $\phi\in C_{c}^{\infty}\left((0,1]\times\R\right).$ 
\begin{proposition}
Let $\mu\in\Pr\left[0,1\right]$. There exists a unique weak solution
$u$ to the Parisi PDE. The weak solution $u$ to (\ref{eq:PPDE}) has higher regularity:
\begin{itemize}
\item $\partial_{x}^{j}u\in C_{b}\left(\left[0,1\right]\times\R\right)$
for $j\geq1$
\item $\partial_{t}\partial_{x}^{j}u\in L^{\infty}\left(\left[0,1\right]\times\R\right)$
for $j\geq0$. 
\end{itemize}
For all $j\geq1$, the derivative $\partial_{x}^{j}u$ is a weak solution
to
\[
\begin{cases}
\left(\partial_{x}^{j}u\right)_{t}+\frac{\xi''\left(t\right)}{2}\left(\left(\partial_{x}^{j}u\right)_{xx}+\mu\left[0,t\right]\partial_{x}^{j}u_{x}^{2}\right)=0 & \left(t,x\right)\in\left(0,1\right)\times\R\\
\partial_{x}^{j}u\left(1,x\right)=\frac{d^{j}}{dx^{j}}\log\cosh x & x\in\R
\end{cases}.
\]

\end{proposition}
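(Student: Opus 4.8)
This is the well-posedness theory for the Parisi PDE, and the plan is to follow the standard route for such equations: first solve $(\ref{eq:PPDE})$ explicitly when $\mu$ is finitely supported, then pass to general $\mu\in\Pr[0,1]$ by approximation using quantitative stability estimates, then establish uniqueness among weak solutions with bounded weak $x$-derivative by a linearization argument, and finally bootstrap to obtain the higher regularity and the equations satisfied by the $x$-derivatives. Full details are in \cite{JagTobSC15}; see also \cite{AuffChen14}.

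\textbf{Step 1: atomic $\mu$ via Cole--Hopf.} When $\mu=\sum_i a_i\delta_{q_i}$ is finitely supported, $t\mapsto\mu[0,t]$ is piecewise constant, so on each maximal subinterval where $\mu[0,t]\equiv m$ the substitution $\phi=\exp(mu)$ (and $\phi=u$ when $m=0$) turns $(\ref{eq:PPDE})$ into the backward heat equation $\partial_t\phi+\frac{\xi''(t)}{2}\partial_{xx}\phi=0$. One then constructs $u$ by solving backward from $t=1$, one subinterval at a time, matching values at each $q_i$; each step is a Gaussian convolution of a smooth bounded function, so $u(t,\cdot)$ is smooth with all $x$-derivatives bounded and $u$ is $C^1$ in $t$ off the atoms. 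Applying the maximum principle to the Burgers-type equation satisfied by $w=\partial_x u$, namely $\partial_t w+\frac{\xi''}{2}(\partial_{xx}w+2\mu[0,t]\,w\,\partial_x w)=0$ with $w(1,\cdot)=\tanh$, gives $\norm{\partial_x u}_\infty\le 1$; differentiating further and iterating yields $0<\partial_{xx}u$ and bounds $\norm{\partial_x^j u}_\infty\le C_j(\xi)$ depending only on $\xi$ (here the assumption $\xi_0(1+\eps)<\infty$ enters).

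\textbf{Step 2: general $\mu$ by approximation; regularity and the differentiated equations.} For arbitrary $\mu$ take finitely supported $\mu_n$ with $d(\mu_n,\mu)\to0$ and corresponding solutions $u_n$ from Step 1. The difference of the two nonlinear equations for $u_n$ and $u_m$ is a \emph{linear} backward parabolic equation for $u_n-u_m$ with bounded coefficients (using the Step 1 bounds) and a source proportional to $\mu_n[0,\cdot]-\mu_m[0,\cdot]$; a Duhamel or Feynman--Kac representation then gives the stability estimate $\norm{\partial_x^j u_n-\partial_x^j u_m}_\infty\le C_j(\xi)\,d(\mu_n,\mu_m)$ for every $j\ge0$. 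Hence $(\partial_x^j u_n)_n$ is Cauchy in $C_b([0,1]\times\R)$, so the limit $u$ satisfies $\partial_x^j u\in C_b$ for $j\ge1$, and passing to the limit in the distributional formulation shows $u$ is a weak solution. Writing the equation for $\partial_x^j u$ (its terminal value being $\frac{d^j}{dx^j}\log\cosh x$, bounded for $j\ge1$) expresses $\partial_t\partial_x^j u$ as a bounded combination of the $\partial_x^{j'}u$, which gives $\partial_t\partial_x^j u\in L^\infty$ for $j\ge0$ and shows $\partial_x^j u$ is a weak solution of the stated problem.

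\textbf{Step 3: uniqueness, and the main obstacle.} If $u,\tilde u$ are weak solutions with essentially bounded weak $x$-derivatives, then $\delta=u-\tilde u$ satisfies weakly the linear backward equation $\partial_t\delta+\frac{\xi''(t)}{2}(\partial_{xx}\delta+\mu[0,t](\partial_x u+\partial_x\tilde u)\,\partial_x\delta)=0$ with zero terminal data and bounded drift, so a Feynman--Kac representation (or a Gronwall argument for $t\mapsto\int\delta^2(t,x)\rho(x)\,dx$ against a Gaussian-type weight $\rho$ handling the unbounded domain) forces $\delta\equiv0$. I expect the technical heart to be Step 1: one must check that the Cole--Hopf and maximum-principle structure survives every differentiation so that the bounds on $\partial_x^j u$, and hence the Lipschitz-in-$\mu$ estimates, hold with constants depending only on $\xi$. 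Once these a priori estimates are in hand, the approximation, uniqueness and regularity conclusions are routine.
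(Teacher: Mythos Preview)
The paper does not actually prove this proposition: it is stated in the appendix with the preface ``The following three propositions are taken from the authors' paper \cite{JagTobSC15}'' and no proof is given. So there is no ``paper's own proof'' to compare against; the result is imported wholesale from the companion work.

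Your sketch is the standard and correct route (and is, in outline, what \cite{JagTobSC15} does): Cole--Hopf on subintervals for atomic $\mu$, uniform-in-$\mu$ a priori bounds on $\partial_x^j u$ via maximum principle arguments, then Lipschitz-in-$d$ stability of $u$ and its $x$-derivatives to pass to general $\mu$, and finally uniqueness by linearizing the difference of two weak solutions. One small point of care worth flagging: the bounds $\norm{\partial_x^j u}_\infty\le C_j(\xi)$ for $j\ge3$ do not follow from a single maximum-principle step as cleanly as for $j=1,2$, since the differentiated equations pick up lower-order source terms (products of lower $x$-derivatives); one really needs an inductive argument or the explicit Gaussian-convolution structure from the Cole--Hopf representation to get uniform constants. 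This is exactly the ``technical heart'' you identify, so you are aware of it, but it is where a careless write-up would have a gap.
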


\begin{proposition}
\label{prop:Lipmuu} Let $\mu,\tilde{\mu}\in\Pr[0,1]$ and $u,\tilde u$ be the corresponding solutions to 
the Parisi PDE. Then
\begin{align*}
 \norm{u-\tilde{u}}_{\infty}  \leq \xi''\left(1\right) d(\mu,\tilde{\mu}) \qquad\text{  and }\qquad
 \norm{u_x-\tilde{u}_x}_{\infty} \leq \exp\left(\xi'\left(1\right) \right)\xi''\left(1\right) d(\mu,\tilde{\mu}).
\end{align*}
\end{proposition}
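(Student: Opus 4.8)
The plan is to reduce both estimates to a single explicit linear backward parabolic equation for the difference $\delta := u - \tilde u$ and to read off the bounds from a Feynman--Kac representation along an auxiliary diffusion, in the spirit of the Auffinger--Chen SDE arguments used elsewhere. Subtracting the two Parisi PDEs and writing $\mu[0,t]u_x^2 - \tilde\mu[0,t]\tilde u_x^2 = \tilde\mu[0,t](u_x+\tilde u_x)\,\delta_x + (\mu[0,t]-\tilde\mu[0,t])\,u_x^2$ puts this equation in the form
\[
\partial_t\delta + \cL_t\delta = -\frac{\xi''(t)}{2}\big(\mu[0,t]-\tilde\mu[0,t]\big)u_x^2, \qquad \delta(1,\cdot) = 0,
\]
where $\cL_t := \frac{\xi''(t)}{2}\partial_{xx} + \frac{\xi''(t)}{2}\tilde\mu[0,t](u_x+\tilde u_x)\,\partial_x$. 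Since $u_{xx}$ and $\tilde u_{xx}$ are bounded (Sect.\ \ref{sec:appendixPDE}), this drift is bounded and Lipschitz in $x$, so the SDE $dY_s = \frac{\xi''(s)}{2}\tilde\mu[0,s](u_x+\tilde u_x)(s,Y_s)\,ds + \sqrt{\xi''(s)}\,dW_s$ is well posed, and It\^{o}'s formula applied to $\delta(s,Y_s)$ together with the terminal condition gives $\delta(t,x) = \E_{Y_t = x}\int_t^1 \frac{\xi''(s)}{2}(\mu[0,s]-\tilde\mu[0,s])\,u_x^2(s,Y_s)\,ds$. Using $|u_x|\leq 1$ and $\xi''(s)\leq\xi''(1)$ on $[0,1]$, this immediately yields $\norm{u-\tilde u}_\infty \leq \xi''(1)\,d(\mu,\tilde\mu)$.

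For the gradient bound I would differentiate the $\delta$-equation once in $x$: with $w := \delta_x = u_x - \tilde u_x$ and $c_t := \frac{\xi''(t)}{2}\tilde\mu[0,t](u_{xx}+\tilde u_{xx})$, the $x$-derivative of the drift coefficient, one obtains
\[
\partial_t w + \cL_t w + c_t\,w = -\xi''(t)(\mu[0,t]-\tilde\mu[0,t])\,u_x u_{xx}, \qquad w(1,\cdot)=0.
\]
The same diffusion $Y$ now gives $w(t,x) = \E_{Y_t=x}\int_t^1 \exp\!\big(\int_t^s c_r(Y_r)\,dr\big)\,\xi''(s)(\mu[0,s]-\tilde\mu[0,s])\,u_x u_{xx}(s,Y_s)\,ds$, the multiplicative weight being the price of the zeroth-order term $c_t$. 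The key point is that since $0 < u_{xx},\tilde u_{xx} \leq 1$ and $\tilde\mu[0,\cdot]\leq 1$ (Sect.\ \ref{sec:appendixPDE}) we have $0 \leq c_r \leq \xi''(r)$, so that $\int_t^s c_r(Y_r)\,dr \leq \int_0^1 \xi''(r)\,dr = \xi'(1)$, using $\xi'(0)=0$ since $\xi_0$ has no linear term. Combining this with $|u_x u_{xx}|\leq 1$ and $\xi''(s)\leq\xi''(1)$ gives $|w(t,x)| \leq e^{\xi'(1)}\xi''(1)\,d(\mu,\tilde\mu)$, and taking the supremum over $(t,x)$ completes the proof.

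The one step that needs genuine care is the rigorous application of It\^{o}'s formula to $\delta(s,Y_s)$ and $w(s,Y_s)$, since $u$ and $\tilde u$ are a priori only weak solutions of the Parisi PDE; here I would invoke the regularity recorded in Sect.\ \ref{sec:appendixPDE} (each $\partial_x^j u$ lies in $C_b$, has $\partial_t\partial_x^j u \in L^\infty$, and is itself a weak solution of the $j$-times differentiated PDE) and justify the representations by mollifying in $x$ and passing to the limit. Everything else is elementary once one has the a priori bounds $|u_x|\leq 1$, $0<u_{xx}\leq 1$ and the monotonicity of $\xi''$ on $[0,1]$ in hand.
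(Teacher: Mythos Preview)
Your proof is correct. Note that the present paper does not actually prove this proposition: it is quoted from \cite{JagTobSC15} along with the other regularity statements in Sect.~\ref{sec:appendixPDE}. That said, your argument is entirely in the spirit of the paper's own computations---compare your derivation of the $\delta$-equation with the nearly identical manipulation in the proof of Lemma~\ref{lem:right-derivP}, where the authors write $(\partial_t+\cL_{t,\mu})\delta = -\frac{\xi''}{2}\big((\tilde\mu-\mu)\tilde u_x^2 + \mu\,\delta_x^2\big)$ using the Auffinger--Chen generator $\cL_{t,\mu}$ rather than your symmetrized drift $\frac{\xi''}{2}\tilde\mu(u_x+\tilde u_x)$. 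Either linearization works; yours has the minor advantage that the source term is genuinely linear in $\delta$, which is what allows the clean Feynman--Kac formula. The gradient estimate via differentiating once in $x$ and absorbing the resulting zeroth-order coefficient $c_t=\frac{\xi''}{2}\tilde\mu(u_{xx}+\tilde u_{xx})$ into a multiplicative weight bounded by $e^{\xi'(1)}$ is exactly the right idea, and your bound $0\le c_r\le\xi''(r)$ together with $\xi'(0)=0$ gives the stated constant on the nose.
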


\begin{proposition}
The solution $u$ to the Parisi PDE satisfies $\abs{u_{x}}<1$ and
$0<u_{xx}\leq 1$.
\end{proposition}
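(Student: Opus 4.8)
The plan is to derive all three bounds from the maximum principle (equivalently, from martingale representations along the Auffinger--Chen diffusion) applied to the linear parabolic equations obtained by differentiating the Parisi PDE \eqref{eq:PPDE} in $x$. The regularity recorded in Section~\ref{sec:appendixPDE} is exactly what makes this legitimate: $u_x,u_{xx},u_{xxx},u_{xxxx}$ are bounded and continuous, the corresponding time derivatives lie in $L^\infty$, and each $\partial_x^j u$ solves the differentiated equation weakly, hence pointwise a.e., so It\^o's formula along \eqref{eq:AC-SDE} may be run for $u_x(t,X_t)$, $u_{xx}(t,X_t)$, and combinations thereof.

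For $\abs{u_x}<1$ and $u_{xx}>0$ I would argue by representation. Differentiating \eqref{eq:PPDE} once, $v:=u_x$ solves $\partial_t v+\tfrac{\xi''}{2}v_{xx}+\xi''\mu[0,t]u_x v_x=0$ with $v(1,x)=\tanh x$; since the drift is bounded, It\^o's formula shows $v(s,X_s)$ is a bounded martingale on $[t_0,1]$ for the SDE started at any $(t_0,x_0)$, so $u_x(t_0,x_0)=\E[\tanh X_1]$, which lies strictly in $(-1,1)$ because $\tanh$ does and $X_1$ is a.s.\ finite. Differentiating twice, $G:=u_{xx}$ solves $\partial_t G+\tfrac{\xi''}{2}G_{xx}+\xi''\mu[0,t]u_x G_x=-\xi''\mu[0,t]G^2$ with $G(1,x)=\sech^2 x$, and It\^o gives $u_{xx}(t,x)=\E_{X_t=x}[\sech^2(X_1)]+\E_{X_t=x}\int_t^1\xi''(s)\mu[0,s]u_{xx}^2(s,X_s)\,ds$, a sum of a strictly positive term and a nonnegative one.

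The bound $u_{xx}\le1$ is the one that genuinely uses that $\mu$ is a \emph{probability} measure. The idea is to test with $Q:=u_{xx}+u_x^2-1$, which has $Q(1,x)=\sech^2 x+\tanh^2 x-1=0$. Using the equations above for $u_x$ and $u_{xx}$, a short computation gives $\partial_t Q+\tfrac{\xi''}{2}Q_{xx}+\xi''\mu[0,t]u_x Q_x=\xi''(t)\,u_{xx}^2\,(1-\mu[0,t])\ge0$, the sign coming precisely from $\mu[0,t]\le1$. Hence $Q(s,X_s)$ is a bounded submartingale on $[t,1]$, so $Q(t,x)\le\E_{X_t=x}[Q(1,X_1)]=0$, i.e.\ $u_{xx}\le1-u_x^2\le1$ (in fact the sharper pointwise bound). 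Equivalently, $Q$ is a bounded subsolution of a linear parabolic equation with zero terminal data, and the maximum principle gives $Q\le0$.

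The main obstacle is not the algebra but making the maximum-principle/It\^o step rigorous at the available level of regularity. One must know the differentiated equations hold a.e.\ and that It\^o's formula and parabolic comparison remain valid when $\partial_t\partial_x^j u$ is only $L^\infty$ in $t$ (handled by mollifying in time), run the comparison on the unbounded domain $\R$ with a growth barrier such as $\varepsilon\cosh(x)e^{Ct}$ (using that $Q$ and all coefficients are bounded), and note that the possible degeneracy $\xi''(0)=0$ only weakens ellipticity in a harmless direction. All of this is routine given Section~\ref{sec:appendixPDE}; indeed this proposition is taken from \cite{JagTobSC15}, where the details are carried out.
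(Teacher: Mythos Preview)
Your argument is correct, and in particular the trick $Q=u_{xx}+u_x^2-1$ combined with $(\partial_t+\cL_{t,\mu})Q=\xi''u_{xx}^2(1-\mu[0,t])\ge0$ is exactly the right way to extract the upper bound $u_{xx}\le1$ from $\mu$ being a probability measure. Note, however, that the present paper does not actually prove this proposition: Section~\ref{sec:appendixPDE} merely imports it from \cite{JagTobSC15}, as you yourself observe in your last sentence. So there is no ``paper's own proof'' to compare against here; what you have written is a clean self-contained sketch of the argument that the cited reference carries out, and it stands on its own.
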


\subsection{The Auffinger-Chen SDE}\label{sec:AC-SDE}
Recall the Auffinger-Chen SDE from (\ref{eq:AC-SDE}),
\begin{align*}
dX_{t} & =\xi''\left(t\right)\mu[0,t]u_x\left(t,X_{t}\right)\, dt +\sqrt{\xi''\left(t\right)}\, dW_{t}\\
X_{0} & =h
\end{align*}
which has infinitesimal generator
\[
\cL_{t,\mu}=\frac{\xi''(t)}{2}(\Delta + 2\mu[0,t] u_x(t,x)\partial_x ).
\]
Note this has coefficients which are uniformly bounded in time and Lipschitz in space by Sect.\ \ref{sec:appendixPDE}. 
We now summarize some basic properties of the SDE which will be used in the subsequent. Their proofs
are standard applications of It\^o's lemma (see \cite{stroock1979multidimensional}) so they are omitted.  
\begin{lemma}
We have
\begin{align*}
\E_{h}\left[u_{x}^{2}\left(s,X_{s}\right)\right] & =  \int_{0}^{s}\xi''\left(t\right)\E_{h}\left[u_{xx}^{2}\left(t,X_{t}\right)\right]\, dt+u_{x}^{2}\left(0,h\right)\\
\E_{h}\left[u_{xx}^{2}\left(s,X_{s}\right)\right] & =  \int_{0}^{s}\xi''\left(t\right)\E_{h}\left[u_{xxx}^{2}\left(t,X_{t}\right)-2\mu\left[0,t\right]u_{xx}^{3}\left(t,X_{t}\right)\right]\, dt+u_{xx}^{2}\left(0,h\right)\\
\frac{d}{ds}\E_{h}\left[u_{x}^{2}\left(s,X_{s}\right)\right] & =\xi''\left(s\right)\E_{h}\left[u_{xx}^{2}\left(s,X_{s}\right)\right]\\
\frac{d}{ds}^{+}\E_{h}\left[u_{xx}^{2}\left(s,X_{s}\right)\right] & =\xi''\left(s\right)\E_{h}\left[u_{xxx}^{2}\left(s,X_{s}\right)-2\mu\left[0,s\right]u_{xx}^{3}\left(s,X_{s}\right)\right]\\
\frac{d}{ds}^{-}\E_{h}\left[u_{xx}^{2}\left(s,X_{s}\right)\right] & =\xi''\left(s\right)\E_{h}\left[u_{xxx}^{2}\left(s,X_{s}\right)-2\mu[0,s)u_{xx}^{3}\left(s,X_{s}\right)\right].
\end{align*}
\end{lemma}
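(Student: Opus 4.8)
The plan is to derive each identity by applying It\^o's formula to $\phi(t,X_t)$ with $\phi=u_x^2$ and $\phi=u_{xx}^2$, exploiting the fact recorded in Sect.\ \ref{sec:appendixPDE} that $u_x$ and $u_{xx}$ are themselves weak solutions of the differentiated Parisi PDE. Concretely, for $\phi$ which is twice continuously differentiable in $x$ and absolutely continuous in $t$ with bounded derivatives, It\^o's lemma for the diffusion $X_t$, whose infinitesimal generator is $\cL_{t,\mu}=\frac{\xi''(t)}{2}(\partial_{xx}+2\mu[0,t]u_x(t,x)\partial_x)$, reads
\[
d\phi(t,X_t)=\left(\partial_t\phi+\cL_{t,\mu}\phi\right)(t,X_t)\,dt+\sqrt{\xi''(t)}\,\partial_x\phi(t,X_t)\,dW_t.
\]
Since $\partial_x\phi$ is bounded and $\xi''$ is bounded on $[0,1]$, the stochastic integral is a genuine square-integrable martingale, so that taking $\E_h$ and using that $X_0=h$ is deterministic yields both $\frac{d}{ds}\E_h[\phi(s,X_s)]=\E_h[(\partial_t\phi+\cL_{t,\mu}\phi)(s,X_s)]$ and, after integration, its integral form.

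Next I would carry out the elementary drift computation. Differentiating the Parisi PDE once and twice in $x$ gives $\partial_t u_x=-\frac{\xi''}{2}(u_{xxx}+2\mu[0,t]u_xu_{xx})$ and $\partial_t u_{xx}=-\frac{\xi''}{2}(u_{xxxx}+2\mu[0,t](u_{xx}^2+u_xu_{xxx}))$. For $\phi=u_x^2$, using $\partial_x(u_x^2)=2u_xu_{xx}$ and $\partial_{xx}(u_x^2)=2u_{xx}^2+2u_xu_{xxx}$, the terms $u_xu_{xxx}$ and $\mu[0,t]u_x^2u_{xx}$ produced by $\partial_t\phi$ cancel exactly against those in $\cL_{t,\mu}\phi$, leaving $\partial_t(u_x^2)+\cL_{t,\mu}(u_x^2)=\xi''u_{xx}^2$. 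For $\phi=u_{xx}^2$, the same bookkeeping with $\partial_x(u_{xx}^2)=2u_{xx}u_{xxx}$ and $\partial_{xx}(u_{xx}^2)=2u_{xxx}^2+2u_{xx}u_{xxxx}$ gives $\partial_t(u_{xx}^2)+\cL_{t,\mu}(u_{xx}^2)=\xi''(u_{xxx}^2-2\mu[0,t]u_{xx}^3)$. Feeding these into the identity of the previous paragraph produces the third and fourth displayed identities, and integrating from $0$ to $s$ produces the first two, with boundary terms $u_x^2(0,h)$ and $u_{xx}^2(0,h)$.

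For the one-sided derivatives I would simply note that $t\mapsto\mu[0,t]$ is non-decreasing, hence has one-sided limits at every point, its right limit being $\mu[0,s]$ and its left limit $\mu[0,s)$; since the $x$-derivatives of $u$ that appear are continuous and bounded and $\xi''$ is continuous, the integrand $s\mapsto\xi''(s)\E_h[u_{xxx}^2(s,X_s)-2\mu[0,s]u_{xx}^3(s,X_s)]$ is right-continuous while its modification with $\mu[0,s)$ is left-continuous, so the second integral identity differentiates from the right and from the left with the stated values. (The derivative of the $u_x^2$ integral is genuinely two-sided since its integrand $\xi''(s)\E_h[u_{xx}^2(s,X_s)]$ is continuous, by dominated convergence and continuity of $s\mapsto X_s$.)

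The one genuinely delicate point is the legitimacy of It\^o's formula, since $u$ is only a weak solution: by Sect.\ \ref{sec:appendixPDE} the functions $u_x,u_{xx},u_{xxx}$ are continuous and bounded, but $\partial_t u$ is merely in $L^\infty$, so $u_x^2$ and $u_{xx}^2$ are not classically $C^{1,2}$. I expect this to be the main (though routine) obstacle, and would handle it by approximation: choose finitely supported $\mu_n\to\mu$ weakly, for which the Cole-Hopf formula gives smooth classical solutions $u^{(n)}$ to which the clean computation above applies verbatim, then pass $n\to\infty$ using the Lipschitz-in-$\mu$ bounds of Proposition \ref{prop:Lipmuu} and their higher-derivative analogues from Sect.\ \ref{sec:appendixPDE}, together with the continuous dependence of the law of $X_t$ on $\mu$. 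Everything else reduces to the algebraic cancellations displayed above.
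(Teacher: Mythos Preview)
Your approach is exactly what the paper intends: it states that ``Their proofs are standard applications of It\^o's lemma (see \cite{stroock1979multidimensional}) so they are omitted,'' and you have correctly supplied those details, including the drift cancellations and the approximation argument to handle the weak regularity of $u$ in time. Nothing further is needed.
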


\subsection{A change of measure formula} \label{sec:girsanov}
\begin{lemma}
Fix a measurable space $(\Omega,\cF)$. Let $Q$ be a probability measure such that $X_t$ solves (\ref{eq:AC-SDE}).
Then, there is a unique probability measure $P$ with
\[
R(t)= \frac{dQ}{dP}= \exp\left[ \int _0^{t} \mu[0,s]\, du\left( s,X_s\right)\right].
\]
Moreover, $X_t$ is distributed like $Y_t$ with respect to $P$, where $Y_t$ solves $dY_t = \sqrt{\xi''(t)}dW_t$ and $W_t$ is 
a standard Brownian motion with respect to $P$.
\end{lemma}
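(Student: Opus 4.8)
The plan is to read the identity as a consequence of Girsanov's theorem, with the weight $R(t)$ identified through It\^o's lemma applied to $u(t,X_t)$ together with the Parisi PDE. First I would introduce the drift coefficient
\[
\theta(t,x)=\sqrt{\xi''(t)}\,\mu[0,t]\,u_x(t,x),
\]
which is bounded on $[0,1]\times\R$ because $\xi''$ is bounded there, $\mu[0,t]\le 1$, and $\abs{u_x}<1$ by Sect.\ \ref{sec:appendixPDE}. Under $Q$ the Auffinger--Chen SDE \eqref{eq:AC-SDE} reads $dX_t=\sqrt{\xi''(t)}\big(\theta(t,X_t)\,dt+dW_t\big)$. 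Since $\theta$ is bounded, Novikov's condition holds trivially on the horizon $[0,1]$, so
\[
D_t:=\exp\!\left(-\int_0^t\theta(s,X_s)\,dW_s-\tfrac12\int_0^t\theta^2(s,X_s)\,ds\right)
\]
is a genuine $Q$-martingale with $D_0=1$. Declaring $dP/dQ=D_t$ on $\cF_t$ is consistent by the martingale property and hence determines a unique probability measure $P$, and $dQ/dP=D_t^{-1}$ on $\cF_t$; this is the candidate for $R(t)$.

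Next I would invoke Girsanov's theorem: under $P$ the process $\tilde W_t=W_t+\int_0^t\theta(s,X_s)\,ds$ is a Brownian motion, so rewriting the SDE gives $dX_t=\sqrt{\xi''(t)}\,d\tilde W_t$ under $P$. Because $X_0=h$ is $\cF_0$-measurable and the change of measure leaves $\cF_0$ unchanged, it follows that under $P$ the process $(X_t)$ has exactly the (explicit Gaussian) law of $Y_t=h+\int_0^t\sqrt{\xi''(s)}\,dW_s$, which is the second assertion. Reverting the logarithm, $\log R(t)=\log D_t^{-1}=\int_0^t\theta(s,X_s)\,dW_s+\tfrac12\int_0^t\theta^2(s,X_s)\,ds$.

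It remains to match this with $\int_0^t\mu[0,s]\,du(s,X_s)$, and this is where \eqref{eq:PPDE} enters. I would apply It\^o's lemma to $s\mapsto u(s,X_s)$ to get
\[
du(s,X_s)=\Big(u_t+\tfrac{\xi''}{2}u_{xx}+\xi''\mu[0,s]u_x^2\Big)(s,X_s)\,ds+\sqrt{\xi''(s)}\,u_x(s,X_s)\,dW_s .
\]
By the Parisi PDE, $u_t+\tfrac{\xi''}{2}u_{xx}=-\tfrac{\xi''}{2}\mu[0,s]u_x^2$ a.e., so the drift collapses to $\tfrac{\xi''}{2}\mu[0,s]u_x^2$; multiplying by $\mu[0,s]$ and integrating,
\[
\int_0^t\mu[0,s]\,du(s,X_s)=\tfrac12\int_0^t\xi''(s)\,\mu[0,s]^2u_x^2(s,X_s)\,ds+\int_0^t\sqrt{\xi''(s)}\,\mu[0,s]u_x(s,X_s)\,dW_s,
\]
and since $\theta=\sqrt{\xi''}\,\mu[0,\cdot]\,u_x$ the right-hand side equals $\tfrac12\int_0^t\theta^2\,ds+\int_0^t\theta\,dW_s=\log R(t)$, as claimed. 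I expect the only real — and mild — obstacle to be the time-regularity of $u$ needed to justify the It\^o step: by Sect.\ \ref{sec:appendixPDE} the derivative $u_t$ is only essentially bounded, so the It\^o formula has to be applied in generalized form, e.g.\ by mollifying $u$ in the time variable and passing to the limit, which is legitimate since $u_t=-\tfrac{\xi''}{2}(u_{xx}+\mu[0,t]u_x^2)$ holds a.e.\ with a bounded, spatially continuous right-hand side. Everything else is routine once $\theta$ is seen to be bounded.
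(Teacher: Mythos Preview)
Your proof is correct and follows essentially the same route as the paper's: apply Girsanov's theorem to remove the drift, then use It\^o's lemma on $u(s,X_s)$ together with the Parisi PDE to identify the Cameron--Martin--Girsanov exponential as $\exp\bigl[\int_0^t\mu[0,s]\,du(s,X_s)\bigr]$. The only cosmetic difference is that the paper computes the It\^o differential of $u$ along the driftless process $Y_t$ under $P$ rather than along $X_t$ under $Q$; you are also more explicit about Novikov's condition and the time-regularity needed for the It\^o step, points the paper leaves implicit.
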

\begin{proof}
We apply Girsanov's theorem (see \cite[\lemmaname~6.4.]{stroock1979multidimensional}) directly. In particular, in the notation of the reference, if let
\[
c(t) = \mu[0,t]u_x(t,Y_t),
\]
$a(t)=\xi''(t)$, and $b=0$, we see that the Cameron-Martin-Girsanov exponential is of the form
\[
R(t)=\exp\left[\int_0^{t}\mu[0,t]u_x(t,Y_t)dY_t-\frac{1}{2}\int_0^{t} \xi''(t)\mu[0,t]^2u_x(t,Y_t)^2dt\right].
\]
Since $u$ solves the Parisi PDE, we see that its It\^o differential with respect to $dY_t$ is 
\[
du(t,Y_t)=-\frac{\xi''}{2}\mu u_x^2 dt + u_x dY_t.
\]
The result then follows by rearrangement.
\end{proof}
\begin{lemma}
We get the integration by parts formula: 
\[
\int_{0}^{t}\nu[0,s]du(t,Y_{t})=\int_{0}^{t}u(t,Y_{t})-u(s,Y_{s})d\nu(s)
\]
for $\nu$ a probability measure on $[0,1]$ .\end{lemma}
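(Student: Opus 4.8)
The plan is to reduce this to the stochastic Fubini theorem; throughout, the stochastic integral on the left is read as $\int_0^t\nu[0,s]\,du(s,Y_s)$ with $s$ the integration variable. First I would record, via It\^o's lemma applied to $u(s,Y_s)$ along $dY_s=\sqrt{\xi''(s)}\,dW_s$ together with the Parisi PDE, that
\[
du(s,Y_s)=-\tfrac{\xi''(s)}{2}\mu[0,s]u_x^2(s,Y_s)\,ds+\sqrt{\xi''(s)}\,u_x(s,Y_s)\,dW_s;
\]
by the regularity of Sect.\ \ref{sec:appendixPDE} (in particular $\abs{u_x}<1$) and the continuity of $\xi''$ on $[0,1]$, both the drift and the diffusion coefficient here are bounded adapted processes on $[0,1]$.

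Next I would write $\nu[0,s]=\int_{[0,1]}\indicator{\{r\le s\}}\,d\nu(r)$ and substitute, so that
\[
\int_0^t\nu[0,s]\,du(s,Y_s)=\int_0^t\left(\int_{[0,1]}\indicator{\{r\le s\}}\,d\nu(r)\right)du(s,Y_s).
\]
Since $\nu$ is a probability measure and the coefficients of $du(s,Y_s)$ are bounded, the hypotheses of the stochastic Fubini theorem are met: for the $ds$-part this is ordinary Fubini, and for the $dW_s$-part the integrand $s\mapsto\indicator{\{r\le s\}}\sqrt{\xi''(s)}\,u_x(s,Y_s)$ has $L^2$-norm bounded uniformly in $r$ by $\big(t\sup_{[0,1]}\xi''\big)^{1/2}$, so $\int_{[0,1]}\norm{\cdot}_{L^2}\,d\nu(r)<\infty$. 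Interchanging the order of integration and evaluating the inner integral by $\int_0^t\indicator{\{r\le s\}}\,du(s,Y_s)=\int_{r\wedge t}^t du(s,Y_s)=u(t,Y_t)-u(r\wedge t,Y_{r\wedge t})$ — which vanishes when $r>t$ — then yields
\[
\int_0^t\nu[0,s]\,du(s,Y_s)=\int_{[0,t]}\big(u(t,Y_t)-u(r,Y_r)\big)\,d\nu(r),
\]
which is the claimed formula.

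The only step that requires real care is the invocation of the stochastic Fubini theorem, but given the uniform bounds of Sect.\ \ref{sec:appendixPDE} this is routine, so I do not expect a genuine obstacle. Should one wish to avoid stochastic Fubini, an alternative is to verify the identity first for finitely supported $\nu=\sum_i c_i\delta_{r_i}$, where it reduces to the telescoping relation $\sum_i c_i\big(u(t,Y_t)-u(r_i\wedge t,Y_{r_i\wedge t})\big)=\sum_i c_i\int_0^t\indicator{\{r_i\le s\}}\,du(s,Y_s)$ together with linearity of the It\^o integral, and then to pass to a weak limit in $\nu$, using dominated convergence and the It\^o isometry to control the left-hand side and path-continuity of $s\mapsto u(s,Y_s)$ for the right-hand side.
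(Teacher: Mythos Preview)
Your proof is correct. The paper itself states this lemma without proof (it passes directly to Corollary \ref{cor:girsanov}), so there is nothing to compare against; what you have written is a clean way to fill in the omitted argument. Your reading of the left-hand side as $\int_0^t\nu[0,s]\,du(s,Y_s)$ is the intended one, and the reduction to stochastic Fubini via $\nu[0,s]=\int_{[0,1]}\indicator{\{r\le s\}}\,d\nu(r)$ is exactly the right move. The boundedness of the integrands that you invoke from Sect.~\ref{sec:appendixPDE} is precisely what makes the hypotheses of stochastic Fubini immediate, and your handling of the case $r>t$ is correct. The alternative route you sketch (atomic $\nu$ plus a limiting argument) would also work and is arguably what the authors had in mind given how routine they evidently consider the result.
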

\begin{corollary}\label{cor:girsanov}
We have
\[
R(t)=e^{\int_{0}^{t}u(t,Y_{t})-u(q,Y_{q})d\mu(q)}.
\]
In particular if $\mu =\delta_q$, we have
\[
R(t)=\frac{\sech(Y_q)}{\sech(Y_t)}e^{- \frac{1}{2}(\xi'(t)-\xi'(q))},\quad t\geq q.
\]
\end{corollary}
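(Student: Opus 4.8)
The plan is to deduce both identities by combining the change-of-measure lemma with the Stieltjes integration-by-parts identity, i.e.\ the two lemmas immediately preceding the corollary. First I would recall that the Girsanov lemma gives
$R(t)=\exp\!\left[\int_0^t\mu[0,s]\,du(s,Y_s)\right]$,
where $Y_t$ solves $dY_t=\sqrt{\xi''(t)}\,dW_t$ under $P$ and $du(s,Y_s)$ denotes the It\^o differential of the Parisi solution $u$ (for the measure $\mu$) along $Y$. Applying the integration-by-parts lemma with $\nu=\mu$ — i.e.\ integrating by parts the bounded-variation factor $s\mapsto\mu[0,s]$ against the continuous semimartingale $s\mapsto u(s,Y_s)$, with $\mu[0,0^-]=0$ and no covariation correction since $\mu[0,\cdot]$ has only deterministic jump times — rewrites the exponent as $\int_0^t\bigl(u(t,Y_t)-u(s,Y_s)\bigr)\,d\mu(s)$. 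Exponentiating gives the first identity.

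For the second identity I would take $\mu=\delta_q$, so that for $t\ge q$ the integral against $\delta_q$ collapses to $u(t,Y_t)-u(q,Y_q)$. Since $q=\sup\operatorname{supp}\delta_q$, Fact \ref{fact:explicitsolns} applies (at the boundary of its hypothesis) and gives $u(s,x)=\log\cosh x+\tfrac12\bigl(\xi'(1)-\xi'(s)\bigr)$ for all $s\ge q$, in particular at $s=t$ and $s=q$. Subtracting these two evaluations,
$u(t,Y_t)-u(q,Y_q)=\log\dfrac{\cosh Y_t}{\cosh Y_q}-\tfrac12\bigl(\xi'(t)-\xi'(q)\bigr)$,
and exponentiating while rewriting $\cosh Y_t/\cosh Y_q=\sech(Y_q)/\sech(Y_t)$ yields exactly $R(t)=\dfrac{\sech(Y_q)}{\sech(Y_t)}e^{-\frac12(\xi'(t)-\xi'(q))}$ for $t\ge q$.

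Since the corollary is essentially a direct combination of results already in hand, I do not expect a substantive obstacle; the only points meriting a line of justification are the vanishing of the covariation term in the integration by parts when $\mu$ carries atoms (handled by the deterministic-jump-times observation above), and the fact that Fact \ref{fact:explicitsolns} is invoked precisely at $s=\sup\operatorname{supp}\mu$, which its hypothesis permits.
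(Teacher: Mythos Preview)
Your proof is correct and follows exactly the route the paper intends: the corollary is stated without proof because it is meant to be an immediate combination of the Girsanov lemma and the integration-by-parts lemma that precede it, together with Fact~\ref{fact:explicitsolns} for the explicit form when $\mu=\delta_q$. Your added remarks on the covariation term and the boundary case $s=\sup\operatorname{supp}\mu$ are careful touches that the paper simply leaves implicit.
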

For the reader more familiar with the work of \cite{TalBK11,TalBK11vol2}, we would like to demonstrate that this Girsanov argument also allows one to translate between that work, the work of Auffinger and Chen in \cite{AuffChen13,AuffChen14}, and the authors in \cite{JagTobSC15}. For example, in the notation of \cite[Chap. 13]{TalBK11vol2} the function $g$ defined in \eqref{eq:g-def}, $g'$, and the main family of integrals studied in Sect. \ref{sec:long-time} can also be written as follows. Let $z,z'$ be standard Gaussians, let $Y=h+\xi'(q_*)^{1/2}z$, $Y'=Y+(\xi(t)-\xi(q_*))^{1/2}z'$
and let $\E$ and $\E'$ denote the expectations with respect to $z$ and $z'$ respectively.
Then
\begin{equation}\label{eq:g-def-alt}
\begin{aligned}
g(y) &= \E\frac{\E'\left(\tanh^2(Y')\cosh(Y'))\right)}{\E_{z'}\cosh\left(Y'\right)}\\
g'(y) &= \xi''(t)\E\frac{\E'\sech^4(Y')\cosh(Y')}{\E'\cosh(Y')}\\
\E 4\sech^4(X_t)-6\sech^6(X_t)&=4\E \frac{\E'\sech^3(Y')}{\E'\cosh(Y')}-6\E\frac{\E'\sech^5(Y')}{\E'\cosh(Y')}
\end{aligned}
\end{equation}

In particular, the reader will observe that, judiciously applied, this Girsanov argument can be seen to relate the representation for these functions obtained through the dynamic programming principle and the Cole-Hopf formula.

\subsection{Asymptotic spectral theory for certain operators} \label{sec:spectral}
In this section we prove Lemma \ref{lem:spec-lem}. We begin with some preliminary estimates. Then the lemma 
is proved at the end of this section.  The definitions from Sect. \ref{sec:setup} will be 
used throughout this section.

We observe the following fact from calculus that will be used repeatedly in the subsequent. 

\begin{fact}\label{fact:calcfact}
We have that
\[
0\leq\sqrt{x+1}-1\leq\frac{1}{2}x,\quad x\geq0.
\]
\end{fact}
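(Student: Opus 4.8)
The plan is to verify the two inequalities $0\le\sqrt{x+1}-1$ and $\sqrt{x+1}-1\le\tfrac12 x$ separately, each by a one-line elementary argument. For the lower bound I would note that $x\ge0$ gives $x+1\ge1$, and since $t\mapsto\sqrt t$ is monotone increasing this yields $\sqrt{x+1}\ge\sqrt1=1$, i.e.\ $\sqrt{x+1}-1\ge0$.

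For the upper bound I would rewrite the claim as $\sqrt{x+1}\le 1+\tfrac{x}{2}$ and observe that on $x\ge0$ both sides are nonnegative, so the inequality is equivalent to the inequality obtained by squaring, namely $x+1\le 1+x+\tfrac{x^2}{4}$, which simplifies to $0\le\tfrac{x^2}{4}$ and is therefore always true. Rearranging $\sqrt{x+1}\le 1+\tfrac{x}{2}$ gives $\sqrt{x+1}-1\le\tfrac12 x$, completing the proof.

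An alternative route, if one prefers to avoid squaring, is to use the concavity of $t\mapsto\sqrt t$ on $[0,\infty)$: the graph lies below its tangent line at $t=1$, which is $t\mapsto 1+\tfrac12(t-1)$, and evaluating this at $t=x+1$ gives $\sqrt{x+1}\le 1+\tfrac{x}{2}$ directly. Either way, the argument is a routine computation, so there is no substantive obstacle; the only point requiring a word of care is noting, in the squaring argument, that the step is reversible precisely because both sides are nonnegative for $x\ge0$.
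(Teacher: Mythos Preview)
Your proof is correct. The paper does not actually supply a proof of this fact; it simply records it as ``a fact from calculus'' to be used repeatedly, so your elementary verification (monotonicity for the lower bound, squaring or concavity for the upper bound) is entirely adequate and in the spirit of what the authors intend.
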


We will also use the following bound frequently: for $(\beta,h,\tau)\in T_{\tilde{\alpha},\tau}$,
\begin{align}\label{eq:abound}
\abs{\frac{a-1}{2}}&=\abs{\frac{\sigma(t)-\sigma(q_{*})}{2\sigma(q_{*})}}
=\frac{\tau}{2}\abs{\frac{\sigma(1)-\sigma(q_{*})}{\sigma(q_{*})}}
\leq\frac{\tau}{2}\frac{\sigma'(1)}{\sigma(q_*)\sigma'(q_*)}\sigma'(q_{*})\abs{1-q_{*}}
\leq\frac{1}{\beta^{2}}C_{1}(\tilde{\alpha},\beta,q_*;\tau).
\end{align}

The notation $\norm{\cdot}$ will refer to the $\ell_2$-norm throughout.

\subsubsection{Estimates on the eigenvectors}

Our goal will be to show 
\begin{lemma} \label{lem:eigenvectorbds}
For $(\beta,h,\tau)\in T_{\tilde{\alpha},\tau}$, we have that
\[
\abs{\left\langle w_1,v_{2}\right\rangle }=\abs{\left\langle w_2,v_{1}\right\rangle }\leq\frac{C_{1}(\tilde{\alpha},\beta,q_*;\tau)}{\sqrt{2}\beta^{2}}\left(1+\frac{C_{1}(\tilde{\alpha},\beta,q_*;\tau)}{2\beta^{2}}\right).
\]
\end{lemma}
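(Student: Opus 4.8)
The goal is to bound $\abs{\langle w_1,v_2\rangle}=\abs{\langle w_2,v_1\rangle}$, where the equality follows because $\{v_1,v_2\}$ and $\{w_1,w_2\}$ are both orthonormal bases of $\R^2$ obtained by rotating the standard basis, so the change-of-basis matrix between them is a rotation and its off-diagonal entries coincide in absolute value. The plan is to compute the eigenvectors $v_i$ and the limiting frame $w_i$ explicitly from \eqref{eq:eigenvectors} and the definitions $\tilde w_1=(-1,1)$, $\tilde w_2=(1,1)$ in Sect.\ \ref{sec:setup}, and then estimate directly using the key bound \eqref{eq:abound} on $\abs{(a-1)/2}$.

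First I would set $s=(a-1)/2$ for brevity, so that from \eqref{eq:eigenvectors} the unnormalized eigenvector is $\tilde v_1=\bigl(-s-\sqrt{s^2+1},\,1\bigr)$, and $\tilde w_2 = (1,1)$. Compute the inner product of the unnormalized vectors: $\langle \tilde v_1,\tilde w_2\rangle = -s-\sqrt{s^2+1}+1 = 1-\sqrt{s^2+1}-s$. By Fact \ref{fact:calcfact}, $0\le \sqrt{s^2+1}-1\le \tfrac12 s^2$, so $\abs{1-\sqrt{s^2+1}}\le \tfrac12 s^2$, giving $\abs{\langle \tilde v_1,\tilde w_2\rangle}\le \abs{s}+\tfrac12 s^2 = \abs{s}\bigl(1+\tfrac12\abs{s}\bigr)$. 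Next I need to divide by the norms: $\norm{\tilde w_2}=\sqrt2$ and $\norm{\tilde v_1}=\sqrt{(s+\sqrt{s^2+1})^2+1}\ge 1$ (in fact $\ge 1$ trivially since the second coordinate is $1$). Hence
\[
\abs{\langle v_1,w_2\rangle}=\frac{\abs{\langle\tilde v_1,\tilde w_2\rangle}}{\norm{\tilde v_1}\norm{\tilde w_2}}\le \frac{1}{\sqrt2}\,\abs{s}\Bigl(1+\tfrac12\abs{s}\Bigr).
\]
Finally, substitute the bound $\abs{s}=\abs{(a-1)/2}\le C_1(\tilde\alpha,\beta,q_*;\tau)/\beta^2$ from \eqref{eq:abound}, using monotonicity of the bracketed factor in $\abs{s}$, to obtain exactly
\[
\abs{\langle w_1,v_2\rangle}=\abs{\langle w_2,v_1\rangle}\le \frac{C_1(\tilde\alpha,\beta,q_*;\tau)}{\sqrt2\,\beta^2}\left(1+\frac{C_1(\tilde\alpha,\beta,q_*;\tau)}{2\beta^2}\right).
\]

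The main obstacle, such as it is, is bookkeeping rather than depth: one must be careful that the asserted equality $\abs{\langle w_1,v_2\rangle}=\abs{\langle w_2,v_1\rangle}$ is justified (orthogonality of the two frames), and that the norm $\norm{\tilde v_1}\ge 1$ bound is used in the right direction so that dividing only helps. One should also double-check the sign conventions in \eqref{eq:eigenvectors} so that $v_1\to w_1$ and $v_2\to w_2$ as $s\to 0$ (i.e.\ that $\tilde v_1$ limits to a multiple of $\tilde w_1=(-1,1)$, which it does: at $s=0$, $\tilde v_1=(-1,1)$); this confirms we are pairing $v_1$ with $w_1$ and $v_2$ with $w_2$ consistently. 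No new ideas beyond Fact \ref{fact:calcfact} and \eqref{eq:abound} are needed.
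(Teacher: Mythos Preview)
Your proof is correct and follows essentially the same approach as the paper: compute $\langle\tilde v_1,\tilde w_2\rangle$, apply Fact~\ref{fact:calcfact} to bound $\abs{1-\sqrt{1+s^2}}$, use $\norm{\tilde v_1}\ge 1$ and $\norm{\tilde w_2}=\sqrt{2}$, and plug in \eqref{eq:abound}. Your introduction of the shorthand $s=(a-1)/2$ and your explicit justification of the equality $\abs{\langle w_1,v_2\rangle}=\abs{\langle w_2,v_1\rangle}$ are minor presentational additions, but the argument is the same.
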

\begin{proof}
Since $\norm{\tilde{v}_{1}}\geq1$,
\begin{align*}
\abs{\left\langle w_2,v_{1}\right\rangle } & \leq\frac{1}{\sqrt{2}}\abs{\left\langle \tilde{w_2},\tilde{v}_{1}\right\rangle }
 =\frac{1}{\sqrt{2}}\abs{1+\frac{1}{2}\left(-\left(a-1\right)-\sqrt{\left(a-1\right)^{2}+4}\right)}
  =\frac{1}{\sqrt{2}}\abs{(1-\sqrt{1+\frac{\left(a-1\right)^{2}}{4}})-\frac{a-1}{2}}\\
 &\leq\frac{1}{\sqrt{2}}\left(\abs{\frac{a-1}{2}}+\abs{1-\sqrt{1+\frac{\left(a-1\right)^{2}}{4}}}\right)
 \leq\frac{1}{\sqrt{2}}\left(\abs{\frac{a-1}{2}}+\frac{1}{2}\abs{\frac{a-1}{2}}^{2}\right)\\
 &= \frac{1}{\sqrt{2}}\abs{\frac{a-1}{2}}\left(1+\frac{1}{2}\abs{\frac{a-1}{2}}\right).
\end{align*}

By \eqref{eq:abound}, 
\begin{equation*}
\abs{\left\langle w_2,v_{1}\right\rangle }  \leq\frac{1}{\sqrt{2}}\frac{1}{\beta^{2}}C_{1}\left(1+\frac{1}{2\beta^{2}}C_{1}\right).
\end{equation*}
\end{proof}

\subsubsection{Estimates on the eigenvalues}

We will prove
\begin{lemma}\label{lem:eigenvaluebds-1}
For $(\beta,h,\tau)\in T_{\tilde{\alpha},\tau}$, we have that 
\begin{align*}
\abs{\lambda_{1}-\nu}\leq\frac{1}{\beta}C_2(\tilde{\alpha},\beta,q_*;\tau)\qquad\text{and}\qquad
\abs{\frac{1}{\lambda_{2}^{1/2}}}&\leq\frac{1}{\beta\sqrt{2\sigma(q_{*})}}.
\end{align*}
\end{lemma}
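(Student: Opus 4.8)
The plan is to prove the two inequalities separately; the bound on $1/\lambda_2^{1/2}$ is immediate, while the bound on $\lambda_1$ is where the work lies. Since $t\geq q_*$ and $\sigma=\xi'_0$ is non-decreasing, $a=\sigma(t)/\sigma(q_*)\geq 1$, so $a-1\geq 0$ and hence by \eqref{eq:eigenvalues} $\tilde{\lambda}_2=\tfrac12\big(2+(a-1)+\sqrt{(a-1)^2+4}\big)\geq\tfrac12(2+2)=2$. As $\lambda_2=\beta^2\sigma(q_*)\tilde{\lambda}_2$, this gives $\lambda_2\geq 2\beta^2\sigma(q_*)$ and therefore $1/\lambda_2^{1/2}\leq 1/(\beta\sqrt{2\sigma(q_*)})$.

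For $\lambda_1$, set $\epsilon=(a-1)/2\geq 0$, so by \eqref{eq:eigenvalues} $\tilde{\lambda}_1=1+\epsilon-\sqrt{1+\epsilon^2}=\epsilon-(\sqrt{1+\epsilon^2}-1)$ and $\lambda_1=\beta^2\sigma(q_*)\tilde{\lambda}_1$. By Fact \ref{fact:calcfact}, $0\leq\sqrt{1+\epsilon^2}-1\leq\epsilon^2/2$, so the ``quadratic'' contribution to $\lambda_1$ is at most $\tfrac12\beta^2\sigma(q_*)\epsilon^2$, which by \eqref{eq:abound} (giving $|\epsilon|\leq C_1(\tilde{\alpha},\beta,q_*;\tau)/\beta^2$) together with $\sigma(q_*)\leq\sigma(1)$ is at most $\sigma(1)C_1(\tilde{\alpha},\beta,q_*;\tau)^2/(2\beta^2)$ --- exactly the third summand of $\tfrac1\beta C_2$. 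The ``linear'' part equals, by the definition of $a$ and of $\tau$ in \eqref{eq:taudefn}, $\beta^2\sigma(q_*)\epsilon=\tfrac12\beta^2(\sigma(t)-\sigma(q_*))=\tfrac{\tau}{2}\beta^2(\sigma(1)-\sigma(q_*))=\tfrac{\tau}{2}(\xi'(1)-\xi'(q_*))$, using $\xi'=\beta^2\sigma$.

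It then remains to compare $\tfrac{\tau}{2}(\xi'(1)-\xi'(q_*))$ with $\nu=\tfrac34\tilde{\alpha}\tau$ (see \eqref{eq:nu}). I would write $\xi'(1)-\xi'(q_*)=\xi''(q_*)(1-q_*)+R$ with $R=\int_{q_*}^1(\xi''(s)-\xi''(q_*))\,ds$; since $\xi'''\geq 0$ is non-decreasing, $0\leq R\leq\tfrac12\xi'''(1)(1-q_*)^2=\tfrac12\beta^2\sigma''(1)(1-q_*)^2$. Corollary \ref{cor:q-lim} (applicable since $h\geq h_0>0$ on $T_{\tilde{\alpha},\tau}$ and $\alpha=\tilde{\alpha}$ there) gives $\big|\xi''(q_*)(1-q_*)-\tfrac32\tilde{\alpha}\big|\leq\Lambda_0\xi''_0(q_*)/(\beta(\xi'_0(q_*))^{3/2})\leq\Lambda_0\sigma'(1)/(\beta(\sigma(q_*))^{3/2})$ (using $\xi''_0(q_*)\leq\xi''_0(1)=\sigma'(1)$); multiplied by $\tau/2$ this is the first summand of $\tfrac1\beta C_2$. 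The same corollary also yields the a priori bound $1-q_*\leq C_0(\tilde{\alpha},\beta,q_*)/(\beta^2\xi''_0(q_*))$ (as $\xi''(q_*)=\beta^2\xi''_0(q_*)$); feeding this into the bound for $R$ produces the remaining ($\sigma''(1)$) summand of $\tfrac1\beta C_2$. Summing the three error contributions via the triangle inequality gives $|\lambda_1-\nu|\leq\tfrac1\beta C_2(\tilde{\alpha},\beta,q_*;\tau)$, and Lemma \ref{lem:spec-lem} then follows by replacing $(\tilde{\alpha},q_*,\tau)$ with $(1,q_0,1)$ and invoking the monotonicity of $C_0,C_1,C_2$.

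I expect the last step to be the main obstacle: the three error pieces --- the quadratic correction to $\tilde{\lambda}_1$, the $O(1/\beta)$ error in Corollary \ref{cor:q-lim}, and the Taylor remainder $R$ --- must be packaged so that their sum is \emph{exactly} the prescribed $C_2(\tilde{\alpha},\beta,q_*;\tau)$ rather than a larger expression. The remainder $R$ is the delicate term, since controlling $\beta^2(1-q_*)^2$ with the correct constant hinges on a careful application of the a priori lower bound on $q_*$ coming from Corollary \ref{cor:q-lim}; everything else is routine monotonicity and the elementary inequalities of Fact \ref{fact:calcfact}.
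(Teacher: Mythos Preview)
Your proposal is correct and follows essentially the same route as the paper. The paper also dispatches the $\lambda_2$ bound via $\tilde{\lambda}_2\geq 2$, then splits $|\lambda_1-\nu|$ into the same two pieces: a quadratic correction $\beta^2\sigma(q_*)\,|\tilde{\lambda}_1-(a-1)/2|$ handled by Fact~\ref{fact:calcfact} and \eqref{eq:abound}, and a linear piece $|\beta^2\sigma(q_*)\tfrac{a-1}{2}-\tfrac34\tilde{\alpha}\tau|$ handled by Corollary~\ref{cor:q-lim} plus a second-order remainder. The only cosmetic difference is that where you write $\xi'(1)-\xi'(q_*)=\xi''(q_*)(1-q_*)+\int_{q_*}^1(\xi''(s)-\xi''(q_*))\,ds$, the paper uses the mean value theorem $\sigma(1)-\sigma(q_*)=\sigma'(c)(1-q_*)$ and then splits off $\sigma'(c)-\sigma'(q_*)$; both lead to a remainder bounded by $\sigma''(1)(1-q_*)^2$ up to a constant, and both feed in $1-q_*\leq C_0/(\beta^2\sigma'(q_*))$ from Corollary~\ref{cor:q-lim}. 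Your closing remark about the bookkeeping being the main obstacle is apt and matches how the paper proceeds.
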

\begin{proof}
The second estimate follows from the fact that since $a\geq1$,
so that 
\begin{align*}
\tilde{\lambda}_{2} & =\frac{1}{2}\left(2+\left(a-1\right)+\sqrt{\left(a-1\right)^{2}+4}\right)
 \geq\frac{1}{2}\left(2+\sqrt{4}\right)=2.
\end{align*}

Now we prove the first estimate. By the triangle inequality,
\begin{align*}
\abs{\lambda_{1}-\nu} & \leq\beta^{2}\sigma(q_{*})\abs{\tilde{\lambda}_{1}-\frac{a-1}{2}}+\abs{\beta^{2}\sigma(q_{*})\frac{a-1}{2}-\frac{3}{4}\tilde{\alpha}\tau}=(i)+(ii).
\end{align*}
Since
\[
\tilde{\lambda}_{1}=\frac{1}{2}\left(2+\left(a-1\right)-\sqrt{\left(a-1\right)^{2}+4}\right) =\frac{\left(a-1\right)}{2}+\left(1-\sqrt{\frac{\left(a-1\right)^{2}}{4}+1}\right),
\]
we see that by Fact \ref{fact:calcfact},
\[
 \abs{\tilde{\lambda_1} - \frac{a-1}{2} } = \abs{1-\sqrt{1+\frac{(a-1)^2}{4}}} \leq \frac{1}{2}\abs{\frac{a-1}{2}}^2.
\]
Hence by \eqref{eq:abound} and the fact that $\sigma$ is non-decreasing,
\[
(i)  \leq\beta^{2}\sigma(q_{*})\frac{1}{2}\abs{\frac{a-1}{2}}^2 
    \leq\frac{\sigma(1)}{2\beta^{2}}C_{1}(\tilde{\alpha},\beta,q_*;\tau)^2.
\]

Now to study $(ii)$. For some $c(q_*)\in(q_*,1)$ we have that
\begin{align*}
(ii) & =\frac{\tau}{2}\abs{\beta^{2}(\sigma(1)-\sigma(q_{*}))-\frac{3}{2}\tilde{\alpha}}
  =\frac{\tau}{2}\abs{\beta^{2}\sigma'(c(q_{*}))(1-q_{*})-\frac{3}{2}\tilde{\alpha}}\\
 & \leq\frac{\tau}{2}\abs{\sigma'(c(q_{*}))-\sigma'(q_{*})}\beta^{2}(1-q_{*})+\frac{\tau}{2}\abs{\xi''(q_{*})\left(1-q_{*}\right)-\frac{3}{2}\tilde{\alpha}} =(iii)+(iv).
\end{align*}
We already know a bound on $(iv)$ by Corollary \ref{cor:q-lim}, so it remains to bound $(iii)$. 
Since $\sigma''$ is non-decreasing,
\[
\abs{\sigma'(c(q_{*}))-\sigma'(q_{*})}\leq\sigma''(1)(1-q_{*})\leq\frac{\sigma''(1)}{\sigma'(q_*)}\frac{C_{0}(\tilde{\alpha},\beta,q_*)}{\beta^{2}},
\]
where we have used Corollary \ref{cor:q-lim} in the last inequality. Thus, 
\[
(i)+(ii)\leq  \frac{1}{2\beta^{2}}\sigma(1)C_{1}^2 + \frac{1}{\beta}\frac{\tau}{2}\Lambda_0\frac{\sigma'(1)}{(\sigma(q_*))^{3/2}}+\frac{1}{\beta^{2}}\frac{\tau}{2}\frac{\sigma''(1)}{\sigma'(q_*)}C_{0} = \frac{C_2}{\beta}.
\]
\end{proof}
\begin{lemma}\label{lem:eigenvaluebds-2}
For all $(\beta,h,\tau)\in T_{\tilde{\alpha},\tau}$, we have that 
\[
\abs{\tilde{\lambda}_{2}-2}\leq \frac{C_1(\tilde{\alpha},\beta,q_*;\tau)}{\beta^2} \left (1+\frac{C_1(\tilde{\alpha},\beta,q_*;\tau)}{2\beta^2}\right).
\]
\end{lemma}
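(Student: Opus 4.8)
The plan is to reduce the claim to elementary estimates on the quantity $\abs{(a-1)/2}$ exactly as in the proof of Lemma \ref{lem:eigenvectorbds}. First I would start from the explicit formula for $\tilde\lambda_2$ in \eqref{eq:eigenvalues}, namely
\[
\tilde\lambda_2 = \frac12\left(2 + (a-1) + \sqrt{(a-1)^2+4}\right),
\]
and rewrite
\[
\tilde\lambda_2 - 2 = \frac{a-1}{2} + \left(\sqrt{1+\tfrac{(a-1)^2}{4}} - 1\right).
\]
By the triangle inequality and Fact \ref{fact:calcfact} applied with $x = (a-1)^2/4 \geq 0$, the second summand is bounded in absolute value by $\tfrac12\abs{(a-1)/2}^2$, so that
\[
\abs{\tilde\lambda_2 - 2} \leq \abs{\frac{a-1}{2}} + \frac12\abs{\frac{a-1}{2}}^2 = \abs{\frac{a-1}{2}}\left(1 + \frac12\abs{\frac{a-1}{2}}\right).
\]

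Next I would invoke the bound \eqref{eq:abound}, which gives $\abs{(a-1)/2} \leq \beta^{-2}C_1(\tilde\alpha,\beta,q_*;\tau)$ for all $(\beta,h,\tau)\in T_{\tilde\alpha,\tau}$. Since the right-hand side of the displayed inequality above is increasing in $\abs{(a-1)/2}$, substituting yields
\[
\abs{\tilde\lambda_2 - 2} \leq \frac{C_1(\tilde\alpha,\beta,q_*;\tau)}{\beta^2}\left(1 + \frac{C_1(\tilde\alpha,\beta,q_*;\tau)}{2\beta^2}\right),
\]
which is the claim.

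There is no real obstacle here: the argument is a direct calculation that runs in parallel to the eigenvector estimate in Lemma \ref{lem:eigenvectorbds}, the only inputs being the algebraic identity for $\tilde\lambda_2$, the scalar inequality of Fact \ref{fact:calcfact}, and the uniform control \eqref{eq:abound} of $a-1$ over the marked level set $T_{\tilde\alpha,\tau}$. The mild point to be careful about is simply that $a \geq 1$ (so that $(a-1)/2 \geq 0$ and the square-root manipulation is legitimate), which holds because $a = \sigma(t)/\sigma(q_*)$ with $t \geq q_*$ and $\sigma = \xi_0'$ non-decreasing.
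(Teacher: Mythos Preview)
Your proof is correct and follows essentially the same approach as the paper's own proof: rewrite $\tilde\lambda_2-2$ as $(a-1)/2 + (\sqrt{1+(a-1)^2/4}-1)$, bound the square-root term via Fact~\ref{fact:calcfact}, and then insert the uniform bound \eqref{eq:abound}. Your explicit remark that $a\geq 1$ (so Fact~\ref{fact:calcfact} applies) is a nice point of care that the paper leaves implicit.
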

\begin{proof}
By Fact \ref{fact:calcfact} and \eqref{eq:abound},
\begin{align*}
\abs{\tilde{\lambda}_{2}-2} & =\abs{1+\left(\frac{a-1}{2}\right)+\sqrt{1+\left(\frac{a-1}{2}\right)^{2}}-2}
  \leq \abs{\frac{a-1}{2}} + \abs{\sqrt{1+\left (\frac{a-1}{2}\right)^2} -1}\\ 
&  \leq \abs{\frac{a-1}{2}}\left(1 + \frac{1}{2}\left (\frac{a-1}{2}\right)\right)
  \leq \frac{C_1}{\beta^2} \left ( 1+\frac{C_1}{2\beta^2}\right).
\end{align*}
\end{proof}
\subsubsection{Proof of Lemma \ref{lem:spec-lem}}

\begin{proof}\emph{of Lemma \ref{lem:spec-lem}}
Assembling the estimates in Lemmas \ref{lem:eigenvectorbds}--\ref{lem:eigenvaluebds-2}, we have for all $(\beta,h,\tau)\in T_{\tilde{\alpha},\tau}$ that
\begin{align*}
\abs{\left\langle w_1,v_{2}\right\rangle }&=\abs{\left\langle w_2,v_{1}\right\rangle }\leq\frac{C_{1}(\tilde{\alpha},\beta,q_*;\tau)}{\sqrt{2}\beta^{2}}\left(1+\frac{C_{1}(\tilde{\alpha},\beta,q_*;\tau)}{2\beta^{2}}\right)\\
\abs{\lambda_{1}-\nu}&\leq\frac{1}{\beta}C_2(\tilde{\alpha},\beta,q_*;\tau)\\
\abs{\frac{1}{\lambda_{2}^{1/2}}}&\leq\frac{1}{\beta\sqrt{2\sigma(q_{*})}}\\
\abs{\tilde{\lambda}_{2}-2}&\leq \frac{C_1(\tilde{\alpha},\beta,q_*;\tau)}{\beta^2} \left (1+\frac{C_1(\tilde{\alpha},\beta,q_*;\tau)}{2\beta^2}\right).
\end{align*}
Note that each $C_i(a,b,\tilde{q};\theta)$ is non-decreasing in $a$ and $\theta$ and non-increasing in $b$ 
and $\tilde{q}$. 

By definition of $q_0$ from \eqref{eq:q_0}, for $i=1,2$ we have that
\[
C_i(\tilde{\alpha},\beta,q_*;\tau) \leq C_i(1,\beta_0,q_0(\xi_0,\beta_0,h_0); 1).
\]
This implies the result. 
\end{proof}

\subsection{Proof of Fact \ref{fact:sign-fact}} \label{sec:integralsign}
\begin{lemma}
We have that
\[
\left\langle \Psi\right\rangle \left(x\right)=\int\left(4\text{sech}^{3}\left(\frac{x+y}{\sqrt{2}}\right)-6\text{sech}^{5}\left(\frac{x+y}{\sqrt{2}}\right)\right)\text{sech}\left(\frac{y-x}{\sqrt{2}}\right)\, dy<0
\]
for all $x\in\R$.\end{lemma}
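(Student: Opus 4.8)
The plan is to recognize $\langle\Psi\rangle$ as a convolution and then to rewrite that convolution in a manifestly negative form. First I would change variables $u=(x+y)/\sqrt2$ in the displayed integral; using that $\sech$ is even this gives $\langle\Psi\rangle(x)=\sqrt2\,(g\ast\sech)(\sqrt2\,x)$ with $g:=4\sech^{3}-6\sech^{5}$, so it suffices to show $(g\ast\sech)(c)<0$ for every $c\in\R$.

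Next I would record the two elementary identities $\sech''=\sech-2\sech^{3}$ and $(\sech^{3})''=9\sech^{3}-12\sech^{5}$, which (with $\partial=d/du$) say exactly that $(1-\partial^{2})\sech=2\sech^{3}$ and $(1-\partial^{2})^{2}\sech=-4g$. Because $\sech$ is Schwartz class, the constant-coefficient operator $1-\partial^{2}$ commutes with convolution and may be pulled out of, or distributed across, either factor; hence
\[
g\ast\sech=-\tfrac14\,(1-\partial^{2})^{2}(\sech\ast\sech)=-\bigl(\tfrac12(1-\partial^{2})\sech\bigr)\ast\bigl(\tfrac12(1-\partial^{2})\sech\bigr)=-\,\sech^{3}\ast\sech^{3}.
\]
Thus $(g\ast\sech)(c)=-\int_{\R}\sech^{3}(u)\,\sech^{3}(c-u)\,du<0$, since $\sech^{3}>0$ everywhere, and together with the first step this proves $\langle\Psi\rangle(x)<0$ for all $x\in\R$. (Equivalently, one may verify $\widehat{g\ast\sech}(k)=-\bigl(\widehat{\sech^{3}}(k)\bigr)^{2}=-\widehat{\sech^{3}\ast\sech^{3}}(k)$ from $\widehat{\sech}(k)=\pi\sech(\pi k/2)$ and $\widehat{\sech^{3}}(k)=\tfrac{\pi}{2}(1+k^{2})\sech(\pi k/2)$, and then invert; this is the same computation carried out in Fourier variables.)

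I expect the only real content — the "main obstacle" — to be spotting that $g=-\tfrac14(1-\partial^{2})^{2}\sech$, equivalently that the convolution $g\ast\sech$ can be re-split symmetrically into $-\sech^{3}\ast\sech^{3}$; once this is noticed, positivity of $\sech^{3}$ closes the argument immediately. The change of variables, the two second-derivative computations, and the justification that $1-\partial^{2}$ commutes with convolution on Schwartz functions are all routine.
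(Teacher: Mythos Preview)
Your proof is correct and takes a genuinely different route from the paper's. The paper proceeds by explicit computation: after the same change of variables reducing to a one-variable integral, it writes $\sech(y-2x)=(a\cosh y+b\sinh y)^{-1}$ with $a=\cosh(2x)$, $b=-\sinh(2x)$, evaluates the resulting integral in closed form as
\[
g(x)=-\frac{2}{\sinh^{5}(2x)}\bigl(-3\sinh(4x)+4x\cosh(4x)+8x\bigr),
\]
and then establishes negativity by checking that the numerator $N(x)$ and denominator have the same sign, which requires showing $N(0)=N'(0)=\cdots=N^{(4)}(0)=0$ and $N^{(5)}>0$ everywhere. Your argument instead recognizes the integral as $\sqrt{2}\,(g\ast\sech)(\sqrt{2}x)$ with $g=4\sech^{3}-6\sech^{5}$, observes the algebraic identity $(1-\partial^{2})^{2}\sech=-4g$ via the derivative formulas for $\sech$ and $\sech^{3}$, and then distributes the operator symmetrically across the convolution to obtain $g\ast\sech=-\sech^{3}\ast\sech^{3}<0$. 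Your approach is more conceptual and avoids both the explicit antiderivative and the fifth-derivative sign analysis; the paper's approach, while more computational, has the advantage of producing an explicit closed-form expression for $\langle\Psi\rangle$.
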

\begin{proof}
A change of variables shows it is enough to prove that 
\[
g\left(x\right)=\frac{1}{\sqrt{2}}\left\langle \Psi\right\rangle \left(\sqrt{2}x\right)=\int\left(4\text{sech}^{3}y-6\text{sech}^{5}y\right)\text{sech}\left(y-2x\right)\, dy<0
\]
for all $x\in\R$. Note that
\[
\text{sech}\left(y-2x\right)=\frac{1}{\cosh\left(-2x\right)\cosh\left(y\right)+\sinh\left(-2x\right)\sinh\left(y\right)}=\frac{1}{a\cosh y+b\sinh y}
\]
where $a\left(x\right)=\cosh\left(-2x\right)$, $b\left(x\right)=\sinh\left(-2x\right)$.
Thus
\begin{align*}
g\left(x\right) & =\int_{-\infty}^{\infty}\frac{4\text{sech}^{3}y-6\text{sech}^{5}y}{a\cosh y+b\sinh y}\, dy
  =\int_{-\infty}^{\infty}\frac{\left(4\text{sech}^{2}y-6\text{sech}^{4}y\right)\text{sech}^{2}y}{a+b\tanh y}\, dy\\
%\text{set }u=\tanh x\quad & =\int_{-1}^{1}\frac{4\left(1-u^{2}\right)-6\left(1-u^{2}\right)^{2}}{a+bu}\, du
 % =\int_{-1}^{1}\frac{-2+8u^{2}-6u^{4}}{a+bu}\, du\\
 % =\frac{2}{b^{5}}\left(6a^{3}b-6ab^{3}-\left(3a^{4}-4a^{2}b^{2}+b^{4}\right)\log\left(\frac{a+b}{a-b}\right)\right)\\
  %&=\frac{2}{b^{5}}\left(6ab+4x\left(3a^{4}-4a^{2}b^{2}+b^{4}\right)\right)
 &=-\frac{2}{\sinh^{5}\left(2x\right)}\left(-3\sinh\left(4x\right)+4x\cosh\left(4x\right)+8x\right).
\end{align*}
%Note that $a>ub$ for $u\in\left[-1,1\right]$ so this change of variables
%was legitimate. The integral on the fourth line is computed after
%observing
%\[
%\frac{-2+8u^{2}-6u^{4}}{a+bu}=-\frac{2a(4b^{2}-3a^{2})}{b^{4}}+\frac{2u(4b^{2}-3a^{2})}{b^{3}}-\frac{2\left(3a^{4}-4a^{2}b^{2}+b^{4}\right)}{b^{4}(a+bu)}+\frac{6au^{2}}{b^{2}}-\frac{6u^{3}}{b}.
%\]

Thus to show negativity of $g$ it suffices to show positivity
of 
\[
R\left(x\right)=\frac{-3\sinh\left(4x\right)+4x\cosh\left(4x\right)+8x}{\sinh^{5}\left(2x\right)}
\]
for all $x$. Note the denominator is negative for negative $x$ and
positive for positive $x$, so it suffices to show that the same is
true for 
\[
N\left(x\right)=-3\sinh\left(4x\right)+4x\cosh\left(4x\right)+8x,
\]
and to check that $R\left(0\right)>0$. Since $\frac{d^{j}}{dx^{j}}N\left(0\right)=0$
for $j=0,\dots,4$ and 
\[
\frac{d^{5}}{dx^{5}}N\left(x\right)=2048\cosh\left(4x\right)\left(1+2x\tanh\left(4x\right)\right)>0
\]
the result follows.\end{proof}

\subsection{The elementary argument for RS} \label{sec:elementaryRS}

In this section we prove the proposition:

\begin{proposition}\label{prop:elemenRS} For all models $\xi_0$ and $\beta_0>0$, there is an $h_0\left(\beta_0,\xi_0\right)$ such that 
 \[
  [0,\beta_0]\times[h_0,\infty) \subset RS.
 \]
\end{proposition}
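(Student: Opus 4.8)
The plan is to use the characterization from Lemma \ref{lem:strats}: it suffices to show that for $\beta\in[0,\beta_0]$ and $h$ large, the function $g(y)=\E_h[u_x^2(y,X_y)]-y$ (with $\mu=\delta_{q_*}$) satisfies $g'(y)\le 0$ for all $y\ge q_*$, since then $G=G_{\delta_{q_*}}$ is non-decreasing on $[q_*,1]$ and $\delta_{q_*}$ is the Parisi measure. By \eqref{eq:g'} we have $g'(y)=\xi''(y)\E_h[u_{xx}^2(y,X_y)]-1$, and since $\xi''$ is non-decreasing and $\beta\le\beta_0$, it is enough to prove
\[
\beta_0^2\,\xi_0''(1)\,\E_h\bigl[u_{xx}^2(y,X_y)\bigr]\le 1
\]
uniformly in $y\in[q_*,1]$ once $h$ is large enough. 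So the whole point is an a priori smallness estimate on $\E_h[u_{xx}^2(y,X_y)]$ when $h\to\infty$, uniformly over $\beta\le\beta_0$ and $y\in[0,1]$.

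First I would establish a pointwise decay bound of the form $u_{xx}(y,x)\le 2e^{-c\,\mathrm{dist}(x,0)}$-type control, or more simply use the probabilistic representation together with the fact that $0<u_{xx}\le 1$ and that, by Fact \ref{fact:explicitsolns} and the maximum-principle-type bounds in Sect.\ \ref{sec:appendixPDE}, $u_{xx}(y,x)$ is bounded by a quantity that is small when $|x|$ is large (indeed $u_{xx}(1,x)=\sech^2 x$, and the backward heat-type evolution with the drift toward larger $|x|$ does not destroy this). The cleaner route: show by It\^o/Girsanov (Corollary \ref{cor:girsanov}) or directly from the representation in Sect.\ \ref{sec:AC-SDE} that $\E_h[u_{xx}^2(y,X_y)]$ is controlled by $\E[\sech^4(\text{Gaussian with mean }\approx h)]$-type expressions, which decay like $e^{-ch^2/\beta^2}$ as $h\to\infty$ with $\beta\le\beta_0$ fixed. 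Concretely, since $X_y$ started at $h$ has, for $y\le q_*$, the explicit form $h+\int_0^y\sqrt{\xi''}\,dW$, and for $y\ge q_*$ one applies Girsanov to reduce to a Gaussian, one gets $\E_h[u_{xx}^2(y,X_y)]\le C(\beta_0)\,\E\,\sech^4(h+\sigma Z)$ with $\sigma^2\le \beta_0^2\xi_0'(1)$; by Lemma \ref{lem:softdisp} or just by dominated convergence this $\to 0$ as $h\to\infty$, uniformly in $y$ and in $\beta\le\beta_0$ by a compactness argument in $\beta$.

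Given that estimate, choose $h_0=h_0(\beta_0,\xi_0)$ so large that $C(\beta_0)\,\E\,\sech^4(h_0+\sigma Z)\le (\beta_0^2\xi_0''(1))^{-1}$ for all $h\ge h_0$ and all admissible $\sigma$; then $g'(y)\le 0$ on $[q_*,1]$, and Lemma \ref{lem:strats} finishes the proof. The main obstacle is making the bound on $\E_h[u_{xx}^2(y,X_y)]$ genuinely \emph{uniform} in $y\in[q_*,1]$ and in $\beta\in[0,\beta_0]$ — one must check that the drift in the Auffinger-Chen SDE, which pushes $u_x$-related quantities around, cannot undo the large-$h$ concentration. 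This is handled by the Girsanov change of measure of Corollary \ref{cor:girsanov}, which removes the drift at the cost of a bounded Radon-Nikodym factor $\sech(Y_{q_*})/\sech(Y_t)\cdot e^{-\frac12(\xi'(t)-\xi'(q_*))}\le 1$ for $t\ge q_*$, reducing everything to an honest Gaussian integral; the $y\le q_*$ part is even easier since there the process is exactly Gaussian by Fact \ref{fact:explicitsolns}. Everything else is routine: uniformity in $\beta$ follows because all constants are continuous in $\beta$ on the compact interval $[0,\beta_0]$.
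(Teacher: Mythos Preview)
Your plan is sound in outline, but the specific claim that the Radon--Nikodym factor satisfies $\sech(Y_{q_*})/\sech(Y_t)\cdot e^{-\frac12(\xi'(t)-\xi'(q_*))}\le 1$ is false: when $|Y_t|$ is large and $|Y_{q_*}|$ is moderate the ratio $\sech(Y_{q_*})/\sech(Y_t)$ can be arbitrarily large, and the exponential factor (bounded below by $e^{-\frac12\beta_0^2\xi_0'(1)}$) cannot compensate. The fix is immediate, however: since $u_{xx}(t,x)=\sech^2(x)$ for $t\ge q_*$, the Girsanov identity of Corollary \ref{cor:girsanov} gives
\[
\E_h\bigl[u_{xx}^2(t,X_t)\bigr] = e^{-\frac12(\xi'(t)-\xi'(q_*))}\,\E\bigl[\sech(Y_{q_*})\sech^3(Y_t)\bigr] \le \E\bigl[\sech(Y_{q_*})\bigr] = \E\bigl[\sech\bigl(h+\sqrt{\xi'(q_*)}\,Z\bigr)\bigr],
\]
and this last quantity tends to $0$ as $h\to\infty$, uniformly over $\beta\le\beta_0$ (the variance is at most $\beta_0^2\xi_0'(1)$, so the splitting argument of Lemma \ref{lem:alphasmall} applies verbatim). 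With that correction your argument goes through; note you should also record that $\alpha\le 1$ for $h$ large, since Lemma \ref{lem:strats} needs $(\beta,h)\in AT$ to handle the region $y\le q_*$. This follows from the same bound at $t=q_*$.

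The paper takes a different route. It first disposes of $\beta\le\beta_*=1/\sqrt{\xi_0''(1)}$ via Lemma \ref{lem:beta-alr}, and for $\beta\in[\beta_*,\beta_0]$ it reuses the It\^o decomposition from the proof of Theorem \ref{thm:long-time} to write $g'(y)$ as a sum of a nonpositive term proportional to $\alpha-1$, a term controlled by $(1-q_*)$, and an integral term bounded crudely by $C\beta_0^4(1-q_*)$; it then uses the Jensen-type bound $1-q_*\le C\sqrt{\alpha}$ from Lemma \ref{lem:lbq} together with $\alpha\to 0$ (Lemma \ref{lem:alphasmall}) to conclude. Your Girsanov route is more direct, avoiding both the case split on $\beta$ and the It\^o expansion; the paper's route has the virtue of recycling machinery already built for Theorem \ref{thm:long-time}.
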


Recall the definitions of $q_*$ and $\alpha$ from \eqref{eq:q-alpha-def}.
We will need the following preliminary result.
\begin{lemma}\label{lem:alphasmall}
Fix a model $\xi_0$ and $\beta_0>0$. Then for all $\epsilon>0$, there is an $h_{0}\left(\epsilon,\beta_0,\xi_0\right)$
such that
\[
\alpha\leq\epsilon
\]
for all $\beta\leq\beta_{0}$ and $h\geq h_{0}$.
\end{lemma}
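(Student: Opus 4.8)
The plan is to show that $\alpha(\beta,h)\to 0$ uniformly on $\beta\le\beta_0$ as $h\to\infty$, and then combine this with Lemma \ref{lem:beta-alr} (via Lemma \ref{lem:strats}) to conclude RS. Recall from \eqref{eq:q-alpha-def} that $\alpha(\beta,h)=\min_{q\in Q_*}\xi_0''(q)\beta^2\,\E\sech^4(\sqrt{\xi'(q)}Z+h)$, so it suffices to bound, uniformly in $\beta\le\beta_0$ and $q\in[0,1]$, the single-variable quantity $\E\sech^4(\sqrt{\xi'(q)}Z+h)$. Since $\xi_0''(q)\le\xi_0''(1)$ and $\beta\le\beta_0$, the prefactor $\xi_0''(q)\beta^2$ is bounded by a constant $C(\xi_0,\beta_0)$, so the whole problem reduces to showing $\sup_{q\in[0,1]}\E\sech^4(\sqrt{\xi'(q)}Z+h)\to 0$ as $h\to\infty$.

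First I would observe that $\sqrt{\xi'(q)}=\beta\sqrt{\xi_0'(q)}$ ranges over a compact interval: $0\le \sqrt{\xi'(q)}\le \beta_0\sqrt{\xi_0'(1)}=:\sigma_{\max}$. So it is enough to show
\[
\sup_{0\le\sigma\le\sigma_{\max}}\E\sech^4(\sigma Z+h)\xrightarrow[h\to\infty]{}0.
\]
This is elementary: $\sech^4$ is bounded by $1$ and decays like $16e^{-2|x|}$, so for any $h>0$,
\[
\E\sech^4(\sigma Z+h)\le \prob(|\sigma Z+h|\le h/2)+16\,e^{-h}\le \prob\big(\sigma Z\le -h/2\big)+16\,e^{-h}.
\]
When $\sigma>0$ the first term is $\prob(Z\le -h/(2\sigma))\le \prob(Z\le -h/(2\sigma_{\max}))$, which is a Gaussian tail tending to $0$ as $h\to\infty$ uniformly in $\sigma\le\sigma_{\max}$; when $\sigma=0$ the first term is $0$. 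Hence for any $\epsilon>0$ there is $h_0(\epsilon,\beta_0,\xi_0)$ making the supremum at most $\epsilon/C(\xi_0,\beta_0)$ for all $h\ge h_0$, giving $\alpha\le\epsilon$ for all $\beta\le\beta_0$, $h\ge h_0$. (One must use, as in the definition of $\alpha$, that $Q_*$ is nonempty — which holds because $q\mapsto\E\tanh^2(\sqrt{\xi'(q)}Z+h)$ is continuous with value $\ge 0$ at $q=0$ and value $\le 1$ at $q=1$.)

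Then for Proposition \ref{prop:elemenRS}: apply the lemma with $\epsilon$ chosen so that whenever $\alpha\le\epsilon$ we are in the RS phase. The cleanest route is: by Lemma \ref{lem:beta-alr}, if $\xi''(1)=\beta^2\xi_0''(1)\le 1$ we are already in RS, so we may assume $\beta^2>1/\xi_0''(1)$; then take $\epsilon$ small and note that the self-consistency obstruction to RS lives near the generalized AT line $\alpha=1$, so $\alpha\le\epsilon$ together with $(\beta,h)\in AT$ and Lemma \ref{lem:strats}(2) (using $g'(q_*)=\alpha-1\le\epsilon-1<0$ and the monotonicity argument there) forces $g\le 0$ on $[q_*,1]$, hence $(\beta,h)\in RS$. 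Concretely: since $\xi''(y)\le\beta_0^2\xi_0''(1)$ for all $y$, and $\E_hu_{xx}^2\le 1$, one has $g'(y)=\xi''(y)\E_hu_{xx}^2(y,X_y)-1$; the point is that at $y=q_*$ this equals $\alpha-1$, and then the same comparison-principle argument used in Lemma \ref{lem:sk-3/2} (now with general $\xi''$ in place of $\beta^2$, using that $\xi''$ is non-decreasing) propagates $g'\le 0$ forward from $q_*$ provided $\alpha$ is small enough relative to how much $\xi''$ can grow on $[q_*,1]$. The main (very mild) obstacle is just bookkeeping: making the choice of $\epsilon$ explicit in terms of $\beta_0$ and the derivatives of $\xi_0$ so that the forward propagation in Lemma \ref{lem:strats} goes through; there is no analytic difficulty, since everything is controlled by the compactness of $q\in[0,1]$ and $\sigma\in[0,\sigma_{\max}]$ and standard Gaussian tail bounds.
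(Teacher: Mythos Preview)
Your proof of the lemma is correct and essentially the same as the paper's: both bound the prefactor by $\beta_0^2\xi_0''(1)$ and then split $\E\sech^4(\sigma Z+h)$ into a Gaussian-tail piece and a $\sech$-decay piece, sending $h\to\infty$. The paper splits on $|Z|\gtrless \delta/\sigma$ with $\delta=h/2$ while you split on $|\sigma Z+h|\gtrless h/2$, but this is cosmetic.
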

\begin{proof}
Note that since $\sech^{4}\leq1$ and since $\xi'_0$ and $\xi''_0$ are non-decreasing, 
we have that
\begin{align*}
\beta^{2}\xi_0''(q_*)\E\sech^{4}\left(\beta\sqrt{\xi_0'(q_{*})}Z+h\right)&\leq\beta_0^{2}\xi_0''(1)\left(P\left(\abs{Z}\geq\frac{\delta}{\beta\sqrt{\xi'_0(q_{*})}}\right)+\sech^{4}(h-\delta)\right)\\
&\leq\beta_0^{2}\xi_0''(1)\left(2e^{-\frac{\delta^{2}}{2\beta_{0}^{2}\xi_0'(1)}}+\sech^{4}(h-\delta)\right)
\end{align*}
for $0<\delta<h$. Taking $\delta=h/2$ and $h\to\infty$ proves the result.
\end{proof}

\begin{proof}\emph{of Proposition \ref{prop:elemenRS}}
By Lemma \ref{lem:beta-alr} we may assume that $\beta\geq \beta_{*}=\frac{1}{\sqrt{\xi''_0(1)}}$. Then as in Lemma \ref{lem:lbq}, we observe that
\begin{align*}
  q_* &\geq q_0\\
 1-q_* &\leq \frac{\sqrt{\alpha}}{\beta_{*}\sqrt{\xi_0''(q_0)}}
\end{align*}
where $q_0(h) = \frac{1}{2}\tanh^2(h)$. Recall by Lemma \ref{lem:strats}, that if $(\beta,h)\in AT$ it suffices to prove that $g'\leq0$ on $[q_*,1]$ to conclude RS. We 
observe as in the proof of Theorem \ref{thm:long-time} that
\begin{align*}
g'(y)\leq \frac{\xi''_0(1)}{\xi''(q_*)}(\alpha-1)+\frac{\xi'''_0(1)}{\xi''_0(q_*)}(1-q_*)+ C\beta_0^4\left(\xi''_0(1)\right)^2(1-q_*).
\end{align*}
Using Lemma \ref{lem:alphasmall} and the estimates on $q_*$ given above, we may take $h\to \infty$ to conclude the result.
\end{proof}

\subsection{Is the AT line a line?} \label{sec:ATline-line}
In this section we briefly discuss some questions regarding the nature of the quantities and sets defined in (\ref{eq:q-alpha-def}).

The first question along these lines is as follows. Fix a model $\xi_0$. 
\begin{question}
For what region in the plane $(\beta,h)$ is $Q_*$ a singleton?
\end{question}
This question, it turns out, is very difficult to answer. For the SK model, this question has been resolved by Guerra and Lata\l a \cite{Guer01,TalBK11vol2}, where they (separately) showed uniqueness everywhere except for the set $h=0,\beta\geq1$. 
For models other than SK, it is far more complicated. For example, numerical studies show that in general the solution
to this fixed point equation is not unique. These studies suggest that when $h$ is large or when $\beta\geq h-\delta$, the solution is unique.
As the reader will see, the condition $\alpha \leq 1+\eps$ comes up in the analysis of related questions, so one is led to ask if this is exactly the region in which the unicity fails. 

Another natural question is regarding the set $\alpha=1$.
\begin{question}
Is the AT line actually a line? That is, is the AT line a (topological or smooth) curve?
\end{question}
This is also a delicate question. A step toward studying this question is the following lemma. 
\begin{lemma}
For any model and point $(\beta,h)$ with $\beta,h>0$ and $\alpha\leq 1$, the map $(\beta,h)\mapsto (q_*,\alpha)$ is $C^1$.
\end{lemma}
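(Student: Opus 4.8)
The plan is to apply the implicit function theorem to the system of equations defining $q_*$ and $\alpha$. Recall from \eqref{eq:q-alpha-def} that on the region in question, $q_*$ is characterized by the fixed-point equation $\E\tanh^2(\sqrt{\xi'(q_*)}Z + h) = q_*$ and $\alpha = \xi''(q_*)\E\sech^4(\sqrt{\xi'(q_*)}Z+h)$; the second relation expresses $\alpha$ as an explicit $C^\infty$ function of $(q_*,\beta,h)$ (all the maps $\beta\mapsto\xi'(\cdot),\xi''(\cdot)$ are smooth and the Gaussian expectations of $\sech$-type functions depend smoothly on their parameters by differentiation under the integral sign, which is justified by the uniform exponential decay of $\sech$ and its derivatives). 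So the content of the lemma is really the $C^1$-regularity of $(\beta,h)\mapsto q_*(\beta,h)$; once that is in hand, $\alpha$ inherits $C^1$-regularity by composition.

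First I would set $\Phi(q,\beta,h) = \E\tanh^2(\sqrt{\xi'(q)}Z+h) - q$, which is $C^1$ (indeed smooth) in all three variables on $\{q\in[0,1],\ \beta,h>0\}$. The crux is to show that $\partial_q\Phi(q_*,\beta,h) \neq 0$ at any point with $\alpha \leq 1$, so that the implicit function theorem produces a $C^1$ local solution $q=q_*(\beta,h)$; by uniqueness of the Parisi measure (and hence of the relevant root — using that $\delta_{q_*}$ is the Parisi measure and the characterization in Corollary \ref{cor:consistency}), this local solution agrees with the globally defined $q_*$. To compute $\partial_q\Phi$, differentiate under the expectation: writing $Y = \sqrt{\xi'(q)}Z + h$, a Gaussian integration by parts (Stein's lemma, $\E[Z\, \varphi(Y)] = \sqrt{\xi'(q)}\,\E[\varphi'(Y)]$) converts the $Z$-weighted term, and one finds, after simplification, that $\partial_q\Phi(q,\beta,h) = \xi''(q)\,\E\sech^4(Y) - 1 = \alpha(q,\beta,h) - 1$, using exactly the same identity $\frac{d}{dt}\E u_x^2(t,X_t) = \xi''(t)\E u_{xx}^2(t,X_t)$ that underlies \eqref{eq:g'} with $u_x = \tanh$, $u_{xx} = \sech^2$ from Fact \ref{fact:explicitsolns}. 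Hence $\partial_q\Phi(q_*,\beta,h) = \alpha(\beta,h) - 1$.

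Therefore, at every point with $\alpha < 1$ strictly, $\partial_q\Phi \neq 0$ and the implicit function theorem applies directly, giving $q_* \in C^1$ locally and hence $\alpha\in C^1$. The main obstacle is the borderline case $\alpha = 1$, where $\partial_q\Phi$ vanishes and the naive implicit function theorem fails. Here I would argue that the hypothesis $\beta, h > 0$ together with $\alpha \le 1$ forces $\alpha$ to be \emph{strictly} less than $1$ on the relevant set, or else handle the degenerate locus separately: on the AT line itself one can instead use that $(\beta,h)\mapsto(q_*,\alpha)$ is the restriction of a smooth map cut out by $\Phi = 0$ whose zero set, by the second-order information $G_{\delta_{q_*}}''(q_*) > 0$ recorded after \eqref{eq:AT} (equivalently $\partial_q(\alpha(q,\beta,h))|_{q_*} \ne 0$ when the first derivative degenerates), is still a $C^1$ submanifold — i.e. one passes to a higher-order nondegeneracy and reparametrizes. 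I expect verifying this nondegeneracy at $\alpha = 1$, and checking it is consistent with the (possibly non-unique) structure of $Q_*$ discussed in Sect.\ \ref{sec:ATline-line}, to be the delicate point; everywhere with $\alpha<1$ the argument is the routine implicit function theorem computation sketched above.
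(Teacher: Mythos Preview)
Your computation of $\partial_q\Phi$ is incorrect, and this is the crux of the matter. You claim $\partial_q\Phi(q_*,\beta,h) = \alpha - 1$ by invoking the It\^o identity $\frac{d}{ds}\E_h u_x^2(s,X_s) = \xi''(s)\E_h u_{xx}^2(s,X_s)$ from \eqref{eq:g'}. But that identity computes the derivative in the time variable $s$ for a \emph{fixed} measure $\mu$; here you are differentiating in the atom location $q$, which is a different operation. Although evaluating at $s=q$ gives the same number $\E\tanh^2(\sqrt{\xi'(q)}Z+h)$, the two derivatives do not agree: in the It\^o identity, moving $s$ away from $q$ changes both $u_x(s,\cdot)$ (no longer $\tanh$ for $s<q$) and the law of $X_s$ (which acquires drift for $s>q$), whereas for $\Phi$ one is simply differentiating a Gaussian variance parameter.

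Direct differentiation and Stein's lemma give, with $X = \sqrt{\xi'(q)}Z + h$,
\[
\partial_q\Phi \;=\; \xi''(q)\,\E\!\left[(2-\cosh 2X)\sech^4 X\right] - 1 \;=\; (\alpha - 1) \;+\; \xi''(q)\,\E\!\left[\sech^4 X\,(1-\cosh 2X)\right].
\]
The extra term is \emph{strictly} negative because $\cosh(2X) > 1$ almost surely (here $h>0$ forces $q_*>0$ via Lemma~\ref{lem:lbq}, hence $\xi'(q_*)>0$). Thus $\partial_q\Phi < 0$ throughout $\{\alpha\leq 1\}$, including on the set $\alpha = 1$, and the implicit function theorem applies directly with no degenerate case to handle. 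This is exactly the paper's argument.

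Your proposed rescue at $\alpha=1$ via second-order nondegeneracy would also fail: from Corollary~\ref{cor:consistency} one has $G_{\delta_{q_*}}''(q_*) = -\tfrac{\xi''(q_*)}{2}(\alpha-1)$, which vanishes precisely when $\alpha=1$, so it cannot supply the missing transversality. Fortunately, once $\partial_q\Phi$ is computed correctly, no such rescue is needed.
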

We note here that the condition $h>0$ is in general necessary as $\alpha$ should be zero on the set $h=0$ when $\xi''(0)=0$.
\begin{proof}
For ease of notation let $f=\xi'_0$. Consider, as usual, the map $F:\R^2\times\R^2 \rightarrow \R^2$, 
defined by
\[
F(\beta,h;q,\alpha)=(\E \tanh^2(\beta\sqrt{f(q)}z+h)-q,f'(q)\E \sech^4(\beta\sqrt{f(q)}z+h)-\alpha).
\]
Note that this is $C^1$. We see that the differential in $(q,\alpha)$ is lower triangular
\[
\left(\begin{matrix} 
\partial_q F_1&\partial_\alpha F_1\\
\partial_q F_2 &\partial_\alpha F_2\\
\end{matrix}\right)
=  \left(  \begin{matrix} 
      a & 0 \\ 
      b & -1 \\
   \end{matrix}\right)
\]
so that it suffices to show that $\partial_q F_1$ is non-zero. To see this, note that
\begin{align*}
\frac{\partial}{\partial q}\E\tanh^{2}(\beta\sqrt{f(q)}z+h)-q & =\E2\tanh \sech^{2}(\beta\sqrt{f(q)}z+h)\beta\frac{f'(q)}{2\sqrt{f(q)}}z-1\\
 & =\beta^{2}f'(q)\E\left[2-\cosh\left(2X\right)\right]\sech^{4}(X)-1\\
% & =2\beta^{2}f'(q)\E\sech^{4}(X)-\beta^{2}f'(q)\E\cosh(2X)\sech^{4}(X)-1\\
 & =\alpha-1+\beta^{2}f'(q)\E\sech^{4}(X)\left(1-\cosh(2X)\right)<0
\end{align*}
provided $\alpha \leq 1+\delta$ for $\delta$ sufficiently small. The second line follows from an integration by parts.
Thus by the implicit function theorem the map from $(\beta,h)\mapsto(q_*,\alpha)$ is $C^1$. 
\end{proof}
This does not show that the 
set $\alpha=1$ is a curve, however it does show that for almost every $\alpha\in [0,1]$, the set $\alpha(\beta,h)=\alpha$ is a curve. In particular, it 
shows this for every $\alpha$ that is regular in the sense of Sard. To get the result precisely when $\alpha=1$ is a difficult calculus question. 

%%\subsubsection*{Compliance with Ethical Standards}
%%\small{{Funding:} A.J.\ was supported by a National Science Foundation Graduate Research Fellowship DGE-0813964; 
%%and National Science Foundation grants DMS-1209165 and OISE-0730136. I.T.\ was supported by a National Science Foundation Graduate Research Fellowship
%%DGE-0813964; and National Science Foundation grants OISE-0967140 and  DMS-1311833.
%%
%%{{Conflict of Interest:} The authors declare that they have no conflicts of interest.}

\bibliographystyle{plain}
\bibliography{parisimeasures}

\begin{thebibliography}{10}

\bibitem{ALR87}
Michael Aizenman, Joel~L. Lebowitz, and David Ruelle.
\newblock Some rigorous results on the {S}herrington-{K}irkpatrick spin glass
  model.
\newblock {\em Comm. Math. Phys.}, 112(1):3--20, 1987.

\bibitem{BABM05}
G{\'e}rard~Ben Arous, Leonid~V. Bogachev, and Stanislav~A. Molchanov.
\newblock {Limit theorems for sums of random exponentials}.
\newblock {\em Probability theory and related fields}, 132(4):579--612, 2005.

\bibitem{AuffChen14}
Antonio {Auffinger} and Wei-Kuo {Chen}.
\newblock {The Parisi formula has a unique minimizer}.
\newblock {\em ArXiv e-prints}, February 2014.

\bibitem{AuffChen13}
Auffinger {Auffinger} and Wei-Kuo {Chen}.
\newblock {On properties of Parisi measures}.
\newblock {\em Probability Theory and Related Fields}, to appear, March 2013.

\bibitem{Bov12}
Anton Bovier.
\newblock {\em Statistical Mechanics of Disordered Systems}.
\newblock Cambridge, 2012.

\bibitem{BovKurk04}
Anton Bovier and Irina Kurkova.
\newblock Derrida's generalised random energy models 1: models with finitely
  many hierarchies.
\newblock {\em Annales de l'institut Henri Poincar\'{e} (B) Probabilit\'{e}s et
  Statistiques}, 40(4):439--480, 2004.

\bibitem{Chen15}
Wei~Kuo Chen.
\newblock {Variational representations for the Parisi functional and the
  two-dimensional Guerra-Talagrand bound}.
\newblock {\em ArXiv e-prints}, January 2015.

\bibitem{AT78}
J.~R.~L. de~Almeida and David~J. Thouless.
\newblock {Stability of the Sherrington-Kirkpatrick solution of a spin glass
  model}.
\newblock {\em Journal of Physics A: Mathematical and General}, 11(5):983,
  1978.

\bibitem{IntegralTable}
I.~S. Gradshteyn and I.~M. Ryzhik.
\newblock {\em Table of integrals, series, and products}.
\newblock Elsevier/Academic Press, Amsterdam, seventh edition, 2007.
\newblock Translated from the Russian, Translation edited and with a preface by
  Alan Jeffrey and Daniel Zwillinger, With one CD-ROM (Windows, Macintosh and
  UNIX).

\bibitem{Guer01}
Francesco Guerra.
\newblock Sum rules for the free energy in the mean field spin glass model.
\newblock In {\em Mathematical physics in mathematics and physics ({S}iena,
  2000)}, volume~30 of {\em Fields Inst. Commun.}, pages 161--170. Amer. Math.
  Soc., Providence, RI, 2001.

\bibitem{GuerTon02}
Francesco Guerra and Fabio~Lucio Toninelli.
\newblock Quadratic replica coupling in the {S}herrington-{K}irkpatrick mean
  field spin glass model.
\newblock {\em J. Math. Phys.}, 43(7):3704--3716, 2002.

\bibitem{JagTobSC15}
Aukosh Jagannath and Ian Tobasco.
\newblock {A dynamic programming approach to the Parisi variational problem}.
\newblock {\em ArXiv e-prints}, February 2015.

\bibitem{MPV87}
Marc M{\'e}zard, Giorgio Parisi, and Miguel~Angel Virasoro.
\newblock {\em Spin glass theory and beyond}, volume~9.
\newblock World scientific Singapore, 1987.

\bibitem{PanchConv05}
Dmitriy Panchenko.
\newblock A question about the {P}arisi functional.
\newblock {\em Electron. Commun. Probab.}, 10:no. 16, 155--166, 2005.

\bibitem{Panch05}
Dmitry Panchenko.
\newblock Free energy in the generalized {S}herrington-{K}irkpatrick mean field
  model.
\newblock {\em Rev. Math. Phys.}, 17(7):793--857, 2005.

\bibitem{Panch12}
Dmitry Panchenko.
\newblock The {S}herrington-{K}irkpatrick model: an overview.
\newblock {\em Journal of Statistical Physics}, 149(2):362--383, 2012.

\bibitem{PanchSKbook}
Dmitry Panchenko.
\newblock {\em The Sherrington-Kirkpatrick model}.
\newblock Springer, 2013.

\bibitem{PanchPF14}
Dmitry Panchenko.
\newblock The {P}arisi formula for mixed {$p$}-spin models.
\newblock {\em Ann. Probab.}, 42(3):946--958, 2014.

\bibitem{stroock1979multidimensional}
Daniel~W. Stroock and S.~R.~Srinivasa Varadhan.
\newblock {\em Multidimensional diffussion processes}, volume 233.
\newblock Springer Science \& Business Media, 1979.

\bibitem{Tal02}
Michel Talagrand.
\newblock On the high temperature phase of the {S}herrington-{K}irkpatrick
  model.
\newblock {\em Ann. Probab.}, 30(1):364--381, 2002.

\bibitem{TalSphPF06}
Michel Talagrand.
\newblock Free energy of the spherical mean field model.
\newblock {\em Probab. Theory Related Fields}, 134(3):339--382, 2006.

\bibitem{TalPM06}
Michel Talagrand.
\newblock Parisi measures.
\newblock {\em Journal of Functional Analysis}, 231(2):269 -- 286, 2006.

\bibitem{TalPF}
Michel Talagrand.
\newblock {The Parisi formula.}
\newblock {\em {Ann. Math. (2)}}, 163(1):221--263, 2006.

\bibitem{TalBK11}
Michel Talagrand.
\newblock {\em Mean field models for spin glasses. {V}olume {I}}, volume~54 of
  {\em Ergebnisse der Mathematik und ihrer Grenzgebiete. 3. Folge. A Series of
  Modern Surveys in Mathematics [Results in Mathematics and Related Areas. 3rd
  Series. A Series of Modern Surveys in Mathematics]}.
\newblock Springer-Verlag, Berlin, 2011.
\newblock Basic examples.

\bibitem{TalBK11vol2}
Michel Talagrand.
\newblock {\em Mean field models for spin glasses. {V}olume {II}}, volume~55 of
  {\em Ergebnisse der Mathematik und ihrer Grenzgebiete. 3. Folge. A Series of
  Modern Surveys in Mathematics [Results in Mathematics and Related Areas. 3rd
  Series. A Series of Modern Surveys in Mathematics]}.
\newblock Springer, Heidelberg, 2011.
\newblock Advanced replica-symmetry and low temperature.

\bibitem{Ton02}
Fabio Toninelli.
\newblock About the {A}lmeida-{T}houless transition line in the
  {S}herrington-{K}irkpatrick mean-field spin glass model.
\newblock {\em EPL (Europhysics Letters)}, 60(5):764, 2002.

\end{thebibliography}
\end{document}